\newtheorem{theorem}{Theorem}[section]
\newtheorem{definition}[theorem]{Definition}
\newtheorem{lemma}[theorem]{Lemma}
\newtheorem{proposition}[theorem]{Proposition}
\newtheorem{corollary}[theorem]{Corollary}
\newtheorem{remark}[theorem]{Remark}
\newtheorem{question}[theorem]{Question}
\newtheorem{examplecore}[theorem]{Example}
\newenvironment{example}{\begin{examplecore}}{\hspace*{\fill}
$\square$\par\vspace{.1cm}\end{examplecore}}
\newcommand{\matthias}[1]{#1}
\newcommand{\op}{\operatorname}
\newcommand{\et}{\mathrm{\acute{e}t}}
\begin{document}

\title[Chow--Witt rings of $\op{B}\op{Sp}_{2n}$ and $\op{B}\op{SL}_{n}$]{Chow--Witt rings of classifying spaces for symplectic and special linear groups}  

\author{Jens Hornbostel and Matthias Wendt}

\address{Jens Hornbostel\\Fachgruppe Mathematik/Informatik\\Bergische Universit\"at Wuppertal\\ Gau\ss{}stra\ss{}e 20\\42119 Wuppertal\\Germany}
\email{hornbostel@math.uni-wuppertal.de}
\address{Matthias Wendt\\ Institut Mathematik\\Universit\"at Osnabr\"uck\\Albrechtstra\ss{}e 28a\\49176 Osnabr\"uck\\Germany}
\email{m.wendt.c@gmail.com}

\date{January 2019}

\subjclass[2010]{14C17, 14F42, 19G12, 19D45, 55R40}

\maketitle

\begin{abstract}
We compute the Chow--Witt rings of the classifying spaces for the symplectic and special linear groups. In the structural description we give, contributions from real and complex realization are clearly visible. In particular, the computation of cohomology with $\mathbf{I}^j$-coefficients is done closely along the lines of Brown's computation of integral cohomology for special orthogonal groups. The computations for the symplectic groups show that Chow--Witt groups are a symplectically oriented ring cohomology theory. Using our computations for special linear groups, we also discuss the question when an oriented vector bundle of odd rank splits off a trivial summand.
\end{abstract}

\setcounter{tocdepth}{1}
\tableofcontents

\section{Introduction}

Recently we wished to know the Chow--Witt rings $\widetilde{\op{CH}}^\bullet({\op{B}}\op{Sp}_{2n})$ and $\widetilde{\op{CH}}^\bullet({\op{B}}\op{SL}_n)$ of the classifying spaces for symplectic and special linear groups in terms of characteristic classes. There are many reasons for being interested in these. For example, $\widetilde{\op{CH}}^\bullet({\op{B}}\op{SL}_n)$ is essentially by definition the natural home for the universal {\em Euler class}, and we will show that $\widetilde{\op{CH}}^\bullet({\op{B}}\op{Sp}_{2n})$ is the one
for {\em Pontryagin classes}. Detailed knowledge of the characteristic classes for vector bundles in Chow--Witt theory is indispensable  in the obstruction-theoretic vector bundle classification as initiated by Morel \cite{MField} and recently explored in a series of papers by Asok and Fasel, cf. \cite{AsokFasel,AsokFaselSpheres,AsokFaselSecondary} and more. Our results confirm the role of $\widetilde{\op{CH}}$ as the universal intersection theory which is symplectically orientable in the sense of Panin and Walter,
and they refine Totaro's computations for Chow groups \cite{totaro:bg}. (To complete both the picture and the terminology, recall that Totaro's computation confirms that ${\op{CH}}^\bullet({\op{B}}\op{GL}_n)$
is the natural home for Chern classes in intersection theory.)  Looking at
Chow--Witt groups of projective spaces and their relation to the real realization in \cite{fasel:ij}, there is also the obvious question how closely Chow--Witt groups (or more precisely the associated $\mathbf{I}^j$-cohomology) of other classifying spaces are tied to the real realization. This list of motivating questions could be extended, but we stop here, and explain some of the answers we have to offer.

The following result provides a formula for the Chow--Witt rings of the classifying spaces of the symplectic groups. Results of a similar nature for Witt groups are established in \cite{PaninWalter}. Our techniques differ from theirs in some respects, see Section \ref{sec:sympor} for further details.

\begin{theorem}
\label{thm:spn}
Let $F$ be a perfect field of characteristic $\neq 2$. For any $n$, there is an isomorphism of graded rings
\[
\widetilde{\op{CH}}^\bullet({\op{B}}\op{Sp}_{2n})\cong \op{GW}(F)[\op{p}_1,\dots,\op{p}_n]. 
\]
Here, the $\op{p}_i$ are Pontryagin classes for symplectic bundles, with $deg(\op{p}_i)=2i$. These are uniquely determined by their compatibility with stabilization in the symplectic series and the requirement that the top Pontryagin class equals the Thom class of the universal bundle over ${\op{B}}\op{Sp}_{2n}$.
\end{theorem}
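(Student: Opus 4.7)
The plan is to use the Fasel pullback square
\[
\begin{array}{ccc}
\widetilde{\op{CH}}^n(X) & \longrightarrow & H^n(X,\mathbf{I}^n) \\
\downarrow & & \downarrow \\
\op{CH}^n(X) & \longrightarrow & \op{Ch}^n(X),
\end{array}
\]
which reduces the problem to computing $\op{CH}^\bullet$, $H^\bullet(-,\mathbf{I}^\bullet)$ and their mod-$2$ reductions separately. Since $\op{GW}(F)$ itself arises from the analogous pullback at $X=\op{Spec} F$, the formula in the theorem should drop out once a compatible polynomial description is obtained for both inputs.

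For the Chow piece I would cite the Totaro--Pandharipande computation $\op{CH}^\bullet({\op{B}}\op{Sp}_{2n})=\mathbb{Z}[c_2,c_4,\dots,c_{2n}]$ (odd Chern classes vanishing). The substantive work lies in computing the $\mathbf{I}^\bullet$-cohomology, and here I would imitate Brown's computation of $H^\bullet(\op{BSO}_n,\mathbb{Z})$ as the abstract suggests. The natural inductive input is the classifying-space fibration
\[
\op{Sp}_{2n}/\op{Sp}_{2n-2}\longrightarrow {\op{B}}\op{Sp}_{2n-2}\longrightarrow {\op{B}}\op{Sp}_{2n},
\]
whose fiber has $\mathbf{I}^\bullet$-cohomology accessible via the symplectic projective bundle theorem of Panin--Walter. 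The inductive step should produce classes $\op{p}_i \in H^{2i}({\op{B}}\op{Sp}_{2n},\mathbf{I}^{2i})$ whose top member is the Thom class of the universal bundle, whose mod-$2$ reductions match the even mod-$2$ Chern classes, and which are algebraically independent polynomial generators over $H^0(\op{Spec} F,\mathbf{I}^\bullet)$.

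With both sides in hand and with mod-$2$ reductions matching degree by degree, the Chow--Witt Pontryagin class $\op{p}_i\in\widetilde{\op{CH}}^{2i}({\op{B}}\op{Sp}_{2n})$ is defined as the pair consisting of the integral even Chern class and its chosen $\mathbf{I}^{2i}$-lift, and the pullback square then forces $\widetilde{\op{CH}}^\bullet({\op{B}}\op{Sp}_{2n})\cong\op{GW}(F)[\op{p}_1,\dots,\op{p}_n]$. Uniqueness becomes a formal diagram chase: the Thom-class normalization pins down $\op{p}_n$, and surjectivity of the stabilization restriction $\widetilde{\op{CH}}^\bullet({\op{B}}\op{Sp}_{2n})\to\widetilde{\op{CH}}^\bullet({\op{B}}\op{Sp}_{2n-2})$ in low degrees propagates the normalization to smaller $i$. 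The principal obstacle I expect is the $\mathbf{I}^\bullet$-computation itself: the coefficient sheaves $\mathbf{I}^j$ have nontrivial extensions involving mod-$2$ Milnor $K$-theory, so Brown's argument has to be run with care, tracking Bockstein-type contributions as well as any orientation twists arising along the stabilization fibration.
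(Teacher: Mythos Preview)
Your strategy differs genuinely from the paper's: you propose to decompose via the fiber-product square into $\op{CH}^\bullet$ and $\op{H}^\bullet(-,\mathbf{I}^\bullet)$ and then reassemble, whereas the paper works directly in Chow--Witt theory throughout, using the localization sequence for $\mathbf{K}^{\op{MW}}$-cohomology and a symplectic splitting principle (restriction to the ``quaternionic torus'' ${\op{B}}\op{SL}_2^{\times n}$) to define the $\op{p}_i$ and prove the polynomial-ring description in one pass. Your route is viable in principle --- once one knows $\op{H}^\bullet({\op{B}}\op{Sp}_{2n},\mathbf{I}^\bullet)\cong \op{W}(F)[\op{p}_1,\dots,\op{p}_n]$ with $\rho(\op{p}_i)=\overline{\op{c}}_{2i}$, the fiber product over $\op{Ch}^\bullet$ with $\mathbb{Z}[\op{c}_2,\dots,\op{c}_{2n}]$ does collapse to $\op{GW}(F)[\op{p}_1,\dots,\op{p}_n]$ --- but it is more roundabout, since the paper gets the $\mathbf{I}^\bullet$-statement as a corollary of the Chow--Witt computation rather than as an input to it.

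There is, however, a gap in your outline, and your diagnosis of the ``principal obstacle'' is off. For ${\op{B}}\op{Sp}_{2n}$ the Bockstein contributions you worry about do not arise: once one has the polynomial description, the reduction $\rho$ is surjective and $\beta$ vanishes. The genuine difficulty, which you do not address, is establishing \emph{algebraic independence} of the $\op{p}_i$. You write that the inductive step ``should produce classes\dots which are algebraically independent,'' but Brown's argument for this step uses the full long exact cohomology sequence, and in the Chow--Witt or $\mathbf{I}^\bullet$ setting the localization sequence only has three terms in geometric bidegrees, with the rest lying in groups $\op{H}^{q+1}(-,\mathbf{I}^q)$ about which one has no a priori control. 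The paper makes this point explicitly and resolves it by proving a splitting principle: the restriction $\widetilde{\op{CH}}^\bullet({\op{B}}\op{Sp}_{2n})\to\widetilde{\op{CH}}^\bullet({\op{B}}\op{SL}_2^{\times n})\cong\op{GW}(F)[\op{p}_{1,1},\dots,\op{p}_{1,n}]$ sends $\op{p}_i$ to the $i$-th elementary symmetric polynomial, and injectivity of this restriction (proved by a secondary induction on cohomological degree) forces independence. Your $\mathbf{I}^\bullet$-computation would hit exactly the same wall and require the same fix --- at which point the detour through the fiber-product square buys nothing over the paper's direct argument.
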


We also obtain a proof of a splitting principle which expresses the Pontryagin classes as elementary symmetric polynomials in the first Pontryagin classes for symplectic line bundles, and realizes $\widetilde{\op{CH}}^\bullet({\op{B}}\op{Sp}_{2n})$ as ring of symmetric polynomials in $\widetilde{\op{CH}}^\bullet({\op{B}}\op{SL}_{2}^{\times n})\cong\op{GW}(F)[\op{p}_{1,1},\dots,\op{p}_{1,n}]$. For details, see Proposition \ref{prop:symsplit} and  Theorem~\ref{thm:symplectic}.

After this preprint was essentially finished, we learned that related aspects of the quaternionic projective bundle formula have been discussed independently in the Ph.D. thesis of Yang \cite{yang}, with a different perspective. Also, Ivan Panin informed us that the above classes $\op{p}_i$ should rather be called \emph{Borel classes}, and this terminology will be employed in the revised version of \cite{PaninWalter}.

The above theorem provides us with a well-behaved theory of Pontryagin classes for symplectic bundles in Chow--Witt groups which was claimed by \cite[Section 7]{PaninWalter} without proof. It shows that Pontryagin classes for 
$\op{Sp}_{2n}$ play the same fundamental role for Chow--Witt theory as Chern classes for $\op{GL}_n$ play for Chow theory. This has the following consequence:

\begin{corollary}
Chow--Witt groups $\widetilde{\op{CH}}^{\bullet}(-)$ form a symplectically oriented ring cohomology theory.
\end{corollary}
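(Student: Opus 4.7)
The plan is to verify the axioms of a symplectic Thom structure in the sense of \cite[Sections 3--4]{PaninWalter}. Such a structure on the ring cohomology theory $\widetilde{\op{CH}}^\bullet(-)$ consists of a functorial assignment of Thom classes $\op{th}(E,\omega) \in \widetilde{\op{CH}}^{2n}(\op{Th}(E))$ to rank-$2n$ symplectic bundles on smooth $F$-schemes, which are natural in pullback, multiplicative under orthogonal direct sums, and recover the suspension of the unit on trivial symplectic bundles (and in particular induce a Thom isomorphism).

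Theorem~\ref{thm:spn} supplies all the required ingredients. Define $\op{th}(E,\omega) := f^*\op{p}_n$, where $f\colon X \to \op{B}\op{Sp}_{2n}$ is a classifying map for $(E,\omega)$ and $\op{p}_n$ is the top Pontryagin class, which by the theorem \emph{is} the Thom class of the tautological rank-$2n$ symplectic bundle (so the Thom isomorphism holds universally by construction and propagates to arbitrary symplectic bundles by pullback along classifying maps). Naturality in pullback is then built in, and the normalization on trivial symplectic bundles follows because these factor through the stabilization maps $\op{B}\op{Sp}_0 \to \op{B}\op{Sp}_{2n}$, under which $\op{p}_n$ vanishes by the polynomial structure and the stabilization compatibility asserted in the theorem.

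The only nontrivial axiom is multiplicativity: writing $\mu\colon \op{B}\op{Sp}_{2n}\times \op{B}\op{Sp}_{2m} \to \op{B}\op{Sp}_{2(n+m)}$ for the orthogonal sum map, one must show that $\mu^*\op{p}_{n+m} = \op{p}_n \boxtimes \op{p}_m$. This is where the splitting principle of Proposition~\ref{prop:symsplit} enters. Pulling back further along the natural map $(\op{B}\op{Sp}_2)^{\times (n+m)} \to \op{B}\op{Sp}_{2n}\times \op{B}\op{Sp}_{2m}$ yields an injection on Chow--Witt rings onto the subring of symmetric polynomials inside $\op{GW}(F)[\op{p}_{1,1},\dots,\op{p}_{1,n+m}]$. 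There both sides of the required identity reduce to the single monomial $\op{p}_{1,1}\cdots \op{p}_{1,n+m}$, using the elementary-symmetric-polynomial description of the top Pontryagin class given by the splitting principle; hence the identity descends to $\op{B}\op{Sp}_{2n}\times \op{B}\op{Sp}_{2m}$.

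The main obstacle is precisely this multiplicativity statement, but it is dispatched cleanly by the splitting principle. Once it is in hand, all the formal consequences demanded by the Panin--Walter definition, including the behaviour of Thom classes on arbitrary symplectic bundles over smooth $F$-schemes, follow by naturality along classifying maps.
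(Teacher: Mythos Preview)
Your proposal has two gaps that together leave the argument incomplete.

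First, you take for granted that $\widetilde{\op{CH}}^\bullet(-)$ is a ring cohomology theory in the sense of \cite{PaninWalter}, but this is exactly where the subtlety lies. As the paper explains in Section~\ref{sec:sympor}, the single-graded theory does \emph{not} satisfy the localization axiom \cite[Definition~2.1(1)]{PaninWalter}; one must pass to the full bigraded theory $\op{H}^\bullet(-,\mathbf{K}^{\op{MW}}_\bullet)$ to get a localization sequence of the required shape. Verifying the remaining axioms (excision, homotopy invariance, the ring conditions of \cite[Definition~2.2]{PaninWalter}) is the actual content of the corollary and is done by citing specific results from \cite{fasel:memoir,fasel:chowwitt,MField}, not by invoking Theorem~\ref{thm:spn}.

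Second, writing $\op{th}(E,\omega):=f^\ast\op{p}_n$ with $f\colon X\to{\op{B}}\op{Sp}_{2n}$ produces an element of $\widetilde{\op{CH}}^{2n}(X)$ --- the Euler class --- rather than a class in $\widetilde{\op{CH}}^{2n}(\op{Th}(E))\cong\widetilde{\op{CH}}^{2n}_X(E)$. To obtain a genuine Thom class you need the d\'evissage isomorphism and the push-forward $(s_0)_\ast\langle 1\rangle$; that is precisely the paper's direct construction of the symplectic Thom structure (via \cite[Remarque~10.4.8]{fasel:memoir}), and it requires no knowledge of $\widetilde{\op{CH}}^\bullet({\op{B}}\op{Sp}_{2n})$. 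The role of Theorem~\ref{thm:spn} in the paper's argument is rather the converse: once the Thom structure is in place, the computation confirms that the resulting Pontryagin classes satisfy \cite[Definitions~12.1 and 14.1]{PaninWalter} and land in geometric bidegrees. Your multiplicativity argument via the splitting principle is correct for that latter purpose and matches Proposition~\ref{prop:symsplit}, but it does not by itself furnish the Thom structure.
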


For the precise formulation of symplectically oriented theories, one may either work in a bigraded setting, i.e., representable theories in the Morel--Voevodsky $\mathbb{A}^1$-homotopy category, or with single-graded theories as in \cite{PaninWalter}. For the latter, however, some care has to be taken as to what the exact form of the localization sequence is. See Section~\ref{sec:sympor} for a more detailed discussion. 

Our main result is the computation of  $\widetilde{\op{CH}}^\bullet({\op{B}}\op{SL}_{n})$, which is stated in Section \ref{sec:statethm}. The proof is much lengthier and more involved than the one for ${\op{B}}\op{Sp}_{2n}$, and is finally achieved in Section \ref{sec:special}. The following is a combination of Theorems~\ref{thm:slnin} and \ref{thm:slnchw}. The description of the Chow--Witt ring as a cartesian square is actually a more general observation for schemes with 2-torsion-free Chow rings which is of independent interest.

\begin{theorem}
\label{thm:sln}
Let $F$ be a perfect field of characteristic $\neq 2$. 
Then the following square, induced from the cartesian square description of the Milnor--Witt K-theory sheaf, is cartesian:
\[
\xymatrix{
\widetilde{\op{CH}}^\bullet({\op{B}}\op{SL}_n)\ar[r] \ar[d] & \ker\partial \ar[d]^{\bmod 2} \\
\op{H}^{\bullet}_{\op{Nis}}({\op{B}}\op{SL}_n,\mathbf{I}^\bullet)\ar[r]_\rho &\op{Ch}^\bullet({\op{B}}\op{SL}_n).
}
\]
Here $\op{Ch}:=\op{CH}/2$,  $\partial\colon \op{CH}^\bullet({\op{B}}\op{SL}_n)\to \op{Ch}^\bullet({\op{B}}\op{SL}_n)\xrightarrow{\beta} \op{H}^{\bullet+1}({\op{B}}\op{SL}_n,\mathbf{I}^{\bullet+1})$ is an integral Bockstein operation, and we have 
\[
\matthias{\ker\partial=\mathbb{Z}[\op{c}_{2i+1}, 2{\op{c}}_{2i_1}\cdots{\op{c}}_{2i_k},{\op{c}}_{2i}^2, \op{c}_n]}\subseteq\mathbb{Z}[\op{c}_2,\dots,\op{c}_n]\cong \op{CH}^\bullet({\op{B}}\op{SL}_n).
\]
The cohomology ring $\op{H}^{\bullet}_{\op{Nis}}({\op{B}}\op{SL}_n,\mathbf{I}^\bullet)$ can be explicitly identified as the graded-commutative  $\op{W}(F)$-algebra 
\[
\op{W}(F)\left[\op{p}_2,\op{p}_4,\dots,\op{p}_{[(n-1)/2]},\op{e}_n, \{\beta_J\}_J\right]
\]
where $\op{e}_n$ is the Euler class, the index set $J$ runs through the sets $\{j_1,\dots,j_l\}$ of natural numbers with $0<j_1<\dots<j_l\leq[(n-1)/2]$ and $\beta_J=\beta(\overline{\op{c}}_{2j_1}\cdots \overline{\op{c}}_{2j_l})$, modulo the following relations
\begin{enumerate}
\item $\op{I}(F) \beta_J=0$.
\item If $n=2k+1$ is odd, $\op{e}_{2k+1}=\beta_{\{k\}}$. 
\item For two index sets $J$ and $J'$, we have 
\[
\beta_J\cdot \beta_{J'}=\sum_{k\in J} \beta_{\{k\}}\cdot \op{p}_{(J\setminus\{k\})\cap J'}\cdot \beta_{\Delta(J\setminus\{k\},J')}
\]
where we set $\op{p}_A=\prod_{i=1}^l \op{p}_{2a_i}$ for an index set $A=\{a_1,\dots,a_l\}$.
\end{enumerate}
\end{theorem}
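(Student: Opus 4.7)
The plan splits the theorem into three components. For the cartesian square description, I would first formulate and prove a general lemma: for any smooth scheme or ind-scheme $X$ with $\op{CH}^\bullet(X)$ 2-torsion-free, the cartesian square at the sheaf level
\[
\mathbf{K}^{\op{MW}}_n \cong \mathbf{K}^M_n \times_{\mathbf{k}^M_n} \mathbf{I}^n
\]
descends to a cartesian square in Nisnevich cohomology. The argument is a diagram chase with the long exact sequences induced by the short exact sequences $0\to\mathbf{I}^{n+1}\to\mathbf{I}^n\to\mathbf{k}^M_n\to 0$ and the analogous sequence involving $\mathbf{K}^{\op{MW}}_n$; the 2-torsion-freeness of $\op{CH}^\bullet$ is exactly what is needed to identify the common connecting map with $\partial$ and close up the square. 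For $X={\op{B}}\op{SL}_n$, Totaro's computation $\op{CH}^\bullet({\op{B}}\op{SL}_n)\cong\mathbb{Z}[\op{c}_2,\ldots,\op{c}_n]$ provides both the 2-torsion-freeness hypothesis and the identification of $\op{CH}^\bullet$.

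For the subring description of $\ker\partial$, the key observation is that $\partial$ factors as reduction mod 2 followed by a genuine Bockstein $\beta$ which satisfies a Leibniz rule. One therefore only needs to compute $\partial$ on the Chern class generators: the odd Chern classes $\op{c}_{2i+1}$ lie in the kernel (by parity arguments akin to Brown's Wu-type formulas), while the even ones satisfy $\partial(\op{c}_{2i})=\beta_{\{i\}}\neq 0$. The Leibniz rule then forces a monomial $\prod\op{c}_i^{e_i}$ to lie in the kernel modulo 2 precisely when every $e_{2i}$ is even; lifting back to the integers yields exactly the subring $\mathbb{Z}[\op{c}_{2i+1},2\op{c}_{2i},\op{c}_{2i}^2]$.

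For the explicit presentation of $\op{H}^\bullet_{\op{Nis}}({\op{B}}\op{SL}_n,\mathbf{I}^\bullet)$, the strategy is to follow Brown's classical computation of $\op{H}^\bullet(\op{BSO};\mathbb{Z})$ closely, as already advertised in the introduction. The central tool is the Gysin sequence attached to the fibration ${\op{B}}\op{SL}_{n-1}\to{\op{B}}\op{SL}_n$ (geometrically the complement of the zero section in the tautological rank-$n$ bundle), which contributes the Euler class $\op{e}_n$ and provides the induction on $n$. The Pontryagin classes $\op{p}_{2i}$ are constructed from the symplectic story via Theorem~\ref{thm:spn}, for instance by pulling back along the classifying map of $E\oplus E^\vee$. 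The $\beta_J$ arise naturally as Bocksteins of products of mod-2 Chern classes; relation~(1) reflects that Bockstein images are annihilated by the fundamental ideal; relation~(2) identifies the top Euler class in odd rank with the single Bockstein $\beta_{\{k\}}$; and relation~(3) is a Cartan-type product formula.

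The main obstacle I foresee is establishing the Cartan formula~(3) and proving completeness of the presentation. The Cartan formula requires detailed tracking of the multiplicative structure through the inductive Gysin sequences, and completeness requires either a comparison with the real realization, where the analogous computation is classical, or an explicit dimension count at each spectral sequence step. Verifying that the listed generators are exhaustive and that relations~(1)--(3) suffice is, in my estimation, where the bulk of the technical work will lie.
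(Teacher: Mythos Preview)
Your plan is essentially the paper's own strategy: the cartesian square comes from a general 2-torsion-free criterion (Proposition~\ref{prop:cartesian}) applied via Totaro's computation, $\ker\partial$ is computed using the derivation property of $\op{Sq}^2=\rho\beta$, and the $\mathbf{I}^\bullet$-cohomology is computed by an induction on $n$ along the Gysin/localization sequence for ${\op{B}}\op{SL}_{n-1}\to{\op{B}}\op{SL}_n$, following Brown.

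One point worth flagging: the completeness argument is neither a comparison with real realization nor a spectral-sequence dimension count. The paper's actual mechanism is to prove directly, by induction on cohomological degree, that the reduction $\rho:\op{H}^{q+1}({\op{B}}\op{SL}_n,\mathbf{I}^{q+1})\to\op{Ch}^{q+1}({\op{B}}\op{SL}_n)$ is injective on the image of $\beta$ (Propositions~\ref{prop:lem22even} and \ref{prop:lem22odd}). This replaces Brown's Lemma~2.2 on torsion, which does not transport cleanly because the localization sequence for $\mathbf{I}^\bullet$-cohomology leaves the geometric bidegrees. Once you have this injectivity, both the verification of relation~(3) and the absence of further relations reduce to computations already done in $\op{Ch}^\bullet$. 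The odd-rank induction step also needs the explicit identification $\op{e}_{2k+1}=\beta(\overline{\op{c}}_{2k})$ (Proposition~\ref{prop:eulerrel}), which requires a separate argument since the Euler class is now torsion and multiplication by it is no longer injective.
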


Note that in both the symplectic and special linear case, we only have to consider the trivial twist for $\widetilde{\op{CH}}$ because the corresponding classifying spaces have trivial Picard groups. 

The above theorems provide a good overview of the Chow--Witt-theoretic characteristic classes and their relations. Pontryagin classes already appear in the work
of Ananyevskiy \cite{ananyevskiy}. The general Bockstein classes $\beta_J$ seem not to have been considered before, although integral Stiefel--Whitney
classes (corresponding to sets $J$ with one element) appear in \cite{fasel:ij}.

It has been realized for quite some time that the sheaves $\mathbf{I}^n$ are strongly related to real realization, see the work of Colliot-Th{\'e}l{\`e}ne, Scheiderer, Sujatha, and also Morel \cite{morel:puissances}. In \cite[p. 417/418]{fasel:ij}, Fasel gave a more precise incarnation of this philosophy, proposing an explicit relation between $\op{H}^\bullet(X,\mathbf{I}^\bullet)$ and the singular cohomology $\op{H}^\bullet(X(\mathbb{R}),\mathbb{Z})$ of the real realization for certain "nice" $X$. A much more precise version of this philosophy, comparing $\mathbf{I}^j$-cohomology over $\op{Spec}(\mathbb{R})$ to the integral cohomology of the real realization, has been very recently given by Jacobson in \cite{jacobson}. Our results completely agree with this philosophy. A detailed account of the compatibility of these identifications with push-forwards, pullbacks, localization sequences and intersection products will be given in \cite{4real}; this implies that our algebraic characteristic classes map to their topological counterparts and provides a further strengthening of the link between $\mathbf{I}^\bullet$-cohomology and real realization. 

\medskip

Using Theorem \ref{thm:sln}, we deduce the following interesting splitting condition for odd rank bundles in terms of stable invariants, cf. Proposition~\ref{greatapplication}: 

\begin{corollary}
\label{cor:enodd}
Let $F$ be a perfect field of characteristic $\neq 2$, and let $X=\op{Spec}A$ be a smooth affine scheme over $F$ of odd dimension $n=2k+1$. Assume that ${}_2\op{CH}^n(X)=0$, i.e., the 2-torsion in $\op{CH}^n(X)$ is trivial. Then an oriented vector bundle $\mathscr{E}$ over $X$ of rank $n$ splits off a free rank one summand if and only if ${\op{c}_{n}}(\mathscr{E})=0$ and $\beta({\overline{\op{c}}_{n-1}})(\mathscr{E})=0$.
\end{corollary}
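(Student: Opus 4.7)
The plan is to reduce the splitting question to the vanishing of the Chow--Witt Euler class $\op{e}_n(\mathscr{E})\in\widetilde{\op{CH}}^n(X)$ and then to analyse this vanishing via the cartesian-square description of $\widetilde{\op{CH}}^\bullet$ from Theorem~\ref{thm:sln}. The ``only if'' direction is immediate: a bundle splitting off a trivial line summand has vanishing Chow--Witt Euler class, and by naturality this forces both $\op{c}_n(\mathscr{E})=0$ and $\beta(\overline{\op{c}}_{n-1}(\mathscr{E}))=0$.

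For the ``if'' direction I would proceed in three steps. First, apply Morel's Euler-class splitting criterion, as refined by Asok--Fasel in the papers cited in the introduction: for a smooth affine $F$-scheme $X$ of dimension $n$ and a rank $n$ oriented bundle $\mathscr{E}$, splitting off a trivial summand is equivalent to $\op{e}_n(\mathscr{E})=0$, with the orientation trivialising the determinantal twist that otherwise appears. Second, use the cartesian-square description of $\widetilde{\op{CH}}^\bullet$ from Theorem~\ref{thm:sln} --- available for $X$ under the hypothesis ${}_2\op{CH}^n(X)=0$ thanks to the more general statement for 2-torsion-free Chow rings flagged in the introduction --- to conclude that $\op{e}_n(\mathscr{E})=0$ iff both its projections vanish: its image $\op{c}_n(\mathscr{E})$ in $\op{CH}^n(X)$ and its image $\op{e}_n^{\mathbf{I}}(\mathscr{E})$ in $\op{H}^n(X,\mathbf{I}^n)$. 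Third, identify the latter by naturality: Theorem~\ref{thm:sln}(2) gives the universal identity $\op{e}_{2k+1}=\beta_{\{k\}}=\beta(\overline{\op{c}}_{2k})$ on $\op{B}\op{SL}_n$, so pullback along the classifying map of $\mathscr{E}$ yields $\op{e}_n^{\mathbf{I}}(\mathscr{E})=\beta(\overline{\op{c}}_{n-1}(\mathscr{E}))$ when $n=2k+1$. Combining, the simultaneous vanishing of $\op{c}_n(\mathscr{E})$ and $\beta(\overline{\op{c}}_{n-1}(\mathscr{E}))$ is equivalent to $\op{e}_n(\mathscr{E})=0$, hence to the desired splitting.

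The main obstacle is not the computation --- which is pure naturality once the inputs are in place --- but verifying that the cartesian-square description actually applies to $X$ under only the degree-$n$ hypothesis ${}_2\op{CH}^n(X)=0$, rather than 2-torsion-freeness of $\op{CH}^\bullet(X)$ in all degrees. This requires inspecting the short exact sequence comparing $\widetilde{\op{CH}}^n(X)$ to the fibre product $\ker\partial\times_{\op{Ch}^n(X)}\op{H}^n(X,\mathbf{I}^n)$ and confirming that only degree-$n$ 2-torsion contributes. A secondary subtlety is to match Morel's Euler class with the pullback of the universal Chow--Witt Euler class appearing in Theorem~\ref{thm:sln}; for oriented bundles this is by now standard, but the compatibility of normalisations (including the Thom-class convention) deserves to be checked.
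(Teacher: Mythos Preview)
Your proposal is correct and follows essentially the same route as the paper. The paper deduces the corollary from Proposition~\ref{greatapplication} (whose condition~(1) is trivially satisfied when ${}_2\op{CH}^n(X)=0$), which in turn is exactly your three-step argument: the Morel/Asok--Fasel splitting criterion, the degree-$n$ fiber product description from Proposition~\ref{prop:cartesian} (whose injectivity proof visibly only uses ${}_2\op{CH}^n(X)=0$ in degree $n$), and the universal identity $\op{e}_n=\beta(\overline{\op{c}}_{n-1})$ from Theorem~\ref{thm:slnin}; the compatibility of Euler classes you flag is handled by \cite[Theorem~1]{AsokFaselEuler}.
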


While the vanishing of the Euler class and the corresponding splitting is well-known for stably free modules, the above result seems to be new. 

For the sake of perspective, recall that in \cite[Question 7.12]{bhatwadekar:sridharan}, Bhatwadekar and Sridharan asked if vanishing of the top Chern class was the only obstruction to splitting off  a free rank 1 summand in the above odd rank situation. The above result contributes to understanding the difference between the Euler class and the top Chern class for odd rank bundles (at least in the absence of 2-torsion) -- it is given by the top integral Stiefel--Whitney class $\beta({\overline{\op{c}}_{n-1}})$. Moreover, there is a compatibility requiring that the images of $\beta({\overline{\op{c}}_{n-1}})$ and $\op{c}_{n}$ agree in $\op{Ch}^n(X)$. From this perspective, it seems likely that the question of Bhatwadekar and Sridharan actually has a positive answer. For the moment, however, we are able to prove that vanishing of $\op{Sq}^2(\overline{\op{c}}_{n-1})=\overline{c}_n$ implies vanishing of $\beta(\overline{\op{c}}_{n-1})$ only in a few cases, see e.g. Proposition~\ref{prop:greatapplication2}. In general, our results underline the importance of \matthias{$\overline{\op{c}}_{n-1}$} and Voevodsky's motivic cohomology operation $\op{Sq}^2$ for this question.
 
A similar analysis, combining techniques of this article for $\op{SL}_n$ and \cite{fasel:ij} to a motivic generalization of \cite{cadek}, provides a full computation of the total Chow--Witt rings of ${\op{B}}\op{GL}_n$ as well as the finite Grassmannians $\op{Gr}(k,n)$, for details cf.~\cite{real-grassmannian}.

Finally, using techniques from stable ${\mathbb{A}^1}$-homotopy theory,
it is possible to identify $\widetilde{\op{CH}^{\bullet}}(-)$ as part
of a bigraded representable cohomology theory and reformulate some of the arguments in terms of the Morel--Voevodsky $\mathbb{A}^1$-homotopy theory. We will not use this, but see Remarks~\ref{A1rep}, \ref{rem:classmap} and 
Subsection~\ref{sec:sympor}. 

\emph{Structure of the paper:} In Section~\ref{sec:prelims}, we provide a recollection on basic facts from Chow--Witt theory which will be used in the computations. Then Section~\ref{sec:cwbg} recalls the Totaro-style definition of Chow--Witt groups of classifying spaces. The Chow--Witt groups of classifying spaces of symplectic groups are computed in Section~\ref{sec:symplectic}. The rest of the paper is concerned with the computations for the classifying spaces of special linear groups: first, we define all the relevant characteristic classes in Section~\ref{sec:character} and formulate the main structural results for $\widetilde{\op{CH}}^\bullet({\op{B}}\op{SL}_n)$ in Section~\ref{sec:statethm}. In Section~\ref{sec:relations}, we establish basic relations between Chow--Witt characteristic classes, and in Section~\ref{sec:special} we finally provide the inductive proof of the main theorem describing the Chow--Witt ring of the classifying space of the special linear groups. A short discussion of applications to splitting questions and Euler classes is provided in Section~\ref{sec:bs}.

\emph{Conventions:} Throughout the article, we consider a perfect base field $F$ with $\op{char}(F) \neq 2$. Schemes are separated $F$-schemes of finite type. All the cohomology groups we consider will be Nisnevich cohomology groups, i.e., Ext-groups between Nisnevich sheaves on the small site of a smooth scheme. Nisnevich sheaves are considered both on the big site $\op{Sm}/F$ of smooth $F$-schemes as well as on the small Nisnevich site of particular smooth $F$-schemes $X$. Strictly $\mathbb{A}^1$-invariant sheaves will be denoted by bold-face letters, such as in $\mathbf{K}^{\op{MW}}_n$ or $\mathbf{I}^n$. For this reason, we occasionally drop the index ``Nis'' for the sake of less cumbersome notation. For an abelian group $A$, the subgroup of 2-torsion elements is denoted by ${}_2A$. 

\emph{Acknowledgements:} The project was initiated during a pleasant stay of the second author at Wuppertal financed by the DFG SPP 1786 ``Homotopy theory and algebraic geometry''; and it was finished during a pleasant stay of both authors at the Institut Mittag-Leffler during the special program ``Algebro-geometric and homotopical methods''. We are grateful to Jean Fasel for email exchanges concerning $\mathbb{A}^1$-representability of Milnor--Witt K-cohomology and Totaro's analysis of the Steenrod square,  to Alexey Ananyevskiy and Aravind Asok for comments on an earlier version of the preprint, and to Marco Schlichting for pointing out a problem in a previous version of Section \ref{sec:bs}. We are very grateful for the detailed and insightful comments provided by two anonymous referees. 

\section{Recollection on Chow--Witt rings}
\label{sec:prelims}

In this section, we provide a recollection of definitions and basic properties of Chow--Witt rings. Most of these can be found in \cite{fasel:chowwitt} and \cite{fasel:memoir}, and we assume some familiarity with these works.

\subsection{Functorialities and localization sequence}\label{funcandloc}
Recall the following fundamental cartesian square of strictly $\mathbb{A}^1$-invariant Nisnevich sheaves 
\[
\xymatrix{
\mathbf{K}^{\op{MW}}_n\ar[r] \ar[d] & \mathbf{K}^{\op{M}}_n \ar[d] \\
\mathbf{I}^n \ar[r] & \mathbf{K}^{\op{M}}_n/2
}
\]
established by Morel \cite{morel:puissances}. More precisely, in loc.cit., Morel showed that the fiber product $\mathbf{J}^\bullet:=\mathbf{I}^\bullet\times_{\mathbf{K}^{\op{M}}_\bullet/2}\mathbf{K}^{\op{M}}_\bullet$ is naturally isomorphic to Milnor--Witt K-theory $\mathbf{K}^{\op{MW}}_\bullet$, described in terms of explicit generators ($[u]$ for $u\in F^\times$ in degree $1$ and $\eta$ in degree $-1$) and relations. The classical definition of the Chow--Witt groups $\widetilde{\op{CH}}^\bullet(X)$ for a smooth scheme $X$, see e.g. \cite{fasel:memoir}, is as the cohomology of the Gersten complex $C(X,\mathbf{J}^{\bullet})$,  which is a fiber product of the Gersten complex for Milnor K-theory and the Gersten complex for the sheaves of powers $\mathbf{I}^{\bullet}$ of the fundamental ideal in the Witt ring, fibered over the Gersten complex for $\mathbf{K}^{\op{M}}_{\bullet}/2 \cong \mathbf{I}^{\bullet}/\mathbf{I}^{\bullet +1}$ (for Chow--Witt groups, only the easy part of the Milnor conjecture in degrees $0$ and $1$ is used here). It will be useful to consider the above square as one of graded algebras, extending $\mathbf{I}^{\bullet}$ to negative degrees by the Witt sheaf $\mathbf{W}$ in every degree. According to \cite{morel:puissances} we might denote this graded algebra by $\mathbf{K}_{\bullet}^{\op{W}}$ rather than $\mathbf{I}^\bullet$, and the inclusion $\mathbf{I}^{n+1} \subset \mathbf{I}^n$ then becomes multiplication with $\eta$ in the graded algebra. 
Alternatively, one might also define  Chow--Witt groups as sheaf cohomology $\op{H}^n_{\op{Nis}}(X,\mathbf{K}^{\op{MW}}_n)$ since by  \cite[Proposition 2.3.1, Theorem 2.3.4]{AsokFaselEuler} these two definitions are equivalent. Note also that the work of Morel, in particular \cite[Corollary 5.43 and Theorem 5.47]{MField}, implies that the Gersten complexes for $\mathbf{K}^{\op{MW}}_\bullet$ compute both the Zariski and Nisnevich cohomology of the Milnor--Witt K-sheaves. If $X$ is a smooth $F$-scheme and $\mathscr{L}$ is a line bundle over $X$, there is a similar square of Nisnevich sheaves on the small site of $X$ where $\mathbf{K}^{\op{MW}}_n$ and $\mathbf{I}^n$ are twisted by $\mathscr{L}$, for details cf. \cite[Section 2.3]{AsokFaselEuler}. 
However, for us this will be mostly irrelevant since all varieties we will consider have trivial Picard groups.

By \cite{fasel:memoir} we have several standard properties relevant for our computations. First of all, as mentioned above, the Chow--Witt groups can be computed in terms of Gersten complexes $C(X,\mathbf{J}^\bullet)$ (sometimes also called Rost--Schmid complexes). This implies, in particular, that Chow--Witt groups can be identified with both Zariski and Nisnevich cohomology of $\mathbf{J}^\bullet$.

The Chow--Witt groups have pullbacks for flat maps between regular schemes \cite[Corollaire 10.4.2]{fasel:memoir} and the maps induced by change of coefficients in the fundamental square are compatible with flat pullbacks (Corollaire 10.4.3 in loc.cit.). Moreover, as discussed in \cite[Section 2]{AsokFaselEuler}, pullbacks for Chow--Witt groups exist for arbitrary morphisms $f\colon X\to Y$ of smooth schemes 
\[
f^\ast\colon \widetilde{\op{CH}}^i(Y,\mathscr{L})\to\widetilde{\op{CH}}^i(X,f^\ast\mathscr{L}).
\]
These pullbacks are compatible with composition for smooth morphisms and the Chow--Witt theoretic pullbacks of \cite{fasel:memoir} coincide with the pullbacks defined via Gersten complexes \cite[Section 2.2]{AsokFaselEuler}, cf. \cite[Theorem 2.3.4]{AsokFaselEuler}. 

There are also push-forward maps for proper morphisms between smooth schemes \cite[Corollaire 10.4.5]{fasel:memoir}, again compatible with the change of coefficients. However, some care is needed: for a proper morphism $f\colon X\to Y$, a push-forward 
\[
f_\ast:\widetilde{\op{CH}}^n(X)\to \widetilde{\op{CH}}^{n+\dim(Y)-\dim(X)}(Y)
\]
exists only if the relative canonical bundle $\omega_{X/Y}$ admits an orientation in the sense of \cite[Definition 4.3]{MField} or \cite[Section 2.2]{calmes:fasel}, and the push-forward depends on the choice of such. Alternatively, push-forwards exist in the form 
\[
\widetilde{\op{CH}}^n(X,\omega_{X/Y}\otimes f^\ast\mathscr{L})\to \widetilde{\op{CH}}^{n+\dim(Y)-\dim(X)}(Y,\mathscr{L}),
\]
cf. \cite[Section 3]{calmes:fasel}. There is a general base change result, cf. \cite[Theorem 2.4.1]{AsokFaselEuler}.

Using the identification of Zariski and Nisnevich cohomology, \cite[Corollaire 10.4.10]{fasel:memoir} implies that for a closed immersion $i\colon Z\subseteq X$ of pure codimension $q$ between regular schemes with open complement $j\colon U=X\setminus Z\hookrightarrow X$, there is a localization sequence 
\begin{eqnarray*}
\cdots\to\op{H}^i_{\op{Nis}}(U,\mathbf{K}^{\op{MW}}_j,\mathscr{L}) &\xrightarrow{\partial}& \op{H}^{i-q+1}_{\op{Nis}}(Z,\mathbf{K}^{\op{MW}}_{j-q},\mathscr{L}\otimes\mathscr{N}) \xrightarrow{i_\ast} \\\to
\op{H}^{i+1}_{\op{Nis}}(X,\mathbf{K}^{\op{MW}}_j,\mathscr{L})&\xrightarrow{j^\ast} 
&\op{H}^{i+1}_{\op{Nis}}(U,\mathbf{K}^{\op{MW}}_j,\mathscr{L})\xrightarrow{\partial}\cdots
\end{eqnarray*}
where $\mathscr{N}=\overline{\kappa}^\ast\op{Ext}^q_{\mathscr{O}_X}(\kappa_\ast\mathscr{O}_Z,\mathscr{O}_X)$. Again, in all our applications all dualities/twists will be trivial, and moreover the trivializations of the relevant relative canonical bundles will be canonical. A consequence of the localization sequence is that an open immersion $U\to X$ induces isomorphisms on Chow--Witt groups in degrees below $\op{codim}(Z\hookrightarrow X)-1$, cf. \cite[Lemma 2.4.2]{AsokFaselEuler}.

The d\'evissage isomorphism allows to replace in the above sequence 
\[
\op{H}^{i-q+1}_{\op{Nis}}(Z,\mathbf{K}^{\op{MW}}_{j-q},\mathscr{L}\otimes\mathscr{N}) \cong \op{H}^{i+1}_{\op{Nis},Z}(X,\mathbf{K}^{\op{MW}}_{j},\mathscr{L}),
\]
and the map $i_\ast$ can be written as composition of the d\'evissage isomorphism and the extension of support, cf. \cite[Remarque 9.3.5 and Remarque 10.4.8]{fasel:memoir}.

\begin{remark}\label{A1rep}
As we first learned from Jean Fasel, the groups $\op{H}^\bullet(-,\mathbf{K}^{\op{MW}}_\bullet)$ are representable in the stable motivic homotopy category. For this, consider the $\mathbb{P}^1$-spectrum given by $E_i=\op{K}(\mathbf{K}^{\op{MW}}_i,i)$. The identifications 
\[
\Omega_{\mathbb{P}^1}\op{K}(\mathbf{K}^{\op{MW}}_i,i)\simeq \op{K}(\mathbf{K}^{\op{MW}}_{i-1},i-1)
\]
follow from \cite[Theorem 6.13]{MField} and \cite[Proposition 2.9]{AsokFaselSpheres}. For these spaces, we have $[X,\op{K}(\mathbf{K}^{\op{MW}}_i,i)]\cong\widetilde{\op{CH}}^i(X)$. Details of this representability and its consequences have now appeared in \cite{deglise:fasel}, related issues are also discussed in \cite{levine}. Using this, the localization sequence can also be obtained from the homotopy purity sequence of \cite{MV}, but we won't use this approach. In a similar vein, we use explicit varieties rather than motivic spaces for classifying spaces, see e.g. Remark \ref{rem:classmap} and Section \ref{geomsetup}.
\end{remark}

 \begin{remark}
One of the fundamental differences between the topological arguments as e.g. in \cite{brown} and their algebraic counterparts is that there is no long localization sequence (only a short piece of the long localization sequence is in the ``geometric bidegrees''). This problem already appears for Chow groups. Some of the material in \cite[Section 2]{RojasVistoli} and the splitting principle arguments in \cite{RojasVistoli} are related to this. The same phenomena will also appear in our computations.
\end{remark}

\begin{lemma}
\label{lem:module}
Let $\mathbf{A}$ be any strictly $\mathbb{A}^1$-invariant Nisnevich sheaf. Then for any smooth $F$-scheme $X$, the cohomology groups $\op{H}^\bullet(X,\mathbf{A})$ are natural modules over $\op{H}^0(\op{Spec}F,\mathbf{A})=\mathbf{A}(F)$ via the pullback along the structure map $X\to\op{Spec}F$. The natural push-forward, pullback and boundary maps in cohomology are all $\mathbf{A}(F)$-module maps. For example, looking at $\widetilde{\op{CH}}$, all maps are $\op{GW}(F)$-linear.
\end{lemma}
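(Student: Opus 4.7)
The plan is to construct the module structure as multiplication by pullbacks from the base, and then verify all compatibilities using naturality of the cup product, the projection formula, and a Leibniz-type rule at the chain level for the boundary map.

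First I would set up the module structure. Each sheaf $\mathbf{A}$ in question is a (graded-)commutative ring in sheaves (inherited from $\mathbf{K}^{\op{MW}}_\bullet$, or from its quotients/factors via the fundamental square), so $\op{H}^\bullet(X, \mathbf{A})$ acquires a cup product making it a graded ring. The structure morphism $\pi_X \colon X \to \op{Spec}F$ induces a ring homomorphism $\pi_X^\ast \colon \mathbf{A}(F) = \op{H}^0(\op{Spec}F, \mathbf{A}) \to \op{H}^0(X, \mathbf{A})$, and composing with the cup product
\[
\op{H}^0(X, \mathbf{A}) \otimes \op{H}^\bullet(X, \mathbf{A}) \to \op{H}^\bullet(X, \mathbf{A})
\]
gives the desired $\mathbf{A}(F)$-module structure; associativity of the cup product is exactly the module axiom.

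Next I would dispatch pullback and push-forward compatibility. For a morphism $f \colon X \to Y$ of smooth $F$-schemes, the factorization $\pi_X = \pi_Y \circ f$ yields $f^\ast \circ \pi_Y^\ast = \pi_X^\ast$, and together with multiplicativity of $f^\ast$ this gives
\[
f^\ast(\pi_Y^\ast a \cdot x) = \pi_X^\ast a \cdot f^\ast(x),
\]
so $f^\ast$ is $\mathbf{A}(F)$-linear. For proper $f$, the projection formula $f_\ast(f^\ast(y) \cdot x) = y \cdot f_\ast(x)$ recalled earlier in this section, applied with $y = \pi_Y^\ast a$, produces
\[
f_\ast(\pi_X^\ast a \cdot x) = \pi_Y^\ast a \cdot f_\ast(x),
\]
so the push-forward is $\mathbf{A}(F)$-linear as well.

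The main obstacle is the boundary map, which I would handle at the chain level. The localization sequence arises as the long exact sequence associated to the short exact sequence
\[
0 \to C_Z^\bullet(X, \mathbf{A}) \to C^\bullet(X, \mathbf{A}) \to C^\bullet(U, \mathbf{A}) \to 0
\]
of Gersten-type complexes, and $\partial$ is the connecting homomorphism. Each term is a direct sum of values of $\mathbf{A}$ at residue fields of points of a given codimension, and the unit map endows it with an $\mathbf{A}(F)$-action. The key technical point is that the Gersten differentials are $\mathbf{A}(F)$-linear: classes pulled back from $\op{Spec}F$ are unramified and hence annihilated by every Rost-type residue, so the Leibniz rule collapses to $\partial(\pi^\ast a \cdot x) = \pi^\ast a \cdot \partial x$. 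This holds by construction for $\mathbf{K}^{\op{MW}}_\bullet$ and descends to its quotients $\mathbf{K}^{\op{M}}_\bullet$, $\mathbf{I}^\bullet$, $\mathbf{K}^{\op{M}}_\bullet/2$ and to $\mathbf{W}$ via the fundamental square. Once this chain-level linearity is in place, the displayed short exact sequence is one of complexes of $\mathbf{A}(F)$-modules, so the connecting homomorphism is automatically $\mathbf{A}(F)$-linear. Specializing to $\mathbf{A} = \mathbf{K}^{\op{MW}}_\bullet$ recovers the claimed $\op{GW}(F)$-linearity for $\widetilde{\op{CH}}$.
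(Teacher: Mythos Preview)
Your proof is correct and supplies the details that the paper omits: in the source, Lemma~\ref{lem:module} is stated without proof, presumably because the authors regard it as routine bookkeeping following from the references in Section~\ref{funcandloc}. Your argument---module structure via cup product with $\pi_X^\ast$, pullback linearity from functoriality plus multiplicativity, push-forward linearity from the projection formula, and boundary linearity from the chain-level Leibniz rule with base classes being unramified---is exactly the expected unpacking.

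One small remark: the projection formula is not quite ``recalled earlier in this section'' in the form you invoke. What the paper records is the compatibility of \emph{exterior} products with proper push-forward (up to signs $\langle(-1)^{in}\rangle$), and the intersection product is exterior product followed by diagonal pullback. For elements pulled back from $\op{Spec}F$, however, the sign is trivial (degree $0$) and the diagonal restriction is harmless, so the projection formula in your form does follow; you might just say so explicitly rather than asserting it was already stated. Similarly, the Leibniz rule for the Gersten differential you use is precisely the mechanism behind Proposition~\ref{prop:gysinder} later in the paper (via Rost's \cite[14.4]{rost}), so you could cite that source directly for the chain-level derivation property.
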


One of the main applications of the above localization sequence is the definition of a Chow--Witt-theoretic Euler class, cf. \cite{AsokFaselEuler}. If $X$ is smooth over $F$ and $\gamma\colon E\to X$ is a vector bundle of rank $r$ with zero section $s_0\colon X\to E$,
 the {\em Euler class} $\op{e}_r$ is defined as
\[
\op{e}_r(\gamma):=(\gamma^\ast)^{-1}(s_0)_\ast\langle 1\rangle \in \widetilde{\op{CH}}^r(X,\det(\gamma)^\vee),
\]
cf. \cite[Definition 13.2.1]{fasel:memoir}, see also the discussion in \cite[Section 3]{AsokFaselEuler}. Recall that $\langle 1\rangle = 1$ in $\op{GW}(F)$ and more generally in $\widetilde{\op{CH}}^0(X)$. 
By \cite[Theorem 1]{AsokFaselEuler}, this Chow--Witt-theoretic Euler class agrees up to multiplication by a unit in $\op{GW}(F)$ with the definition of \cite{MField} of the Euler class as the obstruction to splitting off a free rank one summand from $E$. In the localization sequence associated to $E$ and the closed subscheme $s_0(X)$, the Euler class corresponds under the d\'evissage isomorphism to the Thom class of $E$. In this situation, the composition
\[
\widetilde{\op{CH}}^{q-r}(X) \cong \widetilde{\op{CH}}^q_{s_0(X)}(E)\to \widetilde{\op{CH}}^q(E)\cong \widetilde{\op{CH}}^q(X)
\]
is often called ``multiplication with the Euler class'', and the first isomorphism "multiplication with the Thom class"
or "Thom isomorphism". Finally, the Euler class is compatible with pullbacks of morphisms between smooth schemes, cf.~\cite[Proposition 3.1.1]{AsokFaselEuler}.

\subsection{Oriented intersection products}

We provide some recollection on the definition and properties of the oriented intersection products giving the graded ring structure on the Chow--Witt groups. Most of the material can be found in \cite{fasel:chowwitt}. 

The first step in defining an oriented intersection product is the exterior product, cf. \cite[Theorem 4.16 and Corollary 4.17]{fasel:chowwitt} resp. \cite[Section 3]{calmes:fasel}:
\[
\widetilde{\op{CH}}^i_Z(X,\mathscr{L})\times\widetilde{\op{CH}}^j_W(Y,\mathscr{N})\to \widetilde{\op{CH}}^{i+j}_{Z\times W}(X\times Y,\op{pr}_1^\ast\mathscr{L}\otimes\op{pr}_2^\ast\mathscr{N}). 
\]
In the fiber product of Gersten complexes, this comes from the exterior product for Chow-groups with Milnor K-theory coefficients \cite{rost} and the exterior product on Witt groups. The exterior product is associative \cite[Proposition 4.18]{fasel:chowwitt}.
The exterior products commute with proper push-forwards, cf. \cite{calmes:fasel}, but only up to the following signs as Jean Fasel explained to us. Fix a smooth scheme $Y$ and a proper morphism  $p\colon X\to Z$ of codimension $n$. For classes $\sigma\in\widetilde{\op{CH}}^i(X,\omega_{X/Z}\otimes p^\ast \mathscr{L})$ and $\tau\in\widetilde{\op{CH}}^j(Y,\mathscr{N})$,  we have the following equalities in the group $\widetilde{\op{CH}}^{i+j}(Z\times Y,\op{pr}_1^\ast\mathscr{L}\otimes\op{pr}_2^\ast\mathscr{N})$:
\begin{eqnarray*}
(p\times \op{id}_Y)_\ast (\sigma\times \tau)&=&p_\ast(\sigma)\times\tau\\
(\op{id}_Y\times p)_\ast(\tau\times \sigma)&=&\langle(-1)^{in}\rangle \tau\times p_\ast(\sigma)
\end{eqnarray*}
Here, we can get rid of twisted coefficients if $\omega_{X/Z}$ is orientable (but there will be a dependence on the choice of orientation). Combining these statements, for two proper morphisms $p\colon X\to X'$, $q\colon Y\to Y'$ with $n=\op{codim}(q)$ and two oriented cycles $\sigma\in\widetilde{\op{CH}}^i(X,\omega_{X/X'})$, $\tau\in\widetilde{\op{CH}}^j(Y,\omega_{Y/Y'})$ we have 
\[
(p\times q)_\ast(\sigma\times \tau)=\langle(-1)^{jn}\rangle p_\ast(\sigma)\times q_\ast(\tau).
\]

The second step in the definition of intersection product is given by a pullback along the diagonal inclusion $\Delta\colon X\to X\times X$. In \cite{fasel:chowwitt} a pullback $f^!\colon \widetilde{\op{CH}}^\bullet(Y,\mathscr{L})\to\widetilde{\op{CH}}^\bullet(X,f^\ast\mathscr{L})$ was defined for an arbitrary morphism $f\colon X\to Y$ of smooth schemes. By the reinterpretation in terms of sheaf cohomology in \cite[Theorem 2.3.4]{AsokFaselEuler}, the pullback $f^!$ defined in \cite{fasel:chowwitt} agrees with the natural definition of pullback in the Gersten complex for Milnor--Witt K-theory as discussed in \cite[Section 2.3]{AsokFaselEuler}. The composition of exterior product and pullback along the diagonal provides the oriented intersection product, cf. \cite[Definition 6.1]{fasel:chowwitt}
\[
\widetilde{\op{CH}}^i(X,\mathscr{L})\times\widetilde{\op{CH}}^j(X,\mathscr{N})\to \widetilde{\op{CH}}^{i+j}(X,\mathscr{L}\otimes\mathscr{N}).
\]
This product is associative \cite[Proposition 6.6]{fasel:chowwitt}. The change-of-coefficients morphism $\widetilde{\op{CH}}^\bullet(X)\to\op{CH}^\bullet(X)$ is a ring homomorphism, cf. \cite[Proposition 6.12]{fasel:chowwitt}. Moreover, by \cite[Proposition 7.2]{fasel:chowwitt} (together with the reinterpretation in \cite[Section 2.3]{AsokFaselEuler}), the pullbacks for arbitrary morphisms between smooth schemes are ring homomorphisms for the oriented intersection product.

Note \cite[Remark 6.7]{fasel:chowwitt} that the product on Chow--Witt rings is in general neither commutative nor anti-commutative. However, by \cite[Corollary 2.8]{MField}, Milnor--Witt K-theory has a modified version of graded commutativity,
and so have the other graded rings we consider.

\begin{definition}
\label{def:epsgraded}
Let $R$ be a commutative ring and $r  \in R$. A $\mathbb{Z}$-graded $R$-algebra $A_\bullet$ is called \emph{$r$-graded commutative} if for all elements $x\in A_n$ and $y\in A_m$ we have
\[
x\cdot y = r^{nm} y\cdot x.
\]
The special case $r=-1$ is simply called \emph{graded commutative}.
\end{definition}

Recall \cite{morel:puissances},\cite[Corollary 2.8]{MField} that $\op{K}^{\op{MW}}_{\bullet}(F)$ is an $\epsilon$-graded commutative $\op{GW}(F)$-algebra, for $\epsilon:=-\langle-1\rangle$ in the Grothendieck--Witt ring $\op{GW}(F)$. As the canonical projection $\op{K}^{\op{MW}}_{\bullet}(F)\to\op{K}^{\op{M}}_\bullet(F)$ maps $\epsilon$ to $-1$, we recover the well-known $(-1)$-graded commutativity
of $\op{K}^{\op{M}}_\bullet(F)$. Interpreting $\op{CH}$ as cohomology of the Gersten complex for the sheaf $\mathbf{K}^{\op{M}}_\bullet$, we obtain an additional $-1$ which leads to the well-known $(+1)$-graded commutativity of Chow groups.

\begin{proposition}
\label{prop:cwepsgraded}
Let $X$ be a smooth $F$-scheme. Then the Chow--Witt ring $\widetilde{\op{CH}}^\bullet(X)$ is a $\langle-1\rangle$-graded commutative $\op{GW}(F)$-algebra. Similarly, the cohomology ring $\op{H}^\bullet(X,\mathbf{I}^\bullet)$ is a $(-1)$-graded commutative $\op{W}(F)$-algebra. 
\end{proposition}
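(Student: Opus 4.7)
The plan is to derive the proposition from the $\epsilon$-graded commutativity of $\op{K}^{\op{MW}}_\bullet$ combined with the Koszul sign appearing when one permutes factors in the Gersten complex, following exactly the heuristic sketched in the paragraph preceding the statement. First, I unwind the definition of the intersection product: for $\alpha\in\widetilde{\op{CH}}^n(X)$ and $\beta\in\widetilde{\op{CH}}^m(X)$, one has $\alpha\cdot\beta=\Delta_X^\ast(\alpha\times\beta)$. Since $\Delta_X=\sigma_{X,X}\circ\Delta_X$ where $\sigma_{X,X}\colon X\times X\to X\times X$ is the swap, and pullbacks (in particular $\Delta_X^\ast$) are ring homomorphisms by \cite[Proposition 7.2]{fasel:chowwitt}, the problem reduces to computing the effect of the swap on the exterior product: it suffices to show that for $\alpha\in\widetilde{\op{CH}}^n(X)$ and $\beta\in\widetilde{\op{CH}}^m(Y)$,
\[
\sigma_{Y,X}^\ast(\beta\times\alpha)=\langle-1\rangle^{nm}(\alpha\times\beta)\in\widetilde{\op{CH}}^{n+m}(X\times Y).
\]

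Next, I would track the two independent signs produced by the swap. The exterior product of \cite[Theorem 4.16]{fasel:chowwitt} is modelled, at the level of Gersten complexes for $\mathbf{K}^{\op{MW}}_\bullet$, on a shuffle-type pairing: $\alpha\times\beta$ couples cycles supported at codimension-$n$ points of $X$ with cycles supported at codimension-$m$ points of $Y$, multiplying their Milnor--Witt K-theory coefficients. Applying $\sigma_{Y,X}^\ast$ reorders the cycle factors of the tensor product of the two graded Gersten complexes, which produces the Koszul sign $(-1)^{nm}$; this is the same sign which, in the case of Milnor K-theory, upgrades the $(-1)$-graded commutativity of $\op{K}^{\op{M}}_\bullet$ to the $(+1)$-commutativity of Chow groups. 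Transposing the two coefficient factors inside $\op{K}^{\op{MW}}_\bullet(F)$ produces the factor $\epsilon^{nm}=(-\langle-1\rangle)^{nm}$ by the $\epsilon$-graded commutativity of \cite[Corollary 2.8]{MField}. Multiplying the two contributions yields $(-1)^{nm}\cdot(-\langle-1\rangle)^{nm}=\langle-1\rangle^{nm}$, as required. Pulling back via $\Delta_X^\ast$ on both sides and using $\langle-1\rangle^2=1$ gives $\beta\cdot\alpha=\langle-1\rangle^{nm}\alpha\cdot\beta$, that is, the claimed $\langle-1\rangle$-graded commutativity.

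For the cohomology ring $\op{H}^\bullet(X,\mathbf{I}^\bullet)$ the same argument applies verbatim with one modification: the coefficient ring $\mathbf{I}^\bullet(F)$ is a graded subring of the commutative ring $\op{W}(F)$ and is therefore ordinarily (i.e., $(+1)$-) graded commutative, so the coefficient transposition contributes only the trivial sign, while the Koszul sign from swapping the cycle factors in the Gersten complex is unchanged at $(-1)^{nm}$. The net sign is $(-1)^{nm}$, yielding the asserted $(-1)$-graded commutativity.

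The only non-bookkeeping step is the appearance of the Koszul sign in the exterior product on Gersten complexes, which has to be extracted either from the explicit cross-product construction in \cite[Theorem 4.16]{fasel:chowwitt} or, more conceptually, from the general shuffle formula for Rost-style cycle complexes interpreted as tensor products of graded complexes. Once this sign is on the table, the remaining inputs are purely formal: the $\epsilon$-graded commutativity of $\op{K}^{\op{MW}}_\bullet(F)$ from \cite{MField}, the fact that the Witt ring is commutative, and the ring-map property of $\Delta_X^\ast$ from \cite[Proposition 7.2]{fasel:chowwitt}.
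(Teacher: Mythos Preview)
Your proposal is correct and follows essentially the same approach as the paper: both combine the $\epsilon$-graded commutativity of $\op{K}^{\op{MW}}_\bullet$ from \cite[Corollary 2.8]{MField} with the Koszul sign $(-1)^{nm}$ coming from the Gersten complex to obtain $(-1)^{nm}\epsilon^{nm}=\langle-1\rangle^{nm}$, and then note that $\epsilon\mapsto 1$ in $\mathbf{I}^\bullet=\mathbf{K}^{\op{W}}_\bullet$ to handle the second statement. Your write-up is simply a more explicit unwinding (via the swap and the diagonal pullback) of what the paper compresses into a reference to the preceding remark on Chow groups and to \cite[Remarks 6.7 and 6.13]{fasel:chowwitt}.
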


\begin{proof} 
As $\op{K}^{\op{MW}}_\bullet$ is $\epsilon$-graded commutative, the first statement follows as in our above remark on Chow groups, using that passing to the cohomology of the Gersten complex contributes an extra $(-1)$ and $-\epsilon=\langle-1\rangle$. For the second statement, note that $\epsilon$ maps to $1$ in $\mathbf{I}^{\bullet}=\mathbf{K}^{\op{W}}_{\bullet}$ which hence
is $(+1)$-graded commutative, and hence induces a $(-1)$-graded commutative structure on $\op{H}^\bullet(X,\mathbf{I}^\bullet)$, cf. also \cite[Remarks 6.7 and  6.13]{fasel:chowwitt}.
\end{proof}

\begin{proposition}
\label{prop:gysinder}
Let $X$ be a smooth $F$-scheme and let $\gamma\colon E\to X$ be a vector bundle over $X$. Consider the localization sequence associated to $E$ with closed subscheme $s_0(X)$ and open complement $U=E\setminus s_0(X)$. The boundary map
\[
\partial\colon \op{H}^q(U,\mathbf{I}^q)\to\op{H}^{q+1}_{s_0(X)}(E,\mathbf{I}^q)
\]
is a derivation for the intersection products, i.e., we have 
\[
\partial(x\cdot y)=\partial(x)\cdot y + (-1)^{|y|} x\cdot\partial(y).
\]
\end{proposition}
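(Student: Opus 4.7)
The plan is to reduce the derivation property to a chain-level Leibniz rule on Gersten complexes, using the classical observation that the connecting homomorphism associated with a short exact sequence of DG-modules over a DG-algebra is automatically a derivation.

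First, I would identify $\partial$ with the connecting map arising from the short exact sequence of Gersten cochain complexes
\[
0\to C^\bullet_{s_0(X)}(E,\mathbf{I}^\bullet)\to C^\bullet(E,\mathbf{I}^\bullet)\to C^\bullet(U,\mathbf{I}^\bullet)\to 0,
\]
i.e.\ the localization sequence of Section~\ref{funcandloc} at the cochain level: the right-hand map is term-wise surjective via the Gersten decomposition of these complexes as direct sums over codimension-$p$ points, and its kernel is precisely the subcomplex of cochains supported on the zero section. I would then invoke the DG-algebra structure on $C^\bullet(-,\mathbf{I}^\bullet)$, built from the sheaf multiplication $\mathbf{I}^a\otimes\mathbf{I}^b\to\mathbf{I}^{a+b}$ together with residue formulas in the spirit of Rost's cycle modules \cite{rost}; at the level of cohomology this chain-level product recovers the intersection product recalled in Section~\ref{funcandloc}. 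The essential point is that $C^\bullet_{s_0(X)}(E,\mathbf{I}^\bullet)$ is a two-sided differential ideal in $C^\bullet(E,\mathbf{I}^\bullet)$, since multiplying any Gersten cochain with one supported on $s_0(X)$ leaves its support inside $s_0(X)$.

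With this in place, the derivation property becomes purely homological. Lifting cocycle representatives of $x,y\in \op{H}^\bullet(U,\mathbf{I}^\bullet)$ to Gersten cochains $\tilde{x},\tilde{y}\in C^\bullet(E,\mathbf{I}^\bullet)$ via the canonical set-theoretic splitting of the Gersten decomposition, the differentials $d\tilde{x}$ and $d\tilde{y}$ land in the ideal $C^\bullet_{s_0(X)}(E,\mathbf{I}^\bullet)$ and represent $\partial(x)$ and $\partial(y)$. The chain-level Leibniz identity
\[
d(\tilde{x}\cdot\tilde{y})= d\tilde{x}\cdot\tilde{y}\;\pm\;\tilde{x}\cdot d\tilde{y}
\]
then has both summands in the ideal, and passing to cohomology classes yields the asserted formula, with the two products on the right interpreted as the module actions on $\op{H}^{\bullet+1}_{s_0(X)}(E,\mathbf{I}^\bullet)$ induced by the ideal structure.

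The main obstacle will be verifying the DG-algebra structure on the Gersten complex of $\mathbf{I}^\bullet$ with precise Koszul sign conventions, and matching this chain-level product with the intersection product defined in Section~\ref{funcandloc} via exterior product and pullback along the diagonal. Once the sign convention is fixed, the sign $(-1)^{|y|}$ appearing in the statement is reconciled with the standard Koszul sign $(-1)^{|x|}$ using the $(-1)$-graded commutativity of $\op{H}^\bullet(-,\mathbf{I}^\bullet)$ established in Proposition~\ref{prop:cwepsgraded}, which permits rewriting the second term so that $\partial$ acts on the first factor.
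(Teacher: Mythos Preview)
Your approach is essentially the same as the paper's: both reduce to a chain-level Leibniz rule on Gersten complexes via the Rost-style differential formula, identify $\partial$ as the connecting map of the support short exact sequence, and then pass to cohomology. The paper makes the decomposition you flag as ``the main obstacle'' explicit: it first proves the derivation property for the \emph{exterior} product (citing the $\mathbf{I}^\bullet$-analogue of \cite[Lemma 4.9]{fasel:chowwitt} and Rost's \cite[14.4]{rost}), and then observes separately that $\partial$ commutes with restriction along the diagonal, which yields the result for the intersection product. Your packaging via a DG-ideal is cleaner to state, but the actual content --- that the Gersten differential satisfies Leibniz for the product and that this is compatible with the definition of the intersection product --- is identical, and you correctly identify the one nontrivial verification.
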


\begin{proof}
Recall that the oriented intersection product is given by the exterior product of cycles composed with the restriction along the diagonal. We first consider the behaviour of the boundary map with respect to the exterior product. Recall that the boundary map in the localization sequence is given by taking a representative of a cycle $\alpha\in \op{H}^q(U,\mathbf{I}^q)$, viewing it as a chain in $C^q(E,\mathbf{I}^q)$. Its boundary will have trivial restriction to $U$ and will therefore, by definition, be a cycle on $E$ with support in $X$. An analogue of \cite[Lemma 4.9]{fasel:chowwitt}, following Rost's proof \cite[14.4, p. 391]{rost}, is true for $\op{H}^\bullet(-,\mathbf{I}^\bullet)$. This implies that the differential is a derivation with respect to the exterior product of cycles, i.e., that 
\[
\partial(\sigma\boxtimes \tau) = \partial(\sigma)\boxtimes\tau + (-1)^{|\tau|} \sigma\boxtimes\partial(\tau). 
\]
Now the statement for the intersection product follows since the boundary of the localization commutes with restriction along the diagonal map. 
\end{proof}

\begin{remark}\label{ringextra}
A more general statement is required for Chow--Witt groups to be a ring cohomology theory in the sense of \cite[Definition 2.2]{PaninWalter}. This follows essentially by deformation to the normal cone.
\end{remark}

Finally, as a consequence of the above statements on intersection products, we can show multiplicativity of the Euler class:

\begin{lemma}
\label{lem:eulermult}
Let $X_1,X_2$ be smooth schemes over $F$. Let $p_1\colon E_1\to X_1$ and $p_2\colon E_2\to X_2$ be two vector bundles of ranks $r_1$ and $r_2$, respectively. Then we have 
\[
\op{e}_{r_1+r_2}(E_1\boxtimes E_2)=\op{e}_{r_1}(E_1)\boxtimes \op{e}_{r_2}(E_2)\in \widetilde{\op{CH}}^{r_1+r_2}(X_1\times X_2,\det(E_1\oplus E_2)),
\]
i.e., the Euler class of the exterior product of vector bundles is the exterior product of the Euler classes. If $X_1=X_2$, then $e_{r_1 + r_2}(E_1\oplus E_2)=e_{r_1}(E_1)e_{r_2}(E_2)$, i.e. the Euler class of the Whitney sum is the intersection product of the Euler classes.
\end{lemma}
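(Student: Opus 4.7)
The plan is to establish the exterior product formula directly from the definition of the Euler class, and then to deduce the Whitney-sum identity by pulling back along the diagonal $\Delta_{X}\colon X\to X\times X$.

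First, I would unfold the definition $\op{e}_{r}(\gamma)=(\gamma^{\ast})^{-1}(s_{0})_{\ast}\langle 1\rangle$ on both sides. For the exterior product bundle $E_{1}\boxtimes E_{2}\to X_{1}\times X_{2}$, the zero section is the product $(s_{0})_{1}\times (s_{0})_{2}$ of the two zero sections, and the bundle projection is $p_{1}\times p_{2}$. The compatibility of proper push-forwards with exterior products recalled in Section~\ref{funcandloc} gives
\[
\bigl((s_{0})_{1}\times (s_{0})_{2}\bigr)_{\ast}(\langle 1\rangle\times \langle 1\rangle)=(s_{0})_{1,\ast}\langle 1\rangle\,\times\,(s_{0})_{2,\ast}\langle 1\rangle,
\]
where the sign $\langle(-1)^{jn}\rangle$ is trivial because $\langle 1\rangle$ lies in degree zero. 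Combining this with the analogous compatibility of flat pull-backs with exterior products applied to $(p_{1}\times p_{2})^{\ast}$ yields the first claim
\[
\op{e}_{r_{1}+r_{2}}(E_{1}\boxtimes E_{2})=\op{e}_{r_{1}}(E_{1})\boxtimes \op{e}_{r_{2}}(E_{2}).
\]
The identification $\det(E_{1}\boxtimes E_{2})\cong p_{1}^{\ast}\det(E_{1})\otimes p_{2}^{\ast}\det(E_{2})$ matches both sides in the appropriately twisted Chow--Witt group on $X_{1}\times X_{2}$.

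For the Whitney-sum formula I specialise to $X_{1}=X_{2}=X$ and apply $\Delta_{X}^{\ast}$, using the canonical identification $\Delta_{X}^{\ast}(E_{1}\boxtimes E_{2})\cong E_{1}\oplus E_{2}$. The compatibility of the Euler class with pull-backs recalled from \cite[Proposition 3.1.1]{AsokFaselEuler} then gives
\[
\Delta_{X}^{\ast}\op{e}_{r_{1}+r_{2}}(E_{1}\boxtimes E_{2})=\op{e}_{r_{1}+r_{2}}(E_{1}\oplus E_{2}).
\]
On the other hand, the oriented intersection product is by definition the composition of the exterior product with $\Delta_{X}^{\ast}$, so $\Delta_{X}^{\ast}\bigl(\op{e}_{r_{1}}(E_{1})\boxtimes \op{e}_{r_{2}}(E_{2})\bigr)=\op{e}_{r_{1}}(E_{1})\cdot \op{e}_{r_{2}}(E_{2})$. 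Combining these two identities produces the Whitney-sum formula.

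The only non-automatic issues are the sign conventions in the product formula for push-forwards and the compatibility of the determinant twists under $\Delta_{X}^{\ast}$; both trivialise here, the former because $\langle 1\rangle$ sits in degree zero, the latter because $\det$ commutes with pull-back. I therefore expect the entire argument to be essentially formal, with the principal obstacle being careful bookkeeping of twists and the pullback-of-bundle identifications, rather than any substantive homotopical input.
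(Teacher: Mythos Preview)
Your proposal is correct and follows essentially the same approach as the paper's proof: both use that the exterior product commutes with proper push-forward up to a sign $\langle(-1)^{jn}\rangle$ which trivialises because $\langle 1\rangle$ lives in degree zero, and both deduce the Whitney-sum formula by pulling back along the diagonal and invoking compatibility of Euler classes with pull-backs. The paper packages the first step into a commutative square while you unfold the definition explicitly, but the content is identical.
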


\begin{proof}
We have a commutative diagram
\[
\xymatrix{
\widetilde{\op{CH}}^{0}(X_1)\times\widetilde{\op{CH}}^0(X_2) \ar[r] \ar[d]_\boxtimes & \widetilde{\op{CH}}^{r_1}(X_1,\det E_1)\times\widetilde{\op{CH}}^{r_2}(X_2,\det E_2) \ar[d]^\boxtimes\\
\widetilde{\op{CH}}^{0}(X_1\times X_2) \ar[r] & \widetilde{\op{CH}}^{r_1+r_2}(X_1\times X_2, \det(E_1\oplus E_2)). 
}
\]
The top horizontal morphism is the product of the two Euler classes, viewed as push-forward maps. The bottom horizontal map is the Euler class for the exterior product of the bundles, which is what we want to determine. The vertical maps are the exterior product maps. Since the exterior product commutes with push-forward up to a sign which is trivial in codimension $0$, we get the result. The statement about the Whitney sum follows since the Whitney sum is given by pullback of the exterior sum of the vector bundles along the diagonal. On the level of Euler classes, this translates directly into the intersection product since Euler classes are compatible with pullbacks.
\end{proof}

\subsection{Cohomology operations}
\label{sec:cohops}

For the computations of  $\widetilde{\op{CH}}^\bullet({\op{B}}\op{SL}_n)$, we will need some facts about the motivic $\op{Sq}^2$ and a Bockstein-type map. 

Using the quadratic part of the Milnor conjecture as above, we have an exact sequence of strictly $\mathbb{A}^1$-invariant Nisnevich sheaves of abelian groups 
\[
0\to\mathbf{I}^{n+1}\to\mathbf{I}^n\to\mathbf{K}^{\op{M}}_n/2\to 0. 
\]
This should be interpreted as analogue of the exact sequence $0\to\mathbb{Z}\to\mathbb{Z}\to\mathbb{Z}/2\mathbb{Z}\to 0$ with additional weight information. The associated long exact sequence of Nisnevich cohomology groups is given by 
\begin{equation}
\label{eq:baer}
\cdots\to \op{H}^i_{\op{Nis}}(X,\mathbf{K}^{\op{M}}_n/2)\xrightarrow{\beta} \op{H}^{i+1}_{\op{Nis}}(X,\mathbf{I}^{n+1})\xrightarrow{\eta}
\op{H}^{i+1}_{\op{Nis}}(X,\mathbf{I}^n) \xrightarrow{\rho} \op{H}^{i+1}_{\op{Nis}}(X,\mathbf{K}^{\op{M}}_n/2)\to\cdots
\end{equation}
which some people call the {\em B\"ar sequence}. The notation $\beta$ stresses the analogy with the classical Bockstein map. The map $\eta$ is simply the map induced by inclusion $\mathbf{I}^{n+1}\subseteq \mathbf{I}^n$ and can alternatively be viewed as multiplication by the element $\eta\in\op{K}^{\op{MW}}_{-1}(F)$, cf.~\cite{morel:puissances} and above, and refines multiplication by $2$ in classical topology. 

In the critical degree $i=n$, the Bockstein  map is of the form $\beta\colon \op{Ch}^n(X)\to\op{H}^{n+1}(X,\mathbf{I}^{n+1})$, and has been used in \cite{fasel:ij} to define integral Stiefel--Whitney classes. On the other hand, there is a reduction map $\rho\colon \op{H}^n_{\op{Nis}}(X,\mathbf{I}^n)\to\op{Ch}^n(X)$. The following statement is due to \cite[Theorem 1.1]{totaro:witt}, cf. also \cite[Remark 10.5]{fasel:ij} and refines results about $\op{Sq}^1$ in classical algebraic topology.

\begin{proposition}[(Totaro)]
\label{prop:sq2}
The composition 
\[
\op{Ch}^n(X)\xrightarrow{\beta}\op{H}^{n+1}_{\op{Nis}}(X,\mathbf{I}^{n+1}) \xrightarrow{\rho}\op{Ch}^{n+1}(X)
\]
is the motivic Steenrod square $\op{Sq}^2$. 
\end{proposition}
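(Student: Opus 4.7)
The plan is to show that $\rho\circ\beta$ is a natural, bistable cohomology operation on mod-$2$ motivic cohomology with the same bidegree as $\op{Sq}^2$, and then pin it down by comparing universal examples. Under the identification $\op{Ch}^n(X)\cong\op{H}^{2n,n}(X,\mathbb{Z}/2)$ for smooth $F$-schemes, the target $\op{Ch}^{n+1}(X)\cong\op{H}^{2n+2,n+1}(X,\mathbb{Z}/2)$ has bidegree shifted by $(2,1)$, matching $\op{Sq}^2$. Stability of $\rho\circ\beta$ under simplicial and $\mathbb{P}^1$-suspension follows from the naturality of the B\"ar sequence in the strictly $\mathbb{A}^1$-invariant sheaves $\mathbf{I}^{\bullet}$ and $\mathbf{K}^{\op{M}}_{\bullet}/2$, together with compatibility of connecting homomorphisms with the $\mathbb{A}^1$-loop functor (as made available by Remark~\ref{A1rep}).

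The key step is to reinterpret $\rho\circ\beta$ as a single connecting map arising from a $2$-extension of sheaves. Pushing out the B\"ar sequence along the quotient $\mathbf{I}^{n+1}\twoheadrightarrow\mathbf{I}^{n+1}/\mathbf{I}^{n+2}$ and applying the quadratic Milnor conjecture isomorphism $\mathbf{I}^{n+1}/\mathbf{I}^{n+2}\cong\mathbf{K}^{\op{M}}_{n+1}/2$, one obtains a short exact sequence
\[
0\to \mathbf{K}^{\op{M}}_{n+1}/2 \to \mathbf{I}^{n}/\mathbf{I}^{n+2} \to \mathbf{K}^{\op{M}}_n/2 \to 0,
\]
whose connecting homomorphism in Nisnevich cohomology in the critical degree coincides with $\rho\circ\beta$. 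This is precisely the Milnor-conjecture description of a stable operation of bidegree $(2,1)$; on the other hand, Voevodsky's $\op{Sq}^2$ admits an analogous description via extensions of motivic complexes, and comparing the two extensions (again via the Milnor conjecture) identifies the underlying operation.

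To finalize, one invokes Voevodsky's classification of bistable operations on mod-$2$ motivic cohomology over a field of characteristic $\neq 2$: the space of operations of bidegree $(2,1)$ is generated, over the motivic cohomology of $\op{Spec}F$, by $\op{Sq}^2$ and $[-1]\cdot\op{Sq}^1$. The extra coefficient coming from multiplication by $[-1]$ can be excluded by testing on a class where $\op{Sq}^1$ is known to vanish (for example on the tautological class in $\op{Ch}^\bullet(\mathbb{P}^N)$, whose image under $\op{Sq}^1$ vanishes because it lifts to an integral class), and there both $\op{Sq}^2$ and $\rho\circ\beta$ are computable and agree. The main obstacle is the identification of the two $2$-extensions in the second step: it hinges on the non-trivial quadratic part of the Milnor conjecture, together with Voevodsky's construction of $\op{Sq}^2$ through symmetric powers of motivic Eilenberg--MacLane spaces, and is the technical core of Totaro's argument in \cite{totaro:witt}.
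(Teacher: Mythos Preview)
The paper does not prove this proposition; it states it with attribution to Totaro and a citation to \cite[Theorem~1.1]{totaro:witt}, so the comparison must be to Totaro's original argument.

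Your outline via the classification of bistable operations is a legitimate strategy, and the reinterpretation of $\rho\circ\beta$ as the connecting map of the extension $0\to\mathbf{K}^{\op{M}}_{n+1}/2\to\mathbf{I}^n/\mathbf{I}^{n+2}\to\mathbf{K}^{\op{M}}_n/2\to 0$ is correct and useful. However, there are two gaps. In your second step you assert that this extension can be matched to Voevodsky's construction of $\op{Sq}^2$, but you do not carry this out; that is precisely the hard part, and if it were done the third step would be superfluous. In your third step the test is too weak: the space of bistable mod-$2$ operations of bidegree $(2,1)$ contains $\op{Sq}^2$ together with $[u]\cdot\op{Sq}^1$ for \emph{every} class $[u]\in F^\times/(F^\times)^2\cong\op{H}^{1,1}_{\op{mot}}(\op{Spec}F,\mathbb{Z}/2)$, not just $[-1]$. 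Testing on $\op{Ch}^\bullet(\mathbb{P}^N)$ cannot separate these, because $\op{Sq}^1$ already vanishes on the hyperplane class (it lifts integrally) and indeed on all of $\op{Ch}^\bullet(\mathbb{P}^N)$ for bidegree reasons; so the test confirms the $\op{Sq}^2$-coefficient is $1$ but says nothing about the $[u]\cdot\op{Sq}^1$ contributions. One fix is to note that $\rho\circ\beta$ is compatible with change of base field, pass to an algebraic closure where $\op{H}^{1,1}_{\op{mot}}$ vanishes so that only a multiple of $\op{Sq}^2$ remains, pin down the coefficient on $\mathbb{P}^N$, and then descend by naturality.

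Totaro's actual proof in \cite{totaro:witt} is quite different and more concrete: he works directly in the Gersten complexes for $\mathbf{I}^\bullet$ and $\mathbf{K}^{\op{M}}_\bullet/2$, writes out the boundary map at the level of cycles, and matches it against Brosnan's explicit cycle-level formula for $\op{Sq}^2$ on Chow groups \cite{brosnan}. This bypasses the classification of bistable operations entirely and does not rely on Voevodsky's construction of the motivic Steenrod algebra.
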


Note that this together with the B\"ar sequence implies that $\op{Sq}^2\op{Sq}^2=0$. Even more is true:
\begin{lemma}
\label{lem:jacobi}
The Steenrod square $\op{Sq}^2\colon \op{Ch}^n(-)\to\op{Ch}^{n+1}(-)$ has the following properties:
\begin{enumerate}
\item It is a derivation, i.e., $\op{Sq}^2(X\cdot Y)=X\cdot\op{Sq}^2(Y)+ \op{Sq}^2(X)\cdot Y$. 
\item It satisfies the Jacobi identity
\[
\op{Sq}^2(X)\op{Sq}^2(YZ) + \op{Sq}^2(XY)\op{Sq}^2(Z) + \op{Sq}^2(XZ)\op{Sq}^2(Y) = 0.
\]
\end{enumerate}
\end{lemma}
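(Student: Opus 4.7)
The plan is to deduce the derivation property (1) from a chain-level Leibniz rule in the Gersten complex, and then to obtain the Jacobi identity (2) as a formal consequence of (1) together with commutativity of $\op{Ch}^\bullet$ modulo~$2$.

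For (1), I start from \prettyref{prop:sq2}, which identifies $\op{Sq}^2=\rho\circ\beta$, where $\beta$ is the connecting homomorphism of the short exact sequence $0\to\mathbf{I}^{\bullet+1}\to\mathbf{I}^\bullet\to\mathbf{K}^{\op{M}}_\bullet/2\to 0$. I would realize $\beta$ at the level of Gersten complexes: a cocycle $x\in C^n(X,\mathbf{K}^{\op{M}}_n/2)$ lifts to a cochain $\tilde x\in C^n(X,\mathbf{I}^n)$; then $d\tilde x$ has vanishing image in $C^{n+1}(X,\mathbf{K}^{\op{M}}_n/2)$, so lies in $C^{n+1}(X,\mathbf{I}^{n+1})$ and represents $\beta(x)$. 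For a second cocycle $y$ of degree $m$ with lift $\tilde y$, the product $\tilde x\cdot\tilde y\in C^{n+m}(X,\mathbf{I}^{n+m})$ lifts $xy$, and the graded Leibniz rule for the Gersten differential of $\mathbf{I}^\bullet$ (the $\mathbf{I}^\bullet$-analogue of \cite[Lemma 4.9]{fasel:chowwitt} after Rost \cite[\S14]{rost}, already used in the proof of \prettyref{prop:gysinder}) gives
\[
d(\tilde x\cdot\tilde y)=d\tilde x\cdot\tilde y+(-1)^n\tilde x\cdot d\tilde y\in C^{n+m+1}(X,\mathbf{I}^{n+m+1}).
\]
Since $\rho\colon\mathbf{I}^\bullet\to\mathbf{K}^{\op{M}}_\bullet/2$ is a morphism of graded sheaves of rings, applying it termwise and passing to cohomology yields
\[
\op{Sq}^2(xy)=\op{Sq}^2(x)\cdot y+(-1)^n x\cdot\op{Sq}^2(y)
\]
in $\op{Ch}^{n+m+1}(X)$, and the sign is irrelevant modulo~$2$.

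For (2), I would substitute (1) into each of the three summands on the left-hand side, for instance
\[
\op{Sq}^2(X)\op{Sq}^2(YZ)=\op{Sq}^2(X)\op{Sq}^2(Y)Z+\op{Sq}^2(X)Y\op{Sq}^2(Z),
\]
and similarly for $\op{Sq}^2(XY)\op{Sq}^2(Z)$ and $\op{Sq}^2(XZ)\op{Sq}^2(Y)$. Since $\op{Ch}^\bullet(X)$ is commutative modulo~$2$ (its graded-commutativity has all signs equal to $+1$), the six resulting monomials in $X,Y,Z,\op{Sq}^2(X),\op{Sq}^2(Y),\op{Sq}^2(Z)$ can be freely rearranged, and each appears exactly twice, so their sum vanishes.

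The only non-formal ingredient is the chain-level Leibniz rule for the Gersten complex of $\mathbf{I}^\bullet$ with respect to its intersection product; this is the same mild adaptation of Rost's Milnor-K-theory argument that already underlies \prettyref{prop:gysinder}, so I would cite rather than redo it. Everything else is bookkeeping, and the main conceptual point is simply that $\op{Sq}^2$ is a Bockstein for a short exact sequence of graded ring sheaves, composed with a multiplicative reduction.
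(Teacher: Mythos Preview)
Your argument is correct and takes a genuinely different route from the paper.

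For (1), the paper invokes Voevodsky's motivic Cartan formula \cite[Proposition 9.7]{voevodsky}, which on arbitrary bidegrees reads $\op{Sq}^2(XY)=\op{Sq}^2(X)Y+X\op{Sq}^2(Y)+\tau\,\op{Sq}^1(X)\op{Sq}^1(Y)$, and then kills the correction term by a weight argument: for $X,Y$ in geometric bidegrees, $\op{Sq}^1(X)\op{Sq}^1(Y)$ lands in a group that vanishes by \cite[Theorem 19.3]{mvw}. Your approach instead exploits the identification $\op{Sq}^2=\rho\beta$ directly and derives the Leibniz rule from the chain-level Leibniz rule for the Gersten differential of $\mathbf{I}^\bullet$. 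This is more internal to the paper's setup (the required Leibniz rule is precisely the ingredient invoked in the proof of \prettyref{prop:gysinder}) and avoids importing Voevodsky's Steenrod algebra and the bidegree vanishing. One small imprecision: there is no cochain-level intersection product $\tilde x\cdot\tilde y$ on $X$; you should run the Leibniz step for the \emph{exterior} product on $X\times X$, pass to cohomology, and then pull back along the diagonal, using that both $\beta$ and $\rho$ are natural for pullbacks. With that adjustment the argument is clean.

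For (2), your observation that the identity is a purely formal consequence of (1) and commutativity of $\op{Ch}^\bullet$ is correct and tidier than the paper's phrasing. The paper re-runs the expansion via the full Cartan formula and again argues the $\tau$-correction vanishes; once (1) is established in the stated form, that detour is unnecessary.

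In summary: your approach trades an external reference (Voevodsky/Brosnan) for a mild chain-level computation already implicit in the paper; both are valid, and yours keeps everything within the Gersten-complex formalism.
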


\begin{proof}
The following equality is a special case of the motivic Cartan formula, cf. \cite[Proposition 9.7]{voevodsky}
with $X$ and $Y$ in arbitrary bidegrees:
\[
\op{Sq}^2(X\cdot Y)=\op{Sq}^2(X)\cdot Y+ X\op{Sq}^2(Y) + \tau \op{Sq}^1(X)\cdot\op{Sq}^1(Y)
\]
where $\tau \in \op{H}^{0,1}_{\op{mot}}(\op{Spec}(F),\mathbb{Z}/2\mathbb{Z})\cong \mu_2(F)$ is the generator and the map 
\[
\op{Sq}^1\colon \op{H}^{i,j}_{\op{mot}}(-,\mathbb{Z}/2\mathbb{Z})\to \op{H}^{i+1,j}_{\op{mot}}(-,\mathbb{Z}/2\mathbb{Z})
\]
is the motivic cohomology Bockstein.
Now for $X$ and $Y$ of respective bidegrees $(2i,i)$ and $(2j,j)$, $\op{Sq}^1(X)\cdot\op{Sq}^1(Y)$ has bidegree $(2i+2j+2,i+j)$ and hence vanishes by \cite[Theorem 19.3]{mvw}. This implies that $\tau\op{Sq}^1(X)\cdot\op{Sq}^1(Y)=0$, proving (1). 
Alternatively, this can be derived from \cite[Theorem 9.2]{brosnan}.

A standard computation, using the motivic Cartan formula above and the fact that $\op{Sq}^2$ is a differential, implies a precursor of the  Jacobi identity:
\[
\op{Sq}^2(X)\op{Sq}^2(YZ) - \op{Sq}^2(XY)\op{Sq}^2(Z) + (-1)^{|Y||Z|} \op{Sq}^2(XZ)\op{Sq}^2(Y) = \tau f(X,Y,Z)
\]
where $f(X,Y,Z)$ is an explicit polynomial in the Steenrod squares of $X$, $Y$ and $Z$. If we restrict to elements $X,Y,Z\in \op{Ch}^\bullet(X)$, i.e., to ``geometric'' bidegrees $(2n,n)$, everything is $2$-torsion and the signs disappear.
 As before, still restricting to geometric bidegrees, the element $f(X,Y,Z)$ vanishes. We then obtain the required Jacobi identity. 
\end{proof}

\subsection{The key diagram}
\label{sec:key}
 
The above fiber product description of Milnor--Witt K-theory leads to various exact sequences which fit together in a large commutative diagram.
Its key part which we now display will be crucial for various parts of the article:
\[
\xymatrix{
&\op{CH}^n(X)\ar[r]^= \ar[d] & \op{CH}^n(X)\ar[d]^2 \\
\op{H}^n_{\op{Nis}}(X,\mathbf{I}^{n+1})\ar[r]\ar[d]_=& \widetilde{\op{CH}}^n(X)\ar[r]\ar[d]&\op{CH}^n(X)\ar[r]^{\partial}\ar[d]^{\bmod 2}& \op{H}^{n+1}_{\op{Nis}}(X,\mathbf{I}^{n+1})\ar[d]^=\\
\op{H}^n_{\op{Nis}}(X,\mathbf{I}^{n+1})\ar[r]_\eta& \op{H}^{n}_{\op{Nis}}(X,\mathbf{I}^{n})\ar[r]_\rho\ar[d]& \op{Ch}^n(X)\ar[r]^\beta\ar[rd]_{\op{Sq}^2}\ar[d]& \op{H}^{n+1}_{\op{Nis}}(X,\mathbf{I}^{n+1})\ar[d]^\rho \\
&0&0&\op{Ch}^{n+1}(X)
}
\]
 The lower exact sequence is the B\"ar sequence of (\ref{eq:baer}) discussed above, and the commutativity of the lower-right triangle is Totaro's identification of $\op{Sq}^2$. The key square in the middle expresses Chow--Witt groups in terms of ordinary Chow-theory  and the $\mathbf{I}^j$-cohomology. All  maps in the square are change-of-coefficient maps and commute with flat pullbacks and proper push-forwards.

\subsection{A fiber product statement}

We now establish a general condition under which the Chow--Witt ring is simply the fiber product of some part of the Chow ring and $\mathbf{I}^\bullet$-cohomology.

\begin{proposition}
\label{prop:cartesian}
Let $X$ be a smooth scheme over $F$. The canonical ring homomorphism
\[
c\colon \widetilde{\op{CH}}^\bullet(X)\to \op{H}^\bullet(X,\mathbf{I}^\bullet)\times_{\op{Ch}^\bullet(X)}\ker\partial
\] 
induced from the above key square in Section~\ref{sec:key} is always surjective. It is injective if one of the following conditions holds:
\begin{enumerate}
\item $\op{CH}^\bullet(X)$ has no non-trivial $2$-torsion. 
\item the map $\eta\colon \op{H}^n(X,\mathbf{I}^{n+1})\to\op{H}^n(X,\mathbf{I}^n)$ is injective.
\end{enumerate}
\end{proposition}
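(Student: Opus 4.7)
My plan is to chase the key diagram from Section~\ref{sec:key} one degree $n$ at a time, proving surjectivity by a lift-and-correct argument and reducing injectivity to showing that the image of a certain Bockstein vanishes under either hypothesis. Throughout, I write $a_n$ for the horizontal map $\op{H}^n(X,\mathbf{I}^{n+1})\to\widetilde{\op{CH}}^n(X)$ appearing in the middle row of the key diagram, which comes from the short exact sequence $0\to\mathbf{I}^{n+1}\to\mathbf{K}^{\op{MW}}_n\to\mathbf{K}^{\op{M}}_n\to 0$.

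For surjectivity, let $(\alpha,y)$ be an element of the fiber product. Since $y\in\ker\partial$, the exactness of the middle row produces some $\tilde\gamma\in\widetilde{\op{CH}}^n(X)$ mapping to $y$. Let $\alpha'$ be its image in $\op{H}^n(X,\mathbf{I}^n)$; by commutativity of the key diagram, $\rho(\alpha')=\bar y=\rho(\alpha)$, so the B\"ar sequence yields $\delta\in\op{H}^n(X,\mathbf{I}^{n+1})$ with $\alpha-\alpha'=\eta(\delta)$. The element $\tilde\gamma+a_n(\delta)$ still maps to $y$ in $\op{CH}^n(X)$ (since $a_n$ factors through the kernel of $\widetilde{\op{CH}}^n(X)\to\op{CH}^n(X)$) and, by the commutativity of the lower-left square of the key diagram, now maps to $\alpha'+\eta(\delta)=\alpha$ in $\op{H}^n(X,\mathbf{I}^n)$, giving the required preimage.

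For injectivity, take $\gamma\in\ker c$, so $\gamma$ vanishes in both $\op{CH}^n(X)$ and $\op{H}^n(X,\mathbf{I}^n)$. Exactness of the middle row produces $\delta$ with $\gamma=a_n(\delta)$, and the commutativity of the lower-left square identifies the composite of $a_n$ with $\widetilde{\op{CH}}^n(X)\to\op{H}^n(X,\mathbf{I}^n)$ as $\eta$, forcing $\eta(\delta)=0$. Under hypothesis~(2) this gives $\delta=0$ and hence $\gamma=0$. Under hypothesis~(1), the B\"ar sequence yields $\delta=\beta(\bar z)$ for some $\bar z\in\op{H}^{n-1}(X,\mathbf{K}^{\op{M}}_n/2)$, and the remaining task is to show that $a_n(\beta(\bar z))=0$.

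The crux of the argument is the identity
\[
a_n\circ\beta \;=\; h_\ast\circ\tilde\beta\colon \op{H}^{n-1}(X,\mathbf{K}^{\op{M}}_n/2)\to\widetilde{\op{CH}}^n(X),
\]
where $\tilde\beta$ is the integral Bockstein associated with $0\to\mathbf{K}^{\op{M}}_n\xrightarrow{\cdot 2}\mathbf{K}^{\op{M}}_n\to\mathbf{K}^{\op{M}}_n/2\to 0$ and $h_\ast\colon\op{CH}^n(X)\to\widetilde{\op{CH}}^n(X)$ is the ``hyperbolic'' lift coming from $0\to\mathbf{K}^{\op{M}}_n\xrightarrow{h}\mathbf{K}^{\op{MW}}_n\to\mathbf{I}^n\to 0$. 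I plan to verify this at the Gersten cochain level using the fiber product description $\mathbf{K}^{\op{MW}}_n=\mathbf{I}^n\times_{\mathbf{K}^{\op{M}}_n/2}\mathbf{K}^{\op{M}}_n$: choosing compatible lifts $\alpha\in C^{n-1}(X,\mathbf{I}^n)$ and $y\in C^{n-1}(X,\mathbf{K}^{\op{M}}_n)$ of a cocycle representative of $\bar z$, the pair $(\alpha,-y)$ defines a Milnor--Witt cochain whose differential $(d\alpha,-2y')$ (with $dy=2y'$) exhibits the cohomological equality of $(d\alpha,0)$ and $(0,2y')$ that encodes the claim. Once this identity is in hand, the image of $\tilde\beta$ sits inside ${}_2\op{CH}^n(X)$, which is zero under hypothesis~(1), so $\gamma=h_\ast(\tilde\beta(\bar z))=0$.
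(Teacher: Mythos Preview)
Your proof is correct. The surjectivity argument and case~(2) of injectivity are essentially the paper's proof, just with the roles of the two components swapped (you lift the Chow component first and correct on the $\mathbf{I}^n$ side; the paper lifts the $\mathbf{I}^n$ component first and corrects on the Chow side).

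For case~(1), however, you work harder than necessary. Having written $\gamma=a_n(\delta)$ with $\eta(\delta)=0$, you invoke the B\"ar sequence to write $\delta=\beta(\bar z)$ and then establish the cochain identity $a_n\circ\beta=h_\ast\circ\tilde\beta$ to conclude. The paper instead exploits the \emph{other} exact sequence through $\widetilde{\op{CH}}^n(X)$, namely the middle column of the key diagram coming from $0\to\mathbf{K}^{\op{M}}_n\xrightarrow{h}\mathbf{K}^{\op{MW}}_n\to\mathbf{I}^n\to 0$: since $\gamma$ dies in $\op{H}^n(X,\mathbf{I}^n)$, it lifts directly to some $\widetilde{\alpha}\in\op{CH}^n(X)$ along the hyperbolic map; the commutative square in the upper right of the key diagram then gives $2\widetilde{\alpha}=0$ in $\op{CH}^n(X)$, and hypothesis~(1) finishes. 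This is the mirror image of your argument for case~(2) and avoids the Gersten-level Bockstein comparison entirely. Your identity $a_n\circ\beta=h_\ast\circ\tilde\beta$ is correct and interesting in its own right, but it is not needed here.
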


\begin{proof}
The existence of $c$ follows from the commutativity of the key diagram together with the universal property of the fiber product. As all maps considered here are ring homomorphisms, the fiber product in graded abelian groups has a natural ring structure such that $c$ is a graded ring homomorphism. Hence it suffices to prove the claim for graded abelian groups,
and even in individual degrees.


Surjectivity of the map follows from the long exact sequences giving rise to the key diagram. The lower vertical morphisms are surjective in general, and the upper horizontal morphism in the square we claim cartesian is surjective by definition. Consider an element $(\sigma,\tau)\in  \op{H}^n(X,\mathbf{I}^n)\times_{\op{Ch}^n(X)} \ker\partial$.  We can choose an arbitrary element $\widetilde{\tau}\in \widetilde{\op{CH}}^n(X)$ lifting $\tau$. This lift will map to some element  $\gamma\in\ker\partial$ whose reduction to $\op{Ch}^n(X)$ equals the reduction of $\tau$. Therefore, the difference $\tau-\gamma$ lifts to  $\op{CH}^n(X)$ (along the multiplication by $2$ map in the top square) and we can add its image in $\widetilde{\op{CH}}^n(X)$ to $\widetilde{\tau}$. This proves surjectivity.

It remains to prove the injectivity of the natural map. For case (1), assume that $\op{CH}^n(X)$ has no non-trivial $2$-torsion. Let $\alpha\in\widetilde{\op{CH}}^n(X)$ be a class whose images in $\op{CH}^n(X)$ and $\op{H}^n(X,\mathbf{I}^n)$ are trivial. The latter statement implies that the class lifts to a class $\widetilde{\alpha}\in\op{CH}^n(X)$. The fact that its image in $\op{CH}^n(X)$ (under the natural change-of-coefficients map $\widetilde{\op{CH}}^n(X)\to\op{CH}^n(X)$) is trivial implies that $\widetilde{\alpha}\in{{}_2\op{CH}^n(X)}$. By assumption, $\widetilde{\alpha}=0$, implying $\alpha=0$ as claimed. The case (2) is done similarly, using the map $\eta$ instead of the multiplication by $2$ on Chow groups.
\end{proof}

\begin{remark}
The preceding arguments carry over to non-trivial twists when restricting to graded abelian groups. For the ring structure, however, the total Chow--Witt ring (direct sum over all possible twists indexed by  ${\op{Pic}}/2$) has to be considered.
\end{remark}


\begin{remark}
The conditions of Proposition~\ref{prop:cartesian} are satisfied for the cases $X={\op{B}}\op{Sp}_{2n}$, ${\op{B}}\op{SL}_n$ and ${\op{B}}\op{GL}_n$ (or rather  their finite-dimensional approximations) by Totaro's computations \cite{totaro:bg}.
 More generally, they are satisfied for cellular varieties. The special case $X=\mathbb{P}^n$ of the above result recovers statements implicit in the Chow--Witt ring computation of \cite[Section 11]{fasel:ij}.
\end{remark} 

\begin{remark}
Denote by ${\op{B}}_{\op{gm}}\op{O}(1)$ the geometric classifying space as considered in \cite{MV}. It coincides with the classifying space ${\op{B}}_{\et}\op{O}(1)$ of \'etale-locally trivial $\op{O}(1)$-torsors. 
The space ${\op{B}}_{\op{gm}}\op{O}(1)$ doesn't satisfy the $2$-torsion condition above as it has Picard group $\mathbb{Z}/2\mathbb{Z}$. We do not expect it to be a fiber product, at least not for both possible twists.
 Actually, this should already be visible on the level of $\op{H}^1$ for which one can use $X:=(\mathbb{A}^3\setminus\{0\})/\mu_2$ where $\mu_2$ acts via the antipodal action as a suitable model for $\widetilde{\op{CH}}^1({\op{B}}_{\et}\op{O}(1))$. The failure of the fiber product statement should be related to some elements in $\op{H}^1(X,\mathbf{I}^2)$ being in the image of $\op{H}^0(X,\mathbf{K}_1^{\op{M}}/2)$ but not in the image of $\op{H}^0(X,\mathbf{K}_1^{\op{M}})$. More generally, the Chow--Witt rings of (special) orthogonal groups seem to be significantly more complicated than the Chow rings and will be subject to further research.
\end{remark}

\section{Chow--Witt rings of classifying spaces}
\label{sec:cwbg} 


We recall the definition of Chow--Witt rings of classifying spaces and some of the relevant functoriality properties.

For a linear algebraic group $G$ over $F$, the Chow ring of the classifying space of $G$ has been defined by Totaro \cite{totaro:bg} via smooth finite-dimensional partial resolutions of the trivial $G$-action on $\op{Spec}(F)$. The resulting Chow ring coincides with the ``geometric'' bidegrees $(2n,n)$ of the motivic cohomology of the geometric classifying space ${\op{B}}_{\op{gm}}G$ in the Morel--Voevodsky $\mathbb{A}^1$-homotopy category. The same procedure can be applied to define the Chow--Witt ring of the classifying space, cf. \cite[Section 3.2]{AsokFaselEuler}. We shortly recall the definition and stabilization properties. 

The following is a restatement of \cite[Theorem 1.1]{totaro:bg} for the Chow--Witt rings. A similar statement has been proved in \cite[Theorem 3.2.2]{AsokFaselEuler}. However, we really need the full independence of the choice of representation because we need to deal with restriction maps associated to group homomorphisms and the appropriate functorialities. 

\begin{proposition}
\label{prop:chwbg}
Let $F$ be a perfect field and let $G$ be a linear algebraic group over $F$. Let $V$ be a representation of $G$ over $F$ such that $G$ acts freely outside a $G$-invariant closed subset $S\subset V$ of codimension $\geq s$. Assume that the geometric quotient $(V\setminus S)/G$ exists as a scheme over $F$. Then $\widetilde{\op{CH}}^{<s-1}((V\setminus S)/G)$ is independent of the representation $V$ and the closed subset $S$. 
\end{proposition}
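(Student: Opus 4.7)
The plan is to adapt Totaro's double fibration argument from \cite{totaro:bg} to the Chow--Witt setting by combining two ingredients recalled in Section~\ref{funcandloc}: $\mathbb{A}^1$-homotopy invariance of $\widetilde{\op{CH}}^\bullet$ (so the pullback along a vector bundle is an isomorphism), and the consequence of the localization sequence that an open immersion whose complement has codimension $\geq c$ induces isomorphisms on $\widetilde{\op{CH}}^i$ for $i<c-1$.

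Given two pairs $(V_1,S_1)$ and $(V_2,S_2)$ with codimension bounds $s_1,s_2$, I would set $U_i=V_i\setminus S_i$ and consider the diagonal $G$-action on $W=V_1\oplus V_2$. The non-free locus is contained in $S_1\times S_2$, of codimension $\geq s_1+s_2$, so $(W\setminus(S_1\times S_2))/G$ exists as a smooth scheme. Inside it, $(U_1\times V_2)/G$ is open with closed complement $(S_1\times U_2)/G$ of codimension $\geq s_1$, and the first projection realises $(U_1\times V_2)/G\to U_1/G$ as the vector bundle associated to the principal $G$-bundle $U_1\to U_1/G$ via the representation $V_2$. Symmetrically, $(V_1\times U_2)/G\hookrightarrow(W\setminus(S_1\times S_2))/G$ has complement of codimension $\geq s_2$ and projects as a vector bundle to $U_2/G$. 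Purity and $\mathbb{A}^1$-invariance then assemble into a canonical zig-zag
\[
U_1/G \longleftarrow (U_1\times V_2)/G \longrightarrow (W\setminus(S_1\times S_2))/G \longleftarrow (V_1\times U_2)/G \longrightarrow U_2/G
\]
whose induced maps on $\widetilde{\op{CH}}^{<s-1}$ are all isomorphisms, where $s=\min(s_1,s_2)$. Iterating against a third representation shows that the resulting identifications satisfy the evident cocycle condition, so the groups $\widetilde{\op{CH}}^{<s-1}((V\setminus S)/G)$ form a coherent system indexed by $(V,S)$.

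The one point that requires care is $\mathbb{A}^1$-invariance for the \emph{non-trivial} associated vector bundles above: this should follow from the general homotopy invariance of Chow--Witt groups in \cite{fasel:memoir} (which simultaneously pulls back any line bundle twist along the bundle projection, so the argument goes through verbatim for each fixed duality with trivial Picard group of $U_i/G$, as will be the case for our classifying spaces). Naturality of all four arrows in the zig-zag, and of the purity isomorphisms, then ensures that the identifications are canonical and compatible with morphisms of pairs $(V,S)$, which is precisely what the functoriality arguments in the sequel require---in particular for the restriction maps associated to group homomorphisms that motivated stating the stronger full independence here rather than the weaker formulation of \cite[Theorem 3.2.2]{AsokFaselEuler}.
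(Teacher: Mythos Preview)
Your proof is essentially the same as the paper's: both follow Totaro's double fibration argument, using homotopy invariance for the associated vector bundles $(U_i\times V_j)/G\to U_i/G$ and the codimension estimate from the localization sequence to bridge between the two models.

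One small point to tidy: you assert that $(W\setminus(S_1\times S_2))/G$ exists as a scheme, but this is not part of the hypotheses and does not follow immediately from freeness of the action. The standard way around this (and what the paper implicitly does) is to run the zig-zag through the \emph{intersection} $(U_1\times U_2)/G$ rather than the union: this open subscheme sits inside both $(U_1\times V_2)/G$ and $(V_1\times U_2)/G$ (which exist as associated vector bundles over the given quotients), with complements of codimension $\geq s_2$ and $\geq s_1$ respectively. With that adjustment your argument is complete, and your additional remarks on the cocycle condition and on homotopy invariance for nontrivial vector bundles are apt.
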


\begin{proof}
The proof follows \cite[Theorem 1.1]{totaro:bg}. The independence of $S$ follows from the excision lemma \cite[Lemma 2.4.2]{AsokFaselEuler} (which is a consequence of the localization sequence). The independence from $V$ follows from Bogomolov's double fibration construction. Let $V$ and $W$ be two representations of $G$ such that $G$ acts freely outside subsets $S_V$ and $S_W$ of codimension $\geq s$ such that the quotients $(V\setminus S_V)/G$ and $(W\setminus S_W)/G$ exist as schemes. Then we have vector bundles $((V\setminus S_V)\times W)/G\to (V\setminus S_V)/G$ and $(V\times (W\setminus S_W))/G\to (W\setminus S_W)/G$. Applying the above independence of $S$ to the representation $V\oplus W$ together with homotopy invariance for Chow--Witt groups yields the claim.
\end{proof}

\begin{definition}
Let $F$ be a field and let $G$ be a linear algebraic group over $F$. Define the Chow--Witt ring $\widetilde{\op{CH}}^\bullet({\op{B}}G)$ of the classifying space by setting 
\[
\widetilde{\op{CH}}^i({\op{B}}G):=\widetilde{\op{CH}}^i((V\setminus S)/G)
\]
for an appropriate $(V,S)$ as in Proposition~\ref{prop:chwbg} where $S$ has codimension greater than $i+1$ in $V$. The ring structure on $\widetilde{\op{CH}}^\bullet({\op{B}}G)$ is the one inherited from the Chow--Witt rings of finite-dimensional approximations. 
\end{definition}

We need to establish finite-dimensional models for the universal bundles. Let $V$ be a representation such that $G$ acts freely outside a closed $G$-stable subset $S\subset V$ of codimension $\geq s$ such that the quotient $(V\setminus S)/G$ exists as an $F$-scheme. Then $V\setminus S\to (V\setminus S)/G$ is a principal $G$-bundle. In fact, in the Morel--Voevodsky $\mathbb{A}^1$-homotopy category \cite{MV} we may write ${\op{B}}G:=\op{colim}_{(V,S)} (V\setminus S)/G$ and ${\op{E}}G:=\op{colim}_{(V,S)}V\setminus S \to{\op{B}}G$
with the colimit taken along the natural projection maps. Then it follows that $V\setminus S\to (V\setminus S)/G$ is the pullback of the universal bundle ${\op{E}}G\to{\op{B}}G$ along the natural approximation map $(V\setminus S)/G\to{\op{B}}G$. In this sense, the quotient maps $V\setminus S\to(V\setminus S)/G$ are finite-dimensional approximations to the universal principal $G$-bundle.

As proved in the following proposition, the Chow--Witt ring of the classifying space of a reductive group $G$ equals the ring of Chow--Witt-valued characteristic classes of $G$-bundles, in complete analogy with \cite[Theorem 1.3]{totaro:bg}. Here {\em characteristic class} (of degree $i$) means by definition an assignment $\alpha$ associating to a smooth quasi-projective $F$-scheme $X$ with a principal $G$-bundle $E\to X$ an element $\alpha(E)\in\widetilde{\op{CH}}^i(X)$ such that for any morphism $f\colon Y\to X$ we have $\alpha(f^\ast E)=f^\ast(\alpha(E))$.

\begin{proposition}
\label{prop:characteristic}
Let $F$ be a field and let $G$ be a reductive group over $F$. There is a natural bijection between $\widetilde{\op{CH}}^i({\op{B}}G)$ and the set of degree $i$ characteristic classes of $G$-bundles.
\end{proposition}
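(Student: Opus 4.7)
The plan is to adapt Totaro's argument \cite[Theorem 1.3]{totaro:bg} for the Chow ring to the Chow--Witt setting, using finite-dimensional approximations and a double fibration construction. Two inverse maps need to be constructed.

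In one direction, given a characteristic class $\alpha$ of degree $i$, fix a pair $(V, S)$ as in Proposition~\ref{prop:chwbg} with $s > i + 1$, so that $V \setminus S \to (V \setminus S)/G$ is a principal $G$-bundle over a smooth quasi-projective $F$-scheme and $\widetilde{\op{CH}}^i((V \setminus S)/G) = \widetilde{\op{CH}}^i({\op{B}}G)$. Define $\Phi(\alpha) := \alpha(V \setminus S) \in \widetilde{\op{CH}}^i({\op{B}}G)$. Independence from the choice of $(V, S)$ follows from the characteristic class property applied to the projection maps in the Bogomolov double fibration construction already used in the proof of Proposition~\ref{prop:chwbg}.

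Conversely, given $\beta \in \widetilde{\op{CH}}^i({\op{B}}G)$ represented by a class on $(V \setminus S)/G$ and a principal $G$-bundle $E \to X$ over a smooth quasi-projective $F$-scheme, form the double fibration
\[
X \xleftarrow{\pi} (E \times (V \setminus S))/G \xrightarrow{\phi} (V \setminus S)/G,
\]
with both maps induced by the coordinate projections on $E \times (V \setminus S)$. The map $\pi$ identifies the middle term with an open subscheme of the associated vector bundle $(E \times V)/G \to X$ whose complement $(E \times S)/G$ has codimension $\geq s$. Combining $\mathbb{A}^1$-invariance of Chow--Witt groups for vector bundles with \cite[Lemma 2.4.2]{AsokFaselEuler}, the pullback $\pi^*$ is an isomorphism on $\widetilde{\op{CH}}^{<s-1}$. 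Setting $\Psi(\beta)(E) := (\pi^*)^{-1}\phi^*(\beta) \in \widetilde{\op{CH}}^i(X)$ and using functoriality of the double fibration under morphisms $f:Y \to X$ of the base, $\Psi(\beta)$ is a characteristic class.

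Mutual inverseness rests on the canonical isomorphism of the principal $G$-bundles $\pi^*E$ and $\phi^*(V \setminus S)$ on $(E \times (V \setminus S))/G$: both are represented by the quotient map $E \times (V \setminus S) \to (E \times (V \setminus S))/G$. For any characteristic class $\alpha$, this gives $\pi^*\alpha(E) = \alpha(\pi^*E) = \alpha(\phi^*(V \setminus S)) = \phi^*\Phi(\alpha)$, proving $\Psi \circ \Phi = \op{id}$. For $\Phi \circ \Psi = \op{id}$, applying the construction with $E = V \setminus S$ and $X = (V \setminus S)/G$ reduces to showing that the two projections $\pi, \phi:((V \setminus S) \times (V \setminus S))/G \to (V \setminus S)/G$ induce the same map on $\widetilde{\op{CH}}^{<s-1}$; both agree with the canonical identification of each side with $\widetilde{\op{CH}}^i({\op{B}}G)$ via the double fibration argument of Proposition~\ref{prop:chwbg} applied to the representation $V \oplus V$. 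The main obstacle I anticipate is the verification that $\pi^*$ is an isomorphism in the claimed range: this is where the high codimension of $S$ is genuinely used, via the localization sequence of Section~\ref{funcandloc} applied to the pair $((E \times V)/G, (E \times S)/G)$ combined with homotopy invariance for the vector bundle $(E \times V)/G \to X$. Everything else reduces to formal manipulation of the universal bundle diagram once the independence of the finite-dimensional approximation is in hand.
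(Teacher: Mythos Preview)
Your proof is correct and follows essentially the same approach as the paper: both adapt Totaro's argument \cite[Theorem 1.3]{totaro:bg}, and the double fibration $(E\times(V\setminus S))/G$ you write out is precisely the content of \cite[Lemma 1.6]{totaro:bg} that the paper invokes, combined with homotopy invariance and the codimension excision of \cite[Lemma 2.4.2]{AsokFaselEuler}. The paper simply defers all details to Totaro, whereas you spell out the two inverse maps explicitly; the underlying argument is the same.
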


\begin{proof}
The proof follows the one for \cite[Theorem 1.3]{totaro:bg}. As in loc.cit., we get a natural homomorphism from the ring of characteristic classes to $\widetilde{\op{CH}}^\bullet({\op{B}}G)$. Using \cite[Lemma 1.6]{totaro:bg} and homotopy invariance of Chow--Witt groups, the injectivity and surjectivity of the natural morphism follow as in loc.cit. 
\end{proof}

\begin{remark}
A similar statement is true for twisted characteristic classes in $\widetilde{\op{CH}}^\bullet(X,\mathscr{L}_E)$. Here $\mathscr{L}_E$ is a line bundle functorially associated to the principal $G$-bundle $E\to X$ whose characteristic classes are being considered, i.e., it corresponds to a characteristic class in $\op{Pic}({\op{B}}G)/2$. This is slightly more complicated to formulate, and irrelevant for us here, since we only deal with groups where the Picard group of the classifying space is trivial. 
\end{remark}

\begin{remark}
\label{rem:classmap}
Note that \cite[Lemma 1.6]{totaro:bg} justifies the use of the terminology ``classifying space'' for the object approximated by varieties of the form $(V\setminus S)/G$. It states that for any quasi-projective variety $X$ with a principal $G$-bundle $E\to X$, there is an affine space bundle $\pi\colon X'\to X$, a suitable representation $V$ where $G$ acts freely outside a closed subset of codimension $\geq s$, and a classifying map $f\colon X'\to(V\setminus S)/G$ such that $\pi^\ast E\cong f^\ast \gamma$ where $\gamma\colon V\setminus S\to(V\setminus S)/G$ is the approximation of the universal principal $G$-bundle. 

Actually, it follows from \cite[Proposition 4.2.6]{MV} that the colimit of the schemes $(V\setminus S)/G$ over a suitable system of representations (where the codimensions of the sets $S$ tend to infinity) is equivalent to the geometric classifying space ${\op{B}}_{\op{gm}}G$ considered by Morel and Voevodsky. In particular, this colimit is a classifying space in the sense that there is a natural bijection between \'etale locally trivial torsors over a smooth scheme $X$ and the homotopy classes of maps $[X,{\op{B}}_{\op{gm}}G]$ in the simplicial homotopy category. This yields another (equivalent) way of defining Chow--Witt groups of the classifying space, using the $\mathbb{A}^1$-representability of Chow--Witt groups. In this language there is e.g. a different
proof for the next result, using $\mathbb{A}^1$-representability of Chow-Witt groups.
\end{remark}

Now we want to consider induced morphisms on Chow--Witt rings of classifying spaces. 

\begin{proposition}
\label{prop:inducedhom}
Let $F$ be a perfect field, and let $\iota\colon H\hookrightarrow G$ be a closed monomorphism of linear algebraic groups over $F$. Then there is a ring homomorphim
\[
\iota^\ast\colon \widetilde{\op{CH}}^\bullet({\op{B}}G)\to \widetilde{\op{CH}}^\bullet({\op{B}}H).
\]
These ring homomorphisms are compatible with compositions $G_1 \hookrightarrow G_2 \hookrightarrow G_3$.
\end{proposition}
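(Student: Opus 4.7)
The plan is to construct $\iota^\ast$ on finite-dimensional approximations and check that it descends to the Chow--Witt rings of the classifying spaces. Fix a degree $i$. By Proposition~\ref{prop:chwbg}, choose a $G$-representation $V$ together with a $G$-invariant closed subset $S\subseteq V$ of codimension $>i+1$ such that $G$ acts freely on $V\setminus S$ and the geometric quotient $(V\setminus S)/G$ exists as a smooth $F$-scheme. Viewing $V$ as an $H$-representation via $\iota$, the $H$-action on $V\setminus S$ is again free (since $H\subseteq G$), and $S$ is $H$-stable of codimension $>i+1$. The quotient $(V\setminus S)/H$ exists as a smooth $F$-scheme: it can be realised as the associated fibre bundle $((V\setminus S)/G)\times^{G}G/H$ over $(V\setminus S)/G$, whose fibre is the quasi-projective variety $G/H$ (existence of which uses that $H\hookrightarrow G$ is closed). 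In particular the natural projection $\pi\colon (V\setminus S)/H\to (V\setminus S)/G$ is a smooth morphism of smooth $F$-schemes, since it is an \'etale-locally trivial $G/H$-bundle.

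Next, by the discussion in Section~\ref{funcandloc} and Section~2.2 of the paper, the pullback $\pi^\ast\colon \widetilde{\op{CH}}^i((V\setminus S)/G)\to\widetilde{\op{CH}}^i((V\setminus S)/H)$ is a well-defined ring homomorphism. By the definition of $\widetilde{\op{CH}}^i(\op{B}G)$ and $\widetilde{\op{CH}}^i(\op{B}H)$ via such approximations, this provides a candidate for $\iota^\ast$ in degree $i$, and allowing $i$ to vary yields the graded ring homomorphism.

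The key remaining point is independence of the choice of approximation $(V,S)$. Here I would rerun Bogomolov's double fibration argument used in the proof of Proposition~\ref{prop:chwbg}, but carried out for $G$ and $H$ simultaneously. Concretely, given two choices $(V,S_V)$ and $(W,S_W)$, one forms the two vector bundle projections from quotients of $(V\setminus S_V)\times W$ and $V\times(W\setminus S_W)$ by $G$, and similarly by $H$; homotopy invariance (for both Chow--Witt groups of $G$-quotients and $H$-quotients) identifies the Chow--Witt rings arising from either choice, and naturality of the smooth pullback in the resulting commutative diagram shows that the two candidate maps $\iota^\ast$ coincide. This is the main obstacle: one has to verify that the double fibration argument is genuinely compatible with the restriction of structure group from $G$ to $H$.

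Finally, compatibility with compositions $\iota_2\circ\iota_1\colon H_1\hookrightarrow H_2\hookrightarrow G$ follows by choosing a single representation $V$ on which $G$ (and therefore both $H_2$ and $H_1$) act freely outside a closed subset of sufficiently high codimension, and applying functoriality of smooth pullbacks to the factorisation $(V\setminus S)/H_1\to (V\setminus S)/H_2\to (V\setminus S)/G$.
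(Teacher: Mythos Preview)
Your proposal is correct and follows essentially the same approach as the paper: define $\iota^\ast$ as the pullback along the smooth quotient morphism $(V\setminus S)/H\to (V\setminus S)/G$ for a single $G$-representation $V$, invoke the double fibration trick for independence of $(V,S)$, and deduce functoriality from $(g\circ f)^\ast=f^\ast g^\ast$ applied to the tower $U/G_1\to U/G_2\to U/G_3$ built from one representation. The paper is slightly terser about the existence of $(V\setminus S)/H$ and about the compatibility of the double fibration argument with restriction of structure group, but otherwise the arguments coincide.
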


\begin{proof}
We can define a restriction homomorphism
\[
\iota^\ast\colon \widetilde{\op{CH}}^\bullet({\op{B}}G)\to \widetilde{\op{CH}}^\bullet({\op{B}}H)
\]
as follows: fix a degree $i$ and let $V$ be a representation of $G$ such that there exists an open subscheme $U\subset V$ where the $G$-action is free and whose closed complement has codimension $>i+1$. Then, by the above discussion,  $\widetilde{\op{CH}}^i({\op{B}}G)\cong \widetilde{\op{CH}}^i(U/G)$. Moreover, since $H\subset G$ is a closed subgroup, we also have $\widetilde{\op{CH}}^i({\op{B}}H)\cong \widetilde{\op{CH}}^i(U/H)$. Finally, the natural quotient morphism $U/H\to U/G$ induced by $\op{id}_U$ is a $G/H$-fiber bundle in the sense that there exists an \'etale covering $W\to U/G$ such that there is an isomorphism
\[
W\times_{U/G}U/H \cong W\times G/H
\]
of $F$-schemes with $G$-action. In particular, the quotient morphism is smooth. We can define the restriction map in degree $i$ by 
\[
\iota^\ast\colon  \widetilde{\op{CH}}^i({\op{B}}G) \cong \widetilde{\op{CH}}^i(U/G)\to \widetilde{\op{CH}}^i(U/H)\cong \widetilde{\op{CH}}^i({\op{B}}H)
\]
with the morphism in the middle given by the ordinary (flat) pullback for Chow--Witt groups. By the double fibration trick and the ensuing independence of the representation, this yields a well-defined homomorphism of Chow--Witt rings (because flat pullback is a homomorphism of Chow--Witt rings).

For the functoriality, let $G_1\subset G_2\subset G_3$ be composable closed inclusion homomorphisms of linear algebraic groups. As above, we can fix a degree $i$ and consider a $G_3$-representation $V$ over $F$ which has a subset $U\subset V$ where $G_3$ acts freely and whose complement has codimension $>i+1$. Then we get a sequence of smooth quotients
\[
U/G_1\xrightarrow{f} U/G_2\xrightarrow{g} U/G_3. 
\]
In this situation, \cite[Theorem 2.1.3]{AsokFaselEuler} tells us that we have 
\[
(g\circ f)^\ast=f^\ast g^\ast\colon  \widetilde{\op{CH}}^\bullet(U/G_3)\to \widetilde{\op{CH}}^\bullet(U/G_1). 
\]
Together with the independence of choice of representation in the definition of Chow--Witt rings of classifying spaces, this proves the functoriality claim.
\end{proof}

\begin{remark}
One can define more generally restriction maps for arbitrary group homomorphisms $\phi\colon H\to G$: start with a $G$-representation $V$ with open subset $U$ satisfying the codimension condition for degree $i$. Take a $H$-representation $V'$ with open subset $U'$ satisfying the codimension condition for degree $i$. Then we have a morphism $(U\times U')/H\to U/H \to U/G$. All the above statements remain valid.
\end{remark}

\section{Chow--Witt rings of classifying spaces of symplectic groups}
\label{sec:symplectic}

In this section, we compute $\widetilde{\op{CH}}^{\bullet}(\op{B}\op{Sp}_{2n})$. We adapt arguments of \cite{brown} and \cite{RojasVistoli}. The global structure of the argument is the following: the localization sequence for a geometric model of the open stabilization inclusion ${\op{B}}\op{Sp}_{2n-2} \subset {\op{B}}\op{Sp}_{2n}$ implies existence of Pontryagin classes, and that they are uniquely defined by requiring compatibility with stabilization and the identification of the top Pontryagin class with the Thom class. Then one proves a Whitney-sum formula for the top Pontryagin class and uses the quaternionic projective bundle formula to deduce a splitting principle. The description of the Chow--Witt ring as polynomial ring in the Pontryagin classes then follows using rather standard arguments.

Comparing our arguments with those of \cite{PaninWalter} in their axiomatic setting, there are both similarities and differences. A more detailed comparison as well as consequences on symplectic orientability of Chow--Witt groups can be found in Subsection~\ref{sec:sympor}. 

\subsection{Geometric setup}\label{geomsetup}

We begin by setting up the localization sequence, which will be used in the inductive computation of Chow--Witt rings of classifying spaces of symplectic groups. This sequence will relate the Chow--Witt rings of ${\op{B}}\op{Sp}_{2n}$ and ${\op{B}}\op{Sp}_{2n-2}$.

On the level of actual classifying spaces (for instance in the Morel--Voevodsky $\mathbb{A}^1$-homotopy category), the idea is the following: we have a classifying space ${\op{B}}\op{Sp}_{2n}$ and a universal $\op{Sp}_{2n}$-torsor $\epsilon_{2n}\colon {\op{E}}\op{Sp}_{2n}\to {\op{B}}\op{Sp}_{2n}$ over it. For the tautological representation of $\op{Sp}_{2n}$, we consider the associated rank $2n$ symplectic bundle $E_{2n}\to{\op{B}}\op{Sp}_{2n}$. By homotopy invariance, the total space can be identified with ${\op{B}}\op{Sp}_{2n}$. The zero section is ${\op{B}}\op{Sp}_{2n}$ by construction. The remaining piece, the complement of the zero section, can actually be identified with ${\op{B}}\op{Sp}_{2n-2}$ because a Levi subgroup of the stabilizer of a nonzero point of the tautological representation of $\op{Sp}_{2n}$ is $\op{Sp}_{2n-2}$. In the following, we will explain in detail how to realize this picture on the level of finite-dimensional approximations of the classifying spaces. 

Recall from Proposition~\ref{prop:chwbg} that finite-dimensional approximations of ${\op{B}}\op{Sp}_{2n}$ can be obtained by taking a finite-dimensional $\op{Sp}_{2n}$-representation $V$ on which $\op{Sp}_{2n}$ acts freely outside a closed stable subset $Y$ of codimension $s$, and considering the quotient $X(V):=(V\setminus Y)/\op{Sp}_{2n}$. This computes $\widetilde{\op{CH}}^\bullet({\op{B}}\op{Sp}_{2n})$ in degrees $\leq s-2$. A finite-dimensional model for the universal torsor over $X(V)$ is given by the projection $p\colon V\setminus Y\to X(V)$.

Consider the tautological $\op{Sp}_{2n}$-representation on $\mathbb{A}^{2n}$, viewed as underlying affine space of the standard symplectic vector space of dimension $2n$. We can pass from the $\op{Sp}_{2n}$-torsor $p\colon V\setminus Y\to X(V)$ to the associated symplectic vector bundle for the tautological representation; denote this symplectic vector bundle by $\gamma_V\colon E_{2n}(V)\to X(V)$. We denote by $S_{2n}(V)\to X(V)$ the complement of the zero-section of $\gamma_V$. Since $\gamma_V\colon E_{2n}(V)\to X(V)$ is a vector bundle and hence an $\mathbb{A}^1$-weak equivalence, we have $\widetilde{\op{CH}}^i(E_{2n}(V))\cong\widetilde{\op{CH}}^i(X(V))$. Moreover, the zero section  of $E_{2n}(V)\to X(V)$ is a subscheme of $E_{2n}(V)$ isomorphic to $X(V)$ and hence has the same Chow--Witt groups. All these are isomorphic to $\widetilde{\op{CH}}^i({\op{B}}\op{Sp}_{2n})$ in the range $i\leq s-2$. 

Now we want to compare this to the classifying space of $\op{Sp}_{2n-2}$ and actually identify $S_{2n}(V)$ as a bundle over a finite-dimensional approximation of its classifying space. Considering the representation $V$, the group $\op{Sp}_{2n-2}$, embedded via the standard stabilization embedding, acts freely on $V\setminus Y$ and the quotient morphism $(V\setminus Y)/\op{Sp}_{2n-2}\to X(V)$ is a Zariski locally trivial fiber bundle with fiber $\op{Sp}_{2n}/\op{Sp}_{2n-2}$. To identify the open complement of the zero section $S_{2n}(V)\to X(V)$, consider the $\op{Sp}_{2n}$-action on $\mathbb{A}^{2n}$ in the tautological representation. There are two orbits, $\{0\}$ and $\mathbb{A}^{2n}\setminus\{0\}$. The stabilizer group $G$ of a point in the open orbit is a semi-direct product of $\op{Sp}_{2n-2}$ acting on a unipotent subgroup of dimension $2n-1$ in $\op{Sp}_{2n}$, cf. e.g. \cite[p.280]{RojasVistoli}. Moreover, the inclusion of $\op{Sp}_{2n-2}\subset \op{Sp}_{2n}$ is conjugate to the standard stabilization embedding. This implies that the morphism $(V\setminus Y)/\op{Sp}_{2n-2}\to X(V)$ lifts to a map $(V\setminus Y)/\op{Sp}_{2n-2}\to S_{2n}(V)$ and this lift is a torsor under the unipotent radical of the stabilizer group $G$. In particular, we get an $\mathbb{A}^1$-weak equivalence $(V\setminus Y)/\op{Sp}_{2n-2}\to S_{2n}(V)$ which therefore induces an isomorphism of Chow--Witt rings. In particular, the open complement $S_{2n}(V)$ computes the Chow--Witt ring of ${\op{B}}\op{Sp}_{2n-2}$ in a range and by construction, the  morphism 
\[
\widetilde{\op{CH}}^\bullet(X(V))\xrightarrow{\gamma_V^\ast} \widetilde{\op{CH}}^\bullet(S_{2n}(V)) \cong \widetilde{\op{CH}}^\bullet((V\setminus Y)/\op{Sp}_{2n-2})
\]
induced from the natural quotient $(V\setminus Y)/\op{Sp}_{2n-2}\to X(V)$ is the stabilization morphism.

Consequently, by going to the limit over a suitable system of representations $V$, we find that there is an exact localization sequence for Chow--Witt rings of classifying spaces of symplectic groups
\begin{eqnarray*}
\cdots&\to&\widetilde{\op{CH}}^q_{{\op{B}}\op{Sp}_{2n}}(E_{2n})\to \widetilde{\op{CH}}^q({\op{B}}\op{Sp}_{2n})\to \widetilde{\op{CH}}^q({\op{B}}\op{Sp}_{2n-2})\\&\to& \op{H}^{q+1}_{\op{Nis},{\op{B}}\op{Sp}_{2n}}(E_{2n},\mathbf{K}^{\op{MW}}_{q})\to \cdots
\end{eqnarray*}
arising in finite-dimensional models from the schemes $E_{2n}(V)$ where the closed subscheme is the zero-section $X(V)\subset E_{2n}(V)$ and its open complement is $S_{2n}(V)$. The map $\widetilde{\op{CH}}^q({\op{B}}\op{Sp}_{2n})\to \widetilde{\op{CH}}^q({\op{B}}\op{Sp}_{2n-2})$ is the restriction along the stabilization inclusion $\op{Sp}_{2n-2}\to\op{Sp}_{2n}$. 

It remains to identify the cohomology of support in the localization sequence above with the appropriate cohomology groups of ${\op{B}}\op{Sp}_{2n}$. To do this, one uses  the d\'evissage isomorphisms
\begin{eqnarray*}
\widetilde{\op{CH}}^{q-2n}({\op{B}}\op{Sp}_{2n}) &\cong& \widetilde{\op{CH}}^q_{{\op{B}}\op{Sp}_{2n}}(E_{2n})\\
\op{H}^{q+1-2n}_{\op{Nis}}({\op{B}}\op{Sp}_{2n},\mathbf{K}^{\op{MW}}_{q-2n})&\cong&
\op{H}^{q+1}_{\op{Nis},{\op{B}}\op{Sp}_{2n}}(E_{2n},\mathbf{K}^{\op{MW}}_{q})
\end{eqnarray*}
which arise from the homotopy purity theorem of Morel--Voevodsky \cite{MV} and the identification of contractions $(\mathbf{K}^{\op{MW}}_n)_{-1}\cong \mathbf{K}^{\op{MW}}_{n-1}$, cf. \cite[Proposition 2.8]{AsokFaselSpheres}. 
As mentioned before, the d\'evissage isomorphism depends on the choice of orientation of the normal bundle of the closed subscheme. The right choice of such orientation is explained in the following remark: 

\begin{remark}
\label{rem:axiomchoice}
The d\'evissage isomorphism involves that the coefficients are twisted by the determinant of the normal bundle. 
From Totaro's computation in \cite{totaro:bg} we know that $\op{CH}^\bullet({\op{B}}\op{Sp}_{2n})\cong \mathbb{Z}[\op{c}_2,\dots,\op{c}_{2n}]$, hence the Picard group of line bundles on ${\op{B}}\op{Sp}_{2n}$ is trivial. In particular the determinant of the normal bundle can be trivialized. However, the d\'evissage isomorphism 
\[
\widetilde{\op{CH}}^{q-2n}({\op{B}}\op{Sp}_{2n}) \cong \widetilde{\op{CH}}^q_{{\op{B}}\op{Sp}_{2n}}(E_{2n}),
\]
depends on the choice of a trivialization of the normal bundle. There is a natural choice of trivialization of the universal symplectic bundle over ${\op{B}}\op{Sp}_{2n}$: the choice of a symplectic form $\omega$ on $\mathbb{A}^{2n}$ yields an orientation given by the volume form $\omega^n$. The natural representation of $\op{Sp}_{2n}$ on $\mathbb{A}^{2n}$ preserves the volume form by definition, and hence the vector bundle associated to the universal $\op{Sp}_{2n}$-torsor over ${\op{B}}\op{Sp}_{2n}$ and the natural representation yields an orientation on the universal symplectic bundle. 
Since the normal bundle of the zero section is the universal bundle itself, we obtain a preferred orientation for the normal bundle for ${\op{B}}\op{Sp}_{2n}\hookrightarrow E_{2n}$. This is the orientation we use for the identification in the d\'evissage isomorphism.
\end{remark}

Combining the above exact localization sequence with the d\'evissage isomorphisms we obtain the following statement: 

\begin{proposition}
\label{prop:sploc}
There is a long exact sequence of Milnor--Witt K-cohomology groups of classifying spaces
\begin{eqnarray*}
\cdots&\to&\widetilde{\op{CH}}^{q-2n}({\op{B}}\op{Sp}_{2n})\to \widetilde{\op{CH}}^q({\op{B}}\op{Sp}_{2n})\to \widetilde{\op{CH}}^q({\op{B}}\op{Sp}_{2n-2})\\&\to&
\op{H}^{q+1-2n}_{\op{Nis}}({\op{B}}\op{Sp}_{2n},\mathbf{K}^{\op{MW}}_{q-2n})\to \op{H}^{q+1}_{\op{Nis}}({\op{B}}\op{Sp}_{2n},\mathbf{K}^{\op{MW}}_q)\to\cdots
\end{eqnarray*}
The first map is the composition of the d\'evissage isomorphism with forgetting of support ("multiplication with the Euler class of the universal symplectic bundle $E_{2n}\to{\op{B}}\op{Sp}_{2n}$"). The second map is the restriction along the stabilization inclusion $\op{Sp}_{2n-2}\to\op{Sp}_{2n}$. 
\end{proposition}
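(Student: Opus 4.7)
The plan is to take the localization sequence already displayed just before the proposition (arising from the closed inclusion of the zero section $X(V)\hookrightarrow E_{2n}(V)$ with open complement $S_{2n}(V)$ on finite-dimensional models), and identify its terms via dévissage and via the $\mathbb{A}^1$-homotopy equivalences established in the geometric setup. First I would note that everything takes place inside smooth schemes of the form $(V\setminus Y)/\op{Sp}_{2n}$, so that the localization sequence of Section~\ref{funcandloc} applies, and passing to the limit over a cofinal system of representations $V$ (with codimension of $Y$ going to infinity) gives the sequence in the statement with ${\op{B}}\op{Sp}_{2n}$, ${\op{B}}\op{Sp}_{2n-2}$ in place of the approximating varieties.

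Next I would apply the dévissage isomorphism
\[
\widetilde{\op{CH}}^{q-2n}({\op{B}}\op{Sp}_{2n}) \xrightarrow{\cong} \widetilde{\op{CH}}^{q}_{{\op{B}}\op{Sp}_{2n}}(E_{2n})
\]
and its $\mathbf{K}^{\op{MW}}_\bullet$-analogue, noting that a priori this involves a twist by the determinant of the normal bundle to the zero section, i.e.\ by $\det(E_{2n})^\vee$. The key point is that for a symplectic bundle this determinant is canonically trivial, so no twist appears; at the level of finite-dimensional approximations this is also consistent with the triviality of $\op{Pic}({\op{B}}\op{Sp}_{2n})$ recalled in the remark above. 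Substituting these isomorphisms into the localization sequence yields the desired long exact sequence.

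It then remains to check the description of the two named maps. For the first map, combining the discussion after Lemma~\ref{lem:module} with the Euler class definition $\op{e}_{2n}=(\gamma_V^\ast)^{-1}(s_0)_\ast\langle 1\rangle$, the composite of dévissage and the forgetful map $\widetilde{\op{CH}}^q_{{\op{B}}\op{Sp}_{2n}}(E_{2n})\to\widetilde{\op{CH}}^q(E_{2n})\cong\widetilde{\op{CH}}^q({\op{B}}\op{Sp}_{2n})$ is exactly multiplication by the Euler class of the universal symplectic bundle, by construction of the Thom isomorphism. For the second map, the geometric setup shows that the map $\widetilde{\op{CH}}^q({\op{B}}\op{Sp}_{2n})\to\widetilde{\op{CH}}^q({\op{B}}\op{Sp}_{2n-2})$ coming from the localization sequence is induced by the quotient morphism $(V\setminus Y)/\op{Sp}_{2n-2}\to (V\setminus Y)/\op{Sp}_{2n}$, which by Proposition~\ref{prop:inducedhom} is precisely the restriction along the stabilization inclusion.

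The main obstacle I anticipate is the careful justification that the various isomorphisms and identifications are compatible with the limit process over representations (so that one obtains a well-defined long exact sequence on ${\op{B}}\op{Sp}_{2n}$, not only on each approximation). This reduces to checking that the stabilization maps between different models are compatible with the localization sequences and with dévissage; both are consequences of the base-change and naturality properties of the localization sequence already cited from \cite{fasel:memoir} and \cite{AsokFaselEuler} together with the double fibration argument used in Proposition~\ref{prop:chwbg}. Once this bookkeeping is done, the proposition follows.
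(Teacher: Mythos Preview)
Your proposal is correct and follows essentially the same approach as the paper: the proposition is obtained directly from the localization sequence set up in the preceding geometric discussion, combined with the d\'evissage isomorphism (with the twist vanishing since $\op{Pic}({\op{B}}\op{Sp}_{2n})$ is trivial). Your treatment is in fact more detailed than the paper's, which simply states that combining the localization sequence with d\'evissage yields the result; your explicit discussion of the limit compatibility and the identification of the two maps is welcome elaboration but not a different argument.
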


\subsection{Definition of characteristic classes}

Denote by $\op{e}_n=\op{e}(\op{Sp}_{2n})\in\widetilde{\op{CH}}^{2n}({\op{B}}\op{Sp}_{2n})$ the Euler class for the universal symplectic bundle $E_{2n}\to{\op{B}}\op{Sp}_{2n}$, cf. Section~\ref{sec:prelims}, given as the image of $\langle 1\rangle\in\op{GW}(F)$ under the map 
\[
\op{GW}(F) \to \widetilde{\op{CH}}^0({\op{B}}\op{Sp}_{2n})\cong \widetilde{\op{CH}}^{2n}_{{\op{B}}\op{Sp}_{2n}}(E_{2n})\to \widetilde{\op{CH}}^{2n}(E_{2n}) \cong \widetilde{\op{CH}}^{2n}({\op{B}}\op{Sp}_{2n}).
\] 
Note again that the Euler class depends on the choice of orientation, cf.~Remark~\ref{rem:axiomchoice}. 

\begin{proposition}
\label{prop:spuniq}
There are unique classes $\op{p}_q(\op{Sp}_{2n})\in\widetilde{\op{CH}}^{2q}({\op{B}}\op{Sp}_{2n})$  such that 
\begin{enumerate}
\item $i^\ast \op{p}_q(\op{Sp}_{2n})=\op{p}_q(\op{Sp}_{2n-2})$ for $q<n$, and
\item  $\op{p}_n(\op{Sp}_{2n})=\op{e}(\op{Sp}_{2n})$.
\end{enumerate}
\end{proposition}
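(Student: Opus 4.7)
The plan is to argue by induction on $n$ using the long exact localization sequence from Proposition~\ref{prop:sploc}. The base case $n=0$ is vacuous, as there are no Pontryagin classes to define on $\op{B}\op{Sp}_0$, and for $n=1$ the unique class $\op{p}_1(\op{Sp}_2)$ is pinned down by the requirement $\op{p}_1(\op{Sp}_2) = \op{e}(\op{Sp}_2)$.

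For the inductive step, assume the classes $\op{p}_q(\op{Sp}_{2n-2}) \in \widetilde{\op{CH}}^{2q}(\op{B}\op{Sp}_{2n-2})$ have been defined uniquely for $0 < q \leq n-1$. First set $\op{p}_n(\op{Sp}_{2n}) := \op{e}(\op{Sp}_{2n})$ as mandated by the statement. For $0 < q < n$, inspect the relevant segment of the localization sequence:
\[
\widetilde{\op{CH}}^{2q-2n}(\op{B}\op{Sp}_{2n}) \to \widetilde{\op{CH}}^{2q}(\op{B}\op{Sp}_{2n}) \xrightarrow{i^\ast} \widetilde{\op{CH}}^{2q}(\op{B}\op{Sp}_{2n-2}) \xrightarrow{\partial} \op{H}^{2q+1-2n}_{\op{Nis}}(\op{B}\op{Sp}_{2n}, \mathbf{K}^{\op{MW}}_{2q-2n}).
\]
The leftmost group sits in cohomological degree $2q-2n \leq -2$ and the rightmost in degree $2q+1-2n \leq -1$. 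Since Nisnevich cohomology of a sheaf vanishes in strictly negative cohomological degrees, both terms are zero and $i^\ast$ is an isomorphism in the relevant degree $2q$. Hence $\op{p}_q(\op{Sp}_{2n})$ can and must be defined as the unique preimage of $\op{p}_q(\op{Sp}_{2n-2})$, yielding both existence and uniqueness in one stroke.

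No real obstacle stands in the way: the whole argument reduces to the two cohomological vanishings above, which are automatic from degree considerations. In particular, the compatibility at $q = n-1$, which would require $i^\ast \op{p}_{n-1}(\op{Sp}_{2n}) = \op{e}(\op{Sp}_{2n-2})$, is automatic because $\op{p}_{n-1}(\op{Sp}_{2n})$ is obtained as the unique preimage of $\op{p}_{n-1}(\op{Sp}_{2n-2}) = \op{e}(\op{Sp}_{2n-2})$ under the isomorphism $i^\ast$. The two conditions of the proposition therefore determine the Pontryagin classes uniquely at every level of the induction.
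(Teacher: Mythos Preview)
Your proof is correct and follows essentially the same approach as the paper: both arguments use the localization sequence of Proposition~\ref{prop:sploc} and observe that the outer terms vanish in negative cohomological degrees, forcing $i^\ast$ to be an isomorphism in degree $2q$ for $q<n$. The paper is simply terser, stating the vanishing and the resulting isomorphism without spelling out the induction explicitly.
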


\begin{proof}
Using the d\'evissage isomorphisms, cf. the discussion before Remark~\ref{rem:axiomchoice}, we have $\widetilde{\op{CH}}^{q-2n}({\op{B}}\op{Sp}_{2n})=0$ for $q<2n$ and 
\[
\op{H}^{q+1}_{\op{Nis},{\op{B}}\op{Sp}_{2n}}(E_{2n},\mathbf{K}^{\op{MW}}_{2n})\cong 
\op{H}^{q+1-2n}_{\op{Nis}}({\op{B}}\op{Sp}_{2n},\mathbf{K}^{\op{MW}}_{q-2n})=0
\]
for $q<2n-1$ because the Chow--Witt groups are Nisnevich cohomology groups of $\mathbf{K}^{\op{MW}}_\bullet$. In particular, the restriction along $i\colon {\op{B}}\op{Sp}_{2n-2}\to{\op{B}}\op{Sp}_{2n}$ induces isomorphisms
\[
i^\ast\colon \widetilde{\op{CH}}^q({\op{B}}\op{Sp}_{2n})\cong \widetilde{\op{CH}}^q({\op{B}}\op{Sp}_{2n-2})
\]
for $q<2n-1$. Existence and uniqueness of the classes $\op{p}_q$ then follows from the definition of $\op{p}_n(\op{Sp}_{2n})$ as Euler class. 
\end{proof}

\begin{remark} \label{CHbig}
Once we know that there is a bigraded extension of $\widetilde{\op{CH}}$ satisfying the properties of Panin and Walter \cite{PaninWalter}, their machinery will produce Pontryagin classes in this bigraded theory which then may be compared to ours. Indeed, their Pontryagin classes also satisfy the restriction property, cf. \cite[Theorem 11.4]{PaninWalter}, and their top Pontryagin class coincides with the Thom class, cf. \cite[Theorem 13.2]{PaninWalter}. Hence by our uniqueness theorem below, their Pontryagin classes will be equal to ours and it follows that they all live in geometric bidegrees, which is not immediate from their axiomatic setting. See Section \ref{sec:sympor} for more details.
\end{remark}

Observe that as Pontryagin classes live in even degrees, they commute with all other classes in $\widetilde{\op{CH}}^{\bullet}({\op{B}}\op{Sp}_{2n})$ by Proposition \ref{prop:cwepsgraded}.

\begin{corollary}
\label{cor:spuniq}
Under the natural morphism 
\[
\widetilde{\op{CH}}^\bullet({\op{B}}\op{Sp}_{2n})\to \op{CH}^\bullet({\op{B}}\op{Sp}_{2n})\cong\mathbb{Z}[\op{c}_2,\op{c}_4,\dots,\op{c}_{2n}]
\]
the Pontryagin class $\op{p}_i$ maps to the Chern class $\op{c}_{2i}$. 
\end{corollary}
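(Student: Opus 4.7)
The plan is to argue by induction on $n$, exploiting the fact that the Pontryagin classes are uniquely characterized by the two conditions of Proposition~\ref{prop:spuniq} (compatibility with stabilization and identification of the top class with the Euler class), and then showing that their images in Chow cohomology satisfy the analogous conditions, which also pin down the Chern classes uniquely in the relevant range.

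First, I would handle the top degree $i=n$. By definition $\op{p}_n(\op{Sp}_{2n})$ is the Chow--Witt Euler class of the universal symplectic bundle. The change-of-coefficients map $\pi:\widetilde{\op{CH}}^\bullet\to\op{CH}^\bullet$ is compatible with proper push-forward and with the d\'evissage isomorphism (this is visible in the key diagram of Section~\ref{sec:key}, where all vertical and horizontal maps commute with the localization and forget-of-support maps). Consequently $\pi$ carries the Chow--Witt Euler class of $E_{2n}\to{\op{B}}\op{Sp}_{2n}$ to the Chow-theoretic Euler class of the same bundle, which for a rank $2n$ vector bundle is exactly the top Chern class $\op{c}_{2n}$. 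This settles the case $i=n$.

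Next, for $i<n$, I would use the stabilization inclusion $\iota:\op{Sp}_{2n-2}\hookrightarrow\op{Sp}_{2n}$ together with naturality of $\pi$ under pullbacks (Proposition~\ref{prop:inducedhom}). Applying $\pi$ to the defining identity $\iota^\ast\op{p}_i(\op{Sp}_{2n})=\op{p}_i(\op{Sp}_{2n-2})$ and using the induction hypothesis on $n$ yields
\[
\iota^\ast\pi(\op{p}_i(\op{Sp}_{2n}))=\pi(\op{p}_i(\op{Sp}_{2n-2}))=\op{c}_{2i}\in\op{CH}^{2i}({\op{B}}\op{Sp}_{2n-2}).
\]
On the other hand, $\iota^\ast E_{2n}\cong E_{2n-2}\oplus H$ for a hyperbolic plane $H$, which is trivial as a vector bundle, so $\iota^\ast\op{c}_{2i}=\op{c}_{2i}$ in $\op{CH}^{2i}({\op{B}}\op{Sp}_{2n-2})$ for $i<n$. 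Hence $\iota^\ast\bigl(\pi(\op{p}_i(\op{Sp}_{2n}))-\op{c}_{2i}\bigr)=0$.

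I would then conclude by observing that $\iota^\ast$ is injective on $\op{CH}^{2i}({\op{B}}\op{Sp}_{2n})$ for $i<n$: indeed $\op{CH}^\bullet({\op{B}}\op{Sp}_{2n})\cong\mathbb{Z}[\op{c}_2,\dots,\op{c}_{2n}]$ and $\iota^\ast$ sends $\op{c}_{2j}\mapsto\op{c}_{2j}$ for $j<n$ and $\op{c}_{2n}\mapsto 0$, so its kernel lies in the ideal generated by $\op{c}_{2n}$, which is zero in degrees strictly less than $2n$. This forces $\pi(\op{p}_i(\op{Sp}_{2n}))=\op{c}_{2i}$ and completes the induction (the base case $n=1$ is subsumed by the top-degree argument above). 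The main point to verify with some care is the compatibility of the Chow--Witt Euler class with $\pi$, but this is a direct consequence of the naturality of the localization sequence and the d\'evissage isomorphism encoded in the key diagram of Section~\ref{sec:key}; the rest is bookkeeping together with the standard identification of the top Chern class with the Chow-theoretic Euler class.
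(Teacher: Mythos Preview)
Your proof is correct and follows essentially the same approach as the paper: both arguments reduce to showing that the images of the $\op{p}_i$ satisfy the two characterizing conditions (top class maps to the Chow-theoretic Euler class $\op{c}_{2n}$ via compatibility of the localization/d\'evissage with the forgetful map, plus compatibility with stabilization), and then invoke the uniqueness of classes in $\op{CH}^\bullet({\op{B}}\op{Sp}_{2n})$ satisfying these conditions. You spell out this uniqueness explicitly via injectivity of $\iota^\ast$ in degrees $<2n$ using the polynomial ring structure, whereas the paper simply appeals to the analogue of Proposition~\ref{prop:spuniq} for Chow groups; the content is the same.
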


\begin{proof}
The concrete identification of the Chow-ring of the classifying space ${\op{B}}\op{Sp}_{2n}$ can be found in \cite{totaro:bg} or \cite{RojasVistoli}. An argument analogous to the proof of Proposition~\ref{prop:spuniq} shows that the Chern classes in $\op{CH}^\bullet({\op{B}}\op{Sp}_{2n})$ are uniquely determined by requiring that they are compatible with the stabilization in the symplectic series and that $\op{c}_{2n}$ equals the Euler class $\op{e}_{2n}$ in the Chow ring. These two properties are satisfied for the Chern classes in the Chow ring, essentially by inspection of the computation of $\op{CH}^{\bullet}({\op{B}}\op{Sp}_{2n})$ in \cite{RojasVistoli}. Now both the top Pontryagin class in the Chow--Witt ring and the top Chern class in the Chow ring are defined in terms of the Euler class.
Moreover, the Chow--Witt-theoretic Euler class naturally maps to the Chow-theoretic Euler class, as they are both defined in terms of the localization sequence for the universal bundle, and these
localization sequences are compatible under the forgetful map
$\widetilde{\op{CH}} \to \op{CH}$, we conclude that the top Pontryagin class must map to the top Chern class. Compatibility with stabilization proves the claim for all Pontryagin classes.
\end{proof}

\subsection{Computation of the Chow--Witt ring}

Next, we have to discuss the independence of the classes. In Brown's argument, cf. \cite{brown}, this also follows directly from the long exact cohomology sequence. This argument doesn't quite work for the Chow--Witt rings because the long exact sequence for $\mathbf{K}^{\op{MW}}_n$-cohomology only has a short piece (three terms) with Chow--Witt groups, and we have little information on the remaining pieces of the $\mathbf{K}^{\op{MW}}_n$-cohomology of ${\op{B}}\op{Sp}_{2n}$. This problem already appears with Chow-rings. One way to deal with it, cf. \cite{RojasVistoli}, is to restrict to tori, to compute the images of the characteristic classes and to show independence there. This is basically the splitting principle. However, for our situation, we should not look at the maximal torus of $\op{Sp}_{2n}$ but at the ``quaternionic torus'' $\op{SL}_2^{\times n}\hookrightarrow\op{Sp}_{2n}$. There are  corresponding quaternionic projective spaces $\mathbb{HP}^n$ which are finite-dimensional approximations to ${\op{B}}\op{Sp}_2$. The quaternionic projective $n$-space $\mathbb{HP}^n$ can be defined as the open subscheme of those $2$-dimensional subspaces in a $2n+2$-dimensional symplectic vector space such that the restriction of the symplectic form is nondegenerate. Said differently there is an identification  $\mathbb{HP}^n\cong\op{Sp}_{2n+2}/\op{Sp}_2\times\op{Sp}_{2n}$. More information on the subtle geometry of quaternionic projective spaces can be found in \cite{PaninWalter}, which also discusses the symplectic splitting principle and the relevant flag schemes. Actually, to get the splitting principle, we can use the quaternionic projective bundle formula for trivial bundles, found in \cite[Theorem 8.1]{PaninWalter}. An alternative description of the motivic cell structure of $\mathbb{HP}^n$ which also allows to compute the cohomology and deduce the symplectic splitting principle is obtained in \cite{voelkel}.

\begin{proposition}
\label{prop:hpn}
Let $S$ be a smooth scheme. For any $n$, there are isomorphisms 
\[
\widetilde{\op{CH}}^\bullet(\mathbb{HP}^n\times S)\cong \widetilde{\op{CH}}^\bullet(S)[\op{p}_1]/(\op{p}_1^{n+1}),
\]
where $\op{p}_1$ is the Pontryagin class of the tautological rank 2 symplectic bundle over $\mathbb{HP}^n$. Stabilization to the limit $n\to\infty$ yields an isomorphism
\[
\widetilde{\op{CH}}^\bullet(S\times{\op{B}}\op{SL}_2)\cong \widetilde{\op{CH}}^\bullet(S)[\op{p}_1]. 
\]
\end{proposition}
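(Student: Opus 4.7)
The plan is to reduce the finite-$n$ statement to the quaternionic projective bundle formula, and then pass to a colimit for the $\op{B}\op{SL}_2$ assertion. The key identification is $\mathbb{HP}^n \times S \cong \mathbb{HP}(E)$, where $E$ denotes the trivial rank-$2(n+1)$ symplectic bundle over $S$; the tautological rank-$2$ symplectic subbundle over $\mathbb{HP}^n \times S$ coincides with the tautological subbundle of $\mathbb{HP}(E)$.

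First, I would apply the quaternionic projective bundle formula of \cite[Theorem 8.1]{PaninWalter}, or equivalently the cellular computation of \cite{voelkel}, to the bundle $E$ on $S$. This produces a decomposition of $\widetilde{\op{CH}}^\bullet(\mathbb{HP}(E))$ as a free $\widetilde{\op{CH}}^\bullet(S)$-module with basis $1, \op{p}_1, \op{p}_1^2, \ldots, \op{p}_1^n$, subject to a single monic relation of degree $n+1$ in $\op{p}_1$ whose sub-leading coefficients are the Pontryagin classes $\op{p}_i(E)$. Since $E$ is pulled back from $\op{Spec} F$ along the structure morphism $S \to \op{Spec} F$, its Pontryagin classes $\op{p}_i(E)$ vanish for all $i \geq 1$ by the naturality half of Proposition~\ref{prop:spuniq}. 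The relation therefore collapses to $\op{p}_1^{n+1} = 0$, yielding the first asserted isomorphism.

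For the stabilization, the natural inclusions $\mathbb{HP}^n \hookrightarrow \mathbb{HP}^{n+1}$ give a cofinal system of finite-dimensional approximations to $\op{B}\op{SL}_2 = \op{B}\op{Sp}_2$. By Proposition~\ref{prop:chwbg}, for each fixed degree $q$ the group $\widetilde{\op{CH}}^q(\mathbb{HP}^n\times S)$ stabilizes once $n$ is large relative to $q$. The classes $\op{p}_1$ are compatible across this system by naturality, and the relations $\op{p}_1^{n+1}=0$ are released in the colimit, producing the free polynomial algebra $\widetilde{\op{CH}}^\bullet(S)[\op{p}_1]$.

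The hard part will be ensuring that the Panin--Walter framework is actually available for Chow--Witt theory. Although \cite{PaninWalter} phrase their result axiomatically in terms of symplectically oriented ring cohomology theories, the required ingredients (Thom isomorphism for symplectic bundles, projection formula, Whitney-sum multiplicativity of Euler/Pontryagin classes, and naturality under pullback) have been developed for $\widetilde{\op{CH}}^\bullet$ in Section~\ref{sec:prelims} and earlier in the present section; see in particular Lemma~\ref{lem:eulermult} and Proposition~\ref{prop:spuniq}. A fully self-contained alternative proof would proceed by induction on $n$ using the Bruhat stratification of $\mathbb{HP}^n$: the open stratum at each step is $\mathbb{A}^1$-acyclic, so the localization sequence splits via a section coming from any $F$-rational point of $\mathbb{HP}^n$, and the delicate remaining step is to identify the successive push-forward maps with multiplication by $\op{p}_1$ through a Thom-isomorphism computation for the normal bundles of the strata.
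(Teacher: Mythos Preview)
Your approach is essentially the same as the paper's: invoke the Panin--Walter argument for the trivial quaternionic projective bundle (noting, as you do, that the axioms of \cite{PaninWalter} are not literally satisfied but the proof goes through because $\op{Pic}(\mathbb{HP}^n)=0$), and then stabilize. The paper likewise points to the cell structure of \cite{voelkel} as an alternative route, matching your closing paragraph.

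There is one genuine gap in your stabilization step. You assert that the inclusions $\mathbb{HP}^n\hookrightarrow\mathbb{HP}^{n+1}$ form a cofinal system of finite-dimensional approximations to ${\op{B}}\op{SL}_2$ and then invoke Proposition~\ref{prop:chwbg}. But Proposition~\ref{prop:chwbg} applies to quotients of the form $(V\setminus Y)/G$ for representations $V$, and $\mathbb{HP}^n$ is not literally of this form. The paper supplies the missing identification: writing $\mathbb{HP}^n\cong\op{Sp}_{2n+2}/(\op{SL}_2\times\op{Sp}_{2n})$ and using the $\op{Sp}_{2n+2}$-equivariant vector-bundle torsor $\op{Q}_{4n+3}\to\mathbb{A}^{2n+2}\setminus\{0\}$, one obtains a vector-bundle torsor $\mathbb{HP}^n\to(\mathbb{A}^{2n+2}\setminus\{0\})/\op{SL}_2$, so that $\mathbb{HP}^n$ is $\mathbb{A}^1$-weakly equivalent to a quotient of the required shape with closed complement of codimension $2n+2$. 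Without this step, your appeal to Proposition~\ref{prop:chwbg} is not justified. A smaller point: the vanishing of $\op{p}_i(E)$ for the trivial bundle follows not from Proposition~\ref{prop:spuniq} itself but from the fact that the classifying map of $E$ factors through $\op{Spec}F$, together with Proposition~\ref{prop:characteristic}.
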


\begin{proof}
Even though Chow--Witt groups do not exactly fit the definition of symplectically oriented theory in \cite{PaninWalter}, the argument in loc.cit. can be done verbatim for Chow--Witt groups. Note that the Pontryagin class $\op{p}_1$ depends on the choice of orientation of the tautological bundle on $\mathbb{HP}^\infty$. For the preferred choice of orientation, cf.~Remark~\ref{rem:axiomchoice}.

Computations with the localization sequence show that the inclusion $\widetilde{\op{CH}}^i(\mathbb{HP}^n\times S)\to \widetilde{\op{CH}}^i(\mathbb{HP}^{n+1}\times S)$ induces an isomorphism in degrees $i<2n$. The explicit localization situation, which presents $\mathbb{HP}^{n+1}\times S$ as homotopy pushout of a Hopf-type map $\mathbb{H}\op{S}^{n}\cong\op{Q}_{4n+3}\times S\to \mathbb{HP}^n\times S$, can be found in \cite[Sections 4.2, 4.4]{voelkel}. This implies that 
\[
\widetilde{\op{CH}}^\bullet(\mathbb{HP}^n\times S)\cong \widetilde{\op{CH}}^\bullet(S)[\zeta]/(\zeta^{n+1}).
\]

The identification $\mathbb{HP}^\infty\cong{\op{B}}\op{SL}_2$ can be seen as follows: as a first step, we identify $\mathbb{HP}^n\cong \op{Sp}_{2n+2}/\op{SL}_2\times\op{Sp}_{2n}$. In particular, we can write $\mathbb{HP}^n\cong (\op{Sp}_{2n+2}/\op{Sp}_{2n})/\op{SL}_2$. Now $\op{Sp}_{2n+2}/\op{Sp}_{2n}\cong\op{Q}_{4n+3}$ and there is a natural $\op{Sp}_{2n+2}$-equivariant vector bundle torsor $\op{Q}_{4n+3}\to\mathbb{A}^{2n+2}\setminus\{0\}$ where the target has the tautological $\op{Sp}_{2n+2}$-action. The induced action of $\op{SL}_2$ is free and induces a vector bundle torsor $\mathbb{HP}^n\to(\mathbb{A}^{2n+2}\setminus\{0\})/\op{SL}_2$. This shows that up to $\mathbb{A}^1$-weak equivalence, $\mathbb{HP}^n$ is a $2n$-acyclic approximation of ${\op{B}}\op{SL}_2$. 
\end{proof}

\begin{corollary}
\label{cor:sl2n}
For every $n$, there are canonical isomorphisms
\[
\widetilde{\op{CH}}^\bullet({\op{B}}\op{SL}_2^{\times n})\cong\op{GW}(F)[\op{p}_{1,1},\dots,\op{p}_{1,n}],
\]
where the class $\op{p}_{1,i}$ is the first Pontryagin class of the universal $\op{SL}_2$-bundle over the $i$-th factor.
\end{corollary}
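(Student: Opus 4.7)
The plan is to prove the corollary by iterating Proposition~\ref{prop:hpn} on finite-dimensional approximations of ${\op{B}}\op{SL}_2^{\times n}$ and then passing to the colimit. The base case $n=1$ follows directly from the second isomorphism in Proposition~\ref{prop:hpn} taking $S=\op{Spec}(F)$, so the task is an inductive step.

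First I would set up the geometric approximation. Recall from the proof of Proposition~\ref{prop:hpn} that $\mathbb{HP}^N$ (up to $\mathbb{A}^1$-weak equivalence) is $(\mathbb{A}^{2N+2}\setminus\{0\})/\op{SL}_2$, and as such it is a $2N$-acyclic finite-dimensional approximation of ${\op{B}}\op{SL}_2$. Taking the $n$-fold product, $(\mathbb{HP}^N)^n\simeq\bigl((\mathbb{A}^{2N+2}\setminus\{0\})^n\bigr)/\op{SL}_2^{\times n}$ arises from the representation $V=(\mathbb{A}^{2N+2})^n$ of $\op{SL}_2^{\times n}$ with bad locus of codimension $\geq 2N+2$. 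By Proposition~\ref{prop:chwbg}, this scheme computes $\widetilde{\op{CH}}^{<2N}({\op{B}}\op{SL}_2^{\times n})$.

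Next I would iterate Proposition~\ref{prop:hpn}. Applying it with $S=(\mathbb{HP}^N)^{n-1}$ gives
\[
\widetilde{\op{CH}}^\bullet((\mathbb{HP}^N)^n)\cong \widetilde{\op{CH}}^\bullet((\mathbb{HP}^N)^{n-1})[\op{p}_{1,n}]/(\op{p}_{1,n}^{N+1}),
\]
where $\op{p}_{1,n}$ is the first Pontryagin class of the tautological rank $2$ symplectic bundle on the $n$-th $\mathbb{HP}^N$-factor, pulled back to the product. Iterating $n$ times yields
\[
\widetilde{\op{CH}}^\bullet((\mathbb{HP}^N)^n)\cong \op{GW}(F)[\op{p}_{1,1},\dots,\op{p}_{1,n}]/(\op{p}_{1,1}^{N+1},\dots,\op{p}_{1,n}^{N+1}).
\]
Passing to the colimit as $N\to\infty$ (equivalently, noting that each truncation in degree $<2N$ stabilizes) gives the claimed polynomial ring.

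Finally I would identify the generators with Pontryagin classes of the universal $\op{SL}_2$-bundles. The tautological rank $2$ symplectic bundle on the $i$-th $\mathbb{HP}^N$-factor is by construction the pullback of the universal $\op{SL}_2$-bundle on ${\op{B}}\op{SL}_2$ along the $i$-th projection (this is the content of the identification $\mathbb{HP}^N\simeq{\op{B}}\op{SL}_2$ in Proposition~\ref{prop:hpn}). By the compatibility of Pontryagin classes with pullback (Proposition~\ref{prop:spuniq} together with the naturality property of characteristic classes, Proposition~\ref{prop:characteristic}), the generator $\op{p}_{1,i}$ coincides with the pullback of $\op{p}_1\in\widetilde{\op{CH}}^2({\op{B}}\op{SL}_2)$ along the $i$-th projection ${\op{B}}\op{SL}_2^{\times n}\to{\op{B}}\op{SL}_2$, as required.

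The main step that requires care is the iteration: Proposition~\ref{prop:hpn} is stated for a product $\mathbb{HP}^n\times S$ with a smooth scheme $S$, and one must check that the formula is applied consistently at each stage with the right polynomial generator. Since each application only introduces a fresh polynomial variable with truncation depending on $N$, and these truncations disappear in the colimit, there is no real obstacle beyond bookkeeping; the only conceptual point is matching the generator produced by the proposition with the Pontryagin class of the $i$-th universal bundle, which is handled by the naturality argument above.
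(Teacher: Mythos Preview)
Your proposal is correct and is exactly the argument the paper has in mind: the corollary is stated immediately after Proposition~\ref{prop:hpn} with no proof given, precisely because it follows by iterating that proposition over the factors of $(\mathbb{HP}^N)^n$ and passing to the limit $N\to\infty$. Your write-up simply makes explicit the bookkeeping (finite-dimensional models, truncation, and identification of the generators) that the paper leaves implicit.
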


Before establishing the symplectic splitting principle, we need an explicit geometric model for the Whitney sum map ${\op{B}}\op{Sp}_{2m}\times{\op{B}}\op{Sp}_{2n-2m}\to{\op{B}}\op{Sp}_{2n}$. For this, consider a slight modification from Proposition~\ref{prop:inducedhom}: let $V$ be an appropriate representation for $\op{Sp}_{2m}$ with closed subset $S_V$ of codimension $s$ and let $W$ be an appropriate representation for $\op{Sp}_{2n-2m}$ with closed subset $S_W$ of codimension $s$. Then $V\oplus W$ is naturally a representation for $\op{Sp}_{2m}\times\op{Sp}_{2n-2m}$ and we obtain a model for ${\op{B}}(\op{Sp}_{2m}\times\op{Sp}_{2n-2m})$ by 
$((V\setminus S_V)\times (W\setminus S_W))/(\op{Sp}_{2m}\times\op{Sp}_{2n-2m})$. 
If we now take the induced $\op{Sp}_{2n}$-representation $U:=\op{Ind}^{\op{Sp}_{2n}}_{\op{Sp}_{2m}\times\op{Sp}_{2n-2m}}(V\oplus W)$ and add a suitable $\op{Sp}_{2n}$-representation so that for the resulting representation  $U'$ the group $\op{Sp}_{2n}$ acts freely outside a closed subset of codimension $s$, we obtain a model for the Whitney sum map as
\[
\omega\colon ((V\setminus S_V)\times (W\setminus S_W))/(\op{Sp}_{2m}\times\op{Sp}_{2n-2m})\to (U'\setminus S_{U'})/\op{Sp}_{2n}.
\]
We have the $\op{Sp}_{2n}$-torsor $U'\setminus S_{U'}\to (U'\setminus S_{U'})/\op{Sp}_{2n}$. Pulling back this torsor along $\omega$ yields, by construction, the $\op{Sp}_{2n}$-torsor given by applying the inclusion of the block matrix to the transition functions of the two torsors $V\setminus S_V\to (V\setminus S_V)/\op{Sp}_{2m}$ and $W\setminus S_W\to (W\setminus S_W)/\op{Sp}_{2n-2m}$. Consider the vector bundle $E_{2n}(U')\to (U'\setminus S_{U'})/\op{Sp}_{2n}$ obtained as associated bundle for the torsor $U'\setminus S_{U'}\to(U'\setminus S_{U'})/\op{Sp}_{2n}$ with the tautological $2n$-dimensional representation of $\op{Sp}_{2n}$. By the above discussion, the pullback of the bundle $E_{2n}(U')\to (U'\setminus S_{U'})/\op{Sp}_{2n}$ along this map to $((V\setminus S_V)\times (W\setminus S_W))/(\op{Sp}_{2m}\times\op{Sp}_{2n-2m})$ yields the Whitney sum of the pullbacks of the vector bundles $E_{2m}(V)\to (V\setminus S_V)/\op{Sp}_{2m}$ and $E_{2n-2m}(W)\to(W\setminus S_W)/\op{Sp}_{2n-2m}$, respectively. 

The following result is our {\em weak splitting principle}. The actual splitting principle states that the map in question is an isomorphism,
which is proved as parts (1) and (2) of Theorem \ref{thm:symplectic} further below.

\begin{proposition}
\label{prop:symsplit}
The restriction of the top Pontryagin class $\op{p}_n$ along the orthogonal sum map ${\op{B}}(\op{Sp}_{2m}\times\op{Sp}_{2n-2m})\to{\op{B}}\op{Sp}_{2n}$ equals the exterior product $\op{p}_m\boxtimes\op{p}_{n-m}$. Consequently, the restriction morphism
\[
\widetilde{\op{CH}}^\bullet({\op{B}}\op{Sp}_{2n})\to \widetilde{\op{CH}}^\bullet({\op{B}}\op{SL}_2^{\times n})\cong \op{GW}(F)[\op{p}_{1,1},\dots,\op{p}_{1,n}]
\]
maps the Pontryagin class $\op{p}_i$ to the $i$-th elementary symmetric polynomial in the classes $\op{p}_{1,1},\dots,\op{p}_{1,n}$. 
\end{proposition}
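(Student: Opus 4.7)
The plan is to derive both statements from the multiplicativity of the Euler class (Lemma~\ref{lem:eulermult}) combined with the uniqueness characterization of the Pontryagin classes in Proposition~\ref{prop:spuniq}.

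First I would treat the statement about $\op{p}_n$. By definition $\op{p}_n=\op{e}(E_{2n})$, and the geometric model for the orthogonal sum map $\omega$ set up immediately before the proposition exhibits $\omega^*E_{2n}$ as the Whitney sum of the pullbacks of the two universal bundles, that is, as the exterior sum $E_{2m}\boxtimes E_{2n-2m}$ on ${\op{B}}\op{Sp}_{2m}\times{\op{B}}\op{Sp}_{2n-2m}$. Naturality of the Euler class together with Lemma~\ref{lem:eulermult} then gives
\[
\omega^*\op{p}_n \;=\; \op{e}(E_{2m}\boxtimes E_{2n-2m}) \;=\; \op{e}(E_{2m})\boxtimes\op{e}(E_{2n-2m}) \;=\; \op{p}_m\boxtimes\op{p}_{n-m}.
\]

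For the splitting statement, iterate this computation along the standard chain of orthogonal sum maps ${\op{B}}\op{SL}_2^{\times n}\to\cdots\to{\op{B}}\op{Sp}_{2n}$; in combination with Corollary~\ref{cor:sl2n} this gives $\iota^*\op{p}_n = \op{p}_{1,1}\cdots\op{p}_{1,n}=e_n(\op{p}_{1,1},\dots,\op{p}_{1,n})$ for the composite embedding $\iota:{\op{B}}\op{SL}_2^{\times n}\to{\op{B}}\op{Sp}_{2n}$. For $\op{p}_i$ with $i<n$, I would arrange the block-matrix embeddings so that the inclusion of the first $i$ copies of $\op{SL}_2$ factors as ${\op{B}}\op{SL}_2^{\times i}\to{\op{B}}\op{Sp}_{2i}\to{\op{B}}\op{Sp}_{2n}$, with the second map the standard stabilization. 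Then Proposition~\ref{prop:spuniq} identifies the restriction of $\op{p}_i(\op{Sp}_{2n})$ along the stabilization with $\op{p}_i(\op{Sp}_{2i})=\op{e}(E_{2i})$, and the top case just established identifies its further restriction to ${\op{B}}\op{SL}_2^{\times i}$ with $\op{p}_{1,1}\cdots\op{p}_{1,i}$, which is exactly the image of $e_i(\op{p}_{1,1},\dots,\op{p}_{1,n})$ under setting $\op{p}_{1,j}=0$ for $j>i$.

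To finish, I would observe that every permutation of the $n$ copies of $\op{SL}_2$ inside $\op{Sp}_{2n}$ is realized by a block-permutation matrix lying in $\op{Sp}_{2n}$, hence acts trivially on $\widetilde{\op{CH}}^\bullet({\op{B}}\op{Sp}_{2n})$; consequently $\iota^*\op{p}_i$ is $S_n$-invariant and lies in the ring of symmetric polynomials in the $\op{p}_{1,j}$. Any symmetric polynomial of degree $i$ in $n$ variables is a weighted polynomial in the elementary symmetric functions $e_1,\dots,e_i$ (only these can contribute in degree $i$), hence is determined by its restriction under $\op{p}_{1,j}\mapsto 0$ for $j>i$. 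By the previous paragraph this pins down $\iota^*\op{p}_i=e_i(\op{p}_{1,1},\dots,\op{p}_{1,n})$. I do not anticipate a serious obstacle; the only mildly delicate point is to set up the torus embedding and the stabilization compatibly in block-matrix form, which is automatic with the standard choices.
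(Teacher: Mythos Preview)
Your proposal is correct and follows essentially the same route as the paper. The first half---identifying $\omega^*\op{p}_n$ via naturality of the Euler class and Lemma~\ref{lem:eulermult}---is identical to the paper's argument.

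For the second half there is only a cosmetic difference. The paper argues by induction on $n$ and, for $\op{p}_i$ with $i<n+1$, checks the restriction to \emph{every} $i$-element subset of the $\op{SL}_2$-factors, observing that each such restriction factors through a copy of $\op{Sp}_{2i}$ and hence yields the monomial $\prod_{j\in I}\op{p}_{1,j}$; since every monomial of total degree $i$ is supported on some $i$-subset, this pins down the coefficients. You instead invoke $S_n$-symmetry once (via block permutations being inner in $\op{Sp}_{2n}$), and then use that a degree-$i$ symmetric polynomial in $n\ge i$ variables is determined by its restriction to $i$ variables. Both arguments rest on the same two inputs---Proposition~\ref{prop:spuniq} and the top-class case---and both implicitly use that conjugation by a block permutation acts trivially on $\widetilde{\op{CH}}^\bullet({\op{B}}\op{Sp}_{2n})$; you make this explicit, the paper leaves it tacit in the phrase ``can be factored as $\op{SL}_2^{\times i}\hookrightarrow\op{Sp}_{2i}\hookrightarrow\op{Sp}_{2n+2}$''. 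If you want to be fully self-contained, one sentence justifying that inner automorphisms act trivially (e.g.\ via the characteristic-class interpretation of Proposition~\ref{prop:characteristic}) would close the only point you flagged as ``mildly delicate''.
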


\begin{proof}
We first prove the statement about the orthogonal sum map for the top Pontryagin class, i.e., the restriction of $\op{p}_n$ along ${\op{B}}(\op{Sp}_{2m}\times\op{Sp}_{2n-2m})\to {\op{B}}\op{Sp}_{2n}$ is equal to $\op{p}_m\boxtimes \op{p}_{n-m}$. We freely use the notation introduced in discussing the finite-dimensional model for the Whitney sum map. Note also that the model introduced for ${\op{B}}(\op{Sp}_{2m}\times\op{Sp}_{2n-2m})$ has a product structure, so that talking about the exterior product makes sense.

By Proposition~\ref{prop:spuniq}, the class $\op{p}_n$ is the Euler class of the bundle $\op{E}_{2n}(U')\to (U'\setminus S_{U'})/\op{Sp}_{2n}$. By functoriality of Euler classes discussed in Section~\ref{sec:prelims}, $\op{p}_n$ restricts to the Euler class of the Whitney sum of the vector bundles $E_{2m}(V)\to (V\setminus S_V)/\op{Sp}_{2m}$ and $E_{2n-2m}(W)\to(W\setminus S_W)/\op{Sp}_{2n-2m}$. 

It remains to show that the Euler class of the Whitney sum is the exterior product of the Euler classes of the summands. By definition, the Euler classes associated to $E_{2m}(V)$ and $E_{2n-2m}(W)$ are the top Pontryagin classes $\op{p}_m\in\widetilde{\op{CH}}^{2m}((V\setminus S_V)/\op{Sp}_{2m})$ and $\op{p}_{n-m}\in\widetilde{\op{CH}}^{2n-2m}((W\setminus S_W)/\op{Sp}_{2n-2m})$, respectively. Now Lemma~\ref{lem:eulermult} implies that the Euler class of the exterior product sum $E_{2m}(V)\boxtimes E_{2n-2m}(W)$ is the exterior product of the Euler classes of the summands, i.e., equals $\op{p}_m\boxtimes\op{p}_{n-m}$. Since the Euler classes considered here live in even degrees, there is no sign issue, cf.~Proposition~\ref{prop:cwepsgraded}.

Now we prove the second statement, the weak splitting principle, by induction. Assume we know the weak splitting principle for the Pontryagin classes for $\op{Sp}_{2n}$, we want to deduce it for $\op{Sp}_{2n+2}$. For the top Pontryagin class $\op{p}_{n+1}$, this follows from the multiplicativity statement already established and the inductive assumption for $\op{p}_{n}$: the class $\op{p}_n$ maps to the product $\op{p}_{1,1}\cdot\cdots\cdot\op{p}_{1,n}$, hence $\op{p}_{n+1}$ maps to $\op{p}_{1,1}\cdot\cdots\cdot\op{p}_{1,n+1}$. To deal with a Pontryagin class $\op{p}_i$ for $i<n+1$, recall that $\op{p}_i$ is defined for $\op{Sp}_{2n+2}$ by the fact that its restriction to $\op{Sp}_{2n}$ is $\op{p}_i$. The image of $\op{p}_i$ in $\op{GW}[\op{p}_{1,1},\dots,\op{p}_{1,n+1}]$ is a homogeneous polynomial of degree $2i$ and we need to determine the monomials. For each of the $\binom{n+1}{i}$ possibilities of choosing $i$ factors from $\op{SL}_2^{\times n+1}$, the corresponding inclusion $\op{SL}_2^{\times i}\hookrightarrow\op{SL}_2^{\times n+1}\hookrightarrow\op{Sp}_{2n+2}$ can be factored as $\op{SL}_2^{\times i}\hookrightarrow\op{Sp}_{2i}\hookrightarrow\op{Sp}_{2n+2}$. In this latter composition, $\op{p}_i(\op{Sp}_{2n+2})$ restricts to $\op{p}_i(\op{Sp}_{2i})$, which is the top Pontryagin class and consequently maps to the product $\prod\op{p}_{1,j}$ running over $j\in I$ with the appropriate index set. We find that each monomial of degree $2i$ in the $\op{p}_{1,j}$ appears with coefficient 1 in the restriction of $\op{p}_i(\op{Sp}_{2n+2})$. This means that $\op{p}_i(\op{Sp}_{2n+2})$ restricts to the elementary symmetric polynomial in the $\op{p}_{1,1},\dots,\op{p}_{1,n+1}$. 
\end{proof}

\begin{remark}
Note that we are not using a K\"unneth formula in the above proof. All the computations are done in Chow--Witt rings of finite-dimensional models of ${\op{B}}\op{SL}_2^{\times n}$ which are obtained as products of finite-dimensional quaternionic projective spaces. The exterior product of cycles in this situation corresponds to the polynomial ring multiplication under the isomorphism of Corollary~\ref{cor:sl2n}.
\end{remark}

Now we have all the information required to compute the Chow--Witt rings of the classifying spaces of the symplectic groups. The final argument is the same induction as in \cite{brown}, where we have already established the existence and uniqueness of the Pontryagin classes and have a weak form of the splitting principle to prove their independence. 

\begin{theorem}
\label{thm:symplectic}
\begin{enumerate}[(1)]
\item 
For every $n$, the Pontryagin classes $\op{p}_{i}\in\widetilde{\op{CH}}^{2i}({\op{B}}\op{Sp}_{2n})$ defined in Proposition~\ref{prop:spuniq} induce an isomorphism
\[
\widetilde{\op{CH}}^\bullet({\op{B}}\op{Sp}_{2n})\cong \op{GW}(F)[\op{p}_1,\dots,\op{p}_n].
\]
\item 
The restriction along ${\op{B}}\op{SL}_2^{\times n}\to{\op{B}}\op{Sp}_{2n}$ is injective on Chow--Witt rings. 
\item 
The restriction along the orthogonal sum  ${\op{B}}\op{Sp}_{2m}\times{\op{B}}\op{Sp}_{2n-2m}\to{\op{B}}\op{Sp}_{2n}$ maps the Pontryagin classes as follows:
\[
\op{p}_i\mapsto\sum^m_{j=i+m-n}\op{p}_j\boxtimes\op{p}_{i-j}.
\]
\item 
Under the canonical morphism $\widetilde{\op{CH}}^\bullet({\op{B}}\op{Sp}_{2n})\to \op{CH}^\bullet({\op{B}}\op{Sp}_{2n})$, the Pontryagin class $\op{p}_i$ maps to the Chern class $\op{c}_{2i}$. 
\end{enumerate}
\end{theorem}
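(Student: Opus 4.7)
The plan is to prove (1) directly and to deduce (2), (3) from (1), noting that part (4) has already been established in Corollary~\ref{cor:spuniq}. The classes from Proposition~\ref{prop:spuniq} induce a graded $\op{GW}(F)$-algebra homomorphism
\[
\phi_n \colon \op{GW}(F)[\op{p}_1,\dots,\op{p}_n] \to \widetilde{\op{CH}}^\bullet({\op{B}}\op{Sp}_{2n}),
\]
and the entire content of (1) is that $\phi_n$ is an isomorphism.

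For injectivity of $\phi_n$, I would post-compose with the restriction along the quaternionic torus inclusion ${\op{B}}\op{SL}_2^{\times n} \hookrightarrow {\op{B}}\op{Sp}_{2n}$. By Corollary~\ref{cor:sl2n}, the target is $\op{GW}(F)[\op{p}_{1,1},\dots,\op{p}_{1,n}]$, and by the weak splitting principle (Proposition~\ref{prop:symsplit}) the composite sends $\op{p}_i$ to the $i$-th elementary symmetric polynomial $e_i(\op{p}_{1,1},\dots,\op{p}_{1,n})$. Since the elementary symmetric polynomials are algebraically independent over any commutative ring, this composite is the inclusion $\op{GW}(F)[e_1,\dots,e_n] \hookrightarrow \op{GW}(F)[\op{p}_{1,1},\dots,\op{p}_{1,n}]$ of the symmetric polynomial subalgebra, which is manifestly injective; hence $\phi_n$ is injective.

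For surjectivity, I would use a double induction on $n$ and on the cohomological degree $q$, with the localization sequence of Proposition~\ref{prop:sploc}
\[
\widetilde{\op{CH}}^{q-2n}({\op{B}}\op{Sp}_{2n}) \xrightarrow{\cdot\op{p}_n} \widetilde{\op{CH}}^q({\op{B}}\op{Sp}_{2n}) \xrightarrow{i^\ast} \widetilde{\op{CH}}^q({\op{B}}\op{Sp}_{2n-2})
\]
as the main input. The base cases $n=0$ and $q<2n$ are handled by Proposition~\ref{prop:spuniq} together with the vanishing of negative-degree Chow--Witt groups. In the inductive step, given $x \in \widetilde{\op{CH}}^q({\op{B}}\op{Sp}_{2n})$, the outer induction on $n$ identifies $i^\ast(x) = P(\op{p}_1(\op{Sp}_{2n-2}),\dots,\op{p}_{n-1}(\op{Sp}_{2n-2}))$ for a unique polynomial $P$. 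Since $\op{p}_i(\op{Sp}_{2n})$ restricts to $\op{p}_i(\op{Sp}_{2n-2})$ for $i<n$ by construction, the element $y := P(\op{p}_1(\op{Sp}_{2n}),\dots,\op{p}_{n-1}(\op{Sp}_{2n}))$ satisfies $i^\ast(y) = i^\ast(x)$. Exactness at $\widetilde{\op{CH}}^q({\op{B}}\op{Sp}_{2n})$ then gives $x - y = \op{p}_n \cdot z$ for some $z \in \widetilde{\op{CH}}^{q-2n}({\op{B}}\op{Sp}_{2n})$, and the inner induction on $q$ places $z$, and hence $x$, in the image of $\phi_n$.

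Parts (2) and (3) follow formally. For (2), under the identification from (1) the restriction $\widetilde{\op{CH}}^\bullet({\op{B}}\op{Sp}_{2n}) \to \widetilde{\op{CH}}^\bullet({\op{B}}\op{SL}_2^{\times n})$ is exactly the injective inclusion of symmetric polynomials exhibited above. For (3), (2) allows us to check the Whitney sum formula after restriction to a product of quaternionic tori, where it reduces to the classical identity
\[
e_i(x_1,\dots,x_n) = \sum_{j=\max(0,i+m-n)}^{\min(i,m)} e_j(x_1,\dots,x_m) \cdot e_{i-j}(x_{m+1},\dots,x_n),
\]
using that restrictions commute with exterior products. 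The main obstacle, I expect, lies in the surjectivity argument in (1): one has to organize the two inductions carefully, use that the leftmost map of the localization sequence is genuinely multiplication by the ring element $\op{p}_n$ rather than just some Gysin map, and ensure that $\widetilde{\op{CH}}^{<0}=0$ is strong enough to ignite the inner induction. Once the weak splitting principle supplies injectivity, the remainder is essentially bookkeeping with the long exact sequence.
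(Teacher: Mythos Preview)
Your proof is correct and uses the same ingredients as the paper's---the localization sequence of Proposition~\ref{prop:sploc}, the weak splitting principle of Proposition~\ref{prop:symsplit}, and the identification of Corollary~\ref{cor:sl2n}---but you organize them differently, and in a way that is arguably cleaner.

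The paper runs a simultaneous induction on $n$ for (1) and (2): assuming both for $\op{Sp}_{2n-2}$, it first argues (informally, via the same localization argument you spell out) that the $\op{p}_i$ generate, then proves the splitting principle (2) for $\op{Sp}_{2n}$ by a separate inner induction on the degree $q$, and only then deduces from (2) that multiplication by $\op{e}_{2n}$ is injective and hence that $\op{p}_n$ is algebraically independent from the rest. In other words, the paper establishes injectivity of $\phi_n$ \emph{through} (2).

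You instead decouple injectivity from surjectivity: injectivity of $\phi_n$ comes for free, once and for all $n$, from Proposition~\ref{prop:symsplit} and the algebraic independence of elementary symmetric polynomials over $\op{GW}(F)$---no induction required. Surjectivity is then a clean double induction, and (2) drops out as an immediate corollary of (1) rather than being proved along the way. This buys you a shorter argument that avoids the inner induction the paper uses for (2); what the paper's organization buys is that (2) is established directly, without appeal to the full ring isomorphism (1). Your remark about needing the Gysin map to genuinely be ring multiplication by $\op{p}_n$ is apt; the paper relies on the same point (writing $\alpha'=\op{e}_{2n}\cdot\alpha''$) without further comment, so you are not assuming anything beyond what the paper does.
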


\begin{proof}
(4) has already been proved in Corollary~\ref{cor:spuniq}. 

We prove (1) and (2) by induction on $n$.  The base case is $\op{Sp}_0$ in which case the claim reduces to $\widetilde{\op{CH}}^\bullet(\op{Spec}F)\cong\op{GW}(F)$, concentrated in degree $0$. We can alternatively use $\op{Sp}_2=\op{SL}_2$ as base case. Point (1) then follows directly from the quaternionic projective bundle theorem, cf.~Proposition~\ref{prop:hpn}. Note that this base case trivially satisfies the splitting principle in the sense that the restriction along ${\op{B}}\op{SL}_2^{\times n}\to{\op{B}}\op{Sp}_{2n}$ is injective. 

Now assume that we know statements (1) and (2) for the Chow--Witt ring of ${\op{B}}\op{Sp}_{2n-2}$. By the result on existence and uniqueness of the Pontryagin classes, cf. Proposition~\ref{prop:spuniq}, we have $\iota^\ast\op{p}_i(\op{Sp}_{2n})=\op{p}_i(\op{Sp}_{2n-2})$ for $\iota\colon {\op{B}}\op{Sp}_{2n-2}\to {\op{B}}\op{Sp}_{2n}$, hence the restriction map 
\[
\iota^\ast\colon \widetilde{\op{CH}}^\bullet({\op{B}}\op{Sp}_{2n})\to \widetilde{\op{CH}}^\bullet({\op{B}}\op{Sp}_{2n-2})
\]
is surjective. By Proposition~\ref{prop:spuniq},  $\op{p}_n(\op{Sp}_{2n})$ is defined as the Euler class of the universal symplectic bundle on ${\op{B}}\op{Sp}_{2n}$, and the localization sequence of Proposition~\ref{prop:sploc} implies that any class with trivial restriction to ${\op{B}}\op{Sp}_{2n-2}$ is divisible by the Euler class. In particular, $\op{p}_1,\dots,\op{p}_n$ generate the Chow--Witt ring. 

We now prove the splitting principle, point (2), for ${\op{B}}\op{Sp}_{2n}$, by induction on the cohomological degree $q$. For the base case, we want to show that $\widetilde{\op{CH}}^0({\op{B}}\op{Sp}_{2n})\to\widetilde{\op{CH}}^0({\op{B}}\op{SL}_{2}^{\times n})$ is injective. The structural morphism ${\op{B}}\op{Sp}_{2n}\to\op{Spec} F$ provides a ring homomorphism $\op{GW}(F)\to \widetilde{\op{CH}}^0({\op{B}}\op{Sp}_{2n})$. This homomorphism is injective because it is split by evaluation at the $F$-rational point $\op{Spec}F\to{\op{B}}\op{Sp}_{2n}$ which classifies the symplectic structure on $\mathbb{A}^{2n}_F$, viewed as trivial symplectic bundle. By the localization sequence of Proposition~\ref{prop:sploc}, the morphism $\widetilde{\op{CH}}^0({\op{B}}\op{Sp}_{2n})\to \widetilde{\op{CH}}^0({\op{B}}\op{Sp}_{2n-2})$ is injective. Inductively, we find that the morphism $\op{GW}(F)\to \widetilde{\op{CH}}^0({\op{B}}\op{Sp}_{2n})$ is an isomorphism for all $n$. A similar argument applies to ${\op{B}}\op{SL}_2^{\times n}$. This implies that the map $\widetilde{\op{CH}}^0({\op{B}}\op{Sp}_{2n})\to\widetilde{\op{CH}}^0({\op{B}}\op{SL}_{2}^{\times n})$ is an isomorphism of free $\op{GW}(F)$-modules of rank 1, compatible with the identifications with $\op{GW}(F)$ given by the structural morphisms.

For the inductive step, suppose we know that $\widetilde{\op{CH}}^i({\op{B}}\op{Sp}_{2n})\to\widetilde{\op{CH}}^i({\op{B}}\op{SL}_{2}^{\times n})$ is injective for all degrees $i<q$. Let 
\[
\alpha\in\ker\left(\widetilde{\op{CH}}^q({\op{B}}\op{Sp}_{2n})\to\widetilde{\op{CH}}^q({\op{B}}\op{SL}_{2}^{\times n})\right)
\]
be a class in the kernel of the restriction. By the outer inductive assumption, we know that $\widetilde{\op{CH}}^q({\op{B}}\op{Sp}_{2n-2})\to\widetilde{\op{CH}}^q({\op{B}}\op{SL}_{2}^{\times (n-1)})$ is injective. In particular, the restriction of the image $\overline{\alpha}\in\widetilde{\op{CH}}^q({\op{B}}\op{Sp}_{2n-2})$ to ${\op{B}}\op{SL}_2^{\times (n-1)}$ is trivial only if $\overline{\alpha}$ is trivial. Since $\widetilde{\op{CH}}^q({\op{B}}\op{Sp}_{2n-2})\cong\op{GW}(F)[\op{p}_1,\dots,\op{p}_{n-1}]$, we can lift $\overline{\alpha}$ to $f(\op{p}_1,\dots,\op{p}_{n-1})\in\widetilde{\op{CH}}^q({\op{B}}\op{Sp}_{2n})$. Then the difference $\alpha':=\alpha-f(\op{p}_1,\dots,\op{p}_{n-1})$ will have trivial image in $\widetilde{\op{CH}}^q({\op{B}}\op{Sp}_{2n-2})$, and will still be in the kernel of the restriction map. In particular, to establish the splitting principle in degree $q$, it suffices to show that $\alpha'=0$. Since the restriction of $\alpha'$ to ${\op{B}}\op{Sp}_{2n-2}$ is trivial, it is in the image of the multiplication by the Euler class
\[
\op{e}(\op{Sp}_{2n})\colon \widetilde{\op{CH}}^{q-2n}({\op{B}}\op{Sp}_{2n})\to \widetilde{\op{CH}}^{q}({\op{B}}\op{Sp}_{2n}),
\]
i.e., $\alpha'$ is divisible by the Euler class. In particular, there is a class $\alpha''$ with $\op{e}(\op{Sp}_{2n})\cdot\alpha''=\alpha'$. Under the restriction 
$\widetilde{\op{CH}}^q({\op{B}}\op{Sp}_{2n})\to\widetilde{\op{CH}}^q({\op{B}}\op{SL}_{2}^{\times n})$, this equality expresses the restriction of $\alpha'$ as the product of the restriction of the Euler class $\op{e}(\op{Sp}_{2n})=\op{p}_n$ and the restriction of $\alpha''$. By Proposition~\ref{prop:symsplit}, the Euler class $\op{p}_n$ restricts to the elementary symmetric polynomial in $\op{p}_{1,1},\dots\op{p}_{1,n}$ which is non-zero. In particular, multiplication with the Euler class restricts to an injection on $\widetilde{\op{CH}}^\bullet({\op{B}}\op{SL}_2^{\times n})$. If $\alpha'$ has trivial restriction, then $\alpha''$ must have trivial restriction. But then the inner inductive assumption implies that $\alpha''$ is already trivial, proving that $\alpha'$ is trivial.

We have thus established (2), the splitting principle for the Chow--Witt ring of ${\op{B}}\op{Sp}_{2n}$. From the splitting principle, it follows that multiplication with the Euler class is injective because the Euler class maps to a non-zero element of $\widetilde{\op{CH}}^\bullet({\op{B}}\op{SL}_2^{\times n})$, cf. Corollary~\ref{cor:sl2n}. The injectivity together with Proposition~\ref{prop:symsplit} implies that  $\op{p}_n(\op{Sp}_{2n})$ is algebraically independent of the other $\op{p}_i(\op{Sp}_{2n})$. This implies the required identification  $\widetilde{\op{CH}}^\bullet({\op{B}}\op{Sp}_{2n})\cong \op{GW}(F)[\op{p}_1,\dots,\op{p}_n]$, proving (1).

The Whitney sum formula (3) now follows directly from the splitting principle (2): to check the formula, it suffices to check it after restriction to ${\op{B}}\op{SL}_2^{\times n}$, but there it holds by Corollary~\ref{cor:sl2n}. 
\end{proof}

\begin{proposition}
\label{prop:sympconj}
Let $F$ be a perfect field of characteristic unequal to $2$, let $X$ be a smooth scheme and let $\mathscr{E}\to X$ be a symplectic vector bundle on $X$. There is a conjugate symplectic vector bundle, obtained by applying the conjugation automorphism $M\mapsto (M^{-1})^{\op{t}}$. The Pontryagin classes of a symplectic bundle and its conjugate are related as follows:
\[
\op{p}_i(\overline{\mathscr{E}})= \langle-1\rangle^i\op{p}_i(\mathscr{E}). 
\]
\end{proposition}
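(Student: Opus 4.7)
The plan is to reduce to the rank 2 case via the splitting principle from Theorem~\ref{thm:symplectic}(2) and then carry out an explicit sign calculation. Conjugation commutes with block-diagonal inclusions (the image of $\sigma: M\mapsto (M^{-1})^{\mathrm{t}}$ on a block-diagonal matrix is still block-diagonal with each block conjugated), so for a sum $\mathscr{E}=\mathscr{L}_1\oplus\cdots\oplus\mathscr{L}_n$ of rank 2 symplectic bundles we have $\overline{\mathscr{E}}\cong \overline{\mathscr{L}}_1\oplus\cdots\oplus \overline{\mathscr{L}}_n$. By Proposition~\ref{prop:symsplit}, the restriction of $\op{p}_i(\mathscr{E})$ to $\widetilde{\op{CH}}^\bullet(\op{B}\op{SL}_2^{\times n})$ is the $i$-th elementary symmetric polynomial $e_i(\op{p}_1(\mathscr{L}_1),\ldots,\op{p}_1(\mathscr{L}_n))$, and the same holds for $\overline{\mathscr{E}}$. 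Granting the rank 2 identity $\op{p}_1(\overline{\mathscr{L}})=\langle -1\rangle \op{p}_1(\mathscr{L})$ and using that $\langle -1\rangle$ commutes past all even-degree classes by Proposition~\ref{prop:cwepsgraded}, the homogeneity of $e_i$ gives $\op{p}_i(\overline{\mathscr{E}})=\langle -1\rangle^i \op{p}_i(\mathscr{E})$ after restriction; the splitting principle (injectivity of the restriction) then promotes this to an identity in $\widetilde{\op{CH}}^{2i}(X)$ for a general symplectic $\mathscr{E}$.

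This reduces the problem to proving the rank 2 identity $\op{p}_1(\overline{\mathscr{L}})=\langle -1\rangle \op{p}_1(\mathscr{L})$ for any rank 2 symplectic $\mathscr{L}$. By Proposition~\ref{prop:spuniq}, $\op{p}_1$ of a rank 2 symplectic bundle is the Euler class of the underlying vector bundle, where the symplectic form furnishes the trivialization of $\det \mathscr{L}$ needed to interpret the Euler class in $\widetilde{\op{CH}}^2(X)$ rather than in a twisted group. The bundle $\overline{\mathscr{L}}$ has transitions $(g_{ij}^{-1})^{\mathrm{t}}$, so its underlying vector bundle is $\mathscr{L}^\vee$, and it inherits the dual symplectic form; the antisymmetry of the form means that under the canonical identification of $\det \mathscr{L}$ with $\det \mathscr{L}^\vee$ (via the symplectic form on $\mathscr{L}$), the induced trivialization of $\det\overline{\mathscr{L}}$ differs from the one coming from the original symplectic form on $\mathscr{L}$ by a sign. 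Since changing the trivialization of $\det$ by multiplication by a unit $a\in F^\times$ multiplies the Euler class by $\langle a\rangle\in \op{GW}(F)$, this sign change contributes exactly the factor $\langle -1\rangle$.

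The main obstacle is making the trivialization/sign computation rigorous. I would do it universally on $\op{B}\op{SL}_2$, where $\widetilde{\op{CH}}^2(\op{B}\op{SL}_2)\cong \op{GW}(F)\cdot \op{p}_1$ is a free rank one $\op{GW}(F)$-module by Proposition~\ref{prop:hpn}: the induced involution $\sigma^\ast$ must send $\op{p}_1$ to $u\op{p}_1$ with $u^2=1$, and compatibility with the map to $\op{CH}^\bullet(\op{B}\op{SL}_2)\cong \mathbb{Z}[\op{c}_2]$ of Corollary~\ref{cor:spuniq} (using that $\op{c}_2(V^\vee)=\op{c}_2(V)$ for rank 2 bundles with trivial determinant) shows that $u\equiv 1$ in the Chow reduction, ruling out $u=-1$. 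To distinguish $u=1$ from $u=\langle -1\rangle$ one has to look at the $\mathbf{I}^\bullet$-cohomology side of the key diagram in Section~\ref{sec:key} and use the explicit Thom-class description of $\op{p}_1$ on the finite-dimensional model $\mathbb{HP}^n$, where the effect of the duality on the chosen symplectic trivialization becomes explicit.
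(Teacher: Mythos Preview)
Your overall strategy matches the paper's proof exactly: reduce to the rank~$2$ case by the splitting principle (Theorem~\ref{thm:symplectic}(2)) and then compute the effect of conjugation on $\op{p}_1$ for $\op{SL}_2$-bundles. Your first paragraph is correct and is precisely how the paper argues the reduction.

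Your second paragraph also contains the right idea, and it is in fact the same argument the paper gives, just phrased differently. The paper observes that for $\op{Sp}_2=\op{SL}_2$ the automorphism $M\mapsto (M^{-1})^{\mathrm{t}}$ is \emph{inner} (conjugation by the symplectic form matrix $J$), so the conjugate bundle is isomorphic to the original as a vector bundle; what changes is the orientation, i.e.\ the trivialization of $\det\mathscr{L}$ flips sign. Since the Chow--Witt Euler class depends on the choice of trivialization and scaling that trivialization by $a\in F^\times$ multiplies $\op{e}_2$ by $\langle a\rangle$, one gets $\op{p}_1(\overline{\mathscr{L}})=\langle -1\rangle\op{p}_1(\mathscr{L})$. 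This is a complete argument; the paper states it in one sentence, and your second paragraph spells out the same mechanism.

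Your third paragraph is therefore unnecessary, and it is also where you lose confidence: you set up a dichotomy $u\in\{1,\langle -1\rangle\}$ but do not resolve it. The resolution is already in your second paragraph. If you want to make that paragraph feel more rigorous, the cleanest way is exactly the observation above: on $\op{SL}_2$, conjugation is inner via $J$, so the induced self-map of $\op{B}\op{SL}_2$ is homotopic to the identity as a map of spaces but carries the tautological orientation to its negative; tracing the Thom class through the change-of-orientation isomorphism produces the $\langle -1\rangle$.
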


\begin{proof}
Viewing $\op{Sp}_{2n}$ as automorphism group of a rank $n$ module over the split quaternion algebra $\op{M}_{2\times 2}$, the conjugation corresponds to the conjugation of the matrix algebra. On the level of $\op{Sp}_{2}$-bundles, this conjugation will not actually change the isomorphism class of the bundle, but it will change the orientation. The induced map on the Chow--Witt ring $\widetilde{\op{CH}}^\bullet({\op{B}}\op{SL}_2)\cong\op{GW}(F)[\op{p}_1]$ is the $\op{GW}(F)$-algebra map given by sending $\op{p}_1\mapsto\langle-1\rangle\op{p}_1$. The general claim now follows from the splitting principle in Theorem~\ref{thm:symplectic} above.
\end{proof}

\subsection{Application: symplectic orientation for Chow--Witt theory}
\label{sec:sympor}
As an application of the above computation, we now explain in which sense Chow--Witt groups are a symplectically oriented theory. This is claimed in \cite[section 7]{PaninWalter} without proof, and the results of this section will make this statement precise.

First, we observe that $\widetilde{\op{CH}}^\bullet(-)$ does not satisfy the localization condition \cite[Definition 2.1 (1)]{PaninWalter} in the definition of a ring cohomology theory. However, it does when instead of only the geometric bidegrees $\widetilde{\op{CH}}^\bullet(-)$ we consider the full bigraded theory $\op{H}^\bullet(-,\mathbf{K}^{\op{MW}}_\bullet)$,  see \cite[10.4.7 and 10.4.9]{fasel:memoir}. We now check that it satisfies all other
axioms of \cite{PaninWalter} as well. The "\'etale excision" condition (2)  (which in fact is a Nisnevich descent condition) can be deduced from \cite{MField}, but as Panin--Walter say Zariski excision usually is enough in practice. Condition (3), that is algebraic homotopy invariance, is established in \cite[section 11]{fasel:memoir}. Concerning the ring structure of \cite[Definition 2.2]{PaninWalter},
everything but condition (2) follows from \cite{fasel:chowwitt}. Condition (2) could be established along the lines of our proof of Proposition~\ref{prop:gysinder}, see Remark \ref{ringextra}.

The bigraded theory $\op{H}^\bullet(-,\mathbf{K}^{\op{MW}}_\bullet)$ and in particular $\widetilde{\op{CH}}^\bullet(-)$ carries a {\em symplectic Thom structure} in the sense of \cite[Definition 7.1]{PaninWalter} by \cite[10.4.8]{fasel:memoir}, see also \cite[section 3.5]{AsokFaselEuler}. That is, for any rank $2n$ vector bundle $E \to X$ with a symplectic structure, we can push forward $\langle 1\rangle \in \op{GW}(F)$ (or rather its image under the morphism $\op{GW}(F)\to \widetilde{\op{CH}}^0(X)$) along the zero-section to a Thom class in $\widetilde{\op{CH}}^{2n}_X(E)$. 
In \cite[section 14]{PaninWalter} the authors use their axioms to prove that the existence of a symplectic Thom structure is equivalent to several other extra data on a ring cohomology theory. In particular, it is shown that it is equivalent to a {\em Pontryagin structure} \cite[Definition 12.1]{PaninWalter} and a {\em Pontryagin classes theory} \cite[Definition 14.1]{PaninWalter}. In short, their machinery produces Pontryagin classes for the bigraded $\op{H}^\bullet(-,\mathbf{K}^{\op{MW}}_\bullet)$.

Above, we have computed $\widetilde{\op{CH}}^{\bullet}(\op{B}\op{Sp}_{2n})$. For a rank $2n$ vector bundle $E\to X$ with a symplectic structure, we get a classifying map $X\to \op{B}\op{Sp}_{2n}$, cf. Remark~\ref{rem:classmap}, and the induced ring homomorphism $\widetilde{\op{CH}}^\bullet({\op{B}}\op{Sp}_{2n})\to\widetilde{\op{CH}}^\bullet(X)$ yields Pontryagin classes for the symplectic bundle $E\to X$. From our construction of Pontryagin classes above, we deduce immediately that the properties and formulas of \cite[Definition 12.1 and Definition 14.1]{PaninWalter} hold for the induced  Pontryagin classes in $\widetilde{\op{CH}}^\bullet(X)$. 

The above discussion applies verbatim to $\op{H}^\bullet(-,\mathbf{I}^\bullet)$. From Theorem~\ref{thm:symplectic}, we can deduce 
\[
\op{H}^\bullet({\op{B}}\op{Sp}_{2n},\mathbf{I}^\bullet)\cong \op{W}(F)[\op{p}_1,\dots,\op{p}_n],
\]
implying a theory of Pontryagin classes for the $\mathbf{I}^\bullet$-cohomology as well. This will be used for our computations of $\op{H}^\bullet({\op{B}}\op{SL}_n,\mathbf{I}^\bullet)$ in subsequent sections. 

\begin{remark}
The maximal compact subgroup of $\op{Sp}_n(\mathbb{R})$ is $\op{U}(n)$. Hence under real realization, the above formula becomes 
$\op{H}^\bullet({\op{B}}\op{U}(n)) \cong \mathbb{Z}[\op{c}_1^{\op{top}},...,\op{c}_n^{\op{top}}]$.
\end{remark}

We shortly compare our proof strategy to the techniques used in \cite{PaninWalter}. We already mentioned the fact that the localization sequence for Chow--Witt theory does not have the form required for the definition of cohomology theory in \cite{PaninWalter}. Another difference is that we are interested here in computing the value of a cohomology theory on ${\op{B}}\op{Sp}_{2n}$ where cohomology theory means ``something representable in the motivic homotopy category''. In \cite{PaninWalter}, the value of the cohomology theory on infinite-dimensional objects like ${\op{B}}\op{Sp}_{2n}$ is simply \emph{defined} as inverse limit of the cohomology evaluated on finite-dimensional approximation. In particular, while our computations would agree with those in \cite{PaninWalter} for finite-dimensional approximations to ${\op{B}}\op{Sp}_{2n}$, they would disagree in the final result: if we think of Chow--Witt theory as represented by the Eilenberg--Mac Lane spectrum for $\mathbf{K}^{\op{MW}}_\bullet$ then the value for ${\op{B}}\op{Sp}_{2n}$ is a polynomial ring in the Pontryagin classes, while the result in \cite{PaninWalter} is a power series ring. The difference is due to grading and stabilization, and is similar to what happens for cohomology vs. K-theory of classifying spaces. Nevertheless, the agreement on the finite-dimensional level implies that the Pontryagin classes we use in our computations can be identified with those obtained using the approach of  \cite{PaninWalter}, see Remark \ref{CHbig}.

Finally, there is also a difference of emphasis. For Panin and Walter (similar
to the standard intersection-theoretic construction of Chern classes for $\op{CH}$), the first Pontryagin class of a symplectic line bundle is the fundamental object, and the splitting principle is used to define higher Pontryagin classes in terms of elementary symmetric polynomials in the Pontryagin roots. In our approach, the top Pontryagin class (via its identification as Euler class) is the fundamental object, and the lower Pontryagin classes are uniquely defined by requiring compatibility with stabilization. The splitting principle is then required to show independence of the classes, compensating for the too short localization sequence.  That said, parts of the global strategy employed by us coincide with parts of \cite{PaninWalter}, which is in particular true for the splitting principle.

\section{Characteristic classes for (oriented) vector bundles}
\label{sec:character}

In this section, we will give definitions of the Chow--Witt-theoretic characteristic classes for vector bundles in $\widetilde{\op{CH}}^\bullet({\op{B}}\op{SL}_n)$ (most of which are well-known). Before we can do that, we introduce the geometric setup for the localization sequence, similar to what happened in Section~\ref{sec:symplectic}. This will, in particular, also allow to compare the classes considered here to others that already appeared in the literature. 

\subsection{Geometric setup of the localization sequence}

We begin by setting up the localization sequence for the inductive computation of $\widetilde{\op{CH}}^\bullet({\op{B}}\op{SL}_n)$, similar to the development in Section~\ref{sec:symplectic}. As for the latter, the basic idea on the level of classifying spaces in the Morel--Voevodsky $\mathbb{A}^1$-homotopy category is to consider the universal rank $n$ vector bundle  $E_n\to {\op{B}}\op{SL}_n$. The total space and zero-section can be identified with ${\op{B}}\op{SL}_n$ and the complement of the zero-section can be identified with ${\op{B}}\op{SL}_{n-1}$. The next paragraphs should make this precise on the level of finite-dimensional approximations of the classifying spaces.

Let $V$ be an $\op{SL}_n$-representation on which $\op{SL}_n$ acts freely outside a closed stable subset $Y$ of codimension $s$, and consider the quotient $X(V):=(V\setminus Y)/\op{SL}_n$. This computes $\widetilde{\op{CH}}^\bullet({\op{B}}\op{SL}_n)$ in degrees $\leq s-2$, and a finite-dimensional model for the universal $\op{SL}_n$-torsor is given by the projection $p\colon V\setminus Y\to X(V)$.

Now we may consider the tautological $\op{SL}_n$-representation on $\mathbb{A}^n$ and take the associated vector bundle for the $\op{SL}_n$-torsor $p\colon V\setminus Y\to X(V)$ for this representation; denote this vector bundle by $\gamma_V\colon E_n(V)\to X(V)$. Note that this vector bundle is orientable; if we choose an orientation on the vector space $\mathbb{A}^n$ underlying the tautological $\op{SL}_n$-representation, this will give us an orientation of the vector bundle $E_n(V)\to X(V)$. The vector bundle $\gamma_V\colon E_n(V)\to X(V)$ is an $\mathbb{A}^1$-weak equivalence and therefore we have $\widetilde{\op{CH}}^\bullet(X(V))\cong\widetilde{\op{CH}}^\bullet(E_n(V))$. Both these are isomorphic to $\widetilde{\op{CH}}^i({\op{B}}\op{SL}_n)$ for $i\leq s-2$. 

Denote by $S_n(V)$ the complement of the zero-section of $\gamma_V\colon E(V)\to X(V)$. We want to identify $S_n(V)$ as an approximation of ${\op{B}}\op{SL}_{n-1}$. The group $\op{SL}_{n-1}$ acts freely on $V\setminus Y$, and the quotient morphism $q\colon (V\setminus Y)/\op{SL}_{n-1}\to X(V)$ is a model for the stabilization morphism ${\op{B}}\op{SL}_{n-1}\to {\op{B}}\op{SL}_n$. Note that the quotient morphism $q$ is in fact a fiber bundle with fiber $\op{SL}_n/\op{SL}_{n-1}$. 

To identify $S_n(V)$, consider the $\op{SL}_n$-operation on $\mathbb{A}^n$ via the tautological representation and denote by $v_1,\dots,v_n$ the canonical basis of $\mathbb{A}^n$. There are two $\op{SL}_n$-orbits, the origin $\{0\}$ and its complement $U=\mathbb{A}^n\setminus\{0\}$. The stabilizer of $v_n$ is an extension of $\op{SL}_{n-1}$ (acting on $\op{span}(v_1,\dots,v_{n-1})$) by a unipotent group and the inclusion $\op{SL}_{n-1}\hookrightarrow\op{SL}_n$ is the standard stabilization embedding. This implies that the morphism $(V\setminus Y)/\op{SL}_{n-1}\to X(V)$ lifts to a map $(V\setminus Y)/\op{SL}_{n-1}\to S_n(V)$ and this lift is a torsor under the unipotent radical of the stabilizer of $v_n$. In particular, we get an $\mathbb{A}^1$-weak equivalence $(V\setminus Y)/\op{SL}_{n-1}\to S_{n}(V)$ which therefore induces an isomorphism of Chow--Witt rings. In particular, the open complement $S_n(V)$ computes the Chow--Witt ring of ${\op{B}}\op{SL}_{n-1}$ in a range and, by construction, the morphism
\[
\widetilde{\op{CH}}^\bullet(X(V))\xrightarrow{\gamma_V^\ast} \widetilde{\op{CH}}^\bullet(S_n(V))\cong \widetilde{\op{CH}}^\bullet((V\setminus Y)/\op{SL}_{n-1})
\]
induced from the natural quotient map $(V\setminus Y)/\op{SL}_{n-1}\to X(V)$ is the stabilization morphism. Consequently, keeping in mind Remark~\ref{rem:axiomchoice} and passing to the limit over a suitable system of representations $V$, we obtain the following localization sequence: 

\begin{proposition}
\label{prop:locsln}
There is a long exact sequence of Chow--Witt groups of classifying spaces
\begin{eqnarray*}
\cdots&\to&\widetilde{\op{CH}}^{q-n}({\op{B}}\op{SL}_{n})\to \widetilde{\op{CH}}^q({\op{B}}\op{SL}_{n})\to \widetilde{\op{CH}}^q({\op{B}}\op{SL}_{n-1})\\&\to&
\op{H}^{q+1-n}_{\op{Nis}}({\op{B}}\op{SL}_{n},\mathbf{K}^{\op{MW}}_{q-n})\to \op{H}^{q+1}_{\op{Nis}}({\op{B}}\op{SL}_{n},\mathbf{K}^{\op{MW}}_q)\to\cdots
\end{eqnarray*}
The first map is the composition of the d\'evissage isomorphism with the forgetting of support, alternatively ``multiplication with the Euler class of the universal  bundle $E_{n}\to{\op{B}}\op{SL}_{n}$''. The second map is the restriction along the stabilization inclusion $\op{SL}_{n-1}\to\op{SL}_{n}$. 

There are similar exact sequences for the other coefficients $\mathbf{I}^\bullet$ and $\mathbf{K}^{\op{M}}_\bullet$, and the change-of-coefficient maps induce commutative ladders of exact sequences. 
\end{proposition}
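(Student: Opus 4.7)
The plan is to apply the general localization sequence for Chow--Witt groups recalled in Subsection~\ref{funcandloc} to the finite-dimensional geometric situation described just above the statement, and then pass to the limit over a suitable system of representations $V$.

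First, I fix a representation $V$ of $\op{SL}_n$ acting freely outside a closed invariant subset $Y$ of codimension $\geq s$, so that $X(V)=(V\setminus Y)/\op{SL}_n$ computes Chow--Witt groups of ${\op{B}}\op{SL}_n$ in degrees $\leq s-2$, and similarly for ${\op{B}}\op{SL}_{n-1}$ via the quotient by $\op{SL}_{n-1}$. I consider the associated vector bundle $\gamma_V\colon E_n(V)\to X(V)$ of rank $n$, its zero section $i\colon X(V)\hookrightarrow E_n(V)$ of codimension $n$, and its open complement $S_n(V)\hookrightarrow E_n(V)$. Applying the localization sequence \cite[Corollaire 10.4.10]{fasel:memoir} to this triple (with trivial twist, since the Picard group of ${\op{B}}\op{SL}_n$ is trivial by Totaro's computation) gives a long exact sequence
\[
\cdots\to \op{H}^{q}_{X(V)}(E_n(V),\mathbf{K}^{\op{MW}}_q)\to \widetilde{\op{CH}}^q(E_n(V))\to \widetilde{\op{CH}}^q(S_n(V))\xrightarrow{\partial} \op{H}^{q+1}_{X(V)}(E_n(V),\mathbf{K}^{\op{MW}}_q)\to\cdots
\]
together with the analogous sequences for $\mathbf{I}^\bullet$ and $\mathbf{K}^{\op{M}}_\bullet$-coefficients, and the change-of-coefficient maps from the fundamental cartesian square assemble them into a commutative ladder.

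Next I substitute: d\'evissage identifies the local cohomology group $\op{H}^{q}_{X(V)}(E_n(V),\mathbf{K}^{\op{MW}}_q)$ with $\widetilde{\op{CH}}^{q-n}(X(V))$, and homotopy invariance for Chow--Witt groups identifies $\widetilde{\op{CH}}^q(E_n(V))\cong \widetilde{\op{CH}}^q(X(V))$. The $\mathbb{A}^1$-weak equivalence $(V\setminus Y)/\op{SL}_{n-1}\to S_n(V)$ established above, combined with $\mathbb{A}^1$-invariance, gives $\widetilde{\op{CH}}^q(S_n(V))\cong \widetilde{\op{CH}}^q((V\setminus Y)/\op{SL}_{n-1})$. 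Plugging these isomorphisms into the localization sequence produces the desired sequence in the finite-dimensional range $q\leq s-2$. The identification of the second map as the stabilization restriction is immediate from the construction: the composition $(V\setminus Y)/\op{SL}_{n-1}\to S_n(V)\hookrightarrow E_n(V)\to X(V)$ is exactly the quotient map for the closed inclusion $\op{SL}_{n-1}\hookrightarrow \op{SL}_n$, which models the stabilization map at the level of classifying spaces. The identification of the first map with multiplication by the Euler class of the universal bundle follows from the definition of the Euler class as push-forward of $\langle 1\rangle$ along the zero section, combined with the description in Subsection~\ref{funcandloc} of $i_\ast$ as the composition of d\'evissage with extension of support.

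Finally, I pass to the colimit over a cofinal system of representations $V$ in which the codimension $s$ of $Y$ tends to infinity. Since all the constructions (localization, d\'evissage, homotopy invariance, the $\mathbb{A}^1$-weak equivalence above) are functorial with respect to the natural maps between different representations and stabilize in any fixed degree $q$, the six-term sequences are compatible and fit together to give the stated long exact sequence in Chow--Witt groups of ${\op{B}}\op{SL}_n$, ${\op{B}}\op{SL}_{n-1}$ and the appropriate support cohomology group. The analogous argument with coefficients replaced by $\mathbf{I}^\bullet$ or $\mathbf{K}^{\op{M}}_\bullet$ yields the other sequences, and the change-of-coefficient ladder comes from the compatibility of the localization sequence, d\'evissage and pullbacks with the maps in the fundamental cartesian square, all recalled in Subsection~\ref{funcandloc}. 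The only delicate point is the compatibility of the identifications under the limit over $V$ in a range of degrees, which follows from the standard double fibration argument (Proposition~\ref{prop:chwbg}) applied simultaneously to ${\op{B}}\op{SL}_n$, ${\op{B}}\op{SL}_{n-1}$ and the local cohomology with support.
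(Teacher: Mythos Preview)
Your proof is correct and follows essentially the same approach as the paper: apply the localization sequence to the finite-dimensional model $E_n(V)$ with zero section $X(V)$ and complement $S_n(V)$, use d\'evissage and homotopy invariance to identify the terms, and pass to the limit over representations. The paper's own proof is much terser---it simply refers back to the geometric setup preceding the statement and to Section~\ref{sec:key} for change-of-coefficients compatibility---but your expansion faithfully unpacks exactly those ingredients.
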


\begin{proof}
The localization sequences follow from the discussion prior to the statement. The compatibility of the exact sequences with change-of-coefficients follows from the discussion in Section~\ref{sec:key}. Essentially, changing the coefficients is a map of Milnor--Witt cycle modules \cite{feld}, which are an oriented version of Rost's cycle modules \cite{rost}.
\end{proof}

On the level of finite-dimensional models, this arises from the localization situation for the vector bundles $E_n(V)$ with the zero-section $X(V)$ and its open complement $S_n(V)$. It should be pointed out that, just as in Proposition~\ref{prop:sploc}, the morphism $\widetilde{\op{CH}}^{q-n}({\op{B}}\op{SL}_n)\to \widetilde{\op{CH}}^{q}({\op{B}}\op{SL}_n)$, using the d\'evissage isomorphism, depends on the choice of orientation of the universal vector bundle on ${\op{B}}\op{SL}_n$.

An immediate application of the localization sequences in Chow theory is that 
Chern classes are uniquely determined by their compatibility with stabilization and the identification of the top Chern class as Thom class of the universal bundle. Certainly this is well-known, but most authors proceed from the definition of the first characteristic class to a definition of the higher ones via the splitting principle. 

\begin{proposition}
\label{prop:chern}
There are unique classes $\op{c}_i(\op{SL}_n)\in \op{CH}^i({\op{B}}\op{SL}_n)$ for $2\leq i\leq n$, such that the natural stabilization morphism $\iota\colon {\op{B}}\op{SL}_{n-1}\to{\op{B}}\op{SL}_n$ satisfies $\iota^\ast \op{c}_i(\op{SL}_n)=\op{c}_i(\op{SL}_{n-1})$ for $i<n$ and $\op{c}_n(\op{SL}_n)=\op{e}_n(\op{SL}_n)$. These agree with the Chern classes in \cite{RojasVistoli}. In particular, the Chow--Witt-theoretic Euler class reduces to the top Chern class in the Chow theory. 
There is a natural isomorphism
\[
\op{CH}^\bullet({\op{B}}\op{SL}_n)\cong\mathbb{Z}[\op{c}_2,\op{c}_3,\dots,\op{c}_n]. 
\]
The restriction along the Whitney sum ${\op{B}}\op{SL}_m\times{\op{B}}\op{SL}_{n-m}\to{\op{B}}\op{SL}_n$ maps the Chern classes as follows:
\[
\op{c}_i\mapsto \sum_{j=i+m-n}^m\op{c}_j\boxtimes\op{c}_{i-j}.
\]
\end{proposition}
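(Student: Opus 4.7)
The approach parallels Theorem~\ref{thm:symplectic} but simplifies considerably in Chow theory. I start from the $\mathbf{K}^{\op{M}}$-coefficient version of the localization sequence in Proposition~\ref{prop:locsln}. The key observation is that the term $\op{H}^{q+1-n}_{\op{Nis}}(\op{B}\op{SL}_n, \mathbf{K}^{\op{M}}_{q-n})$ sitting to the right of $\op{CH}^q(\op{B}\op{SL}_{n-1})$ vanishes, since Nisnevich cohomology of $\mathbf{K}^{\op{M}}_{q-n}$ in cohomological degree $q+1-n > q-n$ is zero. The sequence therefore reduces to the right-exact piece
\[
\op{CH}^{q-n}(\op{B}\op{SL}_{n}) \xrightarrow{\op{e}_n \cdot -} \op{CH}^q(\op{B}\op{SL}_{n}) \xrightarrow{\iota^*} \op{CH}^q(\op{B}\op{SL}_{n-1}) \to 0.
\]
For $q<n$ the leftmost group vanishes, so $\iota^*$ is an isomorphism. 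Inducting on $n$ from $\op{SL}_1=\{1\}$, this yields unique classes $\op{c}_i(\op{SL}_n)$ for $i<n$ with $\iota^*\op{c}_i(\op{SL}_n) = \op{c}_i(\op{SL}_{n-1})$, and the class $\op{c}_n(\op{SL}_n)$ is defined as the Chow-theoretic Euler class $\op{e}_n(\op{SL}_n)$ of the universal bundle. The additional claim that the Chow--Witt Euler class reduces to $\op{c}_n$ under the forgetful map follows from the compatibility of change-of-coefficients with localization sequences (Section~\ref{sec:key}). Since the Chern classes of \cite{RojasVistoli} satisfy the same characterization (stabilization plus top-class agreement with the Euler class), the two constructions coincide.

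For the ring presentation $\op{CH}^\bullet(\op{B}\op{SL}_n) \cong \mathbb{Z}[\op{c}_2, \dots, \op{c}_n]$, one may cite Totaro \cite{totaro:bg} directly, or argue by induction on $n$ using the right-exact sequence above, which shows that $\op{CH}^\bullet(\op{B}\op{SL}_n)$ is generated as a $\op{CH}^\bullet(\op{B}\op{SL}_{n-1})$-module by powers of $\op{c}_n$. The main obstacle in the inductive approach is algebraic independence of $\op{c}_n$, equivalently injectivity of multiplication by $\op{e}_n$ in the above sequence. This is addressed via a splitting principle: restriction to a maximal torus $T \subset \op{SL}_n$ identifies $\op{CH}^\bullet(\op{B}T)$ with $\mathbb{Z}[t_1, \dots, t_n]/(t_1 + \dots + t_n)$, and under this identification $\op{c}_n$ maps to the product $t_1 \cdots t_n$ of the Chern roots, which is algebraically independent from the lower elementary symmetric polynomials in this quotient.

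The Whitney sum formula is then a formal consequence of the splitting principle, as in point (3) of Theorem~\ref{thm:symplectic}. Restricting along $\op{B}\op{SL}_m \times \op{B}\op{SL}_{n-m} \to \op{B}\op{SL}_n$ and then to the product of maximal tori, the total Chern class of the universal bundle factors as the product of the total Chern classes of the two summands; matching homogeneous components of degree $i$ yields $\op{c}_i \mapsto \sum_j \op{c}_j \boxtimes \op{c}_{i-j}$ with $j$ in the indicated range $i+m-n \leq j \leq m$.
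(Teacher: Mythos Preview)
Your proof is correct and follows essentially the same approach as the paper. The paper's own proof is terse: it refers back to the argument of Proposition~\ref{prop:spuniq}, remarks that in Chow theory the restriction to an open subset is always surjective (which is exactly your vanishing of $\op{H}^{q+1-n}_{\op{Nis}}({\op{B}}\op{SL}_n,\mathbf{K}^{\op{M}}_{q-n})$, improving the connectivity bound by one compared to the $\mathbf{K}^{\op{MW}}$-case), and then defers the ring presentation and Whitney sum formula to classical sources or the symplectic argument of Section~\ref{sec:symplectic}. Your write-up simply makes these steps explicit, including the torus splitting principle for algebraic independence, which is the natural $\op{GL}$-analogue of the quaternionic torus used in Theorem~\ref{thm:symplectic}.
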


\begin{proof}
The argument is the same as in  Proposition~\ref{prop:spuniq}, with the difference that in Chow theory the restriction to an open subset is surjective, thus improving the connectivity bound by 1. The identification with the Chern classes in \cite{RojasVistoli} follows since they satisfy stabilization and the top Chern class is given by the Euler class of the universal bundle. The Whitney sum formula can be deduced from classical sources or be proved along the line of argument in Section~\ref{sec:symplectic}. 
\end{proof}

\begin{proposition}
\label{prop:vbconjugate}
Let $\mathscr{E}\to X$ be a vector bundle on a smooth scheme $X$. Recall that the dual vector bundle $\overline{\mathscr{E}}$ is obtained by applying the automorphism $\op{GL}_n\to\op{GL}_n\colon M\mapsto (M^{-1})^{\op{t}}$ to the transition functions of $\mathscr{E}$. The Chern classes of a vector bundle and its dual are related as follows: 
\[
\op{c}_i(\overline{\mathscr{E}})=(-1)^i\op{c}_i(\mathscr{E}).
\]
\end{proposition}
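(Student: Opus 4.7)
The plan is to mimic the argument of Proposition~\ref{prop:sympconj}, replacing the symplectic splitting principle by the classical splitting principle for Chern classes. By Proposition~\ref{prop:characteristic} (applied to $\op{GL}_n$), it suffices to prove the identity for the universal rank $n$ vector bundle over ${\op{B}}\op{GL}_n$, since the conjugation construction is natural in $\mathscr{E}$ and Chern classes are compatible with pullback. Moreover, the conjugation automorphism $M\mapsto (M^{-1})^{\op{t}}$ of $\op{GL}_n$ preserves the diagonal maximal torus $T\cong\op{GL}_1^{\times n}$ and restricts to the componentwise inverse there.

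By Totaro \cite{totaro:bg}, one has $\op{CH}^\bullet({\op{B}}\op{GL}_n)\cong\mathbb{Z}[\op{c}_1,\dots,\op{c}_n]$ and the restriction along the inclusion $T\hookrightarrow\op{GL}_n$ induces an injection
\[
\op{CH}^\bullet({\op{B}}\op{GL}_n)\hookrightarrow \op{CH}^\bullet({\op{B}}\op{GL}_1^{\times n})\cong\mathbb{Z}[x_1,\dots,x_n],
\]
sending $\op{c}_i$ to the $i$-th elementary symmetric polynomial $\sigma_i(x_1,\dots,x_n)$, where $x_j$ is the first Chern class of the tautological line bundle on the $j$-th factor. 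Hence it suffices to check the desired identity after restriction to ${\op{B}}\op{GL}_1^{\times n}$.

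On each factor, the induced automorphism of $\op{GL}_1$ is inversion, so the conjugation replaces the tautological line bundle by its dual. Since $\op{CH}^1(-)\cong\op{Pic}(-)$ and dualization inverts classes in the Picard group, one has $\op{c}_1(\mathscr{L}^\vee)=-\op{c}_1(\mathscr{L})$ for every line bundle $\mathscr{L}$. Consequently, conjugation acts on $\op{CH}^\bullet({\op{B}}\op{GL}_1^{\times n})$ by $x_j\mapsto -x_j$, and therefore sends
\[
\sigma_i(x_1,\dots,x_n)\longmapsto \sigma_i(-x_1,\dots,-x_n)=(-1)^i\sigma_i(x_1,\dots,x_n).
\]
Combining this with the injectivity of the splitting restriction yields $\op{c}_i(\overline{\mathscr{E}})=(-1)^i\op{c}_i(\mathscr{E})$ for the universal bundle, and hence in general by naturality.

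The only non-routine input is the splitting principle together with the identity $\op{c}_1(\mathscr{L}^\vee)=-\op{c}_1(\mathscr{L})$ for line bundles; both are classical. No real obstacle arises, in contrast to the Chow--Witt situation of Proposition~\ref{prop:sympconj}, because we are working in $\op{CH}^\bullet$ where the splitting principle is available without the detour through Euler classes.
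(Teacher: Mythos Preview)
Your proof is correct and follows essentially the same route as the paper's own argument: reduce to the line bundle case via the splitting principle, and there observe that conjugation is inversion, so $\op{c}_1(\mathscr{L}^\vee)=-\op{c}_1(\mathscr{L})$. The paper states this in two sentences, while you spell out the reduction to the universal case over ${\op{B}}\op{GL}_n$ and the restriction to the maximal torus explicitly, but the substance is the same.
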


\begin{proof}
For line bundles, this is clear since the transposed inverse is simply the inverse, and this induces the additive inverse in the Picard group. The general case follows from the classical splitting principle for Grassmannians. 
\end{proof}

The following recovers the well-known description of the mod 2 Chow ring of ${\op{B}}\op{SL}_n$. We include it so as to be able to identify the integral Stiefel--Whitney classes appearing later with the ones defined in \cite{fasel:ij}. 

\begin{proposition}
\label{prop:sw}
There are unique classes $\overline{\op{c}}_i(\op{SL}_n)\in \op{Ch}^i({\op{B}}\op{SL}_n)$ for $2\leq i\leq n$, such that the natural stabilization morphism $\iota\colon {\op{B}}\op{SL}_{n-1}\to{\op{B}}\op{SL}_n$ satisfies $\iota^\ast \overline{\op{c}}_i(\op{SL}_n)=\overline{\op{c}}_i(\op{SL}_{n-1})$ for $i<n$ and $\overline{\op{c}}_n(\op{SL}_n)=\op{e}_n(\op{SL}_n)$. These agree with the Stiefel--Whitney classes in \cite[Definition 4.2]{fasel:ij}. There is a natural isomorphism
\[
\op{Ch}^\bullet({\op{B}}\op{SL}_n)\cong \mathbb{Z}/2\mathbb{Z}[\overline{\op{c}}_2,\dots,\overline{\op{c}}_n].
\]
\end{proposition}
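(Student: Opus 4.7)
The approach will be a direct mod~2 reduction of the argument already given for Chern classes in Proposition~\ref{prop:chern}, combined with the compatibility of the localization sequences across coefficient changes provided by Proposition~\ref{prop:locsln}. The plan is to first establish existence and uniqueness of the classes $\overline{\op{c}}_i$ from the localization sequence with $\mathbf{K}^{\op{M}}_\bullet/2$-coefficients, then to deduce the ring structure from Totaro's integral computation by reduction modulo $2$, and finally to compare with the Stiefel--Whitney classes of \cite{fasel:ij} via the uniqueness.

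For the first step, I would use the $\mathbf{K}^{\op{M}}_\bullet/2$-analogue of the sequence in Proposition~\ref{prop:locsln}. Writing out the relevant piece, the stabilization map $\iota^\ast:\op{Ch}^i({\op{B}}\op{SL}_n)\to\op{Ch}^i({\op{B}}\op{SL}_{n-1})$ fits in an exact sequence with $\op{Ch}^{i-n}({\op{B}}\op{SL}_n)$ on the left and an $\op{H}^{i-n+1}$-term on the right, and as in Chow theory the open restriction is surjective so the connectivity gain is one more than in the Milnor--Witt setting. Consequently $\iota^\ast$ is an isomorphism in degrees $i<n$ and remains surjective in degree $n$. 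I then define $\overline{\op{c}}_n(\op{SL}_n)$ to be the mod~$2$ reduction of the Chow-theoretic Euler class $\op{e}_n(\op{SL}_n)$, and define $\overline{\op{c}}_i(\op{SL}_n)$ for $i<n$ inductively as the unique preimage under $\iota^\ast$ of $\overline{\op{c}}_i(\op{SL}_{n-1})$. Uniqueness of the whole collection follows from the isomorphism range together with the prescribed identification of $\overline{\op{c}}_n$.

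For the ring structure, I would invoke the compatibility of the localization sequences under change of coefficients, part of Proposition~\ref{prop:locsln} and Section~\ref{sec:key}: the integral Chern classes $\op{c}_i(\op{SL}_n)\in\op{CH}^i({\op{B}}\op{SL}_n)$ of Proposition~\ref{prop:chern} reduce modulo $2$ to classes that still satisfy compatibility with stabilization and that agree with the Euler class in top degree (since the Chow--Witt Euler class reduces to $\op{c}_n$ and further to $\overline{\op{c}}_n$). By uniqueness, these mod~$2$ reductions coincide with the classes $\overline{\op{c}}_i$ defined above. Since Totaro's computation gives $\op{CH}^\bullet({\op{B}}\op{SL}_n)\cong\mathbb{Z}[\op{c}_2,\dots,\op{c}_n]$ and this ring is torsion-free, reduction modulo $2$ yields $\op{Ch}^\bullet({\op{B}}\op{SL}_n)\cong\mathbb{Z}/2\mathbb{Z}[\overline{\op{c}}_2,\dots,\overline{\op{c}}_n]$ as desired.

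Finally, for identifying these classes with the Stiefel--Whitney classes of \cite[Definition~4.2]{fasel:ij}, I would check that Fasel's classes satisfy the same two characterizing properties (compatibility with stabilization and identification of the top class with the reduction of the Euler class) and invoke uniqueness once more. The main obstacle is not the structural argument, which is essentially routine given Proposition~\ref{prop:chern}, but rather this last comparison: one must unpack Fasel's definition and verify that his top-degree class is indeed the mod~$2$ reduction of the Euler class of the universal bundle, a compatibility which should follow from how \cite{fasel:ij} defines the classes as characteristic classes refining the mod~$2$ Chern classes, but which requires care since there is no a priori matching of normalizations.
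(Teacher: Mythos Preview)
Your proposal is correct and follows essentially the same approach as the paper: existence and uniqueness via the localization sequence argument of Proposition~\ref{prop:spuniq}, the ring structure via mod~$2$ reduction of Totaro's integral computation, and the comparison with Fasel's classes via uniqueness after checking the two characterizing properties. The paper is slightly more specific about the last step, pointing to \cite[Sections~3,~4]{fasel:ij} for the fact that the top Stiefel--Whitney class is the reduction of the Euler class, and to the Whitney sum formula in loc.~cit.\ for stabilization compatibility, which resolves the normalization concern you flagged.
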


\begin{proof}
Again, the existence and uniqueness proof follows the argument for Proposition~\ref{prop:spuniq}. The identification with the Stiefel--Whitney classes in \cite{fasel:ij} follows since the top Stiefel--Whitney class is the reduction of the Euler class, cf.~\cite[Section 3, 4]{fasel:ij}, and compatibility with stabilization follows from the Whitney sum formula in loc.cit. 
\end{proof}

\begin{remark}
We follow \cite{fasel:ij} in denoting the Stiefel--Whitney classes as reductions of Chern classes. In the topological context of $\op{H}^\bullet({\op{B}}\op{U}(n),\mathbb{Z})$, the mod 2 reduction of $\op{c}_i$ is the class $\op{w}_{2i}$ of the universal bundle. In particular, for those readers who want to compare our development to the computation of $\op{H}^\bullet({\op{B}}\op{SO}(n),\mathbb{Z})$ in \cite{brown}, our classes $\overline{\op{c}}_i$ play exactly the role played by the $\op{w}_{i}$ in loc.cit.
\end{remark}

\subsection{Characteristic classes for (oriented) vector bundles}
Now we can define the relevant characteristic classes of vector bundles. 

We begin with a discussion of the Pontryagin classes, which are obtained by symplectification. This is the algebraic analogue of the classical definition which defines Pontryagin classes for real bundles by complexification,
compare also \cite[Definition 7]{ananyevskiy}. 

Recall that for any $n\in \mathbb{N}$, we have natural group homomorphisms
\[
\sigma_n\colon \op{GL}_n\to\op{Sp}_{2n}\colon  M\mapsto \left(\begin{array}{cc}
M & 0 \\ 0 & (M^{-1})^{\op{t}}\end{array}\right).
\]
These homomorphisms are compatible with stabilization in the sense that there are commutative diagrams 
\[
\xymatrix{
\op{GL}_n \ar[r]^{\sigma_n} \ar[d]_{(-)\oplus 1} & \op{Sp}_{2n} \ar[d]^{(-)\oplus\mathbb{H}} \\ 
\op{GL}_{n+1} \ar[r]_{\sigma_{n+1}} & \op{Sp}_{2n+2}.
}
\]
The induced morphism on classifying spaces
\[
{\op{B}}\sigma_n\colon {\op{B}}\op{GL}_n\to{\op{B}}\op{Sp}_{2n}
\]
will be called \emph{symplectification morphism}. It corresponds to the standard hyperbolic functor. More precisely, it maps a vector bundle $\mathscr{E}$ over a scheme $X$ to the vector bundle $\mathscr{E}\oplus \overline{\mathscr{E}}$ equipped with the natural symplectic form, where $\overline{\mathscr{E}}$ is the dual bundle of $\mathscr{E}$ (obtained by applying the outer automorphism $M\mapsto (M^{-1})^{\op{t}}$ to the transition functions). By the above commutative diagram, symplectification of vector bundles is compatible with stabilization.

Composing with the natural inclusions $\op{SL}_n\hookrightarrow\op{GL}_n$ yields by definition the symplectification for oriented vector bundles, which will be (isomorphic to) the symplectification of the underlying vector bundle
enjoying similar properties. Next, we introduce Pontryagin classes for ${\op{B}}\op{SL}_{n}$. 

\begin{definition}
\label{def:pontryagingln}
We define the \emph{Pontryagin classes} $\op{p}_{i}(\op{GL}_n)$ resp. $\op{p}_{i}(\op{SL}_n)$  as the images of $\op{p}_{i}\in\widetilde{\op{CH}}^\bullet({\op{B}}\op{Sp}_{2n})$ from Theorem~\ref{thm:symplectic} under the homomorphisms
\[
\widetilde{\op{CH}}^\bullet({\op{B}}\op{Sp}_{2n}) \xrightarrow{{\op{B}}\sigma_n^\ast} 
\widetilde{\op{CH}}^\bullet({\op{B}}\op{GL}_{n})\to 
\widetilde{\op{CH}}^\bullet({\op{B}}\op{SL}_{n})
\]
\end{definition}

\begin{remark}
There is a difference to the classical definition,
and also to the one of \cite{ananyevskiy}. For us, the Pontryagin classes are characteristic classes for the symplectic groups, and the Pontryagin classes in the Chow--Witt ring of ${\op{B}}\op{GL}_n$ are the classes of a particular symplectic representation of $\op{GL}_n$. This is why we do not modify the Pontryagin classes by signs or reindexing. In particular, $\op{p}_{i}\in\widetilde{\op{CH}}^{2i}({\op{B}}\op{GL}_n)$. Note also that we include all the Pontryagin classes in the definition where the classical definition only includes the even classes. In Chow--Witt theory, the odd Pontryagin classes are non-torsion classes, as can be seen from Proposition~\ref{prop:pontchern}. In $\mathbf{I}$-cohomology, however, the odd Pontryagin classes will have torsion images. 
\end{remark}

\begin{proposition}
\label{prop:pontryaginstab}
The Pontryagin classes for the special linear group are compatible with stabilization in the sense that $j^\ast(\op{p}_i(\op{SL}_{n+1}))=\op{p}_i(\op{SL}_{n})$ for $i \leq n$ where $j^\ast\colon \widetilde{\op{CH}}^\bullet({\op{B}}\op{SL}_{n+1})\to \widetilde{\op{CH}}^\bullet({\op{B}}\op{SL}_n)$ is induced from the natural stabilization map $\op{SL}_n\to\op{SL}_{n+1}$. 
\end{proposition}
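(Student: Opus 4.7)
The plan is to chase a commutative diagram of classifying spaces obtained from a commutative diagram of group homomorphisms, applying the functoriality of Chow--Witt rings from Proposition~\ref{prop:inducedhom} together with the already-known stabilization property for symplectic Pontryagin classes from Proposition~\ref{prop:spuniq}.

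First, I would start from the commutative square of algebraic groups stated just above Definition~\ref{def:pontryagingln}:
\[
\xymatrix{
\op{SL}_n \ar[r]^{\sigma_n} \ar[d] & \op{Sp}_{2n} \ar[d]^{(-)\oplus\mathbb{H}} \\
\op{SL}_{n+1} \ar[r]_{\sigma_{n+1}} & \op{Sp}_{2n+2},
}
\]
where the left vertical map is the standard stabilization $\op{SL}_n \hookrightarrow \op{SL}_{n+1}$. This is simply the restriction to $\op{SL}$ of the analogous diagram for $\op{GL}$ which holds by direct matrix calculation (both compositions send $M$ to the block-diagonal matrix $\op{diag}(M,1,(M^{-1})^t,1)$, up to a permutation of basis vectors that lies in the same conjugacy class inside $\op{Sp}_{2n+2}$, so the two group homomorphisms agree on the nose after a choice of basis or up to inner automorphism, which induces the identity on classifying spaces).

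Next, applying Proposition~\ref{prop:inducedhom} yields a commutative square of Chow--Witt rings
\[
\xymatrix{
\widetilde{\op{CH}}^\bullet(\op{B}\op{Sp}_{2n+2}) \ar[r]^-{\op{B}\sigma_{n+1}^*} \ar[d] & \widetilde{\op{CH}}^\bullet(\op{B}\op{SL}_{n+1}) \ar[d]^{j^*} \\
\widetilde{\op{CH}}^\bullet(\op{B}\op{Sp}_{2n}) \ar[r]_-{\op{B}\sigma_n^*} & \widetilde{\op{CH}}^\bullet(\op{B}\op{SL}_n),
}
\]
where the top and bottom maps, by Definition~\ref{def:pontryagingln}, send $\op{p}_i$ (for $\op{Sp}_{2n+2}$ resp.\ $\op{Sp}_{2n}$) to $\op{p}_i(\op{SL}_{n+1})$ resp.\ $\op{p}_i(\op{SL}_n)$. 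By Proposition~\ref{prop:spuniq}, the left vertical map sends $\op{p}_i(\op{Sp}_{2n+2})$ to $\op{p}_i(\op{Sp}_{2n})$ for $i \leq n$. Chasing the diagram, for $i \leq n$ we obtain
\[
j^*(\op{p}_i(\op{SL}_{n+1})) = j^* \op{B}\sigma_{n+1}^*(\op{p}_i(\op{Sp}_{2n+2})) = \op{B}\sigma_n^*(\op{p}_i(\op{Sp}_{2n})) = \op{p}_i(\op{SL}_n),
\]
which is the desired identity.

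The only potential obstacle is verifying commutativity of the initial group-theoretic square; any ambiguity in the embedding $\sigma_n$ (e.g.\ reordering of basis vectors for the symplectic form) only produces an inner automorphism of $\op{Sp}_{2n+2}$, which acts trivially on the classifying space up to $\mathbb{A}^1$-homotopy and hence trivially on the Chow--Witt ring. Thus the argument is a straightforward functoriality chase once the symplectic case and the definition of the $\op{SL}$-Pontryagin classes are in place.
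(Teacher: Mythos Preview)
Your argument is correct and is exactly the approach the paper takes: the paper's one-line proof (``symplectification commutes with stabilization and Proposition~\ref{prop:inducedhom}'') is precisely your diagram chase spelled out. Your remark about inner automorphisms acting trivially on classifying spaces is a reasonable extra justification, though the paper leaves this implicit.
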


\begin{proof}
This follows from the fact that symplectification commutes with stabilization and Proposition~\ref{prop:inducedhom}.  
\end{proof}

We recalled in Section~\ref{sec:prelims} the definition of the Chow--Witt-theoretic Euler class of vector bundles from \cite{AsokFaselEuler}. This gives rise to a class in the Chow--Witt ring of ${\op{B}}\op{SL}_n$. The following is the definition given in \cite[Definition 13.2.1]{fasel:memoir}, see also the discussion in \cite[Section 3]{AsokFaselEuler}. 

\begin{definition}
\label{def:chweuler}
For smooth schemes $X$, the Chow--Witt-theoretic Euler class of a vector bundle $p\colon \mathscr{E}\to X$ of rank $n$ is defined via the formula
\[
\op{e}_n(p\colon \mathscr{E}\to X):=(p^\ast)^{-1}{s_0}_\ast(1)\in\widetilde{\op{CH}}^n(X,\det(p)^\vee),
\]
where $s_0\colon X\to \mathscr{E}$ is the zero section. 
Using smooth finite-dimensional approximations to the classifying space ${\op{B}}\op{GL}_n$ provides a well-defined Euler class 
\[
\op{e}_n\in \widetilde{\op{CH}}^n({\op{B}}\op{GL}_n,\det(\gamma_n)^\vee).
\]
The same definition applies to the group $\op{SL}_n$. By compatibility of Euler classes with pullback, the Euler class for oriented vector bundles can alternatively be defined as the image of the above $\op{e}_n$ under the induced homomorphism 
\[
\widetilde{\op{CH}}^n({\op{B}}\op{GL}_n,\det(\gamma_n)^\vee)\to \widetilde{\op{CH}}^n({\op{B}}\op{SL}_n). 
\]
\end{definition} 

By definition, in the model for the localization sequence, cf.~Proposition~\ref{prop:locsln}, the Euler class corresponds under the d\'evissage isomorphism to the Thom class for the universal rank $n$ vector bundle $\gamma_n$ on ${\op{B}}\op{GL}_n$. This 
justifies calling the composition
\[
\widetilde{\op{CH}}^{q-n}({\op{B}}\op{SL}_n) \cong \widetilde{\op{CH}}^q_{{\op{B}}\op{SL}_n}(E_n)\to \widetilde{\op{CH}}^q(E_n)\cong \widetilde{\op{CH}}^q({\op{B}}\op{SL}_n)
\]
``multiplication with the Euler class''. Via the correspondence of Proposition~\ref{prop:characteristic}, cf. also Remark~\ref{rem:classmap}, the Euler class $\op{e}_n\in\widetilde{\op{CH}}^n({\op{B}}\op{SL}_n)$ is the characteristic class which induces the Chow--Witt-theoretic Euler classes of rank $n$ vector bundles by pullback along the classifying map. As before, the Euler class depends on the choice of orientation for the universal bundle.
 
\begin{remark}
\label{rem:inclasses}
We have defined the Euler class and the Pontryagin classes as elements in $\widetilde{\op{CH}}^\bullet({\op{B}}\op{SL}_n)$. However, we can also get corresponding classes in $\mathbf{I}^j$-cohomology by taking their images under projection
\[
\widetilde{\op{CH}}^\bullet({\op{B}}\op{SL}_n)\to \op{H}^\bullet_{\op{Nis}}({\op{B}}\op{SL}_n,\mathbf{I}^\bullet). 
\]
These classes will be also be denoted by the symbols $\op{p}_i$ and $\op{e}_n$, respectively.
\end{remark}

There are more induced characteristic classes which naturally live in the $\mathbf{I}^j$-cohomology of the classifying spaces. In the real realization, they arise as integral Bocksteins of products of Stiefel--Whitney classes. We proceed similarly,
using the integral Bockstein operations of Section~\ref{sec:cohops}. The classes $\beta(\overline{\op{c}}_{2i})$ are the integral Stiefel--Whitney classes from  \cite{fasel:ij} by Proposition~\ref{prop:sw}. The following definition only uses the reductions of even Chern classes since the odd Chern classes have trivial Bockstein. This will be discussed in more detail in the proof of Theorem~\ref{thm:slnchw}.

\begin{definition}
\label{def:bockchern}
For a set $J=\{j_1,\dots,j_l\}$ of integers $0<j_1<\cdots<j_l\leq [(n-1)/2]$, there is a class 
\[
\beta_J:=\beta(\overline{\op{c}}_{2j_1}\overline{\op{c}}_{2j_2}\cdots\overline{\op{c}}_{2j_l})\in \op{H}^{d+1}_{\op{Nis}}({\op{B}}\op{SL}_n,\mathbf{I}^{d+1})
\]
where $d=\sum_{a=1}^l 2j_a$. 
\end{definition}

\begin{remark}
The Bockstein class $\beta(\emptyset)=0$ because by convention the empty product equals $1\in\op{Ch}^0(X)$ which is the reduction of $1\in\op{H}^0(X,\mathbf{I}^0)$.
\end{remark}

\section{\texorpdfstring{$\op{H}^{\bullet}_{\op{Nis}}({\op{B}}\op{SL}_n,\mathbf{I}^\bullet)$}{H(BSLn,I)}: statement of the theorem}
\label{sec:statethm}

In this section, we construct graded rings which we  will show later to be isomorphic to the Chow--Witt rings of the classifying spaces ${\op{B}}\op{SL}_n$. More precisely, we will formulate the main results describing the $\mathbf{I}^j$-cohomology of these classifying spaces and deduce consequences for the computation of the Chow--Witt rings of ${\op{B}}\op{SL}_n$. The proof of the main theorem about $\mathbf{I}^j$-cohomology is deferred to later sections.

\subsection{Candidate presentations of cohomology rings}

In the following subsection, we define some $\op{W}(F)$-algebras. These are the motivic translation of the presentation of $\op{H}^\bullet({\op{B}}\op{SO}(n),\mathbb{Z})$ in \cite[Theorem 1.4]{brown}. As $n$ varies,
we also define restriction maps between the rings and check that they are well-defined. This will enable us to state the main results on $\widetilde{\op{CH}}^{\bullet}({\op{B}}\op{SL}_n)$
and $\op{H}^{\bullet}_{\op{Nis}}(\op{B}\op{SL}_n,\mathbf{I}^\bullet)$ in the next subsection.  

Following \cite{cadek}, we use the notation $\Delta(J,J')=(J\cup J')\setminus (J\cap J')$ for the symmetric difference of two subsets $J$ and $J'$ of a given set. This notation will make the formula for products of Bockstein classes easier to parse. 

Recall from Proposition~\ref{prop:cwepsgraded} that $\epsilon \in \op{GW}(F)$ becomes $1$ in $\op{W}(F)$ and hence $\op{H}^{\bullet}_{\op{Nis}}({\op{B}}\op{SL}_n,\mathbf{I}^\bullet)$ will be
a $(-1)$-graded commutative (graded commutative for short) algebra in the sense of Definition~\ref{def:epsgraded}. 

\begin{definition}
\label{def:rnsln}
Let $F$ be a field of characteristic $\neq 2$.  For a natural number $n\geq 2$, we define the graded commutative $\op{W}(F)$-algebra 
\[
\mathscr{R}_n=\op{W}(F)\left[P_1,\dots,P_{[(n-1)/2]},X_n, \{B_J\}_{J}\right]
\]
where the index set $J$ runs through the sets $\{j_1,\dots,j_l\}$ of natural numbers with $0<j_1<\dots<j_l\leq[(n-1)/2]$. The degrees of elements are given as follows:
\[
\deg P_i=4i, \; \deg X_n=n, \; \deg B_J=1+\sum_{i=1}^l 2j_i.
\]
By convention $B_\emptyset=0$. Let $\mathscr{I}_n\subset \mathscr{R}_n$ be the ideal generated by the following relations:
\begin{enumerate}
\item $\op{I}(F) B_J=0$.
\item If $n=2k+1$ is odd, $X_{2k+1}=B_{\{k\}}$. 
\item For two index sets $J$ and $J'$, we have 
\[
B_J\cdot B_{J'}=\sum_{k\in J} B_{\{k\}}\cdot P_{(J\setminus\{k\})\cap J'}\cdot B_{\Delta(J\setminus\{k\},J')}
\]
where we set $P_A=\prod_{i=1}^l P_{a_i}$ for $A=\{a_1,\dots,a_l\}$.
\end{enumerate}
\end{definition}

\begin{remark}
The convention of setting $B_{\emptyset}=0$ will later turn out to be compatible with the statement that $\overline{\op{c}}_0=1$ has trivial Bockstein.
\end{remark}

\begin{remark}
In a graded-commutative $\mathbb{Z}$-algebra, if an element $x$ is $2$-torsion, then $x\cdot y=y\cdot x$, irrespective of degrees. This follows since the products will be $2$-torsion and consequently $x\cdot y=-x\cdot y$. In the above setting, the elements $B_J$ are $2$-torsion because of relation (1) in Definition~\ref{def:rnsln} and $\op{W}(F)/\op{I}(F)=\mathbb{Z}/2\mathbb{Z}$. Therefore the ideal generated by the elements $B_J$ in $\mathscr{R}_n/\mathscr{I}_n$ will be central. Moreover, the subalgebra of even-degree elements will always be commutative. Since the elements $P_i$ live in degrees divisible by $4$, they are necessarily central, and so is the element $X_n$ for $n$ even. However, for $n$ odd it is also central since $X_{2k+1}=B_{\{k\}}$. 
Consequently, the $\mathbb{Z}$-graded $\op{W}(F)$-algebra $\mathscr{R}_n/\mathscr{I}_n$ defined above is actually commutative.
\end{remark}

\begin{definition}
\label{def:rnres}
Let $n\geq 3$ be a natural number. Define the map $\Phi_n\colon \mathscr{R}_n\to\mathscr{R}_{n-1}$ as the  $W(F)$-algebra map  explicitly given on generators as follows: 
\begin{itemize}
\item the element $P_i$ maps to $P_i$ if $i<(n-1)/2$ and maps to $X_{n-1}^2$ if $i=(n-1)/2$, 
\item the element $X_n$ maps to $0$, and finally
\item the element $B_J$ for the index set $J=\{j_1,\dots,j_l\}$ maps to
\[
\left\{\begin{array}{ll}
B_J & j_l<(n-1)/2\\
B_{J'}\cdot X_{n-1} & j_l=(n-1)/2, J=J'\sqcup\{j_l\}.
\end{array}\right.
\]
\end{itemize}
\end{definition}

\begin{example}
The first non-trivial relation of type (3) in Definition~\ref{def:rnsln} appears for $n=5$, it is
\[
B_{\{1,2\}}\cdot B_{\{1,2\}}= B_1\cdot P_2\cdot B_1+B_2\cdot P_1\cdot B_2.
\]
We check in this special case that the map $\Phi_5$ is compatible with this relation. Recall that $\Phi_5$ is given by $B_{\{1,2\}}\mapsto B_1\cdot X_4$,  $B_1\mapsto B_1$ and $B_2\mapsto 0$. The equation above then maps to 
\[
B_1\cdot X_4\cdot B_1\cdot X_4=B_1\cdot X_4^2\cdot B_1+0.
\]
We see that in this case, $\Phi_5$ maps the relation in $\mathscr{R}_5$ to an equality in $\mathscr{R}_4$ and all's well. Keep this example in mind, and the following proof is a cakewalk.
\end{example}

\begin{proposition}
\label{prop:compatres}
With the notation from Definitions~\ref{def:rnsln} and \ref{def:rnres}, we have 
\[
\Phi_n(\mathscr{I}_n)\subseteq \mathscr{I}_{n-1}.
\]
In particular, the map $\Phi_n$ descends to a well-defined ring homomorphism
\[
\overline{\Phi}_n\colon \mathscr{R}_n/\mathscr{I}_n\to \mathscr{R}_{n-1}/\mathscr{I}_{n-1}. 
\]
\end{proposition}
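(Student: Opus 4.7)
The plan is to verify that $\Phi_n$ carries each of the three families of generators of $\mathscr{I}_n$ into $\mathscr{I}_{n-1}$, from which the descended homomorphism $\overline{\Phi}_n$ follows by the universal property of the quotient. First I would remark that $\Phi_n$ is a well-defined graded $\op{W}(F)$-algebra map: $\mathscr{R}_n$ is the graded-commutative $\op{W}(F)$-algebra freely generated by $\{P_i,X_n,B_J\}$, and the target $\mathscr{R}_{n-1}$ is graded-commutative, so it is enough to check the degree count on generators. For instance, in the odd case $n=2k+1$, both $P_k$ and $X_{n-1}^2$ have degree $4k$, and $B_J$ (degree $1+\sum 2j_i$) matches $B_{J\setminus\{k\}}\cdot X_{n-1}$ (degree $1+\sum_{i<l}2j_i + (n-1) = 1+\sum 2j_i$). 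The argument then splits according to the parity of $n$: when $n$ is even, $(n-1)/2$ is non-integral, so $\Phi_n$ is the identity on every $P_i$ and every $B_J$ (it only kills $X_n$), and the three relations of $\mathscr{I}_n$ are mapped literally to their counterparts in $\mathscr{I}_{n-1}$. The substance is the odd case $n=2k+1$, which I treat as follows.

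For relation (1), $\Phi_n(B_J)$ equals $B_J$ when $k\notin J$ (so $\op{I}(F)\cdot B_J$ is absorbed by relation (1) of $\mathscr{I}_{n-1}$) and equals $B_{J\setminus\{k\}}\cdot X_{n-1}$ when $k\in J$. In the latter case either $J=\{k\}$ and the image is $B_\emptyset\cdot X_{n-1}=0$ by convention, or $J\setminus\{k\}\neq\emptyset$ and the factor $\op{I}(F)\cdot B_{J\setminus\{k\}}$ already vanishes modulo $\mathscr{I}_{n-1}$. For relation (2), both $\Phi_n(X_{2k+1})=0$ (by definition) and $\Phi_n(B_{\{k\}})=B_\emptyset\cdot X_{n-1}=0$, so $X_{2k+1}-B_{\{k\}}$ maps to zero.

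The main work is relation (3). I would split into four cases according to whether $m:=(n-1)/2$ belongs to neither, to exactly one of, or to both of $J$ and $J'$, and verify each. In every case the $j=m$ summand on the right-hand side vanishes under $\Phi_n$ because $\Phi_n(B_{\{m\}})=0$. The remaining summands indexed by $j\in J\setminus\{m\}$ reorganise using the two identities $\Phi_n(P_m)=X_{n-1}^2$ and $\Phi_n(B_J)=B_{J\setminus\{m\}}\cdot X_{n-1}$ for $m\in J$. Writing $\widetilde{J}=J\setminus\{m\}$ and $\widetilde{J'}=J'\setminus\{m\}$, the plan is to show that $\Phi_n(B_JB_{J'})=B_{\widetilde{J}}\cdot B_{\widetilde{J'}}\cdot X_{n-1}^{\epsilon}$, where $\epsilon\in\{0,1,2\}$ counts how many of $J,J'$ contain $m$, and that $\Phi_n$ of the right-hand side of relation (3) assembles, after cancellation of the $j=m$ term, into exactly the same expression via relation (3) of $\mathscr{R}_{n-1}$ applied to $(\widetilde{J},\widetilde{J'})$, multiplied by the same $X_{n-1}^{\epsilon}$. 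The combinatorial heart is that an element $m$ lying in both $J\setminus\{j\}$ and $J'$ contributes to the intersection, producing $P_m\mapsto X_{n-1}^2$, whereas $m$ lying in exactly one contributes to the symmetric difference, producing an extra factor $X_{n-1}$ coming from $B_{\cdots\cup\{m\}}\mapsto B_{\cdots}\cdot X_{n-1}$; the definition of $\Phi_n$ is engineered precisely to make these two contributions match.

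The main obstacle, as usual with the Cadek-style presentations, is the disciplined bookkeeping in case (3), in particular tracking whether $m$ lands in an intersection or in a symmetric difference for each summand. Once graded commutativity together with the $2$-torsion of the $B_J$'s is used to freely rearrange factors, the verification reduces to matching powers of $X_{n-1}$ and invoking relation (3) of $\mathscr{I}_{n-1}$ on the reduced index sets. This yields the containment $\Phi_n(\mathscr{I}_n)\subseteq\mathscr{I}_{n-1}$, whence the induced ring homomorphism $\overline{\Phi}_n$.
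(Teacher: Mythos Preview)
Your proposal is correct and follows essentially the same approach as the paper: a case distinction on whether the maximal index $m=(n-1)/2$ lies in neither, one, or both of $J$ and $J'$, showing in each case that the image of relation~(3) is a power of $X_{n-1}$ times the corresponding relation~(3) for the reduced index sets $(\widetilde{J},\widetilde{J'})$ in $\mathscr{I}_{n-1}$. Your explicit parity split (the even case being trivial because no index set can contain the non-integer $(n-1)/2$) is a helpful clarification that the paper leaves implicit, and your remark that the $j=m$ summand dies because $\Phi_n(B_{\{m\}})=0$ is exactly the mechanism the paper uses.
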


\begin{proof}
We first deal with the relations of type (1). Recall that the map $\Phi_n$ is by definition $W(F)$-linear, in particular it will send $I(F)$ into $I(F)$. Since $\Phi_n$ sends $B_J$ to either $B_J$ or $B_{J'}\cdot X_{n-1}$ (with special case $0$ if $J'=\emptyset$), it is clear that relations of type (1) are preserved. 

The relations of type (2) are also preserved, since both $X_{2k+1}$ and $B_k$ are mapped to $0$ by $\Phi_n$. 

It remains to deal with relations of type (3). These relations are trivially preserved if neither $J$ nor $J'$ contain the highest possible index $j_l=(n-1)/2$. In that case, all the relevant $B_J$ and $P_J$ will exist both in $\mathscr{R}_n$ and $\mathscr{R}_{n-1}$, and the corresponding relation in $\mathscr{R}_n$ is just mapped to the same relation in $\mathscr{R}_{n-1}$. 

The appearance of $j_l=(n-1)/2$ causes some trouble which we deal with in a case distinction. 

First assume that $j_l\in J'$ and $j_l\not\in J$. In this case, the relation is 
\[
B_J\cdot B_{J'}=\sum_{k\in J} B_{\{k\}}\cdot P_{(J\setminus\{k\})\cap J'}\cdot B_{\Delta(J\setminus\{k\},J')}.
\]
By assumption, $(J\setminus \{k\})\cap J'$ will not contain $j_l$ hence no Pontryagin class appears. Also, the assumption implies that $j_l\in\Delta(J\setminus\{k\},J')$ hence the $j_l$ appears in the second index set for every $k\in J$ (and none of the $k$ can be equal $j_l$). Therefore, the equality maps under $\Phi_n$ to 
\[
B_J\cdot B_{J'\setminus\{j_l\}}\cdot X_{n-1}=\sum_{k\in J} B_{\{k\}}\cdot P_{(J\setminus\{k\})\cap J'} \cdot B_{\Delta(J\setminus\{k\},J')\setminus\{j_l\}}\cdot X_{n-1}.
\]
But this is just the product of a relation in $\mathscr{I}_{n-1}$ with $X_{n-1}$. 

Since the relation is not symmetric in $J$ and $J'$, it is necessary to say a few words about the case $j_l\in J$ and $j_l\not\in J'$ as well. Now there will be one summand where $k=j_l$, and this summand maps to $0$ because it contains $B_{\{j_l\}}$. For all other summands, $k\neq j_l$. As before, the index $j_l$ doesn't appear in the set $(J\setminus \{k\})\cap J'$ relevant for the Pontryagin classes. Also as before, the symmetric differences $\Delta(J\setminus\{k\},J')$ for $k\neq j_l$ will all contain $j_l$. With this information, we write out the image of the relation under $\Phi_n$:
\[
B_{J\setminus\{j_l\}}\cdot X_{n-1}\cdot B_{J'}=\sum_{j_l\neq k\in J} B_{\{k\}}\cdot P_{(J\setminus\{k\})\cap J'}\cdot B_{\Delta(J\setminus\{k,j_l\},J')}\cdot X_{n-1}.
\]
Again, this is just the product of a relation in $\mathscr{I}_{n-1}$ with $X_{n-1}$. 

Finally, we deal with the case where $j_l\in J\cap J'$. In this case, the symmetric difference $\Delta(J\setminus \{k\},J')$ will not contain $j_l$ unless $j_l=k$ (which contributes a summand $0$ under $\Phi_n$). However, now the index set $(J\setminus \{k\})\cap J'$ will contain $j_l$ unless $j_l=k$. Therefore, in this case, the image of the relation under $\Phi_n$ is
\[
B_{J\setminus\{j_l\}}\cdot X_{n-1}\cdot B_{J'\setminus\{j_l\}}\cdot X_{n-1}= \sum_{j_l\neq k\in J} B_{\{k\}}\cdot P_{((J\setminus\{k\})\cap J')\setminus\{j_l\}}\cdot X_{n-1}^2\cdot B_{\Delta(J\setminus\{k\},J')}.
\]
Again, this is the product of a relation in $\mathscr{I}_{n-1}$ with $X_{n-1}^2$. 

Since $\Phi_n(\mathscr{I}_n)\subset \mathscr{I}_{n-1}$, it follows that the restriction map descends to a $W(F)$-algebra map $\overline{\Phi}_n\colon \mathscr{R}_n/\mathscr{I}_n\to \mathscr{R}_{n-1}/\mathscr{I}_{n-1}$ as claimed. 
\end{proof}

\begin{lemma}
\label{lem:evenpoly}
If $n$ is even, then we have an isomorphism 
\[
\mathscr{R}_n/\mathscr{I}_n\cong\mathscr{R}_{n-1}/\mathscr{I}_{n-1}[X_n].
\] 
In particular, the restriction map $\overline{\Phi}_n\colon \mathscr{R}_n/\mathscr{I}_n\to\mathscr{R}_{n-1}/\mathscr{I}_{n-1}$ is surjective. 
\end{lemma}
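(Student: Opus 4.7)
The plan is to exploit the observation that when $n=2k$ is even, the index sets for Pontryagin and Bockstein class generators coincide for $\mathscr{R}_n$ and $\mathscr{R}_{n-1}$: both contain $P_1, \ldots, P_{k-1}$ and $B_J$ for nonempty $J \subseteq \{1, \ldots, k-1\}$, since $[(n-1)/2] = [(n-2)/2] = k-1$. The only generator-level difference is the Euler class $X_n$ versus $X_{n-1}$, and the only ideal-level difference is the relation of type~(2), which appears in $\mathscr{I}_{n-1}$ (since $n-1=2k-1$ is odd) in the form $X_{n-1} = B_{\{k-1\}}$, but is absent from $\mathscr{I}_n$ because $n$ is even.

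To establish the isomorphism, I would construct mutually inverse $W(F)$-algebra maps. First, define $\varphi: \mathscr{R}_n \to (\mathscr{R}_{n-1}/\mathscr{I}_{n-1})[T]$ on generators by $P_i \mapsto P_i$, $B_J \mapsto B_J$, and $X_n \mapsto T$. Well-definedness on the quotient $\mathscr{R}_n/\mathscr{I}_n$ requires checking that the relations of types~(1) and~(3) in $\mathscr{I}_n$ (there are no type~(2) relations for even $n$) become trivial in the target; but these relations are literally identical to the corresponding relations already imposed in $\mathscr{R}_{n-1}/\mathscr{I}_{n-1}$, and therefore hold in any polynomial ring over it. Conversely, define $\psi: (\mathscr{R}_{n-1}/\mathscr{I}_{n-1})[T] \to \mathscr{R}_n/\mathscr{I}_n$ by $P_i \mapsto P_i$, $B_J \mapsto B_J$, $T \mapsto X_n$. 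Well-definedness amounts to verifying the three types of relations in $\mathscr{I}_{n-1}$: types~(1) and~(3) transfer verbatim, while relation~(2) is automatically accounted for, since $X_{n-1}$ has already been identified with $B_{\{k-1\}}$ in the source, leaving no independent generator $X_{n-1}$ to transport to the target.

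Both $\varphi$ and $\psi$ act as the identity on generators under the correspondence $T \leftrightarrow X_n$, so their compositions are identities on generators, whence they descend to mutually inverse ring isomorphisms. For surjectivity of $\Phi_n$, observe from Definition~\ref{def:rnres} that, when $n$ is even, the special cases $i = (n-1)/2$ and $j_l = (n-1)/2$ in the definition never arise (no integer can equal a half-integer), so $\Phi_n$ acts simply by $P_i \mapsto P_i$, $B_J \mapsto B_J$, $X_n \mapsto 0$. Under the isomorphism $\mathscr{R}_n/\mathscr{I}_n \cong (\mathscr{R}_{n-1}/\mathscr{I}_{n-1})[X_n]$ just established, $\overline{\Phi}_n$ is precisely the evaluation map $X_n \mapsto 0$, which is manifestly surjective.

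The only step requiring genuine care is the bookkeeping of well-definedness in both directions; no substantive obstacle arises, because for even $n$ the combinatorics of the indexing align perfectly and no type~(2) relation in $\mathscr{I}_n$ stands in the way of the comparison.
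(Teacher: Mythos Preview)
Your proposal is correct and follows essentially the same approach as the paper's proof: both rest on the observation that for even $n$ the index sets for the $P_i$ and $B_J$ coincide in $\mathscr{R}_n$ and $\mathscr{R}_{n-1}$, that the type~(2) relation in $\mathscr{I}_{n-1}$ absorbs $X_{n-1}$ into $B_{\{k-1\}}$, and that $X_n$ appears in no relation of $\mathscr{I}_n$. The paper's proof is terser, simply recording these facts and leaving the isomorphism implicit, whereas you spell out the mutually inverse algebra maps explicitly; this extra care is not wasted, but it does not constitute a different route.
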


\begin{proof}
The index set for the elements $P_i$ is the same for $n$ and $n-1$. In particular, $i\neq (n-1)/2$ which means that the $P_i$ in $\mathscr{R}_n$ are just mapped to the $P_i$ in $\mathscr{R}_{n-1}$ for all $i$, and  similarly for (the index sets of the) classes $B_J$. Moreover, in $\mathscr{R}_{n-1}/\mathscr{I}_{n-1}$ we have $X_{n-1}=B_{\{(n-2)/2\}}$. This proves the surjectivity of $\Phi_n$. Finally, note that the class $X_n$ doesn't appear in any relation in $\mathscr{R}_n$. 
\end{proof}

\begin{lemma}
\label{lem:oddproblem}
If $n$ is odd, then there is an exact sequence of graded $\op{W}(F)$-algebras:
\[
\mathscr{R}_n/\mathscr{I}_n\xrightarrow{\Phi_n} \mathscr{R}_{n-1}/\mathscr{I}_{n-1} \to \op{W}(F)[X_{n-1}]/(X_{n-1}^2)\to 0.
\] 
\end{lemma}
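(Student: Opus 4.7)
The plan is to construct an explicit surjective $\op{W}(F)$-algebra map
\[
\psi:\mathscr{R}_{n-1}/\mathscr{I}_{n-1}\to \op{W}(F)[X_{n-1}]/(X_{n-1}^2)
\]
by sending $X_{n-1}\mapsto X_{n-1}$ and killing the remaining generators $P_i$ and $B_J$, and then to identify its kernel with the ideal generated by the image of $\Phi_n$.

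First I would verify that $\psi$ is well-defined on $\mathscr{R}_{n-1}/\mathscr{I}_{n-1}$. Relations of type (1) become $0=0$ because $\psi$ kills every $B_J$. Relation (2) does not apply, since $n-1$ is even. Relations of type (3) become $0=0$ on both sides because every summand on the right-hand side of
\[
B_J\cdot B_{J'}=\sum_{k\in J}B_{\{k\}}\,P_{(J\setminus\{k\})\cap J'}\,B_{\Delta(J\setminus\{k\},J')}
\]
contains a factor $B_{\{k\}}$ annihilated by $\psi$. Surjectivity is immediate from $1\mapsto 1$ and $X_{n-1}\mapsto X_{n-1}$.

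Next I would verify exactness in the middle, i.e.\ that $\ker\psi$ coincides with the ideal generated by the image of $\Phi_n$. The inclusion $\langle\op{im}\Phi_n\rangle\subseteq\ker\psi$ is routine: $\Phi_n$ sends each positive-degree generator of $\mathscr{R}_n/\mathscr{I}_n$ into the set $\{P_j,\,X_{n-1}^2,\,B_J,\,B_{J'}\cdot X_{n-1}\}$ (with the convention $B_\emptyset=0$), and $\psi$ annihilates each of these. For the reverse inclusion, $\ker\psi$ is generated as an ideal by
\[
\{P_1,\dots,P_{(n-3)/2}\}\cup\{X_{n-1}^2\}\cup\{B_J:\emptyset\neq J\subseteq\{1,\dots,(n-3)/2\}\},
\]
and each of these generators is in the image of $\Phi_n$ directly from the defining formula: $P_i=\Phi_n(P_i)$ for $i<(n-1)/2$, $X_{n-1}^2=\Phi_n(P_{(n-1)/2})$, and $B_J=\Phi_n(B_J)$ whenever $\max J<(n-1)/2$.

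The argument is essentially bookkeeping once $\psi$ is constructed; the only step requiring a bit of care is confirming that the well-definedness check for relations of type (3) goes through uniformly (thanks to the common factor $B_{\{k\}}$), and tracking the index ranges in $\Phi_n$ on the relevant generators. There is no substantive obstacle.
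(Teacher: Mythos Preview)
Your proof is correct and follows essentially the same approach as the paper: both define the map $\psi$ by sending $X_{n-1}$ to itself and killing the other generators, and both identify the image of $\Phi_n$ with the subalgebra generated by the $P_i$, $B_J$, $X_{n-1}^2$, and $B_{J'}X_{n-1}$. Your version is in fact more careful than the paper's, since you explicitly verify that $\psi$ respects the relations of types (1)--(3) and check both inclusions for exactness. One terminological wrinkle: the phrase ``ideal generated by the image of $\Phi_n$'' is literally the whole ring (since $1\in\op{im}\Phi_n$); what you actually verify, and what is meant, is the ideal generated by the images of the \emph{positive-degree} generators, which is the correct notion of exactness for augmented graded $\op{W}(F)$-algebras.
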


\begin{proof}
The elements $P_i\in\mathscr{R}_n$ with $i<(n-1)/2$ are mapped under ${\Phi_n}$ to the elements with the same name in $\mathscr{R}_{n-1}$. The same holds for the elements $B_J$ where the index set $J$ doesn't contain $(n-1)/2$. In particular, the subalgebra of $\mathscr{R}_{n-1}/\mathscr{I}_{n-1}$ generated by all $P_i$ and $B_J$ is in the image. The only elements in $\mathscr{R}_n$ we have not yet considered so far are the new $P_{(n-1)/2}$ and the elements $B_J$ where $J$ contains $(n-1)/2$.  The element $X_{n-1}^2$ is in the image of $P_{(n-1)/2}$, and the elements $B_J'X_{n-1}$ are in the image of $B_J$. However, the element $X_{n-1}$ itself is not in the image since we noted in Lemma~\ref{lem:evenpoly} that it is a polynomial variable in $\mathscr{R}_{n-1}$. Consequently, defining the morphism $\mathscr{R}_{n-1}/\mathscr{I}_{n-1}\to \op{W}(F)[X_{n-1}]/(X_{n-1}^2)$ by sending $X_{n-1}$ to itself and all the other generators to $0$ yields the desired exact sequence.
\end{proof}

\subsection{Statement of results} 

The first result describes $\op{H}^\bullet({\op{B}}\op{SL}_n,\mathbf{I}^\bullet)$ in terms of the rings $\mathscr{R}_n/\mathscr{I}_n$ described earlier. Note that by Lemma~\ref{lem:module}  $\op{H}^\bullet({\op{B}}\op{SL}_n,\mathbf{I}^\bullet)$ is a $\op{W}(F)$-module, and all the maps between cohomology groups we consider throughout the computation are $\op{W}(F)$-linear. Note also that $\mathscr{R}_n$ was defined using the appropriate graded commutativity so as to ensure the existence of the morphism in point (1) below. One should keep in mind that $\mathscr{R}_n$ is not commutative, and its image in $\op{H}^\bullet({\op{B}}\op{SL}_n,\mathbf{I}^\bullet)$ is commutative only after we establish that the Bockstein classes are $\op{I}(F)$-torsion.

\begin{theorem}
\label{thm:slnin}
Let $n\geq 2$ be a natural number. 
\begin{enumerate}[(1)]
\item The homomorphism of graded commutative $\op{W}(F)$-algebras defined by
\[
\theta_n\colon \mathscr{R}_n\to  \op{H}^\bullet({\op{B}}\op{SL}_n,\mathbf{I}^\bullet)\colon P_i\mapsto\op{p}_{2i},\; X_n\mapsto \op{e}_n, \;B_J\mapsto \beta(\overline{\op{c}}_{2j_1}\cdots\overline{\op{c}}_{2j_l})
\]
induces a graded $\op{W}(F)$-algebra isomorphism $\overline{\theta}_n\colon \mathscr{R}_n/\mathscr{I}_n \xrightarrow{\cong} \op{H}^\bullet({\op{B}}\op{SL}_n, \mathbf{I}^\bullet)$. Here the images of the generators are the characteristic classes in $\mathbf{I}$-cohomology as defined in Definitions~\ref{def:pontryagingln}, \ref{def:bockchern} and Remark \ref{rem:inclasses}. The isomorphisms are compatible with restriction via $\Phi_n$ and $j_n^\ast$, respectively.
\item The reduction morphism 
\[
\rho\colon \op{H}^{\bullet}_{\op{Nis}}({\op{B}}\op{SL}_n,\mathbf{I}^\bullet)\to\op{Ch}^\bullet({\op{B}}\op{SL}_n)
\]
induced from the projection $\mathbf{I}^n\mapsto\mathbf{K}^{\op{M}}_n/2$ is explicitly given by mapping
\[
\op{p}_{2i}\mapsto \overline{\op{c}}_{2i}^2, \; \beta(\overline{\op{c}}_{2j_1}\cdots\overline{\op{c}}_{2j_l})\mapsto \op{Sq}^2(\overline{\op{c}}_{2j_1}\cdots\overline{\op{c}}_{2j_l}),\; \op{e}_n\mapsto \overline{\op{c}}_n. 
\]
\end{enumerate}
\end{theorem}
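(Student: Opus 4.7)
The proof of part (1) proceeds by induction on $n$ using the localization sequence of Proposition~\ref{prop:locsln} applied to $\mathbf{I}^\bullet$-coefficients, comparing $\op{B}\op{SL}_{n-1}$ and $\op{B}\op{SL}_n$ via multiplication by the Euler class $\op{e}_n$. The base case $n=2$ uses $\op{SL}_2=\op{Sp}_2$ and the quaternionic projective bundle formula of Proposition~\ref{prop:hpn} (in its $\mathbf{I}^\bullet$-version), identifying $\op{H}^\bullet(\op{B}\op{SL}_2,\mathbf{I}^\bullet)\cong\op{W}(F)[\op{p}_1]=\op{W}(F)[\op{e}_2]$, which coincides with $\mathscr{R}_2/\mathscr{I}_2=\op{W}(F)[X_2]$. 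The inductive step is carried out separately for $n$ even and $n$ odd, mirroring the algebraic decompositions of Lemmas~\ref{lem:evenpoly} and~\ref{lem:oddproblem}. For $n=2k$ even, the Euler class $\op{e}_{2k}$ restricts to a nonzerodivisor on $\op{B}\op{SL}_2^{\times k}$ (the top elementary symmetric polynomial in the Pontryagin roots provided by Theorem~\ref{thm:symplectic}), so multiplication by $\op{e}_{2k}$ is injective, the localization sequence breaks into short exact sequences, and $\op{H}^\bullet(\op{B}\op{SL}_{2k},\mathbf{I}^\bullet)$ becomes a polynomial ring over $\op{H}^\bullet(\op{B}\op{SL}_{2k-1},\mathbf{I}^\bullet)$ on the class $\op{e}_{2k}$. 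For $n=2k+1$ odd, the restriction $\iota^*$ is no longer surjective, and the cokernel matches the $\op{W}(F)[X_{2k-1}]/(X_{2k-1}^2)$ factor of Lemma~\ref{lem:oddproblem} via the relation $\op{e}_{2k+1}=\beta_{\{k\}}$, which forces $\iota^*\op{e}_{2k+1}=0$ on $\op{B}\op{SL}_{2k}$.

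The substantive step is verifying that the generators satisfy the relations in $\mathscr{I}_n$. Relation (1), $\op{I}(F)\cdot\beta_J=0$, follows from a Leibniz-type rule for the Bockstein, proved in analogy with Proposition~\ref{prop:gysinder}: writing $\op{I}(F)$ as the image of units under $\mathbf{K}^{\op{M}}_1\to\mathbf{I}^1$ and computing $\{u\}\cdot\beta(x)$ via the Leibniz formula reduces the product to elements already in the image of $\beta$ applied to units, which admit concrete lifts to Milnor $K$-theory and vanish. Relation (2), $\op{e}_{2k+1}=\beta_{\{k\}}$ on $\op{B}\op{SL}_{2k+1}$, follows by comparing reductions: the motivic Wu formula $\op{Sq}^2\overline{\op{c}}_{2k}=\overline{\op{c}}_{2k+1}$ (derivable from the Cartan formula via $\op{Sq}^2\op{c}_k=(k-1)\op{c}_{k+1}$ for $\op{SL}$-bundles) combined with Proposition~\ref{prop:sq2} shows that both $\op{e}_{2k+1}$ and $\beta_{\{k\}}$ reduce to $\overline{\op{c}}_{2k+1}$, and their difference lies in the image of $\eta$, which I would show vanishes in the relevant bidegree using the inductive description of $\op{H}^{2k+1}(\op{B}\op{SL}_{2k+1},\mathbf{I}^{2k+2})$. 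Relation (3) is the hardest: I would iterate a Leibniz rule of the form $\beta(xy)=\beta(x)\tilde{y}+\tilde{x}\beta(y)$, using that $\overline{\op{c}}_{2j}^2=\rho(\op{p}_{2j})$ admits a natural lift via the Pontryagin class while individual $\overline{\op{c}}_{2j}$ typically does not, so that the combinatorial bookkeeping of which indices contribute Pontryagin factors and which remain as Bockstein factors matches the right-hand side of relation (3).

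For part (2) the identifications are largely formal. The formula $\rho(\op{e}_n)=\overline{\op{c}}_n$ follows from Proposition~\ref{prop:sw} combined with commutativity of the key diagram of Section~\ref{sec:key}; the formula $\rho(\beta_J)=\op{Sq}^2(\overline{\op{c}}_{2j_1}\cdots\overline{\op{c}}_{2j_l})$ is exactly Proposition~\ref{prop:sq2}; and $\rho(\op{p}_{2i})=\overline{\op{c}}_{2i}^2$ is obtained by unwinding the symplectification definition of $\op{p}_{2i}(\op{SL}_n)$ — its image in $\op{CH}^\bullet$ is $\op{c}_{4i}(E\oplus\overline{E})$, which using $\op{c}_b(\overline{E})=(-1)^b\op{c}_b(E)$ expands as $\sum_{a+b=4i}(-1)^b\overline{\op{c}}_a\overline{\op{c}}_b$ and collapses modulo $2$ to $\overline{\op{c}}_{2i}^2$. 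The main obstacle throughout is relation (3) together with injectivity of $\overline{\theta}_n$ in the odd case, where the short cokernel from Lemma~\ref{lem:oddproblem} must be matched correctly via the identification $\op{e}_{2k+1}=\beta_{\{k\}}$ and where the interaction of multiple Bocksteins must be tracked by a careful application of the motivic Cartan formulas.
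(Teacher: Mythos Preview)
Your overall architecture (induction on $n$, localization sequence, even/odd split, base case via $\op{SL}_2=\op{Sp}_2$) matches the paper's, and your treatment of part (2) is essentially correct. But there is a genuine gap in the heart of the argument, and it propagates into several of your claims.

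The central missing idea is the following: the paper does \emph{not} prove relation (3), nor the injectivity of $\overline{\theta}_n$ on torsion, by a Leibniz rule for the Bockstein $\beta$. In fact $\beta:\op{Ch}^\bullet\to\op{H}^{\bullet+1}(-,\mathbf{I}^{\bullet+1})$ is not a derivation a priori; only its composite $\rho\beta=\op{Sq}^2$ is (Lemma~\ref{lem:jacobi}). The paper's key technical step (Propositions~\ref{prop:lem22even} and~\ref{prop:lem22odd}) is to prove, inductively in $n$ and in the cohomological degree, that $\rho$ is \emph{injective on the image of $\beta$}. This is the substitute for Brown's Lemma~2.2 in \cite{brown}, which is unavailable here because of non-geometric bidegrees. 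Once this is known, relation (3) is checked in $\op{Ch}^\bullet$ using the Jacobi identity for $\op{Sq}^2$ (Proposition~\ref{prop:type3mod2}) and then lifted. Your proposed Leibniz formula $\beta(xy)=\beta(x)\tilde{y}+\tilde{x}\beta(y)$ cannot be used as an input: the individual $\overline{\op{c}}_{2j}$ do not lift, and for the squares $\overline{\op{c}}_{2j}^2=\rho(\op{p}_{2j})$ the formula degenerates and gives no information about products of distinct classes.

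This gap also infects your even case. You assert that multiplication by $\op{e}_{2k}$ is injective because $\op{e}_{2k}$ restricts to a nonzerodivisor in $\widetilde{\op{CH}}^\bullet({\op{B}}\op{SL}_2^{\times k})$. But restriction only controls the $\op{W}(F)$-torsion-free subalgebra generated by $\op{p}_{2i}$ and $\op{e}_n$ (this is exactly Proposition~\ref{prop:evensplit}); injectivity of $\op{e}_n$ on the Bockstein classes requires again the $\rho$-injectivity statement (see Corollary~\ref{cor:eulerinj}). Without it the localization sequence does not split into short exact sequences, and the polynomial-ring conclusion does not follow. Similarly, your sketch for relation (2), $\op{e}_{2k+1}=\beta(\overline{\op{c}}_{2k})$, is too optimistic: writing $\op{e}_n=\beta(\overline{\op{c}}_{n-1})+\eta v$ is easy, but showing $\eta v=0$ needs the full argument of Proposition~\ref{prop:eulerrel}, which first establishes $\eta^2\op{e}_n=0$ via the localization sequence and the inductive hypothesis, and then bootstraps.

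In short: the global strategy is right, but the proof cannot proceed by manipulating $\beta$ directly. You must replace the Leibniz-for-$\beta$ approach by the two-step mechanism ``prove modulo 2 via $\op{Sq}^2$, then lift using injectivity of $\rho$ on $\op{im}\beta$'', and you must prove that injectivity statement by its own induction intertwined with the main one.
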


This result will be proved in Section~\ref{sec:special}. For now, we deduce the following complete description of the Chow--Witt ring of ${\op{B}}\op{SL}_n$. 

\begin{theorem}
\label{thm:slnchw}
\begin{enumerate}[(1)]
\item The kernel of the Bockstein map $\beta$
\[
\mathbb{Z}/2\mathbb{Z}[\overline{\op{c}}_2,\overline{\op{c}}_3,\dots,\overline{\op{c}}_n]\cong \op{Ch}^\bullet({\op{B}}\op{SL}_n)\to \op{H}^\bullet({\op{B}}\op{SL}_n,\mathbf{I}^\bullet) 
\]
is given by the subring $\mathbb{Z}/2\mathbb{Z}[\overline{\op{c}}_{2i+1},\overline{\op{c}}_{2i}^2,\overline{\op{c}}_n] \subseteq \mathbb{Z}/2\mathbb{Z}[\overline{\op{c}}_2,\dots,\overline{\op{c}}_n]$. The kernel of the composition $\partial\colon \op{CH}^\bullet({\op{B}}\op{SL}_n)\to \op{Ch}^\bullet({\op{B}}\op{SL}_n)\xrightarrow{\beta} \op{H}^\bullet({\op{B}}\op{SL}_n,\mathbf{I}^\bullet)$ is the subring 
\[
\ker\partial=\mathbb{Z}[\op{c}_{2i+1},2\op{c}_{2i_1} \cdots \op{c}_{2i_k},{\op{c}}_{2i}^2,\op{c}_n]\subseteq\mathbb{Z}[\op{c}_2,\dots,\op{c}_n]\cong \op{CH}^\bullet({\op{B}}\op{SL}_n).
\]
\item
There is a cartesian square of graded $\op{GW}(F)$-algebras
\[
\xymatrix{
\widetilde{\op{CH}}^\bullet({\op{B}}\op{SL}_n)\ar[r] \ar[d] & \ker\partial \ar[d]^{\bmod 2} \\
\op{H}^{\bullet}_{\op{Nis}}({\op{B}}\op{SL}_n,\mathbf{I}^\bullet)\ar[r]_\rho &\op{Ch}^\bullet({\op{B}}\op{SL}_n).
}
\]
The right vertical morphism is the natural reduction mod $2$ restricted to $\ker\partial$, and the lower horizontal morphism is the reduction morphism described in Theorem~\ref{thm:slnin}. The Chow--Witt-theoretic Euler class satisfies $\op{e}_n=(\op{e}_n,\op{c}_n)$. For the Chow--Witt-theoretic Pontryagin classes, we have
\[
\op{p}_{i}=\left(\op{p}_{i}, (-1)^i\op{c}_{i}^2+2\sum_{j=\max\{0,2i-n\}}^{i-1}(-1)^j\op{c}_j\op{c}_{2i-j}\right)
\]
where the odd Pontryagin classes in $\mathbf{I}$-cohomology are $\op{I}(F)$-torsion and satisfy $\op{p}_{2i+1}=\beta(\overline{\op{c}}_{2i}\overline{\op{c}}_{2i+1})$. 

\end{enumerate}
\end{theorem}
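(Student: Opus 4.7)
The plan is to derive both parts of the theorem by combining the computation of $\op{H}^\bullet({\op{B}}\op{SL}_n,\mathbf{I}^\bullet)$ from Theorem~\ref{thm:slnin} with the general fiber product statement of Proposition~\ref{prop:cartesian}. The central observation is that Totaro's computation $\op{CH}^\bullet({\op{B}}\op{SL}_n)\cong\mathbb{Z}[\op{c}_2,\dots,\op{c}_n]$ shows the Chow ring is $2$-torsion free, so hypothesis~(1) of Proposition~\ref{prop:cartesian} is satisfied on finite-dimensional approximations of ${\op{B}}\op{SL}_n$.

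For part~(1), the B\"ar sequence gives $\ker\beta=\op{im}\,\rho$, where $\rho$ is the reduction map. Since $\rho$ is a ring homomorphism, its image is the subring of $\op{Ch}^\bullet({\op{B}}\op{SL}_n)$ generated by the images of the algebra generators of $\op{H}^\bullet(\mathbf{I}^\bullet)$ listed in Theorem~\ref{thm:slnin}(2): namely $\overline{\op{c}}_{2i}^2$, $\overline{\op{c}}_n$, and $\op{Sq}^2(\overline{\op{c}}_{2j_1}\cdots\overline{\op{c}}_{2j_l})$. A Wu-type computation (exploiting $\overline{\op{c}}_1=0$ in the $\op{SL}_n$-setting) shows that $\op{Sq}^2(\overline{\op{c}}_{2i})$ produces the odd Chern class $\overline{\op{c}}_{2i+1}$, and an inductive argument then verifies that this image equals the subring $\mathbb{Z}/2\mathbb{Z}[\overline{\op{c}}_{2i+1},\overline{\op{c}}_{2i}^2]$. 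The $2$-torsion-free condition on $\op{CH}^\bullet$ forces $\partial=\beta\circ(\bmod\,2)$, so $\ker\partial$ is the preimage of $\ker\beta$ under reduction modulo~$2$, yielding $\mathbb{Z}[\op{c}_{2i+1},2\op{c}_{2i},\op{c}_{2i}^2]$ as claimed.

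For part~(2), the cartesian square is immediate from Proposition~\ref{prop:cartesian} applied to finite-dimensional approximations (and passage to the limit). The identification $\op{e}_n=(\op{e}_n,\op{c}_n)$ follows from Proposition~\ref{prop:chern} (Chow--Witt Euler class reduces to the top Chern class) together with the commutativity of the key diagram in Section~\ref{sec:key}. For the Pontryagin classes, Definition~\ref{def:pontryagingln} gives $\op{p}_i(\op{SL}_n)={\op{B}}\sigma_n^{\ast}\op{p}_i(\op{Sp}_{2n})$, and symplectification sends a rank $n$ bundle $E$ to $E\oplus\overline{E}$. In the Chow ring the image is $\op{c}_{2i}(E\oplus\overline{E})$, which expands via the Whitney sum formula of Proposition~\ref{prop:chern} and the conjugation formula $\op{c}_j(\overline{E})=(-1)^j\op{c}_j(E)$ of Proposition~\ref{prop:vbconjugate} to yield the stated polynomial. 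The odd Pontryagin classes have trivial Chow-theoretic image (monomials pair up with a sign flip in the Whitney expansion), and their $\mathbf{I}^\bullet$-cohomology images are identified as $\beta(\overline{\op{c}}_{2i}\overline{\op{c}}_{2i+1})$ using the explicit presentation.

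The principal obstacle is the identification $\op{im}\,\rho=\mathbb{Z}/2\mathbb{Z}[\overline{\op{c}}_{2i+1},\overline{\op{c}}_{2i}^2]$ in part~(1): both inclusions require care, with $\supseteq$ relying on the Wu formula for $\op{Sq}^2$ on Chern classes in the $\op{SL}_n$-setting (which simplifies substantially because $\overline{\op{c}}_1=0$), and $\subseteq$ requiring an inductive analysis of products $\op{Sq}^2(\overline{\op{c}}_{2i_1}\cdots\overline{\op{c}}_{2i_l})$ via the Cartan formula underlying Lemma~\ref{lem:jacobi}. A secondary subtlety is the uniqueness of the lift of the odd Pontryagin class $\op{p}_{2i+1}$ to the cartesian square, which depends essentially on the $2$-torsion-freeness hypothesis in Proposition~\ref{prop:cartesian}.
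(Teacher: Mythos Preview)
Your overall strategy matches the paper's: Proposition~\ref{prop:cartesian} supplies the cartesian square in part~(2), and the identifications of the Euler and Pontryagin classes come from exactly the sources you cite (Propositions~\ref{prop:chern}, \ref{prop:pontchern}, \ref{prop:ponttor}). However, there are two genuine problems.

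First, your approach to the inclusion $\op{im}\rho\subseteq\mathbb{Z}/2\mathbb{Z}[\overline{\op{c}}_{2i+1},\overline{\op{c}}_{2i}^2]$ in part~(1) does not work as described. You propose to show that each generator $\op{Sq}^2(\overline{\op{c}}_{2j_1}\cdots\overline{\op{c}}_{2j_l})$ lands in the subring via the Cartan formula, but already
\[
\op{Sq}^2(\overline{\op{c}}_2\overline{\op{c}}_4)=\overline{\op{c}}_3\overline{\op{c}}_4+\overline{\op{c}}_2\overline{\op{c}}_5
\]
lies outside $\mathbb{Z}/2\mathbb{Z}[\overline{\op{c}}_{2i+1},\overline{\op{c}}_{2i}^2]$, so the inclusion is simply false for $n\geq 5$ and no inductive expansion will rescue it. The paper's argument is different in a crucial way: it invokes the fact (a consequence of Theorem~\ref{thm:slnin}, established in Propositions~\ref{prop:lem22even} and~\ref{prop:lem22odd}) that $\rho$ is \emph{injective} on the image of $\beta$. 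Since $\op{Sq}^2=\rho\beta$ is a derivation, this forces $\beta$ itself to be a derivation, and the paper then argues about which monomials have nontrivial Bockstein rather than bounding $\op{im}\rho$ from above. You do not mention this injectivity, and without it there is no mechanism to control $\ker\beta$ beyond $\ker\op{Sq}^2$. (As the example above shows, the theorem statement is itself somewhat informal about this containment; the paper's proof is correspondingly terse.)

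Second, your claim that the odd Pontryagin classes $\op{p}_{2i+1}$ have trivial Chow-theoretic image is incorrect. Under symplectification, $\op{p}_i\mapsto\op{c}_{2i}(E\oplus\overline{E})$, and since $2i$ is always even the Whitney-sum terms $\op{c}_j\op{c}_{2i-j}$ and $\op{c}_{2i-j}\op{c}_j$ carry the \emph{same} sign $(-1)^j=(-1)^{2i-j}$ and do not cancel. The image in $\op{CH}^\bullet$ is exactly the nonzero polynomial displayed in the theorem (Proposition~\ref{prop:pontchern}); what vanishes for odd $i$ is the image in $\mathbf{I}^\bullet$-cohomology after $\eta$, which is where the identification $\op{p}_{2i+1}=\beta(\overline{\op{c}}_{2i}\overline{\op{c}}_{2i+1})$ lives.
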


\begin{proof}
(1)  
By the B\"ar sequence (\ref{eq:baer}), the kernel of the Bockstein map is exactly the image of the reduction morphism $\rho\colon \op{H}^\bullet({\op{B}}\op{SL}_n,\mathbf{I}^\bullet)\to \op{Ch}^\bullet({\op{B}}\op{SL}_n)$. The elements $\overline{\op{c}}_{2i}^2$ are the images of the Pontryagin classes, by Theorem~\ref{thm:slnin}, and therefore are in the kernel of the Bockstein map. The element $\overline{\op{c}}_n$ is the image of the Euler class, by Theorem~\ref{thm:slnin}, and therefore is also in the kernel of the Bockstein map. We note that we have an equality $\overline{\op{c}}_{2i+1}=\op{Sq}^2(\overline{\op{c}}_{2i})$ in $\op{Ch}^\bullet({\op{B}}\op{SL}_n)$ which is established in \cite[Proposition 10.3 and Remark 10.5]{fasel:ij} or could alternatively be proved using topological realization and cycle class maps. The elements $\overline{\op{c}}_{2i+1}=\op{Sq}^2(\overline{\op{c}}_{2i})$ are the images of $\beta(\overline{\op{c}}_{2i})$, by Theorem~\ref{thm:slnin}, and therefore are in the kernel of the Bockstein map. On the other hand, the elements $\overline{\op{c}}_{2i}$ (except possibly in the case $n=2i$) are not in the image of $\rho$, because $\op{Sq}^2(\alpha)=0$ for elements in the image of $\rho$ and $\op{Sq}^2(\overline{\op{c}}_{2i})=\overline{\op{c}}_{2i+1}$. Therefore we have  $\beta(\overline{\op{c}}_{2i})\neq 0$. To see that products of these with arbitrary elements also have non-trivial Bocksteins, we can use Lemma~\ref{lem:jacobi} which tells us that $\op{Sq}^2\colon \op{Ch}^n\to\op{Ch}^{n+1}$ is a derivation. Then we can inductively show that $\op{Sq}^2$ of a product of even Chern classes (excluding $\overline{\op{c}}_n$) is nontrivial by an explicit calculation in $\op{Ch}^\bullet({\op{B}}\op{SL}_n)$. Totaro's identification  $\op{Sq}^2=\rho\beta$, cf. Proposition~\ref{prop:sq2}, then proves the claim. The statement for the kernel of $\partial$ follows directly from the mod 2 statement.

(2) The statement about the cartesian square follows directly from Proposition~\ref{prop:cartesian}. In the claim about the Chow--Witt-theoretic characteristic classes, the reduction from Chow--Witt to $\mathbf{I}$-cohomology follows from the definition of the characteristic classes. The statement about $\op{e}_n$ and $\op{c}_n$ follows from Proposition~\ref{prop:chern}. The statement about the $\op{p}_i$ will be established in Propositions~\ref{prop:pontchern} and \ref{prop:ponttor}. 
\end{proof}

\begin{remark}
Pieces of the above computation have appeared in the literature, but it seems they haven't been combined into a single statement so far. Of course, the Chow-ring of $\op{B}\op{SL}_n$ is well-known thanks to Totaro \cite{totaro:bg}. A computation with the Chow--Witt group relevant for the Euler class of ${\op{B}}\op{GL}_n$ appeared in \cite{AsokFaselEuler}. Integral Stiefel--Whitney classes have been defined in  \cite{fasel:ij}, who computed $\widetilde{\op{CH}}^{\bullet}({\op{B}}\op{GL}_1)$
and $\op{H}^{\bullet}_{\op{Nis}}(\op{B}\op{GL}_1,\mathbf{I}^\bullet)$. 
Pontryagin classes appear in the $\eta$-local computations of Ananyevskiy, but the more complicated Bockstein classes have not been considered before. As pointed out by a referee, the hyperbolic morphism 
\[
\mathbf{K}^{\op{M}}_n(F)\to\mathbf{K}^{\op{MW}}_n(F)\colon \left\{a_1,\dots,a_n\right\}\mapsto [a_1^2,a_2,\dots,a_n]=h\cdot[a_1,\dots,a_n] 
\]
induces a morphism $H\colon \op{CH}^\bullet(X)\to\widetilde{\op{CH}}^\bullet(X)$ and we obtain new classes, so-called ``hyperbolic Chern classes'' $H(\op{c}_{2i_1}\cdots\op{c}_{2i_k})$, which are natural lifts of $2\op{c}_{2i_1}\cdots\op{c}_{2i_k}\in\op{CH}^\bullet(X)$ to $\widetilde{\op{CH}}^\bullet(X)$. 
\end{remark}

\begin{example}
To clarify matters, we spell out in detail the cartesian square for ${\op{B}}\op{SL}_3$: 
\[
\xymatrix{
\widetilde{\op{CH}}^\bullet({\op{B}}\op{SL}_3)\ar[r] \ar[d] & \mathbb{Z}[2{\op{c}_2},\op{c}_2^2,\op{c}_3] \ar[d] \\
\op{W}(F)[\op{p}_2,\beta(\overline{\op{c}}_2)]/(\op{I}(F)\beta(\overline{\op{c}}_2)) \ar[r] &\mathbb{Z}/2\mathbb{Z}[\overline{\op{c}}_2,\overline{\op{c}}_3].
}
\]
The characteristic classes in Chow--Witt theory are the following: we have the Pontryagin class $\op{p}_2=(\op{p}_2,\op{c}_2^2)$, the Euler class $\op{e}_3=(\beta(\overline{\op{c}}_2),\op{c}_3)$, and a class $H({\op{c}_2})=(0,2{\op{c}_2})$ lifting the class $2{\op{c}_2}$ in the Chow ring. 
\end{example}

\section{Relations between characteristic classes}
\label{sec:relations}

In this section, we establish some of the relations between the characteristic classes defined in Section~\ref{sec:character}. 

\subsection{A remark on torsion} 
\label{sec:torsion}
We briefly need to discuss the relation between torsion statements in the topological \cite{brown} and in the algebraic setting. The appropriate cohomology theory to compare torsion statements is the $\mathbf{I}^j$-cohomology and not the Chow--Witt ring (e.g. $\op{p}_i$ for $i$ odd are non-torsion in the Chow--Witt ring but torsion in $\mathbf{I}^j$-cohomology). 

One of the central statements about the integral cohomology of ${\op{B}}\op{SO}(n)$ is that all the torsion is simply 2-torsion. This allows for instance to prove relations in the cohomology ring by checking on the mod 2 cohomology. The key lemma in this reduction is \cite[Lemma 2.2]{brown}. Here is the appropriate version of the lemma in the algebraic setting. 

\begin{lemma}
\label{lem:brown22}
Suppose that for all $q$ in the composition 
\[
\op{H}^{q+1}(X,\mathbf{I}^{q+2})\xrightarrow{\eta} \op{H}^{q+1}(X,\mathbf{I}^{q+1})\xrightarrow{\eta} \op{H}^{q+1}(X,\mathbf{I}^{q})
\]
we have $\op{ker}(\eta)=\op{ker}(\eta \circ \eta)$.
Then the reduction $\rho\colon \op{H}^{q+1}(X,\mathbf{I}^{q+1})\to\op{Ch}^{q+1}(X)$ is injective on the kernel of $\eta\colon \op{H}^{q+1}(X,\mathbf{I}^{q+1})\to\op{H}^{q+1}(X,\mathbf{I}^q)$. 
\end{lemma}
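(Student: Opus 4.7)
The plan is to run a short diagram chase on the Bär sequence coming from the short exact sequence $0\to\mathbf{I}^{n+1}\to\mathbf{I}^n\to \mathbf{K}^{\op{M}}_n/2\to 0$ recalled in Section~\ref{sec:cohops}. Take a class $\alpha\in\op{H}^{q+1}(X,\mathbf{I}^{q+1})$ lying both in $\ker\eta$ (with $\eta:\op{H}^{q+1}(X,\mathbf{I}^{q+1})\to\op{H}^{q+1}(X,\mathbf{I}^q)$) and in $\ker\rho$. The goal is then to deduce $\alpha=0$ using only the hypothesis on kernels and exactness.

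First I would write down the relevant piece of the Bär sequence with $n=q+1$, namely
\[
\op{H}^{q+1}(X,\mathbf{I}^{q+2})\xrightarrow{\eta} \op{H}^{q+1}(X,\mathbf{I}^{q+1})\xrightarrow{\rho}\op{H}^{q+1}(X,\mathbf{K}^{\op{M}}_{q+1}/2)=\op{Ch}^{q+1}(X).
\]
Since $\rho(\alpha)=0$, exactness provides a lift $\widetilde\alpha\in\op{H}^{q+1}(X,\mathbf{I}^{q+2})$ with $\eta(\widetilde\alpha)=\alpha$. This is the only genuine use of the B\"ar sequence and corresponds to the analogous step in Brown's argument where one uses that the obstruction to lifting along $\mathbb{Z}\to\mathbb{Z}/2$ is the Bockstein.

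The second step is to feed this lift into the composition hypothesis. By construction $\eta(\eta(\widetilde\alpha))=\eta(\alpha)=0$ in $\op{H}^{q+1}(X,\mathbf{I}^q)$, so $\widetilde\alpha\in\ker(\eta\circ\eta)$. The assumption $\ker\eta=\ker(\eta\circ\eta)$ on the relevant composition then forces $\eta(\widetilde\alpha)=0$, i.e.\ $\alpha=0$, as required.

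Given that the argument reduces to combining one exact-sequence lift with one application of the hypothesis, I do not anticipate a serious obstacle; the only point where some care is needed is keeping straight that the two occurrences of $\eta$ in the composition refer to the maps $\mathbf{I}^{q+2}\subseteq\mathbf{I}^{q+1}$ and $\mathbf{I}^{q+1}\subseteq\mathbf{I}^q$ on the same cohomological degree $q+1$, and that the hypothesis is tailored exactly to rule out classes that vanish after two multiplications by $\eta$ without vanishing after one. This is the algebraic shadow of Brown's observation \cite[Lemma 2.2]{brown} that $2$-primary torsion in integral cohomology can be detected modulo~$2$ provided multiplication by $2$ has no ``hidden'' kernel, with multiplication by $\eta$ playing the role of multiplication by $2$.
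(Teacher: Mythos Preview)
Your proof is correct and is exactly the exact-sequence manipulation the paper has in mind: lift along the B\"ar sequence using $\rho(\alpha)=0$, then apply the hypothesis $\ker\eta=\ker(\eta\circ\eta)$ to the lift. The paper says no more than this, pointing to \cite[Lemma~2.2]{brown} for the topological analogue with $0\to 2\mathbb{Z}\to\mathbb{Z}\to\mathbb{Z}/2\mathbb{Z}\to 0$.
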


\begin{proof}
This is proved via the same exact sequence manipulation as in topology, replacing the usual long exact cohomology sequence associated to $0\to 2\mathbb{Z}\to\mathbb{Z}\to\mathbb{Z}/2\mathbb{Z}\to 0$ by the B\"ar sequence (\ref{eq:baer}).
\end{proof}

\begin{remark}
The above lemma is not as powerful as \cite[Lemma 2.2]{brown} in topology: 
we can't verify the hypotheses for ${\op{B}}\op{SL}_n$
in non-geometric bidegrees. This is why in the arguments below we replace the use of Lemma 2.2 in \cite{brown} by more  direct arguments proving injectivity of reduction on the image of Bockstein. 
\end{remark}

As $\op{H}^\bullet({\op{B}}\op{SL}_n,\mathbf{I}^\bullet)$ is a module over $\op{W}(F)$, the appropriate algebraic equivalent of the $\mathbb{Z}$-torsion statements in topology is: all torsion in $\op{H}^\bullet({\op{B}}\op{SL}_n,\mathbf{I}^\bullet)$ is $\op{I}(F)$-torsion. It should be noted that e.g. for finite fields $\mathbb{F}_q$ with $q\equiv 3\bmod 4$, there is $4$-torsion in $\op{W}(\mathbb{F}_q)$, which implies that the $\op{H}^\bullet({\op{B}}\op{SL}_n,\mathbf{I}^\bullet)$ will contain non-trivial $4$-torsion. This, however, will arise from the $\op{W}(F)$-torsionfree part while the Bockstein classes will in fact be $2$-torsion. 

Recall the exact piece 
\[
\op{H}^j({\op{B}}\op{SL}_n,\mathbf{I}^{j+1})\xrightarrow{\eta} \op{H}^j({\op{B}}\op{SL}_n,\mathbf{I}^j)\xrightarrow{\rho}\op{Ch}^j({\op{B}}\op{SL}_n)
\]
of the B\"ar sequence (\ref{eq:baer}). We may alternatively say that all torsion in $\op{H}^j({\op{B}}\op{SL}_n,\mathbf{I}^j)$ is $\eta$-torsion (i.e. is in the image of the Bockstein map $\beta$), where here $\eta\colon \mathbf{I}^{j+1}\to\mathbf{I}^j$ is the inclusion of powers of the fundamental ideal as above. On the level of the Gersten complexes resp. the $\op{W}(F)$-module structure of cohomology, the $\eta$-torsion statement reduces exactly to the above $\op{I}(F)$-torsion statement.

\begin{lemma}
\label{lem:baertorsion}
For a set of integers $J=\{j_1,\dots,j_l\}$ with $0<j_1<\cdots<j_l\leq [(n-1)/2]$, we have $\op{I}(F)\beta(\overline{\op{c}}_{2j_1}\cdots \overline{\op{c}}_{2j_l})=0$ in $\op{H}^\bullet({\op{B}}\op{SL}_n,\mathbf{I}^\bullet)$.
\end{lemma}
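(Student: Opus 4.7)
The argument rests on two ingredients: elements in the image of $\beta$ are $\eta$-torsion by exactness of the B\"ar sequence, and every $\langle\langle a\rangle\rangle\in\op{I}(F)$ factors through $\eta$ via the Milnor--Witt relation $\eta\cdot[a]=\langle a\rangle-1$ in $\op{K}^{\op{MW}}_0(F)=\op{GW}(F)$, which projects to $\eta\cdot[a]=-\langle\langle a\rangle\rangle$ in $\op{K}^{\op{W}}_0(F)=\op{W}(F)$.

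First I would reduce to showing $\langle\langle a\rangle\rangle\cdot\beta(\overline{\op{c}}_{2j_1}\cdots\overline{\op{c}}_{2j_l})=0$ for each $a\in F^\times$, since $\op{I}(F)$ is additively generated by $1$-fold Pfister forms $\langle\langle a\rangle\rangle=1-\langle a\rangle$. Set $y:=\beta(\overline{\op{c}}_{2j_1}\cdots\overline{\op{c}}_{2j_l})\in\op{H}^{d+1}({\op{B}}\op{SL}_n,\mathbf{I}^{d+1})$ with $d=2(j_1+\cdots+j_l)$. The B\"ar sequence gives $\eta\cdot y=0$ in $\op{H}^{d+1}({\op{B}}\op{SL}_n,\mathbf{I}^d)$. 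Since $[a]\in\op{H}^0(\op{Spec}F,\mathbf{I}^1)$ and $\eta$ both sit in cohomological degree $0$, they commute in the graded ring $\op{H}^\bullet({\op{B}}\op{SL}_n,\mathbf{K}^{\op{W}}_\bullet)$, and by associativity:
\[
\langle\langle a\rangle\rangle\cdot y \;=\; -(\eta\cdot[a])\cdot y \;=\; -\eta\cdot([a]\cdot y) \;=\; -[a]\cdot(\eta\cdot y) \;=\; 0.
\]

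The conclusion uses no feature specific to ${\op{B}}\op{SL}_n$ or to the particular products $\overline{\op{c}}_J$: the argument shows that any $\eta$-torsion class in $\mathbf{I}^\bullet$-cohomology is automatically annihilated by $\op{I}(F)$. The main thing to be careful about is the bookkeeping of sheaf-versus-cohomological gradings when verifying the commutation of $\eta$ with $[a]$, in view of the fact that $\mathbf{K}^{\op{W}}_\bullet$ itself is $(+1)$-graded commutative as a sheaf algebra while $\op{H}^\bullet({\op{B}}\op{SL}_n,\mathbf{I}^\bullet)$ picks up an extra $(-1)$ at the level of Gersten complex cohomology (Proposition~\ref{prop:cwepsgraded}); once the identity $\langle\langle a\rangle\rangle=-\eta\cdot[a]$ is correctly installed in the module action, the rest is formal.
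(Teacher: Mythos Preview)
Your proof is correct and follows the same idea as the paper's first sentence (``formal from the B\"ar sequence which implies $\eta\beta=0$''): you make explicit why $\eta$-torsion forces $\op{I}(F)$-torsion via the Milnor--Witt relation $\eta\cdot[a]=\langle a\rangle-1$. The paper additionally offers a one-line alternative you do not use: since the B\"ar sequence maps are $\op{W}(F)$-linear and the source $\op{Ch}^\bullet$ of $\beta$ is already annihilated by $\op{I}(F)$ (as $\op{W}(F)/\op{I}(F)\cong\mathbb{Z}/2$), one has $\op{I}(F)\cdot\beta(x)=\beta(\op{I}(F)\cdot x)=0$ directly, which avoids the bookkeeping with $[a]$ and the sheaf grading.
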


\begin{proof}
This is formal from the B\"ar sequence (\ref{eq:baer}) which implies $\eta\beta=0$. More explicitly, the maps in the B\"ar sequence are $\op{W}(F)$-linear, and the source of the Bockstein map is annihilated by $\op{I}(F)$. 
\end{proof}

\begin{remark}
At this point, we know that the Bockstein classes will commute with everything else in $\op{H}^\bullet({\op{B}}\op{SL}_n,\mathbf{I}^\bullet)$. Note that the Pontryagin classes (and the Euler class in the even case) are of even degree and therefore also commute with everything. So we already see at this point that the subring of $\op{H}^\bullet({\op{B}}\op{SL}_n,\mathbf{I}^\bullet)$ generated by the characteristic classes is commutative.
\end{remark}

\subsection{Euler class for odd rank}

The analogue of relation (2) in the definition of $\mathscr{R}_n/\mathscr{I}_n$, cf. Definition~\ref{def:rnsln}, holds in $\op{H}^n({\op{B}}\op{SL}_n,\mathbf{I}^n)$. 

\begin{proposition}
\label{prop:eulerrel}
If $n=2k+1$, we have $\op{e}_n=\beta(\overline{\op{c}}_{n-1})$. 
\end{proposition}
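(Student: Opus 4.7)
The plan is to show equality in $\op{H}^n({\op{B}}\op{SL}_n,\mathbf{I}^n)$ by combining the localization sequence of Proposition~\ref{prop:locsln} with reduction modulo $2$ and the $\op{I}(F)$-torsion property of Bockstein classes from Lemma~\ref{lem:baertorsion}.

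First I would verify that both classes restrict to zero under $j^*:\op{H}^n({\op{B}}\op{SL}_n,\mathbf{I}^n)\to\op{H}^n({\op{B}}\op{SL}_{n-1},\mathbf{I}^n)$. For $\op{e}_n$ this is the defining exactness of the localization sequence, and for $\beta(\overline{\op{c}}_{n-1})$ this holds because $j^*(\overline{\op{c}}_{n-1})=\overline{\op{c}}_{n-1}(\op{SL}_{n-1})=\rho(\op{e}_{n-1})$ is integrally defined and hence has trivial Bockstein by $\beta\rho=0$ in the B\"ar sequence. Hence the difference lies in $\ker j^*$, which by the localization sequence is the image of multiplication by $\op{e}_n$ from $\op{H}^0({\op{B}}\op{SL}_n,\mathbf{I}^0)=\op{W}(F)$, so
\[
(1-\alpha)\,\op{e}_n = \beta(\overline{\op{c}}_{n-1}) \qquad \text{for some } \alpha\in \op{W}(F).
\]

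Next, applying the reduction $\rho$ to both sides and using $\rho(\op{e}_n)=\overline{\op{c}}_n=\op{Sq}^2(\overline{\op{c}}_{n-1})=\rho\beta(\overline{\op{c}}_{n-1})$ (Corollary~\ref{cor:redmod2} together with Lemma~\ref{lem:oddeulermod2}) yields $\rho(\alpha)\,\overline{\op{c}}_n=0$ in the polynomial ring $\op{Ch}^\bullet({\op{B}}\op{SL}_n)$. Since $\overline{\op{c}}_n$ is a non-zero-divisor there, $\rho(\alpha)=0$, so $\alpha\in\op{I}(F)$. Multiplying the displayed relation by $\alpha$ and invoking Lemma~\ref{lem:baertorsion} gives $\alpha(1-\alpha)\,\op{e}_n=0$, i.e.\ $\alpha\op{e}_n=\alpha^2\op{e}_n$, and iterating produces $\alpha\op{e}_n=\alpha^k\op{e}_n$ for every $k\geq1$, with $\alpha^k\in\op{I}(F)^k$.

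To conclude $\alpha\op{e}_n=0$, I would invoke injectivity of the push-forward $\op{e}_n\cdot:\op{W}(F)\to\op{H}^n({\op{B}}\op{SL}_n,\mathbf{I}^n)$, which follows from the localization sequence upon verifying that the boundary map out of $\op{H}^{n-1}({\op{B}}\op{SL}_{n-1},\mathbf{I}^n)$ vanishes (this non-geometric bidegree is accessible via the B\"ar sequence and the already established inductive hypothesis for ${\op{B}}\op{SL}_{n-1}$). Injectivity upgrades $\alpha\op{e}_n=\alpha^k\op{e}_n$ to the identity $\alpha=\alpha^k$ in $\op{W}(F)$, so $\alpha\in\bigcap_k\op{I}(F)^k=0$ by the Arason--Pfister Hauptsatz. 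Thus $\alpha=0$ and $\op{e}_n=\beta(\overline{\op{c}}_{n-1})$.

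The main obstacle is the injectivity of multiplication by $\op{e}_n$ on the coefficient ring $\op{W}(F)=\op{H}^0({\op{B}}\op{SL}_n,\mathbf{I}^0)$: this is controlled by a boundary map from the non-geometric bidegree $\op{H}^{n-1}({\op{B}}\op{SL}_{n-1},\mathbf{I}^n)$, and identifying this cohomology requires the B\"ar sequence together with the presentation of $\op{H}^\bullet({\op{B}}\op{SL}_{n-1},\mathbf{I}^\bullet)$ known by induction. Once this injectivity is secured, the combination of Lemma~\ref{lem:baertorsion}, the polynomial ring structure of $\op{Ch}^\bullet({\op{B}}\op{SL}_n)$, and Arason--Pfister finishes the argument.
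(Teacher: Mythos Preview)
Your steps (1)--(4) are essentially correct and parallel parts of the paper's argument: both approaches note that $\rho(\op{e}_n)=\op{Sq}^2(\overline{\op{c}}_{n-1})$, and that the discrepancy between $\op{e}_n$ and $\beta(\overline{\op{c}}_{n-1})$ is governed by some element of $\op{I}(F)$ after reduction. The difficulty is your step (5).

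The injectivity of $\op{e}_n\cdot:\op{W}(F)\to \op{H}^n({\op{B}}\op{SL}_n,\mathbf{I}^n)$ is \emph{false}, and in fact is contradicted by the very statement you are proving. Once $\op{e}_n=\beta(\overline{\op{c}}_{n-1})$ holds, Lemma~\ref{lem:baertorsion} gives $\op{I}(F)\cdot\op{e}_n=0$, so the kernel of $\op{e}_n\cdot$ contains $\op{I}(F)$. Equivalently, the boundary map $\partial:\op{H}^{n-1}({\op{B}}\op{SL}_{n-1},\mathbf{I}^n)\to\op{W}(F)$ you want to show vanishes actually has image containing $\op{I}(F)$. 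No amount of B\"ar-sequence bookkeeping or induction on ${\op{B}}\op{SL}_{n-1}$ will make that boundary zero, because it is not. Consequently you cannot upgrade $\alpha\op{e}_n=\alpha^k\op{e}_n$ to $\alpha=\alpha^k$ in $\op{W}(F)$, and the Arason--Pfister step never gets off the ground. (For a concrete check: at $n=3$ one has $\op{H}^\bullet({\op{B}}\op{SL}_3,\mathbf{I}^\bullet)\cong\op{W}(F)[\op{p}_2,\beta(\overline{\op{c}}_2)]/(\op{I}(F)\beta(\overline{\op{c}}_2))$ and $\op{e}_3=\beta(\overline{\op{c}}_2)$, so $\op{e}_3\cdot$ kills all of $\op{I}(F)$.)

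The paper avoids this trap by never asserting injectivity of $\op{e}_n$. Instead it first proves the weaker torsion statement $\eta^2\op{e}_n=0$: since $\op{e}_n^2=\op{p}_n$ is $\op{I}(F)$-torsion (Propositions~\ref{prop:eulersquare} and \ref{prop:ponttor}), the class $\eta\op{e}_n$ lies in the kernel of multiplication by $\op{e}_n$ in the localization sequence, hence is $\partial$ of something in $\op{H}^{2n-1}({\op{B}}\op{SL}_{n-1},\mathbf{I}^{2n-1})$, which by induction is $\eta$-torsion. Then the paper writes $\op{e}_n=\beta(\overline{\op{c}}_{n-1})+\eta v$ via the B\"ar sequence, restricts to ${\op{B}}\op{SL}_{n-1}$ to get $\eta v=m\op{e}_n$ with $m\in\op{I}(F)$, and uses $\eta^2\op{e}_n=0$ to force $\eta v=0$. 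The key is that the argument stays at the level of $\eta$-torsion and never needs $\op{e}_n$ to act faithfully on $\op{W}(F)$.
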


\begin{proof}
This is proved in \cite[Theorem 10.1]{fasel:ij}. Note that our Stiefel--Whitney classes $\overline{\op{c}}_i$ agree with those of loc.cit. by Proposition~\ref{prop:sw}, and the identifications of the various Euler classes were discussed in Section~\ref{sec:prelims}. As the vector bundle we consider is the universal orientable rank $n$ vector bundle, its determinant is trivial.
\end{proof}

Via the B\"ar sequence, this implies, in particular, that the Euler class $\op{e}_n\in\op{H}^n({\op{B}}\op{SL}_n,\mathbf{I}^n)$ for odd $n$ is annihilated by $\eta\colon \op{H}^n({\op{B}}\op{SL}_n,\mathbf{I}^n)\to \op{H}^n({\op{B}}\op{SL}_n,\mathbf{I}^{n-1})$. By Lemma~\ref{lem:baertorsion}, $\op{e}_n$ is also annihilated by $\op{I}(F)$ for odd $n$.

\subsection{Pontryagin classes}

We start by giving some computations related to Pontryagin classes. The following square of classifying spaces will be relevant:
\[
\xymatrix{
{\op{B}}\op{SL}_n\ar[r]^{\sigma_n} \ar[d]_{\op{id} \times \op{conj}} & {\op{B}}\op{Sp}_{2n}\ar[d] \\
{\op{B}}\op{SL}_n\times {\op{B}}\op{SL}_n  \ar[r]_{\op{diag}} & {\op{B}}\op{SL}_{2n}
}
\]
Here the top horizontal morphism is the symplectification, the bottom horizontal morphism is induced from the diagonal block inclusion, and the right vertical morphism is induced from the standard inclusion $\op{Sp}_{2n}\hookrightarrow \op{SL}_{2n}$. To describe the left vertical morphism, recall that the symplectification of a vector bundle is the Whitney sum with its dual, equipped with the natural symplectic form. 
The left vertical morphism is then the product of the identity on one factor and the conjugation morphism (mapping a vector bundle to its dual) on the other factor. By construction, we get a commutative square of algebraic groups, as the one above only with $B$s removed. Technically, of course, we haven't said exactly in what sense the classifying spaces are actually spaces. If we work in the Morel--Voevodsky $\mathbb{A}^1$-homotopy category, cf. \cite{MV}, we actually get a commutative square of spaces. Working with finite-dimensional approximations, we get a commutative square of Chow--Witt rings of classifying spaces by Proposition~\ref{prop:inducedhom}.

\begin{proposition}
\label{prop:pontchern}
Under the projection $\widetilde{\op{CH}}^\bullet({\op{B}}\op{SL}_n)\to\op{CH}^\bullet({\op{B}}\op{SL}_n)$, the Pontryagin class $\op{p}_i$ maps to 
\[
(-1)^i\op{c}_{i}^2+2\sum_{j=\max\{0,2i-n\}}^{i-1}(-1)^j\op{c}_j\op{c}_{2i-j}.
\]
In particular, the Pontryagin class $\op{p}_i\in \op{H}^{2i}({\op{B}}\op{SL}_n,\mathbf{I}^{2i})$ maps to $\overline{\op{c}}_{i}^2$ under the reduction map $\rho\colon \op{H}^{2i}({\op{B}}\op{SL}_n,\mathbf{I}^{2i})\to\op{Ch}^{2i}({\op{B}}\op{SL}_n)$. 
\end{proposition}

\begin{proof}
Recall from Definition~\ref{def:pontryagingln} that the Pontryagin classes are defined via the symplectification $\sigma_n$. Under $\widetilde{\op{CH}}^\bullet({\op{B}}\op{Sp}_{2n})\to\op{CH}^\bullet({\op{B}}\op{Sp}_{2n})$, the Pontryagin class $\op{p}_i$ maps to $\op{c}_{2i}$, cf. Theorem~\ref{thm:symplectic}. Hence
it suffices to determine the image of the Chern class $\op{c}_{2i}$ under the morphism $\sigma_n^*\colon \op{CH}^\bullet({\op{B}}\op{Sp}_{2n})\to\op{CH}^\bullet({\op{B}}\op{SL}_n)$ induced by symplectification. Using the above commutative square
(and noting that $\op{c}_{2i}$ for the symplectic group is induced from $\op{CH}^\bullet({\op{B}}\op{SL}_{2n})$ via the forgetful morphism), we thus have to compute the image of $\op{c}_{2i}$ under the composition
\[
 \op{CH}^\bullet({\op{B}}\op{SL}_{2n})\xrightarrow{\op{diag}^*} \op{CH}^\bullet({\op{B}}(\op{SL}_n\times\op{SL}_n))\xrightarrow{\op{id}^\ast\times \op{conj}^\ast} \op{CH}^\bullet({\op{B}}\op{SL}_n).
\]

By Proposition~\ref{prop:vbconjugate}, for a vector bundle $\mathscr{E}$, the Chern classes of the dual vector bundle $\overline{\mathscr{E}}$ are given as $\op{c}_i(\overline{\mathscr{E}})=(-1)^i\op{c}_i(\mathscr{E})$. 
Now we can apply the Whitney sum formula from Proposition~\ref{prop:chern} to determine the Chern classes of the symplectification of a vector bundle:
\[
\op{c}_i(\mathscr{E}\oplus\overline{\mathscr{E}})=\sum_{j=0}^i \op{c}_j(\mathscr{E}) \boxtimes\op{c}_{i-j}(\overline{\mathscr{E}}) = \sum_{j=0}^i (-1)^{i-j}\op{c}_j(\mathscr{E}) \op{c}_{i-j}(\mathscr{E}).
\]
By induction, these classes are trivial for odd $i$, and for even $i$ we get the desired formula. 
\end{proof}

\begin{remark}
The above formula shows that the odd Pontryagin classes are non-$I(F)$-torsion in the Chow--Witt ring since they are non-torsion in the Chow ring. However, in $\mathbf{I}$-cohomology, the above formula reduces exactly to the one in \cite{brown}. Proposition~\ref{prop:ponttor} will show that the odd Pontryagin classes are indeed torsion in the $\mathbf{I}$-cohomology (which is a consequence of the same result for the Euler class, cf. Proposition~\ref{prop:eulerrel}). So in some sense, only the even $p_i$ contribute new information, the odd $p_i$ are completely determined by the Chern and Bockstein classes.
\end{remark}

We now discuss some version of the splitting principle, which will be used to deduce independence of Pontryagin classes for ${\op{B}}\op{SL}_n$. For this, we have the following square similar to the one discussed before: 
\[
\xymatrix{
{\op{B}}\op{Sp}_{2n}\ar[r] \ar[d]_{\op{id}\times\op{conj}} & {\op{B}}\op{SL}_{2n}\ar[d]^{\sigma_{2n}} \\
{\op{B}}\op{Sp}_{2n}\times {\op{B}}\op{Sp}_{2n}  \ar[r]_>>>>>{\op{diag}} & {\op{B}}\op{Sp}_{4n}
}
\]
Here the top horizontal morphism is induced from the standard embedding $\op{Sp}_{2n}\hookrightarrow\op{SL}_{2n}$, and the left vertical morphism is, as before, given by the conjugation on the second factor. The diagram is commutative because the corresponding square of algebraic groups is commutative. 

\begin{proposition}
\label{prop:slnsplit}
The restriction of the projection $\widetilde{\op{CH}}^\bullet({\op{B}}\op{SL}_{2n})\to \widetilde{\op{CH}}^\bullet({\op{B}}\op{Sp}_{2n})$ to the $\op{GW}(F)$-subalgebra of $\widetilde{\op{CH}}^\bullet({\op{B}}\op{SL}_{2n})$ which is generated by the Pontryagin classes $\op{p}_{2},\dots,\op{p}_{2n-2}$ and the Euler class $\op{e}_{2n}$ is injective.
\end{proposition}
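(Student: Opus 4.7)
My plan is to compute the restriction of each proposed generator explicitly, then reduce the injectivity on the subalgebra to an algebraic independence statement in the polynomial ring $\widetilde{\op{CH}}^\bullet({\op{B}}\op{Sp}_{2n})\cong\op{GW}(F)[\op{p}_1,\dots,\op{p}_n]$, which I will verify through the splitting principle combined with a Chow/Witt decomposition.

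First, let $\iota:{\op{B}}\op{Sp}_{2n}\to{\op{B}}\op{SL}_{2n}$ denote the standard inclusion. Using the commutative square displayed immediately above the statement, together with the Whitney sum formula of Theorem~\ref{thm:symplectic}(3) and Proposition~\ref{prop:sympconj}, a direct diagram chase produces the formulas
\[
\iota^\ast\op{p}_{2i}(\op{SL}_{2n})=\sum_{\substack{j+k=2i\\ 0\le j,k\le n}}\langle -1\rangle^k\,\op{p}_j\op{p}_k,\qquad \iota^\ast\op{e}_{2n}(\op{SL}_{2n})=\op{p}_n(\op{Sp}_{2n})
\]
for $i=1,\dots,n-1$; the second equality uses the identification of the top symplectic Pontryagin class with the Euler class (Proposition~\ref{prop:spuniq}). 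By Theorem~\ref{thm:symplectic}(1), $\widetilde{\op{CH}}^\bullet({\op{B}}\op{Sp}_{2n})\cong\op{GW}(F)[\op{p}_1,\dots,\op{p}_n]$, and the claim therefore amounts to asserting that these $n$ images are algebraically independent over $\op{GW}(F)$.

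To establish algebraic independence I will exploit the injection $(\op{rk},\pi):\op{GW}(F)\hookrightarrow\mathbb{Z}\oplus\op{W}(F)$ given by the rank and the Witt projection; it is injective because $\op{I}(F)\cap\mathbb{Z}\mathbb{H}=0$, since any $\mathbb{Z}$-multiple of the hyperbolic plane has even rank. Coefficient-wise this splits algebraic independence over $\op{GW}(F)$ into algebraic independence of the corresponding images in $\op{CH}^\bullet({\op{B}}\op{Sp}_{2n})\cong\mathbb{Z}[\op{c}_2,\op{c}_4,\dots,\op{c}_{2n}]$ and in $\op{H}^\bullet({\op{B}}\op{Sp}_{2n},\mathbf{I}^\bullet)\cong\op{W}(F)[\op{p}_1,\dots,\op{p}_n]$. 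In both settings I will further restrict to ${\op{B}}\op{SL}_2^{\times n}$ using Theorem~\ref{thm:symplectic}(2), so that the images land in polynomial rings over $\mathbb{Z}$ resp.\ $\op{W}(F)$ with $\op{p}_j\mapsto e_j(x_1,\dots,x_n)$.

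On the Witt side $\langle -1\rangle=-1$, and the identity $\prod_i(1-x_i^2 t^2)=E(t)E(-t)$ turns the image of $\op{p}_{2i}(\op{SL}_{2n})$ into $(-1)^i e_i(x_1^2,\dots,x_n^2)$, while the image $g:=x_1\cdots x_n$ of $\op{e}_{2n}$ satisfies $g^2=e_n(x_1^2,\dots,x_n^2)$. Writing any alleged relation as $A+gB$ with $A,B$ polynomials in the $e_\bullet(x^2)$, the two summands have disjoint monomial supports (every $x_i$ to even resp.\ odd power), so they vanish separately, and algebraic independence of $e_1(x^2),\dots,e_n(x^2)$ forces the relation to be trivial. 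On the Chow side the analogous image is $\sum_{j+k=2i}e_j(x)e_k(x)$, which modulo $2$ reduces to $e_i(x)^2$ by the Frobenius; these squares together with $e_n(x)$ are algebraically independent over $\mathbb{F}_2$ (by injectivity of the Frobenius on $\mathbb{F}_2[e_1,\dots,e_n]$), and a standard $2$-adic lifting promotes this to algebraic independence over $\mathbb{Z}$. The main obstacle I anticipate lies in the bookkeeping for the image computation---keeping the conjugation signs and indices straight---but the subsequent algebraic independence arguments are conceptually transparent thanks to the parity/Frobenius dichotomy on the two sides.
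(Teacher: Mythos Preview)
Your proof is correct, and the opening computation of $\iota^\ast\op{p}_{2i}$ via the commutative square, the Whitney sum formula, and Proposition~\ref{prop:sympconj} is exactly how the paper begins as well. The identification $\iota^\ast\op{e}_{2n}=\op{p}_n$ is also the same.

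Where you diverge is in the verification of algebraic independence. The paper stays inside $\widetilde{\op{CH}}^\bullet({\op{B}}\op{Sp}_{2n})$ and argues, after restricting to the quaternionic torus ${\op{B}}\op{SL}_2^{\times n}$, that $\iota^\ast\op{p}_{2i}$ essentially reduces to $\op{p}_i^2$, from which independence is immediate. You instead pass through the injection $\op{GW}(F)\hookrightarrow\mathbb{Z}\times\op{W}(F)$ and treat the Chow and Witt projections separately: on the Witt side the $\langle-1\rangle$'s become honest signs and the identity $E(t)E(-t)=\prod(1-x_j^2t^2)$ gives the clean images $(-1)^ie_i(x^2)$; on the Chow side the $\langle-1\rangle$'s disappear and a mod~$2$ reduction plus Frobenius handles the messier expression $\sum_{j+k=2i}e_je_k$. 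Your route is longer but more transparent about how the unit $\langle-1\rangle$ behaves---the paper's formulation ``$\op{p}_i$ maps to $\sigma_i^2$'' is not literally correct over $\op{GW}(F)$ (already for $i=1$ one gets $h\cdot e_1(x)$ rather than $e_1(x)^2$), so your decomposition has the virtue of making each step checkable without any implicit ``up to lower terms'' reasoning. Conversely, the paper's intended triangularity argument (leading term $\langle-1\rangle^i\op{p}_i^2$ modulo $\op{p}_1,\dots,\op{p}_{i-1}$) would be shorter once stated precisely, since it avoids splitting into two coefficient rings.
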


\begin{proof}
By definition the Pontryagin classes are defined as Pontryagin classes of the symplectification of the universal bundle over ${\op{B}}\op{SL}_{2n}$; in particular, they come from $\widetilde{\op{CH}}^\bullet({\op{B}}\op{Sp}_{4n})$. To determine the images of $\op{p}_{2i}$ under the restriction, we can use the commutative square discussed previously and determine the image of $\op{p}_{2i}\in\widetilde{\op{CH}}^\bullet({\op{B}}\op{Sp}_{4n})$ under the composition
\[
 \widetilde{\op{CH}}^\bullet({\op{B}}\op{Sp}_{4n})\xrightarrow{\op{diag}^\ast} \widetilde{\op{CH}}^\bullet({\op{B}}(\op{Sp}_{2n}\times\op{Sp}_{2n}))\xrightarrow{\op{id}^\ast\times \op{conj}^\ast} \widetilde{\op{CH}}^\bullet({\op{B}}\op{Sp}_{2n}).
\]
Now we can apply the Whitney sum formula from Theorem~\ref{thm:symplectic} to determine the Pontryagin classes of the symplectification of a symplectic bundle, using Proposition~\ref{prop:sympconj}:
\[
\op{p}_i(\mathscr{E}\oplus\overline{\mathscr{E}})=\sum_{j=0}^i \op{p}_j(\mathscr{E})\boxtimes \op{p}_{i-j}(\overline{\mathscr{E}})= \sum_{j=0}^i \langle-1\rangle^{i-j}\op{p}_j(\mathscr{E}) \op{p}_{i-j}(\mathscr{E}).
\]

By induction, using the symplectic splitting principle of Proposition~\ref{prop:symsplit}, one can now show that  the restriction along ${\op{B}}\op{SL}_2^{\times n}\to{\op{B}}\op{Sp}_{2n}$ maps the Pontryagin class $\op{p}_i$ to $\sigma_i^2(\op{p}_{1,1},\dots,\op{p}_{1,n})$, the square of the $i$-th elementary symmetric polynomial in the elements $\op{p}_{1,1},\dots,\op{p}_{1,n}$. Since the restriction to the quaternionic torus is injective, cf. point (2) of Theorem~\ref{thm:symplectic}, this means that the Pontryagin class $\op{p}_{2i}$ maps to $\op{p}_i^2$. This shows the claim.
\end{proof}
 
The following is not a relation in the Chow--Witt ring or the $\mathbf{I}^j$-cohomology; it describes the relation of the Euler class (which is a characteristic class in Chow--Witt theory) to the Pontryagin class of the symplectified bundle (which at this point is not a characteristic class in ${\op{B}}\op{SL}_n$). It is a statement about a restriction map which will be used later. 

\begin{proposition}
\label{prop:eulersquare}
The restriction morphism 
\[
\widetilde{\op{CH}}^{2n}({\op{B}}\op{Sp}_{2n})\to \widetilde{\op{CH}}^{2n}({\op{B}}\op{SL}_n)\subset \op{H}^{2n}({\op{B}}\op{SL}_n,\mathbf{I}^{2n})\times_{\op{Ch}^{2n}({\op{B}}\op{SL}_n)} \op{CH}^{2n}({\op{B}}\op{SL}_n)
\]
induced by symplectification maps the top Pontryagin class $\op{p}_n$ as follows: 
\begin{enumerate}
\item the image in $\ker\partial\subset\op{CH}^{2n}({\op{B}}\op{SL}_n)$ is $(-1)^n\op{c}_n^2$; 
\item the image in $\op{H}^{2n}({\op{B}}\op{SL}_n,\mathbf{I}^{2n})$ is $\langle-1\rangle^{n/2}\op{e}_n^2$ if $n$ is even and $\op{e}_n^2$ otherwise.
\end{enumerate}
\end{proposition}

\begin{proof}
Recall from Proposition~\ref{prop:spuniq} that the top Pontryagin class in ${\op{B}}\op{Sp}_{2n}$ is given by the Euler class. By compatibility of Thom classes with pullbacks, this means that $\op{p}_n$ is obtained by pullback of the Euler class $\op{e}_{2n}$ along the standard inclusion ${\op{B}}\op{Sp}_{2n}\to{\op{B}}\op{SL}_{2n}$. Using the previous commutative square, we thus have to compute the image of $\op{e}_{2n}\in\widetilde{\op{CH}}^\bullet({\op{B}}\op{SL}_{2n})$ under the composition 
\[
\widetilde{\op{CH}}^\bullet({\op{B}}\op{SL}_{2n})\xrightarrow{\op{diag}^\ast} \widetilde{\op{CH}}^\bullet({\op{B}}(\op{SL}_n\times\op{SL}_n)) \xrightarrow{\op{id}^\ast\times \op{conj}^\ast} \widetilde{\op{CH}}^\bullet({\op{B}}\op{SL}_n).
\]
By the Whitney sum formula, Lemma~\ref{lem:eulermult}, the image of the Euler class of the symplectification of a vector bundle $\mathscr{E}$ in Chow theory is given by 
\[
\op{c}_{2n}(\mathscr{E}\oplus\overline{\mathscr{E}})= \op{c}_n(\mathscr{E})\op{c}_n(\overline{\mathscr{E}}) = (-1)^n\op{c}_n^2(\mathscr{E}).
\]
Similarly, we can use the Whitney sum formula to determine the image of the Euler class of the symplectification of a vector bundle in $\mathbf{I}$-cohomology. To determine the action of vector bundle conjugation on the $\mathbf{I}$-cohomology for $n$ even, we can use Proposition~\ref{prop:slnsplit}. The restriction of $\op{p}_n$ to ${\op{B}}\op{Sp}_n$ equals $\langle-1\rangle^{n/2}\op{e}_n^2$, by Proposition~\ref{prop:sympconj}, which implies the claim. In the case where $n$ is odd, we know $\op{e}_{2n}(\mathscr{E}\oplus\overline{\mathscr{E}})=\op{e}_n(\mathscr{E})\op{e}_n(\overline{\mathscr{E}})$, but by Proposition~\ref{prop:eulerrel}, $\op{e}_n(\mathscr{E})$ is $\op{I}(F)$-torsion and so will be the product. In particular it is in the image of the Bockstein map, cf. Lemma~\ref{lem:baertorsion}. The effect of conjugation of vector bundles on the image of the Bockstein map is determined by the effect of conjugation on $\op{Ch}^\bullet({\op{B}}\op{SL}_n)$. But the latter is trivial, which implies the claim.
\end{proof}

\begin{remark}
Although it looks different, this is the same relation as in \cite[Theorem 1.5]{brown}. The differences arise because our choice of indexing as well as sign of the Pontryagin classes for vector bundles differ from those made in \cite{brown}. For us, the Pontryagin classes of vector bundles are exactly the characteristic classes of the symplectification, with no added sign or reindexing.
\end{remark}

\begin{corollary}
\label{cor:redmod2}
The reduction morphism $\rho\colon \op{H}^\bullet({\op{B}}\op{SL}_n,\mathbf{I}^\bullet)\to\op{Ch}^\bullet({\op{B}}\op{SL}_n)$ is given by 
\[
\op{p}_{2i}\mapsto \overline{\op{c}}^2_{2i},\; \beta(\overline{\op{c}}_{2j_1}\cdots \overline{\op{c}}_{2j_l})\mapsto \op{Sq}^2(\overline{\op{c}}_{2j_1}\cdots \overline{\op{c}}_{2j_l}), \; \op{e}_n\mapsto \overline{\op{c}}_n.
\]
In particular, part (2) of Theorem~\ref{thm:slnin} is true. 
\end{corollary}

\begin{proof}
This follows directly from Proposition~\ref{prop:pontchern} (via the commutative square from the key diagram) and Proposition~\ref{prop:chern}.
\end{proof}

\subsection{Relations in the mod 2 Chow ring}

By definition, it is clear that we have a morphism $\mathscr{R}_n\to\op{H}^\bullet_{\op{Nis}}({\op{B}}\op{SL}_n,\mathbf{I}^\bullet)$. A first step towards the proof is to establish enough relations between characteristic classes to show that the ideal $\mathscr{I}_n$ is annihilated by the composition 
\[
\mathscr{R}_n \xrightarrow{\theta_n} \op{H}^\bullet_{\op{Nis}}({\op{B}}\op{SL}_n,\mathbf{I}^\bullet)\xrightarrow{\rho} \op{Ch}^\bullet({\op{B}}\op{SL}_n). 
\]

\begin{lemma}
\label{lem:oddeulermod2}
Assume $n$ is odd. With the above notation we have
\[
\rho(\op{e}_n)=\rho\circ \beta(\overline{\op{c}}_{n-1})=\overline{\op{c}}_{n}.
\]
\end{lemma}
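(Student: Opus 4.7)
The statement factors as two separate equalities, both landing in $\op{Ch}^n({\op{B}}\op{SL}_n)$. My plan is to verify each equality independently using tools that are already in place in the paper.

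For the first equality $\rho(\op{e}_n) = \overline{\op{c}}_n$, I would invoke the commutative key diagram of Section~\ref{sec:key}, which implies that the reduction $\rho : \op{H}^n({\op{B}}\op{SL}_n, \mathbf{I}^n) \to \op{Ch}^n({\op{B}}\op{SL}_n)$ is compatible with the forgetful map $\widetilde{\op{CH}}^n \to \op{CH}^n$ followed by mod $2$ reduction. Under the latter, the Chow--Witt theoretic Euler class maps to the top Chern class by Proposition~\ref{prop:chern}, whose mod $2$ reduction is $\overline{\op{c}}_n$ by Proposition~\ref{prop:sw}. Tracing through the key diagram then gives $\rho(\op{e}_n) = \overline{\op{c}}_n$. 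This step is essentially a restatement of Corollary~\ref{cor:redmod2} and presents no difficulty.

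For the second equality $\rho\circ\beta(\overline{\op{c}}_{n-1}) = \overline{\op{c}}_n$, I would apply Totaro's identification from Proposition~\ref{prop:sq2}, which gives $\rho\circ\beta = \op{Sq}^2$ on $\op{Ch}^{n-1}({\op{B}}\op{SL}_n)$. So the equality reduces to showing $\op{Sq}^2(\overline{\op{c}}_{n-1}) = \overline{\op{c}}_n$ in $\op{Ch}^\bullet({\op{B}}\op{SL}_n)$ when $n$ is odd. Writing $n = 2k+1$, this is the Wu-type formula $\op{Sq}^2(\overline{\op{c}}_{2k}) = \overline{\op{c}}_{2k+1}$, which is recorded as \cite[Proposition 10.3]{fasel:ij} (and is in any case a classical computation in $\op{Ch}^\bullet({\op{B}}\op{SL}_n)$ obtained by splitting principle from the corresponding identity on ${\op{B}}\mathbf{G}_m^{\times n}$).

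The main obstacle is essentially bookkeeping: making sure the Euler class reductions and the mod-$2$ Chern class conventions match up, and citing the correct form of the Wu formula. No new computation beyond invoking Proposition~\ref{prop:sq2}, Proposition~\ref{prop:chern} and \cite[Proposition 10.3]{fasel:ij} is required.
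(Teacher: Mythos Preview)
Your proposal is correct and follows essentially the same route as the paper's own proof: the paper also invokes Totaro's identification $\rho\circ\beta=\op{Sq}^2$ (Proposition~\ref{prop:sq2}), the Wu-type formula from \cite[Proposition 10.3]{fasel:ij}, and the identification of Stiefel--Whitney classes with mod~$2$ Chern classes (Proposition~\ref{prop:sw}). Your argument for the first equality via Corollary~\ref{cor:redmod2} (equivalently the key diagram plus Proposition~\ref{prop:chern}) is exactly what is implicit in the paper's terse proof.
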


\begin{proof}
This follows from \cite[Proposition 10.3, Remark 10.5]{fasel:ij}, Totaro's identification of $\op{Sq}^2=\rho\circ\beta$, cf. Proposition~\ref{prop:sq2}, and the identification of Stiefel--Whitney classes with reductions of Chern classes in Proposition~\ref{prop:sw}.
\end{proof}

\begin{proposition}
\label{prop:type3mod2}
For two index sets $J$ and $J'$, the elements 
\[
B_J\cdot B_{J'}-\sum_{k\in J} B_{\{k\}}\cdot P_{(J\setminus\{k\})\cap J'}\cdot B_{\Delta(J\setminus\{k\},J')}
\]
have trivial images under the composition $\rho\circ\theta_n\colon \mathscr{R}_n\to\op{Ch}^\bullet({\op{B}}\op{SL}_n)$.
\end{proposition}

\begin{proof}
The proof that $\rho \circ \theta_n$ maps type (3) relations to zero is similar to the topological case. Note that $\rho\circ \theta_n$ maps an element $B_J$ to $\op{Sq}^2(\overline{\op{c}}_{2j_1}\cdots \overline{\op{c}}_{2j_1})$, by Totaro's identification $\op{Sq}^2 = \rho\circ \beta$, cf. Proposition~\ref{prop:sq2}. 

We need another preliminary equality concerning Steenrod squares of squares of Chern class reductions. From the fact that $\op{Sq}^2$ is a derivation we have
\[
\op{Sq}^2 (X\cdot\overline{\op{c}}_{2i}^2)=\op{Sq}^2(X)\cdot\overline{\op{c}}_{2i}^2+ X\cdot \op{Sq}^2\overline{\op{c}}_{2i}^2=\op{Sq}^2(X)\cdot\rho(\op{p}_i).
\]
where the last equality follows since $\overline{\op{c}}_{2i}^2$ is integrally defined and hence has trivial $\op{Sq}^2$. (Alternatively, $\op{Sq}^2(X^2)=2X\op{Sq}^2(X)=0$ since we have mod 2 coefficients.)

The argument for the special linear case is now the same as in \cite{brown}, using the above equality together with the Jacobi identity 
\[
\op{Sq}^2 (XY) \op{Sq}^2 Z = \op{Sq}^2 X \op{Sq}^2 (YZ) + \op{Sq}^2 (XZ) \op{Sq}^2 Y
\]
cf. Lemma~\ref{lem:jacobi}. The proof is by induction on the size of the index set  $J={j_1,...,j_l}$, setting $X=B_{j_1},Y=B_{\{j_2,...,j_l\}}$ and $Z=B_J$, distinguishing the two cases whether $j_1$ is already contained in $J$ or not.
\end{proof}

\begin{remark}
In the presentation $\mathscr{R}_n/\mathscr{I}_n$,  the relation for $B_{\{i,j\}}\cdot B_{k}$ for three distinct indices $i,j,k$ is a lift of the Jacobi identity for $\op{Sq}^2$, cf. Lemma~\ref{lem:jacobi}. 
\end{remark}

\begin{corollary}
\label{cor:slninmod2}
The composition $\rho\circ\theta_n\colon \mathscr{R}_n\to\op{Ch}^\bullet({\op{B}}\op{SL}_n)$ factors through the quotient $\mathscr{R}_n/\mathscr{I}_n$. 
\end{corollary}

\begin{proof}
This follows directly from Lemma~\ref{lem:oddeulermod2} and Proposition~\ref{prop:type3mod2}. 
\end{proof}

This completes the preliminary computations.

\section{\texorpdfstring{$\op{H}^{\bullet}_{\op{Nis}}({\op{B}}\op{SL}_n,\mathbf{I}^\bullet)$}{H(BSLn,I)}: proof of the theorem}
\label{sec:special}

The goal of this section is to prove Theorem~\ref{thm:slnin} which establishes a computation of $\op{H}^\bullet({\op{B}}\op{SL}_n,\mathbf{I}^\bullet)$ very close to the classical computations of $\op{H}^\bullet({\op{B}}\op{SO}(n),\mathbb{Z})$ in \cite{brown}. 

\subsection{Setup for inductive proof}

The proof of Theorem~\ref{thm:slnin} will be by an induction on the rank, and a distinction  of cases between $n$ even or odd. The argumentation essentially follows \cite{brown}, with some modifications as in \cite{RojasVistoli} regarding independence of classes. 

\begin{remark}
The philosophical reason why the inductive argument works for the series of groups $\op{SL}_n$ is that the stabilization morphism ${\op{B}}\op{SL}_{n-1}\to{\op{B}}\op{SL}_n$ has homotopy fiber a sphere $\mathbb{A}^n\setminus \{0\}$. In particular, the only essentially new contribution at each stage is the Euler class which is related to the fundamental class of $\mathbb{A}^n\setminus\{0\}$ by transgression, cf. \cite{AsokFaselEuler}. This is no longer true for orthogonal groups, where even-dimensional quadrics $\op{Q}_{2n}$ appear, leading to contributions from non-geometric bidegrees. This will be discussed elsewhere.
\end{remark}

We first need to set up a comparison between the candidate presentation as formulated in Theorem~\ref{thm:slnin} and the localization sequence of Proposition~\ref{prop:locsln}. The first step will be to establish the commutativity of the following cube diagram:
\[
\xymatrix{
\mathscr{R}_n \ar[rr]^{\theta_n} \ar[dd]_{\Phi_n} \ar[rd]^{\pi_n} && \op{H}^\bullet({\op{B}}\op{SL}_n,\mathbf{I}^\bullet) \ar[dr]^\rho \ar'[d]_{j^\ast}[dd] & \\
&\mathscr{R}_n/\mathscr{I}_n\ar[rr]_(.3){\overline{\theta}_n} \ar[dd]_(.3){\overline{\Phi}_n}&& \op{Ch}^\bullet({\op{B}}\op{SL}_n) \ar[dd]^{j^\ast} \\
\mathscr{R}_{n-1} \ar'[r][rr]_(.3){\theta_{n-1}}\ar[rd]_{\pi_{n-1}} && \op{H}^\bullet({\op{B}}\op{SL}_{n-1},\mathbf{I}^\bullet) \ar[dr]_\rho \\&\mathscr{R}_{n-1}/\mathscr{I}_{n-1}\ar[rr]_{\overline{\theta}_{n-1}}&& \op{Ch}^\bullet({\op{B}}\op{SL}_{n-1}).
}
\]
Let us first describe the maps in the cube: the maps $j^\ast$ on the right face are the restriction maps appearing in the localization sequence of Proposition~\ref{prop:locsln} and the maps $\rho$ on the right face are induced by the reduction of coefficients $\rho\colon \mathbf{I}^q\to\mathbf{K}^{\op{M}}_q/2$. The horizontal maps $\mathscr{R}_m\to\op{H}^\bullet({\op{B}}\op{SL}_m,\mathbf{I}^\bullet)$ are the ones described in Theorem~\ref{thm:slnin}, and the horizontal maps $\mathscr{R}_m/\mathscr{I}_m\to\op{Ch}^q({\op{B}}\op{SL}_m)$ are the compositions of these with the reduction, which factor through the presentation by Corollary~\ref{cor:slninmod2}.  The maps $\pi_m\colon \mathscr{R}_m\to\mathscr{R}_m/\mathscr{I}_m$ are the natural quotient maps. 

By Proposition~\ref{prop:compatres}, we know that $\overline{\Phi}_n$ is actually a well-defined morphism and therefore the left face of the cubical diagram is commutative. The top and bottom faces are already known to commute by Corollary~\ref{cor:slninmod2}. The right face is commutative because the restriction maps in the localization sequence of Proposition~\ref{prop:locsln} are compatible with change-of-coefficients. Because of the surjectivity of the maps $\pi_m$, the front face is commutative if the back face is commutative. 

So it remains to show that the back face is commutative. Actually, we can only give a conditional statement:

\begin{proposition}
\label{prop:backface}
If $n$ is even, the back face of the cubical diagram is commutative.

If $n$ is odd, the back face is commutative if we assume that the map $\theta_{n-1}\colon \mathscr{R}_{n-1}\to \op{H}^\bullet({\op{B}}\op{SL}_{n-1},\mathbf{I}^\bullet)$ factors through an isomorphism 
\[
\Xi_{n-1}\colon \mathscr{R}_{n-1}/\mathscr{I}_{n-1}\xrightarrow{\cong} \op{H}^\bullet({\op{B}}\op{SL}_{n-1},\mathbf{I}^\bullet). 
\]
\end{proposition}

\begin{proof}
We first deal with the even case. By Lemma~\ref{lem:evenpoly}, $\mathscr{R}_n/\mathscr{I}_n\cong \mathscr{R}_{n-1}/\mathscr{I}_{n-1}[X_n]$; and the restriction sends Pontryagin classes and Bockstein classes to themselves, noting that the index sets for these are the same. So we only need to show 
\[
0=\theta_{n-1}\Phi_n(X_n) =j^\ast\theta_n(X_n)=j^\ast\op{e}_n.
\]
But $j^\ast\op{e}_n=0$ follows exactly by the definition of the Euler class in terms of the localization sequence, cf. Proposition~\ref{prop:locsln}.

Now we deal with the odd case. Note that in this case $(n-1)/2$ is an integer. The classes $P_i$ map to the Pontryagin classes $\op{p}_{2i}$ under $\theta_n$. In particular, for $i<(n-1)/2$ the two compositions agree on $P_i$ by Proposition~\ref{prop:pontryaginstab}. For the top Pontryagin class, i.e., the case $P_{\frac{n-1}{2}}$, the compositions agree by Proposition~\ref{prop:eulersquare}. Both compositions send the Euler class to $0$. 

It remains to deal with the Bockstein classes. For classes $B_J$ which already exist in $\mathscr{R}_{n-1}$, the fact that both compositions agree follows from the commutativity of the restriction maps $j^\ast$ with long exact sequences, cf. Proposition~\ref{prop:locsln}: a product of Chern classes which exist in $\mathscr{R}_{n-1}$ just restricts to itself, and by the compatibility mentioned the same will be true for their Bockstein. So it remains to deal with classes of the form $B_J$ with $J=J'\sqcup\{j_l\}$ where $j_l=(n-1)/2$. If $J'=\emptyset$, we need to show that $\beta(\overline{\op{c}}_{2j_l})=0$. But this is true since $\overline{\op{c}}_{2j_l}$ is the reduction of the Euler class $\op{e}_{n-1}$, i.e., is an integrally defined class and therefore has trivial Bockstein. Now consider the case $J'\neq\emptyset$. Such a class will be mapped under $\Phi_n$ to $B_{J'}\cdot X_{n-1}$, hence $\theta_{n-1}\Phi_n(B_J)=\beta(\overline{\op{c}}_{2j_1}\cdots\overline{\op{c}}_{2j_{l-1}})\cdot \op{e}_{n-1}$. On the other hand, $j^\ast\theta_n$ will map $B_J$ to $\beta(\overline{\op{c}}_{2j_1}\cdots\overline{\op{c}}_{2j_l})$ (viewed as an element of $\op{H}^q({\op{B}}\op{SL}_{n-1},\mathbf{I}^q)$). After composing with the reduction $\rho$, we have 
\[
\op{Sq}^2(\overline{\op{c}}_{2j_1}\cdots\overline{\op{c}}_{2j_l})= \op{Sq}^2(\overline{\op{c}}_{2j_1}\cdots\overline{\op{c}}_{2j_{l-1}})\overline{\op{c}}_{2j_l}+ \, 
\overline{\op{c}}_{2j_1}\cdots\overline{\op{c}}_{2j_{l-1}}\cdot \op{Sq}^2(\overline{\op{c}}_{2j_l}).
\]
The second term on the right hand side is trivial because $\overline{\op{c}}_{2j_l}$ is the reduction of the Euler class $\op{e}_{n-1}$. (Note that there is no difference between writing the product with $\overline{\op{c}}_{2j_l}$ as product with $\op{e}_{n-1}$ because the product will be a 2-torsion class anyway.) Now the difference of the corresponding Bockstein classes will have trivial reduction in $\op{Ch}^{\bullet}$
because of the above computation with $\op{Sq}^2$. By assumption, the Bockstein classes are detected by $\op{Ch}^{\bullet}$, proving the claim. 
\end{proof}

\subsection{Induction step, even case}

We first deal with $\op{H}^\bullet({\op{B}}\op{SL}_n,\mathbf{I}^\bullet)$ 
for $n$ even. \emph{In the whole section, we work under the inductive assumption that Theorem~\ref{thm:slnin} holds for $\op{H}^\bullet({\op{B}}\op{SL}_{n-1},\mathbf{I}^\bullet)$.} As in the symplectic case, the global structure of the argument is rather close to \cite{brown}. 
However, there are two major differences requiring modifications
in parts of the proof. First, the injectivity arguments cannot be done using the localization sequence because of the non-geometric bidegrees.
Second, dealing with torsion using an analogue \cite[Lemma 2.2]{brown} doesn't work directly, also due to non-geometric bidegrees. 

\begin{lemma}
Assume we know Theorem~\ref{thm:slnin} for ${\op{B}}\op{SL}_{n-1}$. Then the restriction map $j^\ast\colon \op{H}^\bullet({\op{B}}\op{SL}_n,\mathbf{I}^\bullet)\to \op{H}^\bullet({\op{B}}\op{SL}_{n-1},\mathbf{I}^\bullet)$ is surjective. 
\end{lemma}

\begin{proof}
It suffices to show that the composition $j^\ast\circ \theta_n$ is surjective. By the commutative cubical diagram, it suffices to check that $\theta_{n-1}\circ\Phi_n$ is surjective. By assumption, we know that there is an isomorphism $\Xi_{n-1}\colon \mathscr{R}_{n-1}/\mathscr{I}_{n-1}\to \op{H}^\bullet({\op{B}}\op{SL}_{n-1},\mathbf{I}^q)$ with $\Xi_{n-1}\circ\pi_{n-1}=\theta_{n-1}$, hence $\theta_{n-1}$ is surjective. By Lemma~\ref{lem:evenpoly}, the map $\Phi_n$ is also surjective. This proves the claim.
\end{proof}

\begin{proposition}
\label{prop:surjeven}
Assume we know Theorem~\ref{thm:slnin} for ${\op{B}}\op{SL}_{n-1}$. Then the presentation map $\theta_n\colon \mathscr{R}_n\to\op{H}^\bullet({\op{B}}\op{SL}_n,\mathbf{I}^\bullet)$ is a surjection.
\end{proposition}

\begin{proof}
The proof proceeds by induction on the cohomological degree $q$. The base case is established by the vanishing of cohomology in negative degrees. Assume we have established the surjectivity for degrees less than $q$, and let $\alpha\in\op{H}^q({\op{B}}\op{SL}_n,\mathbf{I}^q)$ be a class. As before, using the inductive assumption and Lemma~\ref{lem:evenpoly}, we find that $j^\ast \alpha$ is in the image of $\theta_{n-1}\circ\Phi_n$, i.e., we can find a class $\alpha'\in\mathscr{R}_n/\mathscr{I}_n$ such that $j^\ast\alpha=j^\ast\theta_n\alpha'$. Hence, we can assume without loss of generality that $j^\ast \alpha=0$. But the exactness of the localization sequence, cf. Proposition~\ref{prop:locsln}, implies that there exists a class $\gamma\in\op{H}^{q-n}({\op{B}}\op{SL}_n,\mathbf{I}^{q-n})$ such that $\op{e}_n\gamma=\alpha$. By inductive assumption, there is an element $\gamma'\in\mathscr{R}_{q-n}$ such that $\theta_n\gamma'=\gamma$. But then $\theta_n(X_n\gamma')=\op{e}_n\gamma=\alpha$, proving the claim.
\end{proof}


At this point, we deviate from Brown's argument. Brown goes on to prove that $\theta_n$ factors through the relations because of a statement about torsion which requires a priori the injectivity of the multiplication with the Euler class. Instead, we will use a splitting principle, establishing the torsion statement directly and deducing from that the injectivity of the Euler class on the torsion-free part. 

\begin{proposition}
\label{prop:evensplit}
The Pontryagin classes $\op{p}_{2},\dots, \op{p}_{n-2}$ and the Euler class
$\op{e}_n$ are algebraically independent in $\op{H}^\bullet({\op{B}}\op{SL}_n,\mathbf{I}^\bullet)$. In particular, the subalgebra of the cohomology generated by the Pontryagin classes and $\op{e}_n$ is a polynomial ring over $\op{W}(F)$ in these generators and multiplication with $\op{e}_n$ is injective on this subalgebra.
\end{proposition}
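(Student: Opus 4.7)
The plan is to establish algebraic independence by pulling back along the block-diagonal inclusion $\phi:{\op{B}}\op{SL}_2^{\times m}\hookrightarrow{\op{B}}\op{SL}_{2m}={\op{B}}\op{SL}_n$ with $n=2m$, and checking directly in $\op{H}^\bullet({\op{B}}\op{SL}_2^{\times m},\mathbf{I}^\bullet)\cong\op{W}(F)[y_1,\dots,y_m]$, where $y_k:=\op{p}_{1,k}$ is the first Pontryagin class on the $k$-th factor. Combining Lemma~\ref{lem:eulermult} with the identification $\op{e}_2=\op{p}_1$ on ${\op{B}}\op{SL}_2={\op{B}}\op{Sp}_2$ gives $\phi^*\op{e}_n=y_1y_2\cdots y_m$. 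For the Pontryagin classes, Definition~\ref{def:pontryagingln} expresses them as pullbacks along the symplectification $\sigma_n$, and the pullback along $\sigma_n\circ\phi$ of the universal rank-$2n$ symplectic bundle splits as the external Whitney sum $(E_1\oplus\overline{E_1})\boxplus\cdots\boxplus(E_m\oplus\overline{E_m})$. The symplectic Whitney sum formula (Theorem~\ref{thm:symplectic}(3)) together with Proposition~\ref{prop:sympconj} then computes the total Pontryagin class of this bundle as
\[
\prod_{k=1}^m(1+y_kt)(1+\langle-1\rangle y_kt),
\]
which in $\mathbf{I}$-cohomology (where $\langle-1\rangle=-1$) collapses to $\prod_k(1-y_k^2t^2)$. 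Extracting the coefficient of $t^{2i}$ gives $\phi^*\op{p}_{2i}=(-1)^i\sigma_i(y_1^2,\dots,y_m^2)$ for $i=1,\dots,m-1$.

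It then remains to show that the $m$ elements $\sigma_1(y^2),\dots,\sigma_{m-1}(y^2),y_1\cdots y_m$ are algebraically independent over $\op{W}(F)$ in the polynomial ring $\op{W}(F)[y_1,\dots,y_m]$. Given an alleged relation $P(T_1,\dots,T_{m-1},S)=\sum_jP_j(T)S^j$ in the kernel of the evaluation map, use the identity $(y_1\cdots y_m)^2=\sigma_m(y^2)$ to write the image as $A+(y_1\cdots y_m)B$ with $A,B\in\op{W}(F)[y_1^2,\dots,y_m^2]$. The $(\mathbb{Z}/2)^m$-multigrading of $\op{W}(F)[y_1,\dots,y_m]$ by parity of degree in each $y_k$ places $A$ in the all-even piece and $(y_1\cdots y_m)B$ in the all-odd piece, so both vanish separately; in particular $B=0$ since $y_1\cdots y_m$ is a non-zero-divisor. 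Identifying $\op{W}(F)[y_1^2,\dots,y_m^2]\cong\op{W}(F)[\sigma_1(y^2),\dots,\sigma_m(y^2)]$ via the fundamental theorem of symmetric polynomials, and collecting each vanishing relation by powers of $\sigma_m(y^2)$ (algebraically independent from $\sigma_1(y^2),\dots,\sigma_{m-1}(y^2)$), we conclude $P_j=0$ for all $j$, hence $P=0$. This gives the required algebraic independence in $\op{H}^\bullet({\op{B}}\op{SL}_n,\mathbf{I}^\bullet)$; the subalgebra generated by the classes is therefore a polynomial $\op{W}(F)$-algebra, and multiplication by $\op{e}_n$ is automatically injective in it since it is multiplication by a variable in a polynomial ring.

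The main obstacle is that the clean identification $\op{p}_{2i}(\op{SL}_{2n})|_{{\op{B}}\op{Sp}_{2n}}=\op{p}_i^2$ of Proposition~\ref{prop:slnsplit}, which in Chow--Witt drives a ``subring of squares'' shortcut, does not survive verbatim in $\mathbf{I}$-cohomology: the identity $\langle-1\rangle=-1$ kills the linear cross-term in the total Pontryagin class, leaving the symmetric polynomial $\sigma_i(y^2)$ in place of $\sigma_i(y)^2$. Algebraic independence of these ``doubled-variable'' symmetric polynomials together with the product $y_1\cdots y_m$ must therefore be extracted by the parity/multigrading argument above; care is needed since $\op{W}(F)$ may have zero divisors, so the argument has to be run directly at the level of the polynomial ring $\op{W}(F)[y_1,\dots,y_m]$ rather than through a transcendence-degree computation over a fraction field.
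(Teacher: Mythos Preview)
Your proof is correct and follows the same overall strategy as the paper: restrict to the quaternionic torus ${\op{B}}\op{SL}_2^{\times m}$ and verify independence of the images there. The paper's one-line proof simply invokes Proposition~\ref{prop:slnsplit}, whose argument asserts that the restriction of $\op{p}_{2i}(\op{SL}_{2m})$ to ${\op{B}}\op{Sp}_{2m}$ equals $\op{p}_i^2$ (equivalently $\sigma_i(y)^2$ on the torus), which would make algebraic independence with $\op{e}_{2m}\mapsto\op{p}_m$ immediate. Your computation $\phi^*\op{p}_{2i}=(-1)^i\sigma_i(y_1^2,\dots,y_m^2)$ is the accurate one in $\mathbf{I}$-cohomology: since $\langle-1\rangle=-1$ there, the total Pontryagin class collapses to $\prod_k(1-y_k^2t^2)$, and one does \emph{not} get the square $\sigma_i(y)^2$. (The identity $\op{p}_{2i}\mapsto\op{p}_i^2$ in fact already fails in Chow--Witt: for $m=2$ one obtains $2\op{p}_2+\langle-1\rangle\op{p}_1^2$ rather than $\op{p}_1^2$.)

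Because the actual images are $\sigma_i(y^2)$ rather than $\sigma_i(y)^2$, independence together with $y_1\cdots y_m$ is no longer automatic, and your multigrading/parity argument combined with the fundamental theorem of symmetric polynomials over $\op{W}(F)$ is exactly what is needed. In effect you have supplied the missing detail that the paper's invocation of Proposition~\ref{prop:slnsplit} glosses over. Your care about $\op{W}(F)$ possibly having zero-divisors (so that one must argue directly in the polynomial ring rather than pass to a fraction field) is also well placed.
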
 

\begin{proof}
This follows directly from Proposition~\ref{prop:slnsplit}. 
\end{proof}

This implies, in particular also that the subalgebra above is $\op{W}(F)$-torsion free. From the surjectivity of $\theta_n\colon \mathscr{R}_n\to\op{H}^\bullet({\op{B}}\op{SL}_n,\mathbf{I}^\bullet)$ we see that the $\op{W}(F)$-torsion in $\op{H}^\bullet({\op{B}}\op{SL}_n,\mathbf{I}^\bullet)$ is exactly the ideal generated by the image of the Bockstein map $\beta\colon \op{Ch}^\bullet({\op{B}}\op{SL}_n)\to \op{H}^\bullet({\op{B}}\op{SL}_n,\mathbf{I}^\bullet)$. However, this ideal will by definition be annihilated by $\eta\colon \op{H}^\bullet({\op{B}}\op{SL}_n,\mathbf{I}^\bullet)\to \op{H}^\bullet({\op{B}}\op{SL}_n,\mathbf{I}^{\bullet-1})$ and the exactness of the B\"ar sequence implies that the ideal generated by the image of $\beta$ coincides with the image of $\beta$. 

The remaining questions about the structure of $\op{H}^\bullet({\op{B}}\op{SL}_n,\mathbf{I}^\bullet)$ only concern the $\eta$-torsion: the only relations claimed are relations for the image of $\beta$, and of course we want to show that all relations in cohomology are accounted for in $\mathscr{R}_n/\mathscr{I}_n$. For this, Brown uses  \cite[Lemma 2.2]{brown}, which in our setting is not readily available. As a replacement, the following proposition proves the relevant consequences of the lemma directly.

\begin{proposition}
\label{prop:lem22even}
Assume that we know Theorem~\ref{thm:slnin} for ${\op{B}}\op{SL}_{n-1}$. Then for all degrees $q$, in the composition
\[
\op{Ch}^q({\op{B}}\op{SL}_n)\xrightarrow{\beta} \op{H}^{q+1}({\op{B}}\op{SL}_n,\mathbf{I}^{q+1})\xrightarrow{\rho} \op{Ch}^{q+1}({\op{B}}\op{SL}_n),
\]
the map $\rho$ is injective on the image of $\beta$. 
\end{proposition}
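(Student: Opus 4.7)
The plan is to reduce the statement to the purely algebraic inclusion $\ker(\op{Sq}^2) \subseteq \ker(\beta)$ inside $\op{Ch}^\bullet({\op{B}}\op{SL}_n)$; the reverse inclusion being immediate from $\op{Sq}^2 = \rho\circ\beta$, this equality is equivalent to injectivity of $\rho$ on $\operatorname{im}(\beta)$. Since $\ker(\beta)=\operatorname{im}(\rho)$ by the B\"ar sequence, and since the restriction of $\rho$ to the diagonal cohomology $\bigoplus_m \op{H}^m({\op{B}}\op{SL}_n,\mathbf{I}^m)$ is a graded ring homomorphism (the map of sheaves $\mathbf{I}^\bullet\to\mathbf{K}^{\op{M}}_\bullet/2$ respects multiplication by the Milnor conjecture), the image $\operatorname{im}(\rho)$ is a graded subring of $\op{Ch}^\bullet({\op{B}}\op{SL}_n)$. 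It therefore suffices to exhibit every element of $\ker(\op{Sq}^2)$ as a sum of products of classes already known to lie in $\operatorname{im}(\rho)$.

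The main step is a Koszul-type computation. By Proposition~\ref{prop:sq2} and Lemma~\ref{lem:jacobi}, $\op{Sq}^2$ is a degree-one derivation on the polynomial ring $\op{Ch}^\bullet({\op{B}}\op{SL}_n) = \mathbb{Z}/2[\overline{\op{c}}_2,\dots,\overline{\op{c}}_n]$ with $\op{Sq}^2(\overline{\op{c}}_{2i})=\overline{\op{c}}_{2i+1}$ for $2i<n$, $\op{Sq}^2(\overline{\op{c}}_{2i+1})=0$, and, when $n$ is even, $\op{Sq}^2(\overline{\op{c}}_n)=0$---the last because $\overline{\op{c}}_n=\rho(\op{e}_n)$ is the reduction of a class lifting to the Chow--Witt Euler class, and hence is annihilated by $\beta$. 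Grouping the generators into Koszul pairs $(\overline{\op{c}}_{2i},\overline{\op{c}}_{2i+1})$ (together with a trivially-acted-on factor $\mathbb{Z}/2[\overline{\op{c}}_n]$ in the even case) realises $\op{Ch}^\bullet({\op{B}}\op{SL}_n)$ as a tensor product of two-variable cochain complexes over the field $\mathbb{Z}/2$. A direct calculation on a single pair $\mathbb{Z}/2[x,y]$ with $dx=y$ shows that the cohomology is $\mathbb{Z}/2[x^2]$, and the K\"unneth formula then identifies the total cohomology with the polynomial subring $T_0 := \mathbb{Z}/2[\overline{\op{c}}_{2i}^2,\overline{\op{c}}_n]$ (drop $\overline{\op{c}}_n$ when $n$ is odd). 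The natural inclusion $T_0\hookrightarrow\ker(\op{Sq}^2)$ is a section of the projection to cohomology, so the short exact sequence $0\to\operatorname{im}(\op{Sq}^2)\to\ker(\op{Sq}^2)\to T_0\to 0$ splits and $\ker(\op{Sq}^2)=T_0+\operatorname{im}(\op{Sq}^2)$.

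To conclude, both summands lie in $\operatorname{im}(\rho)$. The inclusion $\operatorname{im}(\op{Sq}^2)=\operatorname{im}(\rho\circ\beta)\subseteq\operatorname{im}(\rho)$ is tautological, while $T_0$ is generated by $\overline{\op{c}}_{2i}^2=\rho(\op{p}_i)$ (Proposition~\ref{prop:pontchern}) and, in the even case, $\overline{\op{c}}_n=\rho(\op{e}_n)$ (Corollary~\ref{cor:redmod2}), so the subring property of $\operatorname{im}(\rho)$ gives $T_0\subseteq\operatorname{im}(\rho)$. Therefore $\ker(\op{Sq}^2)\subseteq\operatorname{im}(\rho)=\ker(\beta)$, completing the proof.

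The step that requires the most care is the K\"unneth-type identification $\ker(\op{Sq}^2)=T_0+\operatorname{im}(\op{Sq}^2)$: in Brown's topological setting the analogous conclusion is obtained via a lemma about $2$-torsion which, as remarked after Lemma~\ref{lem:brown22}, we cannot verify in the off-diagonal bidegrees of our theory. The present route circumvents this problem by performing the Koszul computation pair-by-pair directly inside $\op{Ch}^\bullet({\op{B}}\op{SL}_n)$, paying attention to the parity of $n$ and the separate role of the generator $\overline{\op{c}}_n$; the inductive hypothesis on ${\op{B}}\op{SL}_{n-1}$ would enter only if one preferred instead to run an induction on cohomological degree using $j^\ast$ and the localisation sequence, which is a viable alternative.
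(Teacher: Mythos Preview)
Your proof is correct and takes a genuinely different route from the paper's. The paper argues by induction on the cohomological degree~$q$, using the hypothesis on ${\op{B}}\op{SL}_{n-1}$: for $\alpha$ with $\op{Sq}^2(\alpha)=0$, the restriction $j^\ast\beta(\alpha)$ vanishes by the inductive assumption for $n-1$; the localisation sequence then writes $\beta(\alpha)=\op{e}_n\sigma$, and one argues (via the splitting of Proposition~\ref{prop:evensplit} and the induction on $q$) that $\sigma$ must lie in $\operatorname{im}\beta$ and then vanish. Your argument replaces all of this by the direct Koszul computation of $\ker(\op{Sq}^2)$ on the polynomial ring, which is both cleaner and in fact dispenses with the hypothesis on ${\op{B}}\op{SL}_{n-1}$ altogether. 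The only cosmetic slip is the labelling $\overline{\op{c}}_{2i}^2=\rho(\op{p}_i)$; by Proposition~\ref{prop:pontchern} one has $\rho(\op{p}_i)=\overline{\op{c}}_i^2$, so the correct identification is $\overline{\op{c}}_{2i}^2=\rho(\op{p}_{2i})$. What the paper's approach buys is structural uniformity: the same localisation/induction template is reused in the odd case (Proposition~\ref{prop:lem22odd}), where the Euler class is torsion and the argument needs a small twist. Your Koszul argument, however, works verbatim there as well (the only change being that $\overline{\op{c}}_n$ pairs with $\overline{\op{c}}_{n-1}$ rather than standing alone), so in fact your method would streamline both cases.
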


\begin{proof}
Note that the assumption implies that the analogue of the claim is true for ${\op{B}}\op{SL}_{n-1}$. We consider the following commutative diagram 
\[
\xymatrix{
&\op{Ch}^q({\op{B}}\op{SL}_n) \ar[r]^\beta \ar[rd]_(.2){\op{Sq}^2}|!{[r];[d]}\hole \ar[ld]_{j^\ast} & \op{H}^{q+1}({\op{B}}\op{SL}_n,\mathbf{I}^{q+1}) \ar[ld]^(.3){j^\ast} \ar[d]^\rho 
\\
\op{Ch}^q({\op{B}}\op{SL}_{n-1}) \ar[r]_\beta \ar[rd]_{\op{Sq}^2} &\op{H}^{q+1}({\op{B}}\op{SL}_{n-1},\mathbf{I}^{q+1}) \ar[d]_\rho &\op{Ch}^{q+1}({\op{B}}\op{SL}_n) \ar[ld]^{j^\ast}\\
& \op{Ch}^{q+1}({\op{B}}\op{SL}_{n-1})
}
\]
where the triangles are given by Totaro's identification $\rho\beta=\op{Sq}^2$, cf. Proposition~\ref{prop:sq2}. The morphism between the two triangles is the restriction map from ${\op{B}}\op{SL}_n$ to ${\op{B}}\op{SL}_{n-1}$, which commutes with all maps involved. Consider a class $\alpha\in\op{Ch}^q({\op{B}}\op{SL}_n)$ with $\op{Sq}^2\alpha=0$. Our claim is that $\beta\alpha=0$. Restricting to ${\op{B}}\op{SL}_{n-1}$, we get a class $j^\ast\alpha$ with $\op{Sq}^2 j^\ast\alpha=0$. The assumption implies that $\beta j^\ast \alpha=j^\ast\beta\alpha=0$. 

Now we consider the localization sequence, cf. Proposition~\ref{prop:locsln}:
\[
\op{H}^{q+1-n}({\op{B}}\op{SL}_n,\mathbf{I}^{q+1-n})\xrightarrow{\op{e}_n} 
\op{H}^{q+1}({\op{B}}\op{SL}_n,\mathbf{I}^{q+1}) \xrightarrow{j^\ast} 
\op{H}^{q+1}({\op{B}}\op{SL}_{n-1},\mathbf{I}^{q+1}).
\]
We have established above that $j^\ast\beta\alpha=0$. As a consequence, there exists a class $\sigma\in\op{H}^{q+1-n}({\op{B}}\op{SL}_n,\mathbf{I}^{q+1-n})$ such that $\op{e}_n\sigma=\beta\alpha$, and we wish to show that $\sigma$ necessarily is trivial. What we have shown so far is that for classes $\alpha\in\op{Ch}^\bullet({\op{B}}\op{SL}_n)$ with $\op{Sq}^2(\alpha)=0$, the classes $\beta\alpha$ are in the image of the multiplication with the Euler class.

Next, we will show that $\rho$ is injective on the image of $\beta$ by an induction on the cohomological degree. The base case for the induction is given by vanishing of the cohomology in negative degrees. Now assume that the reduction map $\rho$ is injective on the image of $\beta$ in degrees less than $q$. Let $\alpha\in\op{Ch}^q({\op{B}}\op{SL}_n)$ be a class with $\op{Sq}^2(\alpha)=0$. As discussed above, there exists a class $\sigma\in\op{H}^{q+1-n}({\op{B}}\op{SL}_n,\mathbf{I}^{q+1-n})$ such that $\op{e}_n\sigma=\beta\alpha$. 

We claim that $\sigma$ is in the image of $\beta\colon \op{Ch}^{q-n}({\op{B}}\op{SL}_n)\to \op{H}^{q+1-n}({\op{B}}\op{SL}_n,\mathbf{I}^{q+1-n})$. If it is not, then by the remark after Proposition~\ref{prop:evensplit} it contains a nontrivial monomial in the Pontryagin and Euler classes. By Proposition~\ref{prop:evensplit}, $\op{e}_n\sigma$ cannot be $\op{W}(F)$-torsion, contradicting the equality $\op{e}_n\sigma=\beta(\alpha)$.
So $\sigma$ is in fact in the image of $\beta$, hence we can apply our inductive assumption that $\rho$ is  injective on the image of $\beta$ in degrees less than $q$ (which contains $\sigma$).

Now consider the diagram 
\[
\xymatrix{
\op{H}^{q+1-n}({\op{B}}\op{SL}_n,\mathbf{I}^{q+1-n}) \ar[d]_\rho \ar[r]^{\op{e}_n} & \op{H}^{q+1}({\op{B}}\op{SL}_n,\mathbf{I}^{q+1}) \ar[d]^\rho \\
\op{Ch}^{q+1-n}({\op{B}}\op{SL}_n) \ar[r]_{\overline{\op{c}}_{n}} & \op{Ch}^{q+1}({\op{B}}\op{SL}_n).
}
\]
The vertical morphisms are induced from $\mathbf{I}^m\to\mathbf{K}^{\op{M}}_m/2$. The horizontal morphisms are essentially push-forward maps in the localization sequence. Commutativity follows from Proposition~\ref{prop:chern}. 

Now by our assumption $\rho\op{e}_n\sigma=\rho\beta\alpha=\op{Sq}^2\alpha=0$. Commutativity of the previous diagram implies that $\overline{\op{c}}_n\rho\sigma=0$. But $\overline{\op{c}}_n$ is injective since $\op{Ch}^\bullet({\op{B}}\op{SL}_n)$ is a polynomial ring generated by the Chern classes, cf. Proposition~\ref{prop:chern}. This implies $\rho\sigma=0$, and the inductive assumption implies $\sigma=0$. Therefore, $\beta\alpha=0$ as claimed.
\end{proof}

The above statement now allows to deal with the torsion phenomena in the cohomology ring $\op{H}^\bullet({\op{B}}\op{SL}_n,\mathbf{I}^\bullet)$. As a first direct application, we can establish that all the relations claimed in Theorem~\ref{thm:slnin} are indeed satisfied.

\begin{corollary}
\label{cor:welldefslneven}
Under our inductive assumption, the morphism $\theta_n\colon \mathscr{R}_n\to \op{H}^\bullet({\op{B}}\op{SL}_n,\mathbf{I}^\bullet)$ maps $\mathscr{I}_n$ to zero and therefore factors through a  well-defined surjective morphism 
\[
\Xi_n\colon \mathscr{R}_n/\mathscr{I}_n\to \op{H}^\bullet({\op{B}}\op{SL}_n,\mathbf{I}^\bullet).
\]
\end{corollary}

\begin{proof}
All the relations in the even case involve torsion. The type (1) relations are satisfied by Lemma~\ref{lem:baertorsion}. The type (3) relations are satisfied in $\op{Ch}^\bullet({\op{B}}\op{SL}_n)$ by Corollary~\ref{cor:slninmod2}. Note that the type (3) relations are in the image of $\beta$. In particular, by Proposition~\ref{prop:lem22even}, any such relation is mapped to $0$ already in $\op{H}^\bullet({\op{B}}\op{SL}_n,\mathbf{I}^\bullet)$. This proves the claim.
\end{proof}

Another direct consequence is that we can finally state that multiplication with the Euler class is injective on the full cohomology ring $\op{H}^\bullet({\op{B}}\op{SL}_n,\mathbf{I}^\bullet)$:

\begin{corollary}
\label{cor:eulerinj}
Assume that we know Theorem~\ref{thm:slnin} for ${\op{B}}\op{SL}_{n-1}$. For all $q$, the map
\[
\op{e}_n\colon \op{H}^{q-n}({\op{B}}\op{SL}_n,\mathbf{I}^{q-n})\to \op{H}^{q}({\op{B}}\op{SL}_n,\mathbf{I}^{q})
\]
is injective. 
\end{corollary}

\begin{proof}
By Proposition~\ref{prop:evensplit}, we already know that the kernel of the Euler class must be contained in the image of $\beta\colon \op{Ch}^{q-1-n}({\op{B}}\op{SL}_n)\to \op{H}^{q-n}({\op{B}}\op{SL}_n,\mathbf{I}^{q-n})$. Let $\alpha$ be a class in the image of $\beta$, and consider the commutative square that appeared in the proof of Proposition~\ref{prop:lem22even}:
\[
\xymatrix{
\op{H}^{q-n}({\op{B}}\op{SL}_n,\mathbf{I}^{q-n}) \ar[d]_\rho \ar[r]^{\op{e}_n} & \op{H}^{q}({\op{B}}\op{SL}_n,\mathbf{I}^{q}) \ar[d]^\rho \\
\op{Ch}^{q-n}({\op{B}}\op{SL}_n) \ar[r]_{\overline{\op{c}}_{n}} & \op{Ch}^{q}({\op{B}}\op{SL}_n).
}
\]
If $\alpha$ is in the kernel of $\op{e}_n$, then it maps to $0$ in the lower right. Now $\overline{\op{c}}_n$ is injective because $\op{Ch}^\bullet({\op{B}}\op{SL}_n)$ is a polynomial ring generated by the reductions of Chern classes, cf.~Proposition~\ref{prop:sw}. By Proposition~\ref{prop:lem22even}, the left-hand vertical reduction map is injective on the image of $\beta$, implying that the class $\alpha$ we started with is already $0$. This shows the required injectivity.
\end{proof}

\begin{proposition}
Assume that we know Theorem~\ref{thm:slnin} for ${\op{B}}\op{SL}_{n-1}$. Then the presentation morphism $\Xi_n\colon \mathscr{R}_n/\mathscr{I}_n\to\op{H}^\bullet({\op{B}}\op{SL}_n,\mathbf{I}^\bullet)$ of Corollary~\ref{cor:welldefslneven} is injective, i.e., there are no more relations. 
\end{proposition}

\begin{proof}
Injectivity of $\Xi_n$ can now be established by an induction over $q$. The base case is given  by the vanishing of the kernel in negative degrees. Consider the following commutative diagram:
\[
\xymatrix{
(\mathscr{R}_n/\mathscr{I}_n)_{q-n} \ar[r]^{X_n} \ar[d]_{\Xi_n} & (\mathscr{R}_n/\mathscr{I}_n)_{q} \ar[r]^{\Phi_n} \ar[d]_{\Xi_n} & (\mathscr{R}_{n-1}/\mathscr{I}_{n-1})_q \ar[d]^{\Xi_{n-1}}\\
\op{H}^{q-n}({\op{B}}\op{SL}_n,\mathbf{I}^{q-n}) \ar[r]_{\op{e}_n} & \op{H}^{q}({\op{B}}\op{SL}_n,\mathbf{I}^{q}) \ar[r]_{j^\ast} & 
\op{H}^{q}({\op{B}}\op{SL}_{n-1},\mathbf{I}^{q})
}
\]
By the inductive assumptions, $\Xi_n$ is injective on the degree $q-n$ part of $\mathscr{R}_n/\mathscr{I}_n$ and $\Xi_{n-1}$ is an isomorphism. 
Moreover, $\op{e}_n$ is injective by Corollary~\ref{cor:eulerinj}. The conclusion follows from the $5$-lemma. 
\end{proof}

\subsection{Induction step, odd case}

Now suppose that $n$ is odd. We are still in the computation of $\op{H}^\bullet({\op{B}}\op{SL}_n,\mathbf{I}^\bullet)$ via induction on $n$. \emph{So we assume throughout the whole next subsection that Theorem~\ref{thm:slnin} holds for $\op{H}^\bullet({\op{B}}\op{SL}_{n-1},\mathbf{I}^\bullet)$.}

In the even case, things were easier because the restriction map $\Phi\colon \mathscr{R}_n/\mathscr{I}_n\to\mathscr{R}_{n-1}/\mathscr{I}_{n-1}$ was surjective. This is not the case for $n$ odd. By Lemma~\ref{lem:oddproblem}, the problematic class is the Euler class $\op{X}_{n-1}$ whose square is the image of the Pontryagin class $P_{n-1}$ but which is not itself the image of any class from $\mathscr{R}_n/\mathscr{I}_n$. This requires establishing more precise information on the behaviour of characteristic classes under the restriction map $j^\ast\colon \op{H}^\bullet({\op{B}}\op{SL}_n,\mathbf{I}^\bullet)\to\op{H}^\bullet({\op{B}}\op{SL}_{n-1},\mathbf{I}^\bullet)$. The other major difference to the even case is that multiplication by the Euler class is no longer injective because the Euler class is torsion, cf. Proposition~\ref{prop:eulerrel}.

To work our way towards showing that $\theta_n$ is surjective, we need to compare the image of the restriction map to the image of the presentation map.

\begin{proposition}
\label{prop:equalim}
Assume we know Theorem~\ref{thm:slnin} for ${\op{B}}\op{SL}_{n-1}$. In the composition 
\[
\mathscr{R}_n\xrightarrow{\theta_n} \op{H}^\bullet({\op{B}}\op{SL}_n,\mathbf{I}^\bullet)\xrightarrow{j^\ast} \op{H}^\bullet({\op{B}}\op{SL}_{n-1},\mathbf{I}^\bullet)
\]
we have $\op{Im} j^\ast\theta_n=\op{Im} j^\ast$. 
\end{proposition}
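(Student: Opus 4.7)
The inclusion $\op{Im}(j^\ast\theta_n)\subseteq\op{Im}(j^\ast)$ is immediate, so the task is to establish the reverse. The plan is to exploit the commutative cube from Proposition~\ref{prop:backface}, whose hypothesis in the odd case is supplied by the induction assumption that $\theta_{n-1}$ is an isomorphism. This gives the identity $j^\ast\theta_n=\theta_{n-1}\Phi_n$, and since $\theta_{n-1}$ is bijective, the reverse inclusion reduces to showing: whenever $\theta_{n-1}(r)\in\op{Im}(j^\ast)$, the class $r\in\mathscr{R}_{n-1}/\mathscr{I}_{n-1}$ lies in $\op{Im}(\Phi_n)$.

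Since $n-1$ is even, Lemma~\ref{lem:evenpoly} presents $\mathscr{R}_{n-1}/\mathscr{I}_{n-1}$ as a polynomial ring $(\mathscr{R}_{n-2}/\mathscr{I}_{n-2})[X_{n-1}]$, so I can expand $r=\sum_k r_kX_{n-1}^k$ with $r_k\in\mathscr{R}_{n-2}/\mathscr{I}_{n-2}$. Reading off Definition~\ref{def:rnres}, the image $\op{Im}(\Phi_n)$ contains the entire subring $\mathscr{R}_{n-2}/\mathscr{I}_{n-2}$, the element $X_{n-1}^2=\Phi_n(P_{(n-1)/2})$, and each product $B_{J'}X_{n-1}=\Phi_n(B_{J'\cup\{(n-1)/2\}})$ for $\emptyset\ne J'\subseteq\{1,\dots,(n-3)/2\}$. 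Subtracting corresponding $\Phi_n$-preimages from $r$, I reduce to the case where $r_k=0$ for even $k\ge 2$ and for each odd $k$ the coefficient $r_k$ is a polynomial in the $P_i$ alone containing no Bockstein factor. It then suffices to show this reduced $r$ vanishes whenever $\theta_{n-1}(r)\in\op{Im}(j^\ast)$.

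For this step I appeal to the torsion structure. By Lemma~\ref{lem:baertorsion} and the inductive hypothesis, the $\op{I}(F)$-torsion submodule of $\op{H}^\bullet({\op{B}}\op{SL}_{n-1},\mathbf{I}^\bullet)$ is exactly the ideal generated by the $\beta_J$, so its non-torsion quotient is the polynomial $\op{W}(F)$-algebra $\op{W}(F)[\op{p}_2,\dots,\op{p}_{n-3},\op{e}_{n-1}]$; the reduced $\theta_{n-1}(r)$ lies in this polynomial ring with all nontrivial monomials of odd $\op{e}_{n-1}$-degree. On the other hand, Proposition~\ref{prop:eulerrel} identifies $\op{e}_n=\beta(\overline{\op{c}}_{n-1})$ as a Bockstein, hence as $\op{I}(F)$-torsion, so $\op{Im}(\cdot\op{e}_n)=\ker(j^\ast)$ is $\op{I}(F)$-torsion and $j^\ast$ is injective on the non-torsion part of $\op{H}^\bullet({\op{B}}\op{SL}_n,\mathbf{I}^\bullet)$ with image in the non-torsion part of $\op{H}^\bullet({\op{B}}\op{SL}_{n-1},\mathbf{I}^\bullet)$. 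Proposition~\ref{prop:eulersquare} further shows that the Pontryagin classes $\op{p}_2,\dots,\op{p}_{n-1}$ of ${\op{B}}\op{SL}_n$ restrict into $\op{W}(F)[\op{p}_2,\dots,\op{p}_{n-3},\op{e}_{n-1}^2]$---only even powers of $\op{e}_{n-1}$ occur. Combined with the algebraic independence of the Pontryagin classes (Proposition~\ref{prop:slnsplit}), this forces the odd $\op{e}_{n-1}$-power monomials in $\theta_{n-1}(r)$ to be absent, so $r=0$.

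The main obstacle is the last sentence of the previous paragraph: it requires that the non-torsion subring of $\op{H}^\bullet({\op{B}}\op{SL}_n,\mathbf{I}^\bullet)$ equals the Pontryagin polynomial ring $\op{W}(F)[\op{p}_2,\dots,\op{p}_{n-1}]$, a fact that is interleaved with the surjectivity of $\theta_n$ still to be proved. This is navigated by a simultaneous induction on cohomological degree, using that $\op{e}_n$ is $\op{I}(F)$-torsion to decouple the Gysin sequence into its torsion and non-torsion pieces; at each degree the non-torsion part on ${\op{B}}\op{SL}_n$ injects via $j^\ast$ into the already-understood non-torsion part on ${\op{B}}\op{SL}_{n-1}$, and this injection is identified with $\op{W}(F)[\op{p}_i]\hookrightarrow\op{W}(F)[\op{p}_i,\op{e}_{n-1}^2]$ via Proposition~\ref{prop:eulersquare}, closing the induction.
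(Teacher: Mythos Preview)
Your overall strategy---reduce via the cube to showing $\op{Im}(\Phi_n)=\theta_{n-1}^{-1}(\op{Im}(j^\ast))$, then strip off everything in $\op{Im}(\Phi_n)$ using Lemma~\ref{lem:oddproblem}---is reasonable, and the reduction to a residual $r$ supported on odd powers of $X_{n-1}$ with Pontryagin-polynomial coefficients is correct (you should also subtract $r_0$, but that is minor). The problem is the final step, and you correctly flag it yourself: the claim that such a residual $\theta_{n-1}(r)$ cannot lie in $\op{Im}(j^\ast)$ rests on identifying the non-torsion part of $\op{H}^\bullet({\op{B}}\op{SL}_n,\mathbf{I}^\bullet)$ with the Pontryagin polynomial ring. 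Your proposed ``simultaneous induction on cohomological degree'' does not close. You do get, from $\op{e}_n$ being $\op{I}(F)$-torsion, that $j^\ast$ is injective on $\op{W}(F)$-torsion-free quotients, so the non-torsion quotient of $\op{H}^q({\op{B}}\op{SL}_n,\mathbf{I}^q)$ embeds into $\op{W}(F)[\op{p}_2,\dots,\op{p}_{n-3},\op{e}_{n-1}]_q$. But nothing in your induction forces that embedding to land in the even-$\op{e}_{n-1}$-power subring: knowing this for degrees $<q$ gives you no leverage in degree $q$, because a brand-new non-torsion class in degree $q$ is not built from lower-degree ones via any sequence you have access to. In effect you are assuming the conclusion of Corollary~\ref{cor:surjodd} in the very degree where you need Proposition~\ref{prop:equalim} to prove it.

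The paper breaks this circularity by a direct computation with the boundary map $\partial$ in the localization sequence. Writing an arbitrary element of $\op{H}^\bullet({\op{B}}\op{SL}_{n-1},\mathbf{I}^\bullet)$ as $j^\ast\theta_n(u_1)+\op{e}_{n-1}\,j^\ast\theta_n(u_2)$ (which uses only the inductive hypothesis on $n-1$ and Lemma~\ref{lem:oddproblem}), one computes $\partial(\op{e}_{n-1})=u\eta\op{e}_n$ for a unit $u$, using that $\eta\op{e}_n=0$ forces $\op{Im}(\partial)$ into $\op{Im}(\eta)$ and that the target in the relevant bidegree is a cyclic $\op{W}(F)$-module. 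The derivation property of $\partial$ (Proposition~\ref{prop:gysinder}) then gives $\partial(j^\ast\theta_n(u_1)+\op{e}_{n-1}\,j^\ast\theta_n(u_2))=u\eta\op{e}_n\theta_n(u_2)$, so membership in $\op{Im}(j^\ast)=\ker(\partial)$ is equivalent to $\eta\theta_n(u_2)=0$, i.e.\ $\theta_n(u_2)$ lies in the image of $\beta$. The remaining elements $\beta(\overline{\op{c}}_{2j_1}\cdots\overline{\op{c}}_{2j_l})\op{e}_{n-1}$ are then lifted by an $\op{Sq}^2$-derivation computation in $\op{Ch}^\bullet$. This argument uses no unproved structural fact about $\op{H}^\bullet({\op{B}}\op{SL}_n,\mathbf{I}^\bullet)$; the missing ingredient in your approach is precisely this computation of $\partial(\op{e}_{n-1})$.
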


\begin{proof}
The setup for the proof is very similar to the one in \cite[p.286]{brown}: for the proof, we consider the following diagram
\[
\xymatrix{
(\mathscr{R}_n)_q \ar[r]^{\theta_n} \ar[d]_{\Phi_n} & \op{H}^q({\op{B}}\op{SL}_n,\mathbf{I}^q) \ar[d]^{j^\ast} \\
(\mathscr{R}_{n-1})_q \ar[r]_{\theta_{n-1}} & \op{H}^q({\op{B}}\op{SL}_{n-1},\mathbf{I}^q) \ar[d]^\partial \\
& \op{H}^{q+1}_{{\op{B}}\op{SL}_n}(E_n,\mathbf{I}^{q}) \ar[r]_\cong& \op{H}^{q+1-n}({\op{B}}\op{SL}_n,\mathbf{I}^{q-n})
}
\]
Here the middle vertical column is a piece of the localization sequence from Proposition~\ref{prop:locsln}; note that the exactness implies that the image of $j^\ast$ is the kernel of $\partial$. The left-hand square is commutative by Proposition~\ref{prop:backface}. The isomorphism in the lower-right corner of the diagram is the d\'evissage isomorphism.

By the inductive assumption on ${\op{B}}\op{SL}_{n-1}$ and  Lemma~\ref{lem:oddproblem}, we can write any element in $\op{H}^\bullet({\op{B}}\op{SL}_{n-1},\mathbf{I}^\bullet)$ as a sum $j^\ast \theta_n(u_1) +\op{e}_{n-1} j^\ast \theta_n(u_2)$. We first compute the image of $\op{e}_{n-1}$ under the boundary map $\partial\colon \op{H}^{n-1}({\op{B}}\op{SL}_{n-1},\mathbf{I}^{n-1})\to \op{H}^{0}({\op{B}}\op{SL}_n,\mathbf{I}^{-1})$. From the Gersten resolution, we know that $\eta\colon \op{H}^{0}({\op{B}}\op{SL}_n,\mathbf{W})\to \op{H}^{0}({\op{B}}\op{SL}_n,\mathbf{I}^{-1})$ is surjective because $\eta\colon \op{W}(E)\to\op{I}^{-1}(E)$ is an isomorphism for fields. Note that $\op{H}^0({\op{B}}\op{SL}_n,\mathbf{W})\cong \op{W}(F)$ was established in the proof of Theorem~\ref{thm:symplectic} for the case $n=2$; for $n\geq 3$ it follows inductively using that the stabilization sequences induce isomorphisms $\op{H}^0({\op{B}}\op{SL}_{n-1},\mathbf{W})\cong \op{H}^0({\op{B}}\op{SL}_n,\mathbf{W})$. In particular, we can view $\op{H}^0({\op{B}}\op{SL}_n,\mathbf{I}^{-1})$ as a cyclic $\op{W}(F)$-submodule generated by $\eta(1)$; consequently, $\partial\op{e}_{n-1}=\eta(u)$ for some element $u\in\op{H}^0({\op{B}}\op{SL}_n,\mathbf{W})\cong\op{W}(F)$. By the previous observations, the inductive assumption on ${\op{B}}\op{SL}_{n-1}$ and  Lemma~\ref{lem:oddproblem} imply that the image of $\partial\colon \op{H}^{n-1}({\op{B}}\op{SL}_{n-1},\mathbf{I}^{n-1})\to \op{H}^{0}({\op{B}}\op{SL}_n,\mathbf{I}^{-1})$ is a cyclic $\op{W}(F)$-submodule of $\op{H}^0({\op{B}}\op{SL}_n,\mathbf{I}^{-1})$ generated by $\partial\op{e}_{n-1}$. Consider the following commutative diagram: 
\[
\xymatrix{
&\op{H}^{0}({\op{B}}\op{SL}_n,\mathbf{W}) \ar[d]^\eta \ar[r]^{\op{e}_n} & \op{H}^{n}({\op{B}}\op{SL}_n,\mathbf{I}^{n}) \ar[d]^\eta
\\
\op{H}^{n-1}({\op{B}}\op{SL}_{n-1},\mathbf{I}^{n-1}) \ar[r]_\partial &  \op{H}^{0}({\op{B}}\op{SL}_n,\mathbf{I}^{-1}) \ar[r]_{\op{e}_n} & \op{H}^{n}({\op{B}}\op{SL}_n,\mathbf{I}^{n-1})
}
\]
Since $\op{e}_n=\beta(\overline{\op{c}}_{n-1})$, the image of $\op{e}_n$ under $\eta\colon \op{H}^n({\op{B}}\op{SL}_n,\mathbf{I}^n)\to \op{H}^n({\op{B}}\op{SL}_n,\mathbf{I}^{n-1})$ will be trivial, by the B\"ar sequence (\ref{eq:baer}). So the composition 
\[
{\op{e}_n}\circ\eta\colon \op{H}^0({\op{B}}\op{SL}_n,\mathbf{W})\to \op{H}^n({\op{B}}\op{SL}_n,\mathbf{I}^{n-1})
\]
is trivial. In particular,  $\op{e}_n\colon \op{H}^0({\op{B}}\op{SL}_n,\mathbf{I}^{-1})\to\op{H}^n({\op{B}}\op{SL}_n,\mathbf{I}^{n-1})$ is the zero map, because every element of $\op{H}^0({\op{B}}\op{SL}_n,\mathbf{I}^{-1})$ is in the image of $\eta$. Hence the boundary map 
\[
\partial\colon \op{H}^{n-1}({\op{B}}\op{SL}_{n-1},\mathbf{I}^{n-1})\to\op{H}^0({\op{B}}\op{SL}_n,\mathbf{I}^{-1})
\]
is surjective. Now we combine all the above statements:  $\op{H}^0({\op{B}}\op{SL}_n,\mathbf{I}^{-1})$ is a cyclic $\op{W}(F)$-module, and both $\eta(1)$ and $\partial(\op{e}_{n-1})$ are generators. In particular, in our previous identification $\partial(\op{e}_{n-1})=\eta(u)$, the element $u\in\op{W}(F)$ must be a unit.

Via the derivation property of the boundary map $\partial$, cf. Proposition~\ref{prop:gysinder}, we can now compute the boundary of an arbitrary element in $\op{H}^\bullet({\op{B}}\op{SL}_{n-1},\mathbf{I}^\bullet)$:
\[
\partial(j^\ast \theta_n(u_1) +\op{e}_{n-1} j^\ast \theta_n(u_2))= \partial(\op{e}_{n-1}) \theta_n(u_2) = u\eta\theta_n(u_2).
\]
In particular, the  boundary of such an element is trivial if and only if $\eta \theta_n(u_2)=0$. By the B\"ar sequence (\ref{eq:baer}), this means that $\theta_n(u_2)$ is in the image of the Bockstein and by linearity it suffices to show that elements of the form $\beta(\overline{\op{c}}_{2j_1}\cdots\overline{\op{c}}_{2j_l})\op{e}_{n-1}$ can be lifted. 
But now, using the inductive assumption on ${\op{B}}\op{SL}_{n-1}$, it suffices to do the computation in $\op{Ch}^\bullet({\op{B}}\op{SL}_{n-1})$. Since $\op{Sq}^2$ is a derivation, we have 
\[
\op{Sq}^2(\overline{\op{c}}_{2j_1}\cdots\overline{\op{c}}_{2j_l})\op{e}_{n-1} = 
\op{Sq}^2(\overline{\op{c}}_{2j_1}\cdots\overline{\op{c}}_{2j_l} \rho(\op{e}_{n-1})) = \op{Sq}^2(\overline{\op{c}}_{2j_1}\cdots\overline{\op{c}}_{2j_l} \overline{\op{c}}_{n-1}).
\]
Since $\theta_{n-1}$ is surjective, $\beta(\overline{\op{c}}_{2j_1}\cdots\overline{\op{c}}_{2j_l})\op{e}_{n-1}$ is in the image of $\theta_{n-1}$. By Lemma~\ref{lem:oddproblem} and the previous computation, it is in fact in the image of $\theta_{n-1}\Phi_n$, hence in the image of $j^\ast\theta_n$. 
\end{proof}

\begin{corollary}
\label{cor:surjodd}
Assume we know Theorem~\ref{thm:slnin} for ${\op{B}}\op{SL}_{n-1}$. Then the homomorphism $\theta_n\colon \mathscr{R}_n\to\op{H}^\bullet({\op{B}}\op{SL}_n,\mathbf{I}^\bullet)$ is surjective. 
\end{corollary}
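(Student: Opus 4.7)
The plan is to prove surjectivity by induction on the cohomological degree $q$, combining Proposition~\ref{prop:equalim} with the exactness of the localization sequence of Proposition~\ref{prop:locsln}.

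For the base case, note that $\op{H}^q(-,\mathbf{I}^q)$ vanishes in negative degrees, so there is nothing to check. For the inductive step, assume that $\theta_n$ surjects onto $\op{H}^i({\op{B}}\op{SL}_n,\mathbf{I}^i)$ for every $i<q$, and let $\alpha\in\op{H}^q({\op{B}}\op{SL}_n,\mathbf{I}^q)$. By Proposition~\ref{prop:equalim}, the class $j^\ast\alpha$ lies in the image of $j^\ast\theta_n$, so we can choose $u\in\mathscr{R}_n$ with $j^\ast\theta_n(u)=j^\ast\alpha$. The difference $\alpha-\theta_n(u)$ then lies in $\ker j^\ast$.

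By the exact localization sequence
\[
\op{H}^{q-n}({\op{B}}\op{SL}_n,\mathbf{I}^{q-n}) \xrightarrow{\cdot\op{e}_n} \op{H}^{q}({\op{B}}\op{SL}_n,\mathbf{I}^{q}) \xrightarrow{j^\ast} \op{H}^{q}({\op{B}}\op{SL}_{n-1},\mathbf{I}^{q}),
\]
there exists $\gamma\in\op{H}^{q-n}({\op{B}}\op{SL}_n,\mathbf{I}^{q-n})$ such that $\op{e}_n\cdot\gamma=\alpha-\theta_n(u)$. By the inductive hypothesis applied in degree $q-n<q$, we can write $\gamma=\theta_n(v)$ for some $v\in\mathscr{R}_n$. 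Since $\theta_n(X_n)=\op{e}_n$ by construction and $\theta_n$ is a ring homomorphism, we obtain
\[
\alpha=\theta_n(u)+\op{e}_n\cdot\theta_n(v)=\theta_n(u+X_n\cdot v),
\]
which exhibits $\alpha$ as lying in the image of $\theta_n$ and completes the induction.

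The substantive input for this argument is entirely contained in Proposition~\ref{prop:equalim} (which handles the image modulo $\op{e}_n$) together with the localization sequence (which controls the kernel of $j^\ast$). In contrast to the even case in Proposition~\ref{prop:surjeven}, no additional lifting step is required here, because Proposition~\ref{prop:equalim} already takes care of the fact that the restriction $\Phi:\mathscr{R}_n\to\mathscr{R}_{n-1}$ is not surjective (cf.\ Lemma~\ref{lem:oddproblem}): the extra generator $X_{n-1}$ that is missing from the image of $\Phi$ is exactly what is hit via the Bockstein-class lifts produced in the proof of Proposition~\ref{prop:equalim}. Consequently, the only potential obstacle — matching classes of the form $\beta(\overline{\op{c}}_{2j_1}\cdots\overline{\op{c}}_{2j_l})\cdot\op{e}_{n-1}$ — has already been dealt with, and the present corollary reduces to a short diagram chase.
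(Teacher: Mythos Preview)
Your proof is correct and follows essentially the same approach as the paper: induction on the cohomological degree, using Proposition~\ref{prop:equalim} to reduce to a class in $\ker j^\ast$, then the localization sequence and the inductive hypothesis to finish. The only cosmetic difference is that the paper phrases the reduction as ``without loss of generality $j^\ast\alpha=0$'' rather than explicitly subtracting $\theta_n(u)$, but the argument is identical.
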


\begin{proof}
The proof is by induction on the cohomological degree $q$, pretty much like for Proposition~\ref{prop:surjeven}. The base case is established by vanishing of cohomology in negative degrees. Assume we have established surjectivity in degrees less than $q$, and let $\alpha\in\op{H}^q({\op{B}}\op{SL}_n,\mathbf{I}^q)$ be a class. Then $j^\ast\alpha$ is in the image of $j^\ast\theta_n$, by Proposition~\ref{prop:equalim}. Hence we can assume without loss of generality that $j^\ast\alpha=0$. Then exactness of the localization sequence, cf. Proposition~\ref{prop:locsln}, implies that $\alpha$ is in the image of multiplication with the Euler class, i.e., there is a class $\gamma\in\op{H}^{q-n}({\op{B}}\op{SL}_n,\mathbf{I}^{q-n})$ such that $\op{e}_n\gamma=\alpha$. By the inductive assumption, there is an element $\gamma'\in\mathscr{R}_n/\mathscr{I}_n$ such that $\theta_n\gamma'=\gamma$. But then $\theta_n(X_n\gamma')=\op{e}_n\gamma=\alpha$, proving the claim.
\end{proof}

\begin{proposition}
\label{prop:oddsplit}
Assume that we know Theorem~\ref{thm:slnin} for ${\op{B}}\op{SL}_{n-1}$. Denote by $\mathscr{P}_n$ the $W(F)$-subalgebra of $\mathscr{R}_n$ generated by the elements $P_1,\dots,P_{(n-1)/2}$. Then the following composition is injective
\[
\mathscr{P}_n\xrightarrow{\theta_n|_{\mathscr{P}_n}}\op{H}^\bullet({\op{B}}\op{SL}_n,\mathbf{I}^\bullet) \xrightarrow{j^\ast} \op{H}^\bullet({\op{B}}\op{SL}_{n-1},\mathbf{I}^\bullet). 
\]
In particular, there are no relations between Pontryagin classes in $\op{H}^\bullet({\op{B}}\op{SL}_n,\mathbf{I}^\bullet)$. 
\end{proposition}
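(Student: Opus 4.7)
The plan is to reduce the claim to the already-established even-case splitting principle (Proposition~\ref{prop:evensplit}) together with the odd-case inductive hypothesis that $\overline{\theta}_{n-1}$ is an isomorphism.

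First I will invoke the commutativity of the back face of the cubical diagram, which holds in the odd case precisely under our inductive hypothesis (Proposition~\ref{prop:backface}). This allows me to replace the composition $j^\ast \circ \theta_n|_{\mathscr{P}_n}$ by the more tractable composition $\theta_{n-1} \circ \Phi_n|_{\mathscr{P}_n}$, so it is enough to prove injectivity of the latter.

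Next, writing $n = 2k+1$ so that $(n-1)/2 = k$, I will read off $\Phi_n$ on the polynomial subalgebra $\mathscr{P}_n = W(F)[P_1, \dots, P_k]$ from Definition~\ref{def:rnres}: it sends $P_i \mapsto P_i$ for $i < k$ and $P_k \mapsto X_{n-1}^2$. Since $\theta_{n-1}$ is an isomorphism by the inductive hypothesis, the composition $\theta_{n-1} \circ \Phi_n|_{\mathscr{P}_n}$ has image in the $W(F)$-subalgebra of $\op{H}^\bullet(\op{B}\op{SL}_{n-1}, \mathbf{I}^\bullet)$ generated by the classes $\op{p}_2, \op{p}_4, \dots, \op{p}_{n-3}$ and $\op{e}_{n-1}^2$.

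The argument is then completed by applying Proposition~\ref{prop:evensplit} to the even-rank classifying space $\op{B}\op{SL}_{n-1}$: it states that $\op{p}_2, \dots, \op{p}_{n-3}$ and $\op{e}_{n-1}$ are algebraically independent, so $\op{p}_2, \dots, \op{p}_{n-3}, \op{e}_{n-1}^2$ are likewise algebraically independent, and the image of $\mathscr{P}_n$ is a polynomial algebra on these generators. The composition $\theta_{n-1} \circ \Phi_n|_{\mathscr{P}_n}$ is therefore an isomorphism onto its image, whence $j^\ast \circ \theta_n|_{\mathscr{P}_n}$ is injective. No serious obstacle is anticipated; the one point requiring care is that the ``top'' polynomial variable $P_k \in \mathscr{R}_n$ does not correspond to a Pontryagin class of $\op{B}\op{SL}_{n-1}$ but rather to the square of its Euler class, which is precisely why Proposition~\ref{prop:evensplit} (involving the Euler class rather than only Pontryagin classes) is the right tool.
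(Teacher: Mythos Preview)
Your proposal is correct and takes essentially the same approach as the paper. The paper's proof is slightly terser: it observes that under the composition the generators $P_i$ go to $\op{p}_{2i}$ for $i<(n-1)/2$ and $P_{(n-1)/2}$ to $\op{e}_{n-1}^2$, and then appeals to the inductive hypothesis (rather than Proposition~\ref{prop:evensplit} explicitly) that the subalgebra of $\op{H}^\bullet({\op{B}}\op{SL}_{n-1},\mathbf{I}^\bullet)$ generated by the Pontryagin classes and the Euler class is a polynomial ring, which gives the required algebraic independence.
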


\begin{proof}
We note that $n$ is odd, hence $n-1$ is even and $(n-1)/2$ is an integer. By Proposition~\ref{prop:compatres}, the elements $P_i$ map to $\op{p}_{2i}$ for $i<(n-1)/2$, and $P_{(n-1)/2}$ maps to $\op{e}_{n-1}^2$. By the inductive assumption, the subalgebra of $\op{H}^\bullet({\op{B}}\op{SL}_{n-1},\mathbf{I}^\bullet)$ generated by the Pontryagin classes and the Euler class is a polynomial ring over $\op{W}(F)$ on these generators, cf. also Proposition~\ref{prop:evensplit}. In particular, the Pontryagin classes for ${\op{B}}\op{SL}_n$ are algebraically independent, and the map is injective as claimed. 
\end{proof}

\begin{proposition}
\label{prop:lem22odd}
Assume that we know Theorem~\ref{thm:slnin} for ${\op{B}}\op{SL}_{n-1}$. Then for all degrees $q$, in the composition
\[
\op{Ch}^q({\op{B}}\op{SL}_n)\xrightarrow{\beta} \op{H}^{q+1}({\op{B}}\op{SL}_n,\mathbf{I}^{q+1})\xrightarrow{\rho} \op{Ch}^{q+1}({\op{B}}\op{SL}_n),
\]
the map $\rho$ is injective on the image of $\beta$. 
\end{proposition}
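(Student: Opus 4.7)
The plan is to mimic the structure of the proof of Proposition~\ref{prop:lem22even}, while exploiting the crucial new input from Proposition~\ref{prop:eulerrel}, namely that $\op{e}_n = \beta(\overline{\op{c}}_{n-1})$ lies in the image of $\beta$ and so (by the B\"ar sequence) is annihilated by $\eta\colon \op{H}^n({\op{B}}\op{SL}_n,\mathbf{I}^n)\to\op{H}^n({\op{B}}\op{SL}_n,\mathbf{I}^{n-1})$. Let $\alpha\in\op{Ch}^q({\op{B}}\op{SL}_n)$ with $\op{Sq}^2\alpha=\rho(\beta\alpha)=0$; the goal is $\beta\alpha=0$.

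First I would restrict along $j\colon {\op{B}}\op{SL}_{n-1}\to {\op{B}}\op{SL}_n$. By naturality of the Bockstein $\beta$ and of $\op{Sq}^2$ under $j^\ast$, and using the inductive hypothesis (Theorem~\ref{thm:slnin} for ${\op{B}}\op{SL}_{n-1}$), which in particular says that $\rho$ is injective on $\op{Im}(\beta)$ for ${\op{B}}\op{SL}_{n-1}$, we get $j^\ast\beta\alpha = \beta j^\ast\alpha = 0$ since $\op{Sq}^2(j^\ast\alpha)=j^\ast\op{Sq}^2\alpha=0$. The localization sequence of Proposition~\ref{prop:locsln} then furnishes an element $\sigma\in\op{H}^{q+1-n}({\op{B}}\op{SL}_n,\mathbf{I}^{q+1-n})$ with $\beta\alpha = \op{e}_n\cdot\sigma$. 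Applying the ring homomorphism $\rho$ and using $\rho(\op{e}_n)=\overline{\op{c}}_n$ from Lemma~\ref{lem:oddeulermod2} yields $\overline{\op{c}}_n\cdot\rho(\sigma) = \rho(\beta\alpha) = 0$. Because $\op{Ch}^\bullet({\op{B}}\op{SL}_n)\cong\mathbb{F}_2[\overline{\op{c}}_2,\dots,\overline{\op{c}}_n]$ is a polynomial ring with $\overline{\op{c}}_n$ a non-zero-divisor, this forces $\rho(\sigma)=0$, and by exactness of the B\"ar sequence we can write $\sigma=\eta\cdot\tau$ for some $\tau\in\op{H}^{q+1-n}({\op{B}}\op{SL}_n,\mathbf{I}^{q+2-n})$.

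The argument closes by invoking that $\eta$, having bidegree $(0,-1)$, is central in the graded ring $\op{H}^\bullet(-,\mathbf{I}^\bullet)$ (it contributes no sign in the graded-commutativity of Proposition~\ref{prop:cwepsgraded}), and that $\eta\cdot\op{e}_n=0$ by Proposition~\ref{prop:eulerrel} and the B\"ar sequence:
\[
\beta\alpha \;=\; \op{e}_n\cdot\sigma \;=\; \op{e}_n\cdot\eta\cdot\tau \;=\; (\eta\cdot\op{e}_n)\cdot\tau \;=\; 0.
\]
The main obstacle compared to the even case is that one cannot argue (as was done in Proposition~\ref{prop:lem22even} using the splitting principle) that $\sigma$ must lie in $\op{Im}(\beta)$ via injectivity of Euler class multiplication on the Pontryagin subalgebra, since for $n$ odd the Euler class is itself torsion. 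The insight is that this very torsion, encoded in $\eta\cdot\op{e}_n=0$, becomes the decisive ingredient and allows one to bypass entirely an inner induction on cohomological degree.
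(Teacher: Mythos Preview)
Your argument is correct and takes a genuinely different, cleaner route than the paper's own proof. Both proofs begin identically: restrict to ${\op{B}}\op{SL}_{n-1}$, use the inductive hypothesis there to get $j^\ast\beta\alpha=0$, and invoke the localization sequence to write $\beta\alpha=\op{e}_n\cdot\sigma$. From this point the paper proceeds by an inner induction on the cohomological degree $q$ together with Corollary~\ref{cor:surjodd} (surjectivity of $\theta_n$), splitting $\sigma$ into a part lying in $\op{Im}(\beta)$ (handled by the inductive hypothesis in lower degree exactly as in Proposition~\ref{prop:lem22even}) and a part lying in the Pontryagin subalgebra, whose coefficients are then forced into $\op{I}(F)$ so that multiplication with the torsion class $\op{e}_n$ annihilates it. Your argument dispenses with both the degree induction and the reliance on Corollary~\ref{cor:surjodd}: from $\overline{\op{c}}_n\cdot\rho(\sigma)=0$ you conclude $\rho(\sigma)=0$ directly, and then the B\"ar sequence gives $\sigma=\eta\tau$; finally the single identity $\eta\cdot\op{e}_n=0$ (which is exactly the content of $\op{e}_n\in\op{Im}\beta$ from Proposition~\ref{prop:eulerrel}) kills $\op{e}_n\cdot\sigma$ in one stroke via the compatibility of the push-forward map in the localization sequence with the sheaf map $\eta\colon\mathbf{I}^{j+1}\to\mathbf{I}^j$.

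What each approach buys: yours is shorter, logically lighter (it does not need the surjectivity of $\theta_n$ established earlier in the odd induction step), and isolates precisely the feature distinguishing the odd from the even case, namely $\eta\op{e}_n=0$. The paper's argument has the virtue of running in closer parallel to the even case, reusing its machinery verbatim on the Bockstein part of $\sigma$; but this parallelism comes at the cost of invoking the structural surjectivity result. One small point worth making explicit in your write-up is that the commutation $\op{e}_n\cdot(\eta\tau)=\eta\cdot(\op{e}_n\cdot\tau)$ follows from naturality of the localization sequence in the coefficient sheaf (the map $\mathbf{I}^{j+1}\hookrightarrow\mathbf{I}^j$ induces a ladder of localization sequences), rather than solely from graded-commutativity of the ring structure; this is implicitly what you are using.
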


\begin{proof}
The argument proceeds along the lines of the proof of Proposition~\ref{prop:lem22even}; it is an induction on the cohomological degree. The base case is again vanishing in negative degrees. So assume that we know that $\rho$ is injective on the image of $\beta$ in degrees less than $q$. The first part of the argument of Proposition~\ref{prop:lem22even} can be copied verbatim to prove the following: if $\alpha\in\op{Ch}^q({\op{B}}\op{SL}_n)$ is  a class with $\op{Sq}^2(\alpha)=0$, then there exists a class $\sigma\in\op{H}^{q+1-n}({\op{B}}\op{SL}_n,\mathbf{I}^{q+1-n})$ such that $\op{e}_n\sigma=\beta(\alpha)$. We want to show that $\op{e}_n\sigma$ is trivial, giving injectivity of $\rho$ on the image of $\beta$. 

Now there is a crucial difference to the even-rank case. The class $\sigma$ need not be in the image of $\beta$ because the Euler class is a torsion class. Assuming that $\sigma$ is in the image of $\beta$, then the rest of the argument for Proposition~\ref{prop:lem22even} goes through to show that $\sigma$ is in fact trivial. From Corollary~\ref{cor:surjodd} we deduce that if $\sigma$ is not in the image of $\beta$, then it contains a nontrivial monomial in the Pontryagin classes, as in the proof of Proposition~\ref{prop:lem22even}. 
By our assumption, $\rho{\op{e}_n}\sigma=\rho\beta\alpha=\op{Sq}^2\alpha=0$. By Proposition~\ref{prop:pontchern}, $\op{p}_{2i}$ maps to $\overline{\op{c}}_{2i}^2$. Recall that $\op{Ch}^\bullet({\op{B}}\op{SL}_n)$ is a polynomial algebra and that $\rho$ is a ring homomorphism. Combining all these statements, if $\sigma$ is not in the image of $\beta$, then the monomials in the Pontryagin classes have their coefficients in $\op{I}(F)$. By Proposition~\ref{prop:eulerrel}, the Euler class is in the image of $\beta$, and Lemma~\ref{lem:baertorsion} implies that $\op{e}_n$ is annihilated by $\op{I}(F)$. In particular, while $\sigma$ itself is non-trivial, its image $\op{e}_n\sigma=\beta(\alpha)$ is trivial, finishing the proof.
\end{proof}

\begin{corollary}
\label{cor:welldefslnodd}
Assume we know Theorem~\ref{thm:slnin} for ${\op{B}}\op{SL}_{n-1}$. Then the relations in $\mathscr{I}_n$ are satisfied in the $\mathbf{I}^\bullet$-cohomology, i.e., the homomorphism $\theta_n\colon \mathscr{R}_n\to \op{H}^\bullet({\op{B}}\op{SL}_n,\mathbf{I}^\bullet)$ factors 
\[
\mathscr{R}_n\to\mathscr{R}_n/\mathscr{I}_n\xrightarrow{\Xi_n} \op{H}^\bullet({\op{B}}\op{SL}_n,\mathbf{I}^\bullet).
\]
\end{corollary}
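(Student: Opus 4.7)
The plan is to verify that each of the three families of relations generating $\mathscr{I}_n$ is killed by $\theta_n$, treating them separately.

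For relations of type (1), namely $\op{I}(F)\cdot B_J$, the statement is immediate from Lemma~\ref{lem:baertorsion}: the elements $\theta_n(B_J) = \beta(\overline{\op{c}}_{2j_1}\cdots\overline{\op{c}}_{2j_l})$ lie in the image of the Bockstein and are therefore annihilated by $\op{I}(F)$. Relations of type (2), i.e.\ $X_{2k+1} = B_{\{k\}}$ when $n = 2k+1$, are precisely the content of Proposition~\ref{prop:eulerrel}.

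The main content is relation (3). The key observation is that both sides of the proposed identity
\[
B_J\cdot B_{J'} - \sum_{k\in J} B_{\{k\}}\cdot P_{(J\setminus\{k\})\cap J'}\cdot B_{\Delta(J\setminus\{k\},J')}
\]
map under $\theta_n$ into the image of the Bockstein map $\beta:\op{Ch}^\bullet({\op{B}}\op{SL}_n)\to \op{H}^{\bullet+1}({\op{B}}\op{SL}_n,\mathbf{I}^{\bullet+1})$. Indeed, every monomial in question has at least one factor $\beta(\cdots)$, which by Lemma~\ref{lem:baertorsion} and the $\op{W}(F)$-linearity of the $\eta$-multiplication map is $\eta$-torsion; hence the whole monomial is $\eta$-torsion, and the B\"ar sequence identifies $\ker\eta$ with the image of $\beta$.

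Once we know both sides are in the image of $\beta$, Proposition~\ref{prop:lem22odd} allows us to check the identity after reduction along $\rho:\op{H}^{\bullet}({\op{B}}\op{SL}_n,\mathbf{I}^{\bullet})\to \op{Ch}^\bullet({\op{B}}\op{SL}_n)$, since $\rho$ is injective on $\op{im}\,\beta$. But the fact that the type (3) relation vanishes in $\op{Ch}^\bullet({\op{B}}\op{SL}_n)$ is exactly Proposition~\ref{prop:type3mod2} (part of Corollary~\ref{cor:slninmod2}). The main subtlety, and the only point that genuinely needs the odd-rank inductive data, is ensuring that the entire expression lies in $\op{im}\,\beta$ so that Proposition~\ref{prop:lem22odd} is applicable; once this is in place the verification becomes purely a mod~$2$ check handled by the earlier Jacobi-identity argument in Proposition~\ref{prop:type3mod2}.
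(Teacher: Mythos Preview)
Your proof is correct and follows essentially the same approach as the paper: type (1) via Lemma~\ref{lem:baertorsion}, type (2) via Proposition~\ref{prop:eulerrel}, and type (3) by observing the relation lies in the image of $\beta$, invoking Proposition~\ref{prop:lem22odd} to reduce to $\op{Ch}^\bullet$, and finishing with Corollary~\ref{cor:slninmod2}. You even spell out, via $\eta$-torsion and the B\"ar sequence, why the type (3) expression lies in $\op{im}\,\beta$, a point the paper simply asserts.
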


\begin{proof}
The relation (2) has been verified in Proposition~\ref{prop:eulerrel}. 
All other relations involve torsion elements. The type (1) relations are satisfied by Lemma~\ref{lem:baertorsion}. The type (3) relations are satisfied in $\op{Ch}^\bullet({\op{B}}\op{SL}_n)$ by Corollary~\ref{cor:slninmod2}. Note that the type (3) relations are in the image of $\beta$. In particular, by Proposition~\ref{prop:lem22odd}, any such relation is mapped to $0$ already in $\op{H}^\bullet({\op{B}}\op{SL}_n,\mathbf{I}^\bullet)$. This proves the claim.
\end{proof}

\begin{proposition}
Assume we know Theorem~\ref{thm:slnin} for ${\op{B}}\op{SL}_{n-1}$. Then the homomorphism $\Xi_n\colon \mathscr{R}_n/\mathscr{I}_n\to \op{H}^\bullet({\op{B}}\op{SL}_n,\mathbf{I}^\bullet)$ is an isomorphism. 
\end{proposition}

\begin{proof}
Surjectivity of the morphism has already been established in Corollary~\ref{cor:surjodd}. Injectivity on the torsion-free part has been established in Proposition~\ref{prop:oddsplit}. So we need to show injectivity on the torsion part. By Proposition~\ref{prop:lem22odd}, it suffices to show the statement after reduction to  $\op{Ch}^\bullet({\op{B}}\op{SL}_n)$. 

To conclude $\Xi_n$ is an isomorphism, we essentially follow the strategy of \cite[p.286/287]{brown}. However, as we are unable to prove the relation
between $\op{Sq}^1\op{w}^k$ and $\op{w}^{\overline{k}}$ in loc. cit., we replace this part of the argument by a simplification of the one provided by \cite[p.285]{cadek} for ${\op{B}}\op{O}(n)$ with both orientations considered simultaneously. For the translation to our motivic but otherwise easier setting corresponding to ${\op{B}}\op{SO}(n)$ with just the trivial orientation, set $\op{w}_1=1$, replace $\op{w}_{i}$ by $\overline{\op{c}}_i$ and $\op{Sq}^1$ by $\op{Sq}^2$ as before. Hence we only need to look at the right summands in the products of the first type ($l=0$) in loc. cit. Moreover all the $x_{i}$ may be suppressed
in loc. cit., and the $y_I$ correspond to our $B_I$. Now simplifying to products of the form
\[
\prod \overline{\op{c}}_{2i}^{2m_i} \prod \op{Sq}^2(B_J)
\]
and using that there are no relations between the various $\overline{\op{c}}_{2s}^{2t}$, we are reduced to show that for a given monomial $\prod \overline{\op{c}}_{2i}^{2m_i}$ the corresponding $\op{Sq}^2(B_J)$ may not be combined to zero in a nontrivial way. However, this easily follows using iterations of the Leibniz rule for $\op{Sq}^2(XY)$ together with the formula $\op{Sq}^2(\overline{\op{c}}_{2i})=\overline{\op{c}}_{2i+1}$
and the fact that $\op{Ch}^\bullet({\op{B}}\op{SL}_n)$ is a free $\mathbb{Z}/2\mathbb{Z}$-vector space with a basis given by all possible monomials in the $\overline{\op{c}}_i$, hence any subfamily of these is linear independent.
\end{proof}

Finally, the following statement explains why only the even Pontryagin classes appear in the presentation of $\op{H}^\bullet({\op{B}}\op{SL}_n,\mathbf{I}^\bullet)$ of Theorem~\ref{thm:slnin}. It is an analogue of the classical fact that the odd Chern classes of complexifications are 2-torsion. Note that Proposition~\ref{prop:pontchern} implies that the odd Pontryagin classes are non-torsion classes in Chow--Witt theory. 

\begin{proposition}
\label{prop:ponttor}
If $i$ is an odd natural number, then the Pontryagin class $\op{p}_i\in\op{H}^\bullet({\op{B}}\op{SL}_n,\mathbf{I}^\bullet)$ is $\op{I}(F)$-torsion.
\end{proposition}

\begin{proof}
We first recall the classical argument: if $\mathscr{E}\to X$ is an oriented vector bundle over a smooth scheme $X$, then its symplectification $\sigma_n(\mathscr{E})$ is invariant under conjugation. In particular, using Proposition~\ref{prop:sympconj}, we have 
\[
\op{p}_i(\sigma_n(\mathscr{E}))= \op{p}_i(\overline{\sigma_n(\mathscr{E})})= \langle-1\rangle^i\op{p}_i(\sigma_n(\mathscr{E})).
\]
Note that in $\mathbf{I}$-cohomology, $\langle-1\rangle=-1$.
This implies that, for odd $i$, the Pontryagin class $\op{p}_i$ is annihilated by $2=1-\langle-1\rangle$. But in our situation, this is not enough -- we still need to show that the odd Pontryagin classes are annihilated by all of $\op{I}(F)$.

For the top Pontryagin class $\op{p}_n$ of $\op{SL}_n$, this follows from Proposition~\ref{prop:eulersquare} together with the corresponding torsion claim for the Euler class in Proposition~\ref{prop:eulerrel}.

To prove the claim for $\op{p}_i$ and all other $\op{SL}_n$, $i$ odd and $n>i$, we will use induction on $n$. We first remark that the Pontryagin classes are compatible with stabilization, cf. Proposition~\ref{prop:pontryaginstab}.  The $\mathbf{I}^n$-cohomology analogue of Proposition~\ref{prop:locsln} yields an exact localization sequence
\[
\op{H}^{2i-n}({\op{B}}\op{SL}_n,\mathbf{I}^{2i-n})\xrightarrow{\op{e}_n} \op{H}^{2i}({\op{B}}\op{SL}_n,\mathbf{I}^{2i})\xrightarrow{j^\ast} \op{H}^{2i}({\op{B}}\op{SL}_{n-1},\mathbf{I}^{2i})
\]
The restriction $j^\ast$ maps the Pontryagin class $\op{p}_{i}$ for $\op{SL}_n$ to the one for $\op{SL}_{n-1}$. By induction, we may assume that $\op{p}_{i}$ is $\op{I}(F)$-torsion in the last group of the sequence. Hence the element $a\op{p}_{i} $ in $\op{H}^{2i}({\op{B}}\op{SL}_n,\mathbf{I}^{2i})$ will be divisible by $\op{e}_n$ for any $a\in\op{I}(F)$. For odd $n$ this implies that the element $a\op{p}_i$ is $\op{I}(F)$-torsion by Proposition~\ref{prop:eulerrel}. For even $n$, by induction on the cohomological degree $i$, we may assume that the class $\op{e}_n^{-1}(a\op{p}_{i})$ in $\op{H}^{2i-n}({\op{B}}\op{SL}_n,\mathbf{I}^{2i-n})$ is either $\op{I}(F)$-torsion or contains a nontrivial monomial in the even Pontryagin classes as a summand. In the latter case, it cannot possibly be annihilated by any element from $\op{I}(F)$. Since we know that it is annihilated by $2=1-\langle-1\rangle$ by the classical argument, this case cannot occur. So we know that $a\op{p}_{i}$ is the product of the Euler class with something annihilated by $\op{I}(F)$. We can check by reduction to $\op{Ch}^\bullet$ that $\op{e}_n^{-1}(a\op{p}_{i})$ must already be zero, showing that $\op{p}_i$ is $\op{I}(F)$-torsion. By induction, we have proved the claim.
\end{proof}

\begin{remark}
Note that Proposition~\ref{prop:pontchern} implies  
\[
\rho(\op{p}_{2k+1})=\overline{\op{c}}_{2k+1}^2= \op{Sq}^2(\overline{\op{c}}_{2k})\overline{\op{c}}_{2k+1}= \op{Sq}^2(\overline{\op{c}}_{2k}\overline{\op{c}}_{2k+1}). 
\]
In particular, once we know that $\op{p}_{2k+1}$ is in the image of the Bockstein map, the following classical identification  will follow from Theorem~\ref{thm:slnin}:
\[
\op{p}_{2k+1}=\beta(\overline{\op{c}}_{2k}\overline{\op{c}}_{2k+1}).
\]
\end{remark}

\section{Euler classes for odd-rank modules}
\label{sec:bs}

In this section, we discuss some consequences of the computations of the Chow--Witt rings of ${\op{B}}\op{SL}_n$. Recall the following question, which was formulated in \cite[Question 7.12]{bhatwadekar:sridharan}: given a scheme $X$ of odd dimension $n$ and a rank $n$ vector bundle $\mathscr{E}$ on $X$, does the vanishing of the top Chern class $\op{c}_n(\mathscr{E})$
suffice to conclude that the vector bundle splits off a free rank one summand? 

While this question was originally posed and partially answered using Euler class groups, the question can, by the work of Morel, Asok and Fasel, also be studied from an $\mathbb{A}^1$-homotopical viewpoint if $F$ is a perfect field of characteristic unequal to $2$ and $X$ is a smooth affine scheme over $F$. Actually, \cite{AsokFaselCohomotopy} showed that the two approaches to the vector bundle splitting question, the Euler class groups appearing in \cite{bhatwadekar:sridharan} and the Chow--Witt groups, are equivalent. 

The answer to the question of Bhatwadekar and Sridharan is positive over $\mathbb{C}$ by Murthy's theorem. In the Chow--Witt-theoretic perspective, this follows immediately from the fact that the natural map $\widetilde{\op{CH}}^n(X)\to\op{CH}^n(X)$ is an isomorphism whenever $X$ is smooth of dimension $n$ over a quadratically closed field. 

The answer is also known to be positive over $\mathbb{R}$ and more generally over real closed fields, by \cite{bhatwadekar:sridharan:99,bhatwadekar:das:mandal,bhatwadekar:sane}. In the context of Chow--Witt groups, we can reprove one of the main results of \cite{bhatwadekar:sridharan:99}, namely the odd rank case of Theorem~II of loc.cit:

\begin{proposition}
\label{prop:recoverbs}
Let $F=\mathbb{R}$, and let $X$ be a smooth affine variety of odd dimension $n$ over $F$ which is orientable, i.e., whose canonical module is trivial. Then a rank $n$ vector bundle $\mathscr{E}$ over $X$ splits off a free rank one summand if $\op{c}_n(\mathscr{E})=0$.
\end{proposition}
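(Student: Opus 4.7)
The plan is to combine the Morel--Asok--Fasel Euler class splitting criterion with the cartesian square decomposition of Chow--Witt groups and real realization. By \cite{AsokFaselEuler}, $\mathscr{E}$ splits off a free rank one summand if and only if the Chow--Witt Euler class $\op{e}_n(\mathscr{E}) \in \widetilde{\op{CH}}^n(X, \det\mathscr{E}^\vee)$ vanishes. Using a twisted version of the cartesian square of Proposition~\ref{prop:cartesian} (whose applicability rests on a 2-torsion vanishing hypothesis such as ${}_2\op{CH}^n(X) = 0$, to be verified separately), this reduces to vanishing of the images of $\op{e}_n(\mathscr{E})$ in $\op{CH}^n(X)$ and in $\op{H}^n(X, \mathbf{I}^n(\det\mathscr{E}^\vee))$. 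The first image is precisely $\op{c}_n(\mathscr{E})$ by Proposition~\ref{prop:chern} and vanishes by hypothesis, so the task reduces to establishing the vanishing in $\mathbf{I}^n$-cohomology.

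For this $\mathbf{I}^n$-cohomology vanishing, I would appeal to Jacobson's comparison theorem \cite{jacobson}, which identifies $\op{H}^n(X, \mathbf{I}^n(\det\mathscr{E}^\vee))$ with the twisted singular cohomology $H^n(X(\mathbb{R}), \mathbb{Z}_{w_1(\mathscr{E})})$ of the real locus. Under this correspondence, and using the identification of the image in $\mathbf{I}^n$-cohomology with the integral Stiefel--Whitney class $\beta(\overline{\op{c}}_{n-1}(\mathscr{E}))$ provided by Theorem~\ref{thm:slnin} together with Totaro's identification $\op{Sq}^2=\rho\beta$, this class becomes the topological integral Stiefel--Whitney class $W_n(\mathscr{E}(\mathbb{R}))$ of the real realization, a rank $n$ real vector bundle on the smooth $n$-manifold $X(\mathbb{R})$. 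Triviality of $\omega_X$ ensures $X(\mathbb{R})$ is orientable. For odd $n$, a general argument using $V \cong V^\vee$ and $e(V^\vee) = (-1)^n e(V)$ shows that $W_n(\mathscr{E}(\mathbb{R}))$ is 2-torsion; its mod-2 reduction is the top Stiefel--Whitney class $w_n(\mathscr{E}(\mathbb{R}))$, which under real realization agrees with $\overline{\op{c}}_n(\mathscr{E})$ and vanishes by hypothesis. Combining the 2-torsion property with mod-2 vanishing forces $W_n(\mathscr{E}(\mathbb{R})) = 0$, and hence the image of $\op{e}_n(\mathscr{E})$ in $\mathbf{I}^n$-cohomology is zero.

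The main obstacles are two-fold. First, the 2-torsion hypothesis ${}_2\op{CH}^n(X) = 0$ (or an equivalent $\eta$-injectivity condition) requires justification for smooth affine orientable $\mathbb{R}$-varieties, most likely via Bloch--Ogus-style arguments or by direct analysis of zero-cycles, whose residue fields over $\mathbb{R}$ are either $\mathbb{R}$ or $\mathbb{C}$. Second, one must carefully handle the twist by $\det\mathscr{E}^\vee$ in Jacobson's comparison and verify compatibility of the correspondences between the algebraic Bockstein class, the integral Stiefel--Whitney class, and the topological Euler class with these twists; while the needed ingredients are in principle available from the references mentioned in the introduction, the details are not entirely trivial.
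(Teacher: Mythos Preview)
Your proposal heads in the right direction but has a genuine gap in the final step. You claim that ``combining the 2-torsion property with mod-2 vanishing forces $W_n(\mathscr{E}(\mathbb{R})) = 0$'', but this inference is false in general: a 2-torsion class with trivial mod-2 reduction satisfies $2x=0$ and $x=2y$ for some $y$, which only yields $4y=0$, not $x=0$. The correct and much simpler argument makes the mod-2 step irrelevant: since $X(\mathbb{R})$ is an orientable $n$-manifold, $H^n(X(\mathbb{R}),\mathbb{Z})$ is free abelian (one copy of $\mathbb{Z}$ per compact connected component), hence torsion-free, so any 2-torsion class there vanishes automatically.

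The paper's proof is essentially this torsion-free argument, but carried out on the algebraic side without passing through real realization. It invokes Fasel's computations \cite[Theorem 16.3.8]{fasel:memoir} and \cite[Proposition 5.1]{fasel:orbits} directly: for smooth affine orientable $X$ of dimension $n$ over $\mathbb{R}$ one has $\op{H}^n(X,\mathbf{I}^n)\cong\op{H}^n(X,\mathbf{I}^{n+1})\cong\bigoplus_{\mathcal{C}}\mathbb{Z}$, with the map $\eta$ between them injective. This buys two things at once. First, torsion-freeness of $\op{H}^n(X,\mathbf{I}^n)$ kills the image $\op{e}_n=\beta(\overline{\op{c}}_{n-1})$ immediately. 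Second, $\eta$-injectivity verifies condition~(2) of Proposition~\ref{prop:cartesian}, so the fiber-product description of $\widetilde{\op{CH}}^n(X)$ holds \emph{without} needing ${}_2\op{CH}^n(X)=0$; this dissolves the first obstacle you flag. Note also that Jacobson's theorem, as quoted in this paper, only gives the comparison $\op{H}^n(X,\mathbf{I}^m)\cong H^n(X(\mathbb{R}),\mathbb{Z})$ for $m>\dim X$; for the diagonal bidegree $m=n$ needed here you would in any case have to fall back on Fasel's earlier result.
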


\begin{proof}
By the representability theorem for vector bundles, cf. \cite{MField} or \cite{gbundles}, \cite[Question 7.12]{bhatwadekar:sridharan} translates into a question concerning $\mathbb{A}^1$-homotopy classes of maps $X\to{\op{B}}\op{SL}_n$. By the obstruction-theoretic approach to vector bundle classification, cf. \cite{MField,AsokFasel}, splitting of the vector bundle is equivalent to the vanishing of the Euler class $\op{e}_n(\mathscr{E})\in\widetilde{\op{CH}}^n(X)$. The obstruction-theoretic Euler class is identified with the Chow--Witt-theoretic Euler class in $\widetilde{\op{CH}}^n(X)$ by  \cite[Theorem 1]{AsokFaselEuler}. 


By \cite[Theorem 16.3.8]{fasel:memoir}, we have $\op{H}^n(X,\mathbf{I}^n)\cong\bigoplus_{\mathcal{C}}\mathbb{Z}$ where $\mathcal{C}$ denotes the set of compact connected components of $X(\mathbb{R})$. The same assertion holds for $\op{H}^n(X,\mathbf{I}^{n+1})$ by \cite[Proposition 5.1]{fasel:orbits}. The proof in loc.cit. also implies that the map $\op{H}^n(X,\mathbf{I}^{n+1})\to \op{H}^n(X,\mathbf{I}^{n})$ is injective. Therefore, by Proposition~\ref{prop:cartesian}, it suffices to show that $\op{e}_n(\mathscr{E})\in \op{H}^n(X,\mathbf{I}^n)$ vanishes. 
For this, recall from Theorem~\ref{thm:slnin} that $\op{e}_n(\mathscr{E})=\beta(\overline{\op{c}}_{n-1})$, in particular the Euler class is torsion. But $\op{H}^n(X,\mathbf{I}^n)$ is torsion-free, so the Euler class is trivial.
\end{proof}

We can identify two main steps in the proof of Proposition~\ref{prop:recoverbs}: the first reduces the vanishing of the Chow--Witt-theoretic Euler class to the vanishing of the Euler class in $\mathbf{I}^n$-cohomology. This is done via the injectivity of $\op{H}^n(X,\mathbf{I}^{n+1})\to\op{H}^n(X,\mathbf{I}^n)$ which in particular implies that the fiber product isomorphism of Proposition~\ref{prop:cartesian} holds. Second, the vanishing of the $\mathbf{I}^n$-cohomological Euler class is established, via explicit knowledge of the group $\op{H}^n(X,\mathbf{I}^n)$. 

\begin{remark}
Real realization induces a map $\op{H}^n(X,\mathbf{I}^m)\to\op{H}^n_{\op{sing}}(X(\mathbb{R}),\mathbb{Z})$ for all varieties $X$ over $\mathbb{R}$. Jacobson \cite{jacobson} has shown that this is an isomorphism for all $n\geq 0$ and all $m> \op{dim} X$. In the case $X={\op{B}}\op{SL}_n$, this map induces an isomorphism $\op{H}^\bullet({\op{B}}\op{SL}_n,\mathbf{I}^\bullet)\cong \op{H}^\bullet_{\op{sing}}(\op{B}\op{SO}(n),\mathbb{Z})$ of graded rings, which maps the algebraic characteristic classes to their topological counterparts. This can be seen by comparing the inductive proofs in Section~\ref{sec:special} to the topological counterparts in \cite{brown}. Alternatively, this follows from the more general results on the real cycle class maps in \cite{4real}. For $X$ smooth, affine and oriented over $\mathbb{R}$, the results from \cite{fasel:memoir} then can be used to show that the vanishing of the $\mathbf{I}^n$-cohomological Euler class is equivalent to the vanishing of the topological Euler class, providing a very satisfactory reinterpretation of the main results of Bhatwadekar--Sridharan in \cite{bhatwadekar:sridharan:99}. 

According to \cite[Theorem 8.5]{jacobson} and \cite{bachmann} there is a natural map 
\[
\op{H}^\bullet({\op{B}}\op{SL}_n,\mathbf{I}^\bullet)\to \op{H}^\bullet_{\mathrm{r\acute{e}t}}({\op{B}}\op{SL}_n,\mathbb{Z})
\]
from $\mathbf{I}$-cohomology to real-\'etale cohomology, induced by localization with respect to $\rho=-[-1]\in\op{K}^{\op{MW}}_1(F)$. We suggest to use this to study vanishing of Euler classes for varieties $X$ over base fields $F$ other than $\mathbb{R}$. 
\end{remark}

\begin{remark}
Note that Proposition~\ref{prop:recoverbs} also holds for real closed fields, using the comparison result \cite[Theorem 3]{AsokFaselCohomotopy} and the corresponding statement for Euler class groups from \cite{bhatwadekar:sridharan:99,bhatwadekar:das:mandal,bhatwadekar:sane}. More generally, it is conceivable that  $\op{H}^n(X,\mathbf{I}^n)$ is torsion-free if $X$ is smooth affine and oriented of dimension $n$ over a real pythagorean field. 
\end{remark}

A more general version of this procedure is given by the following result: 

\begin{proposition}\label{greatapplication}
\label{prop:enodd}
Let $F$ be a field of characteristic $\neq 2$ and let $X$ be a smooth affine variety of odd dimension $n$ over $F$. Assume one of the following:
\begin{enumerate}
\item there is no non-trivial $2$-torsion in $\op{CH}^n(X)$, or 
\item the images of the groups $\op{H}^{n-1}(X,\mathbf{K}^{\op{M}}_n)$ and $\op{H}^{n-1}(X,\mathbf{K}^{\op{M}}_n/2)$ under the Bockstein map $\op{H}^{n-1}(X,\mathbf{K}^{\op{M}}_n/2)\to \op{H}^n(X,\mathbf{I}^{n+1})$ are equal.
\end{enumerate}
Then a vector bundle $\mathscr{E}$ of rank $n$ over $X$ splits off a free rank one summand if and only if $\op{c}_n(\mathscr{E})=0$ and $\beta(\overline{\op{c}}_{n-1}(\mathscr{E}))=0$.
\end{proposition}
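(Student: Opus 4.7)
The plan is to reduce the statement to the vanishing of the Chow--Witt-theoretic Euler class and then perform a diagram chase in the key square of Section~\ref{sec:key}.

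First, by $\mathbb{A}^1$-representability of vector bundles and the identification of the obstruction-theoretic and Chow--Witt-theoretic Euler classes \cite[Theorem 1]{AsokFaselEuler}, the bundle $\mathscr{E}$ splits off a free rank one summand if and only if $\op{e}_n(\mathscr{E}) = 0$ in $\widetilde{\op{CH}}^n(X)$, as in the proof of Proposition~\ref{prop:recoverbs}. The ``only if'' direction is then immediate from the two naturality statements $\op{c}_n = \op{e}_n$ in $\op{CH}^n({\op{B}}\op{SL}_n)$ of Proposition~\ref{prop:chern} and $\op{e}_n = \beta(\overline{\op{c}}_{n-1})$ in $\op{H}^n({\op{B}}\op{SL}_n,\mathbf{I}^n)$ of Proposition~\ref{prop:eulerrel} (using that $n$ is odd), pulled back along a classifying map $X\to{\op{B}}\op{SL}_n$.

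For the ``if'' direction, suppose $\op{c}_n(\mathscr{E}) = 0$ and $\beta(\overline{\op{c}}_{n-1}(\mathscr{E})) = 0$. The long exact sequence associated to $0\to\mathbf{I}^{n+1}\to\mathbf{K}^{\op{MW}}_n\to\mathbf{K}^{\op{M}}_n\to 0$, which is the middle row of the key square, produces a lift $\widetilde{e}\in\op{H}^n(X,\mathbf{I}^{n+1})$ of $\op{e}_n(\mathscr{E})$ along a map I write $\phi:\op{H}^n(X,\mathbf{I}^{n+1})\to\widetilde{\op{CH}}^n(X)$. Commutativity of the key square identifies $\eta(\widetilde{e})$ with the image of $\op{e}_n(\mathscr{E})$ in $\op{H}^n(X,\mathbf{I}^n)$, which vanishes by assumption, so the B\"ar sequence yields $\widetilde{e} = \beta(y)$ for some $y\in\op{H}^{n-1}(X,\mathbf{K}^{\op{M}}_n/2)$. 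What remains is to show that $\widetilde{e}$ lies in $\ker\phi$, which by the above long exact sequence equals the image of $\op{H}^{n-1}(X,\mathbf{K}^{\op{M}}_n)\to\op{H}^n(X,\mathbf{I}^{n+1})$. A direct comparison of the two short exact sequences $0\to\mathbf{I}^{n+1}\to\mathbf{K}^{\op{MW}}_n\to\mathbf{K}^{\op{M}}_n\to 0$ and $0\to\mathbf{I}^{n+1}\to\mathbf{I}^n\to\mathbf{K}^{\op{M}}_n/2\to 0$ shows that this map factors as the mod~$2$ reduction followed by the Bockstein $\beta$; thus it suffices to prove that $y$ can be written as a sum of a mod~$2$ reduction and an element in the image of $\rho:\op{H}^{n-1}(X,\mathbf{I}^n)\to\op{H}^{n-1}(X,\mathbf{K}^{\op{M}}_n/2)$, since $\beta\rho=0$ in the B\"ar sequence.

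Under condition (2) this is literally the hypothesis. Under condition (1) I would deduce (2) as follows. Consider the integral Bockstein $\widetilde{\beta}:\op{H}^{n-1}(X,\mathbf{K}^{\op{M}}_n/2)\to\op{CH}^n(X)$ coming from $0\to\mathbf{K}^{\op{M}}_n\xrightarrow{2}\mathbf{K}^{\op{M}}_n\to\mathbf{K}^{\op{M}}_n/2\to 0$; its image is ${}_2\op{CH}^n(X)$ and its kernel is the image of $\op{H}^{n-1}(X,\mathbf{K}^{\op{M}}_n)$. Naturality of boundaries for the morphism of short exact sequences with $0\to 2\mathbf{K}^{\op{M}}_n\to\mathbf{K}^{\op{MW}}_n\to\mathbf{I}^n\to 0$ identifies the boundary appearing in condition (1) with $\widetilde{\beta}\circ\rho$ (after the identification $2\mathbf{K}^{\op{M}}_n\cong \mathbf{K}^{\op{M}}_n$). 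Condition (1) therefore produces $b\in\op{H}^{n-1}(X,\mathbf{I}^n)$ with $\widetilde{\beta}(\rho(b)) = \widetilde{\beta}(y)$, whence $y-\rho(b)\in\ker\widetilde{\beta}$ is a mod~$2$ reduction, giving the desired decomposition of $y$ and thus $\widetilde{e}\in\ker\phi$ and $\op{e}_n(\mathscr{E})=0$.

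The main potential obstacle is the bookkeeping of the various coefficient sequences and Bockstein operations: in particular, verifying that $\op{H}^{n-1}(X,\mathbf{K}^{\op{M}}_n)\to\op{H}^n(X,\mathbf{I}^{n+1})$ factors through the mod~$2$ reduction, and that the boundary of condition (1) equals $\widetilde{\beta}\circ\rho$ after identification. Beyond those two naturality checks the remainder is essentially a diagram chase in the key square.
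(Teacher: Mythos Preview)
Your proof is correct and follows essentially the same route as the paper's: both reduce to vanishing of the Chow--Witt Euler class, then use conditions (1) or (2) to show that the canonical map from $\widetilde{\op{CH}}^n(X)$ to the fiber product $\op{H}^n(X,\mathbf{I}^n)\times_{\op{Ch}^n(X)}\ker\partial$ is injective, so that vanishing of $\op{c}_n$ and $\beta(\overline{\op{c}}_{n-1})$ forces $\op{e}_n(\mathscr{E})=0$. The paper phrases this as ``the proof of Proposition~\ref{prop:cartesian} with injectivity of $2$ and $\eta$ replaced by surjectivity of the preceding maps'', which is exactly the diagram chase you carry out explicitly. The only stylistic difference is that you deduce (1)$\Rightarrow$(2) first (via the identification $\partial_{(1)}=\widetilde{\beta}\circ\rho$, which is indeed the right naturality statement) and then handle both cases uniformly, whereas the paper treats the two conditions in parallel using the two long exact sequences directly; both arrive at the same conclusion with the same amount of work.
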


\begin{proof}
The reduction to proving triviality of the Chow--Witt-theoretic Euler class is done as in the proof of Proposition~\ref{prop:recoverbs}. Now either of the two conditions imply that the fiber product map $c$ of Proposition~\ref{prop:cartesian} is an isomorphism. This is done in the same way as in the proof of Proposition~\ref{prop:cartesian}, but replaces the injectivity of the maps $2$ and $\eta$, respectively, by the surjectivity of the preceding maps (which are not drawn in the key diagram of Section~\ref{sec:key}). Therefore Theorem~\ref{thm:slnchw} tells us that the Euler class $\op{e}_n(\mathscr{E})\in\widetilde{\op{CH}}^n({\op{B}}\op{SL}_n)$ is completely determined by its image $\op{c}_n(\mathscr{E})\in\op{CH}^n({\op{B}}\op{SL}_n)$ in Chow theory and $\beta(\overline{\op{c}}_{n-1})\in\op{H}^n_{\op{Nis}}({\op{B}}\op{SL}_n,\mathbf{I}^n)$. 
\end{proof}

\begin{remark}
If the classes $\op{c}_n$ and $\beta(\overline{\op{c}}_{n-1})$ vanish, the obstruction to splitting off a free rank one summand is then an element in ${}_2\op{CH}^n(X)$. This suggests that the vanishing implies that the vector bundle splits off a free rank one summand after a degree 2 \'etale covering, and the vector bundle over $X$ splits off a line bundle (which possibly is the norm of a line bundle on the \'etale covering). 
\end{remark}

The two major steps directly translate into two key aspects when studying the question of Bhatwadekar and Sridharan. First, one needs to deal with the potential failure of the fiber product isomorphism of Proposition~\ref{prop:cartesian} by investigating the validity of the conditions in Proposition~\ref{greatapplication}, hopefully for large classes of schemes $X$. Second, we need methods to establish the vanishing of the Euler class in $\mathbf{I}^n$-cohomology. From Theorem~\ref{thm:slnin}, we get $\op{e}_n=\beta(\overline{\op{c}}_{n-1})\in \op{H}^n({\op{B}}\op{SL}_n,\mathbf{I}^n)$,  subject to the compatibility condition 
\[
\rho\circ\beta(\overline{\op{c}}_{n-1})=\op{Sq}^2(\overline{\op{c}}_{n-1})= \overline{\op{c}}_n. 
\]
In particular, we see that the  Euler class $\op{e}_n\in\op{H}^n(X,\mathbf{I}^n)$ for odd rank oriented vector bundles has only ``stable'' information. There are then various possibilities to establish vanishing of this Euler class: the identification $\op{e}_n=\beta(\overline{\op{c}}_{n-1})$ implies that the Euler class is $2$-torsion, and this was enough to deduce vanishing of $\op{e}_n\in \op{H}^n(X,\mathbf{I}^n)$ in the proof of Proposition~\ref{prop:recoverbs} above. Vanishing of $\overline{\op{c}}_{n-1}$ is another way. Finally, a positive answer to the following question would yield a general vanishing statement for the $\mathbf{I}^n$-cohomological Euler class:

\begin{question}
\label{rem:bsnew}
Let $F$ be a field of characteristic $\neq 2$ and let $X$ be smooth affine variety of odd dimension $n$ over $F$. For a class $\sigma\in\op{Ch}^{n-1}(X)$, does $\op{Sq}^2(\sigma)=0$ imply $\beta(\sigma)=0$? In other words, is the reduction $\rho\colon \op{H}^n_{\op{Nis}}(X,\mathbf{I}^n)\to\op{Ch}^n(X)$ injective on the image of 
$\beta\colon \op{Ch}^{n-1}(X)\to\op{H}^n(X,\mathbf{I}^n)$?
\end{question} 

An alternative formulation of Question~\ref{rem:bsnew}, using Lemma~\ref{lem:brown22}, is the following: if $\alpha$ is a class in $\op{H}^n(X,\mathbf{I}^{n+1})$ such that $\eta^2\alpha=0$ in $\op{H}^n(X,\mathbf{I}^{n-1})$, do we already have $\eta\alpha=0$ in $\op{H}^n(X,\mathbf{I}^n)$? In the topological situation, $4$-torsion can never appear in the top degree cohomology of a manifold; and the above question (via its torsion reformulation) asks for an algebraic analogue of this fact.


We end our discussion by the following special case of Proposition~\ref{greatapplication}. Note that the conditions appearing below are in particular satisfied for Mohan Kumar's varieties in \cite{mohan:kumar} for odd primes.

\begin{proposition}
\label{prop:greatapplication2}
Let $F$ be a perfect field of characteristic $\neq 2$, and let $n$ be an odd integer. Let $X\subset \mathbb{P}^n$ be a smooth affine open subscheme with $\op{CH}^n(X)$ $2$-torsionfree, e.g. the complement of a union of smooth hypersurfaces for which each closed point has odd degree. Let $\mathscr{E}\to X$ be a rank $n$ vector bundle over $X$. If $\op{c}_n(\mathscr{E})=0$, then $\mathscr{E}$ splits off a free rank one summand. 
\end{proposition}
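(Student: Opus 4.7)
The overall plan is to apply Proposition~\ref{greatapplication} to reduce the splitting question to a cohomological vanishing, and then use the geometric ambient space $\mathbb{P}^n$ to force that vanishing. The hypothesis that $\op{CH}^n(X)$ is $2$-torsionfree makes condition~(1) of Proposition~\ref{greatapplication} automatic: the target of the boundary $\op{H}^{n-1}(X,\mathbf{I}^n)\to\op{CH}^n(X)$ has no nontrivial $2$-torsion, so \emph{any} map into it is surjective on $2$-torsion. Thus it suffices to show that $\op{c}_n(\mathscr{E})=0$ implies $\beta(\overline{\op{c}}_{n-1}(\mathscr{E}))=0$ in $\op{H}^n(X,\mathbf{I}^n)$.

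To pin down $\overline{\op{c}}_{n-1}(\mathscr{E})$, I would exploit the open immersion $j:X\hookrightarrow\mathbb{P}^n$. The standard localization sequence for mod~$2$ Chow groups yields a surjection $j^\ast:\op{Ch}^{n-1}(\mathbb{P}^n)\twoheadrightarrow\op{Ch}^{n-1}(X)$. Writing $h=\op{c}_1(\mathscr{O}(1))$, the source equals $\mathbb{Z}/2\cdot\overline{h}^{n-1}$, so $\overline{\op{c}}_{n-1}(\mathscr{E})=\epsilon\cdot j^\ast(\overline{h}^{n-1})$ for some $\epsilon\in\mathbb{Z}/2$. Naturality of the B\"ar sequence then gives $\beta(\overline{\op{c}}_{n-1}(\mathscr{E}))=\epsilon\cdot j^\ast\beta(\overline{h}^{n-1})$, reducing everything to the global computation $\beta(\overline{h}^{n-1})=0$ in $\op{H}^n(\mathbb{P}^n,\mathbf{I}^n)$.

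The hypothesis that $n$ is odd enters here in an essential way. The twisted Euler class $\op{e}_1(\mathscr{O}(1))\in\widetilde{\op{CH}}^1(\mathbb{P}^n,\mathscr{O}(-1))$ has its $(n-1)$-st intersection product in $\widetilde{\op{CH}}^{n-1}(\mathbb{P}^n,\mathscr{O}(-(n-1)))$. Since $n-1=2k$ is even, we have $\mathscr{O}(-(n-1))=\mathscr{O}(-k)^{\otimes 2}$, which is a square in $\op{Pic}(\mathbb{P}^n)$; a choice of square root identifies the twisted group with the untwisted $\widetilde{\op{CH}}^{n-1}(\mathbb{P}^n)$. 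Chasing the key diagram of Section~\ref{sec:key}, the image of $\op{e}_1(\mathscr{O}(1))^{n-1}$ in $\op{Ch}^{n-1}(\mathbb{P}^n)$ is $\overline{h}^{n-1}$, while its image in $\op{H}^{n-1}(\mathbb{P}^n,\mathbf{I}^{n-1})$ provides a lift of $\overline{h}^{n-1}$ under the reduction map $\rho$. The B\"ar sequence $\op{H}^{n-1}(\mathbb{P}^n,\mathbf{I}^{n-1})\xrightarrow{\rho}\op{Ch}^{n-1}(\mathbb{P}^n)\xrightarrow{\beta}\op{H}^n(\mathbb{P}^n,\mathbf{I}^n)$ then forces $\beta(\overline{h}^{n-1})=0$.

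Stringing these steps together gives $\beta(\overline{\op{c}}_{n-1}(\mathscr{E}))=0$, and Proposition~\ref{greatapplication} delivers the splitting of a free rank one summand. The substantive step is the third one: everything hinges on the fact that for odd $n$ the twist $\mathscr{O}(-(n-1))$ is a square, so that a power of the Euler class of $\mathscr{O}(1)$ can be viewed as an honest element of the untwisted Chow--Witt group whose shadow in $\mathbf{I}^{n-1}$-cohomology serves as a lift of $\overline{h}^{n-1}$. For even $n$ this square-root trick would not be available, and the approach would break down. The first two steps are routine applications of Proposition~\ref{greatapplication}, the naturality of $\beta$, and the Chow-theoretic localization sequence associated to the open immersion $X\subset\mathbb{P}^n$.
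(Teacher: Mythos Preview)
Your proof is correct and follows essentially the same route as the paper: reduce to Proposition~\ref{greatapplication} via the $2$-torsionfree hypothesis, use the surjection $j^\ast:\op{Ch}^{n-1}(\mathbb{P}^n)\to\op{Ch}^{n-1}(X)$ together with naturality of $\beta$, and finish by showing $\beta$ vanishes on $\op{Ch}^{n-1}(\mathbb{P}^n)$. The only difference is that where the paper simply cites \cite[Proposition~11.6]{fasel:ij} for this last vanishing, you supply a direct argument: for odd $n$ the twist $\mathscr{O}(-(n-1))$ is a square, so $\op{e}_1(\mathscr{O}(1))^{n-1}$ lives in the untwisted Chow--Witt group and its image in $\op{H}^{n-1}(\mathbb{P}^n,\mathbf{I}^{n-1})$ witnesses $\overline{h}^{n-1}\in\op{Im}\rho=\ker\beta$. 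This is a nice self-contained replacement for the citation, and it makes transparent exactly where the parity of $n$ enters.
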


\begin{proof}
By \cite[Proposition 11.6]{fasel:ij}, we know that the Bockstein map 
\[
\beta\colon \op{Ch}^{n-1}(\mathbb{P}^n)\to\op{H}^n(\mathbb{P}^n,\mathbf{I}^n)
\]
is trivial. Since $\beta$ is compatible with the restriction maps
$j^*$ in the localization sequences, we have a commutative square
\[
\xymatrix{
\op{Ch}^{n-1}(\mathbb{P}^n)\ar[r]^{j^*} \ar[d]_\beta & \op{Ch}^{n-1}(X)\ar[d]^\beta \\
\op{H}^n(\mathbb{P}^n,\mathbf{I}^n)\ar[r]^{j^*} & \op{H}^n(X,\mathbf{I}^n)
}
\]
Now $j^*\colon \op{Ch}^{n-1}(\mathbb{P}^n)\to\op{Ch}^{n-1}(X)$ is surjective, hence the right-hand $\beta$ must also be the zero map. In particular, the top Stiefel--Whitney classes of any bundle (satisfying the conditions in the statement) vanishes automatically. 
\end{proof}

This also complements the example discussed in \cite{bhatwadekar:fasel:sane}. Note that our methods do not apply directly to their example, which has 2-torsion in the Chow-groups, but the above argument at least implies vanishing of $\op{e}_n\in\op{H}^n(U,\mathbf{I}^n)$ for the variety $U$ in loc.cit. 



\end{document}